\begin{document}

%%%%%%%%%%%%%%%%%%%%%%%%
%                       Custom Commands                           %
%%%%%%%%%%%%%%%%%%%%%%%%

%\renewcommand{\theenumii}{\roman{enumii}}
%\renewcommand{\labelenumii}{(\theenumii)}
\newcommand{\supp}{\text{supp}}
\newcommand{\Aut}{\text{Aut}}
\newcommand{\Gal}{\text{Gal}}
\newcommand{\Inn}{\text{Inn}}
\newcommand{\Irr}{\text{Irr}}
\newcommand{\Ker}{\text{Ker}}
\newcommand{\N}{\mathbb{N}}
\newcommand{\Z}{\mathbb{Z}}
\newcommand{\Q}{\mathbb{Q}}
\newcommand{\R}{\mathbb{R}}
\newcommand{\C}{\mathbb{C}}
\renewcommand{\H}{\mathcal{H}}
\newcommand{\B}{\mathcal{B}}
\newcommand{\A}{\mathcal{A}}
\newcommand{\K}{\mathcal{K}}
\newcommand{\M}{\mathcal{M}}
\newcommand{\vphi}{\varphi}

\newcommand{\J}{\mathscr{J}}
\newcommand{\D}{\mathscr{D}}
\renewcommand{\l}{\ell}
\newcommand{\Tr}{\text{Tr}}

\renewcommand{\P}{\mathcal{P}}

\newcommand{\ul}[1]{\underline{#1}}

\newcommand{\I}{\rm{I}}
\newcommand{\II}{\rm{II}}
\newcommand{\III}{\rm{III}}

\newcommand{\<}{\left\langle}
\renewcommand{\>}{\right\rangle}
\renewcommand{\Re}[1]{\text{Re}\ #1}
\renewcommand{\Im}[1]{\text{Im}\ #1}
\newcommand{\dom}[1]{\text{dom}\,#1}
\renewcommand{\i}{\text{i}}
\renewcommand{\mod}[1]{(\operatorname{mod}#1)}
\newcommand{\mb}[1]{\mathbb{#1}}
\newcommand{\mc}[1]{\mathcal{#1}}
\newcommand{\mf}[1]{\mathfrak{#1}}
\newcommand{\im}{\operatorname{im}}

\newcommand{\TODO}[1]{{\color{red}\textbf{TODO: }{#1}}}

%%%%%%%%%%%%%%%%%%%%%%%%
%                      Theorem Environments                      %
%%%%%%%%%%%%%%%%%%%%%%%%

\newtheorem{thm}{Theorem}[section]
\newtheorem{prop}[thm]{Proposition}
\newtheorem{lem}[thm]{Lemma}
\newtheorem{cor}[thm]{Corollary}
\newtheorem{innercthm}{Theorem}
\newenvironment{cthm}[1]
  {\renewcommand\theinnercthm{#1}\innercthm}
  {\endinnercthm}
\newtheorem{innerclem}{Lemma}
\newenvironment{clem}[1]
  {\renewcommand\theinnerclem{#1}\innerclem}
  {\endinnerclem}

\theoremstyle{definition}
\newtheorem{defi}[thm]{Definition}
\newtheorem{ex}[thm]{Example}
\newtheorem*{exs}{Examples}
\newtheorem{rem}[thm]{Remark}
\newtheorem{innercdefi}{Definition}
\newenvironment{cdefi}[1]
  {\renewcommand\theinnercdefi{#1}\innercdefi}
  {\endinnercdefi}

%for proofs with equivlances where you want to assert which implication is being shown
\newtheorem*{imp}{}
%%%%%%%%%%%%%%%%%%%%%%%%
%                        	  Title Page                                  %
%%%%%%%%%%%%%%%%%%%%%%%%

\title{On finite free Fisher information for eigenvectors of a modular operator}
\author{Brent Nelson}
\address{Department of Mathematics, University of California, Berkeley}
\email{brent@math.berkeley.edu}

\thanks{Research supported by NSF grants DMS-1502822 and DMS-0838680}

\begin{abstract}
Suppose $M$ is a von Neumann algebra equipped with a faithful normal state $\varphi$ and generated by a finite set $G=G^*$, $|G|\geq 2$. We show that if $G$ consists of eigenvectors of the modular operator $\Delta_\varphi$ with finite free Fisher information, then the centralizer $M^\varphi$ is a $\II_1$ factor and $M$ is either a type $\II_1$ factor or a type $\III_\lambda$ factor, $0<\lambda\leq 1$, depending on the eigenvalues of $G$. Furthermore, $(M^\vphi)'\cap M=\C$, $M^\vphi$ does not have property $\Gamma$, and $M$ is full provided it is type $\III_\lambda$, $0<\lambda<1$.
\end{abstract}

\maketitle

%%%%%%%%%%%%%%%%%%%%%%%%%%%%%%%%%%%%%%%%%%%%%%%%%%%%%%%%%%%%%%%%%%%%%%%%%%%%%
%                   			Introduction   		                       %
%%%%%%%%%%%%%%%%%%%%%%%%%%%%%%%%%%%%%%%%%%%%%%%%%%%%%%%%%%%%%%%%%%%%%%%%%%%%%

\section*{Introduction}

Given random variables $x_1,\ldots, x_n$ in a non-commutative probability space $(M,\varphi)$, it is natural to ask what information about the distribution of a polynomial $p\in \C\<x_1,\ldots, x_n\>$ can be gleaned from the distributions of $x_1,\ldots, x_n$. If $p=x_1+x_2$ or $p=x_1x_2$ with $x_1$ freely independent from $x_2$, the theory of free additive and multiplicative convolutions tells us everything about the distribution of $p$, but (until recently) without the strict \emph{regularity condition} of free independence little could be deduced about the distribution of a general polynomial.

Shlyakhtenko and Skoufranis studied the distributions of matrices of polynomials in freely independent random variables $x_1,\ldots, x_n$ and their adjoints, and in particular showed that if $x_1,\ldots, x_n$ were semicircular random variables, then any self-adjoint polynomial has diffuse spectrum \cite{SS15}. Mai, Speicher, and Weber later improved upon this result by showing that if $x_1,\ldots, x_n$ are self-adjoint random variables, not necessarily freely independent or having semicircular distributions but instead having finite free Fisher information, then $x_1,\ldots, x_n$ are algebraically free, any non-constant self-adjoint polynomial $p\in \C\<x_1,\ldots, x_n\>$ has diffuse spectrum, and $W^*(x_1,\ldots, x_n)$ contains no zero divisors for $\C\<x_1,\ldots, x_n\>$ \cite{MSW15}. Charlesworth and Shlyakhtenko further improved on this result by weakening the assumption of finite free Fisher information to having full free entropy dimension, and showed that under stronger assumptions on $x_1,\ldots, x_n$ one can assert that the spectral measure of $p\in\C\<x_1,\ldots, x_n\>$ is non-singular \cite{CS15}. These techniques have since been applied by Hartglass to show that certain elements in $C^*$-algebras associated to weighted graphs have diffuse spectrum \cite{Har15}. In this paper, these techniques are brought to bear on non-tracial von Neumann algebras.

We consider a von Neumann algebra $M$ with a faithful normal state $\varphi$, and a finite generating set $G$. We will further assume that $G$ has finite free Fisher information with respect to the state $\varphi$, and that each $y\in G$ is an ``eigenoperator''; that is, scaled by the modular automorphism group: $\sigma_t^\varphi(y)=\lambda_y^{it} y$ for some $\lambda_y>0$. Under these assumptions, we obtain a criterion for when polynomials $\C\<G\>$ in the centralizer $M^\vphi$ have diffuse spectrum (\emph{cf.} Corollary \ref{diffuse_centralizer_elements}). Our context is inspired by Shlyakhtenko's free Araki-Woods factors, which are non-tracial von Neumann algebras generated by \emph{generalized circular elements} (operators scaled by the action of the modular automorphism group, \emph{cf.} \cite[Section 4]{Shl97}).

Regularity conditions on $x_1,\ldots, x_n$ can also have consequences on the von Neumann algebra generated by these operators. Indeed, Dabrowski \cite{Dab10} showed that if $x_1,\ldots, x_n$ in a tracial non-commutative probability space have finite free Fisher information, then these operators generate a factor without property $\Gamma$. The non-tracial analogue of this result, which considers the centralizer $M^\vphi$ as well as $M$, is the content of the two main results of this paper. The first is concerned with factoriality:

\begin{cthm}{A}\label{A}
Let $M$ be a von Neumann algebra with a faithful normal state $\varphi$. Suppose $M$ is generated by a finite set $G=G^*$, $|G|\geq 2$, of eigenoperators of $\sigma^\varphi$ with finite free Fisher information. Then $(M^\vphi)'\cap M=\C$. In particular, $M^\varphi$ is a $\II_1$ factor and if $H< \R_+^\times$ is the closed subgroup generated by the eigenvalues of $G$ then
	\begin{align*}
		\text{$M$ is a factor of type} \left\{\begin{array}{cl} \rm{III}_1 & \text{if  }\ H=\R_+^\times\\
													\rm{III}_\lambda	& \text{if }\ H=\lambda^\Z,\ 0<\lambda<1\\
													\rm{II}_1 & \text{if }\ H=\{1\}. \end{array}\right.
	\end{align*}
\end{cthm}

Lacking property $\Gamma$ is, for tracial von Neumann algebras, equivalent to the more general property of a von Neumann algebra being ``full'' (\emph{cf.} Subsection \ref{full}). Consequently, the following theorem is the other half of the non-tracial analogue to Dabrowski's result:

\begin{cthm}{B}\label{B}
Let $M$ be a von Neumann algebra with a faithful normal state $\varphi$. Suppose $M$ generated by a finite set $G=G^*$, $|G|\geq 2$, of eigenoperators of $\sigma^\varphi$ with finite free Fisher information. Then $M^\varphi$ does not have property $\Gamma$. Furthermore, if $M$ is a type $\III_\lambda$ factor, $0<\lambda<1$, then $M$ is full.
\end{cthm}

The structure of the paper is as follows. In Section \ref{prelim} we recall various notions relevant to the study of non-tracial von Neumann algebras. We also recall the definition of Dirichlet and completely Dirichlet forms. The context of our results is established in Section \ref{Generators}, wherein eigenoperators are defined and studied. In Section \ref{diff_calc} we analyze derivations on the non-tracial von Neumann algebra $M$, conjugate variables, and free Fisher information. Of particular interest are ``$\mu$-modular'' derivations, namely those derivations that interact nicely with the modular automorphism group $\sigma^\vphi$. Section \ref{Closable} is dedicated to the study of closable $\mu$-modular derivations. In order to show that these closures are still derivations when restricted to the centralizer $M^\vphi$, we study Dirichlet forms arising from $\mu$-modular derivations. This is also used to establish a type of Kaplansky's density theorem for operators in the domain of a $\mu$-modular derivation. Contraction resolvents associated to these derivations are also considered here. In Section \ref{diffuse} we produce a criterion for when polynomials $\C\<G\>$ in $M^\vphi$ are diffuse, and deduce when monomials in $M^\vphi$ have an atom at zero and of what size. Section \ref{polar_deriv} combines the analysis of the previous two sections to show that derivations associated to $y\in G$ give rise to derivations (enjoying many of the same properties) that are related to the polar decomposition of $y$. Furthermore, the derivations associated to $|y|$ for $y\in G$ are in some sense tracial derivations, which we exploit in the proofs of our main theorems in Section \ref{main_results}. Theorem \ref{A} is proven using a contraction resolvent argument similar to the one used in \cite{Dab10}. The type classifications of these von Neumann algebras are deduced using the well-known invariants recalled in Section \ref{prelim}. Theorem \ref{B} is proven by using the derivations associated to $|y|$ for $y\in G$ to appeal to a tracial result of Curran, Dabrowski, and Shlyakhtenko from \cite{CDS14}.

\subsection*{Acknowledgments} I would like to thank Dimitri Shlyakhtenko for the initial idea of this paper and his guidance in the early stages. I am indebted to Yoann Dabrowski, who greatly accelerated the progress of this paper with discussions we had while at Mathematisches Forschungsinstitut Oberwolfach, and suggested proofs that significantly improved Theorem B. Fabio Cipriani also helped immensely by guiding me through the literature on Dirichlet forms. Thanks to Ian Charlesworth, Michael Hartglass, and Benjamin Hayes who all provided an abundance of support and advice. Finally, I would also like to thank the anonymous referee for their helpful comments and suggestions.

%%%%%%%%%%%%%%%%%%%%%%%%%%%%%%%%%%%%%%%%%%%%%%%%%%%%%%%%%%%%%%%%%%%%%%%%%%%%%
%                    			 Preliminaries                  	       %
%%%%%%%%%%%%%%%%%%%%%%%%%%%%%%%%%%%%%%%%%%%%%%%%%%%%%%%%%%%%%%%%%%%%%%%%%%%%%

\section{Preliminaries}\label{prelim}

Throughout the paper, $M$ will denote a von Neumann algebra with a faithful normal state $\vphi$.

%%%%%%%%%%%%%%%%%%%%%%%%%%%%%%%%%%%%%%%%%%%%%%%%%%%%%%%%%%%%%%%%%%%%%%%%%%%%%

\subsection{Arveson spectrum and Connes's $S(M)$ invariant}\label{arveson_connes}

Suppose $M$ is a factor. We recall some invariants for later use in establishing the type classification of $M$ in Theorem \ref{A}. The following exposition can be found in greater generality in \cite[Chapter XI]{Tak03}.

Identify $\R_+^\times$ as the dual group of $\R$ via the pairing
	\begin{align*}
		\R\otimes \R_+^\times \ni (t,\lambda)\mapsto \lambda^{it},
	\end{align*}
so that the \emph{Fourier transform} $\mc{F}$ on $L^1(\R)$ is defined
	\begin{align*}
		(\mc{F} f)(\lambda) = \int_\R \lambda^{-it} f(t)\ dt,\qquad f\in L^1(\R).
	\end{align*}
Denote by $A(\R_+^\times):=\mc{F}(L^1(\R))$, and for $f\in A(\R_+^\times)$ let $\check{f}$ denote its inverse image under the Fourier transform. When $f$ is integrable this can be computed by the inverse Fourier transform:
	\begin{align*}
		\check{f}(t)= \int_{\R_+^\times} \lambda^{it} f(\lambda)\ d\lambda\qquad f\in A(\R_+^\times)\cap L^1(\R_+^\times).
	\end{align*}

The following definitions are due to Arveson (\emph{cf.} \cite{Arv74}), but we use the notation from \cite[Chapter XI]{Tak03}. For all $x\in M$ and $f\in A(\R_+^\times)$, denote
	\begin{align*}
		\sigma_f(x) = \int_\R \check{f}(-t) \sigma_t^\varphi(x)\ dt.
	\end{align*}
Define for each $x\in M$
	\begin{align*}
		I(x) = \{f\in A(\R_+^\times)\colon \sigma_f^\varphi(x)=0\}.
	\end{align*}
The \emph{$\sigma^\varphi$-spectrum of $x$} is defined
	\begin{align*}
		\text{Sp}_{\sigma^\varphi}(x):=\{\lambda\in \R_+^\times\colon f(\lambda)=0,\ f\in I(x)\},
	\end{align*}
and the \emph{Arveson spectrum of $\sigma^\varphi$} is defined
	\begin{align*}
		\text{Sp}(\sigma^\varphi) := \left\{ \lambda \in \R_+^\times\colon f(\lambda)=0,\ f\in \bigcap_{x\in M} I(x)\right\}.
	\end{align*}

\begin{lem}[{\cite[Lemma XI.1.3.(v)]{Tak03}}]\label{M_0_Takesaki_Lemma}
For an open subset $U\subset \R_+^\times$ define $M^{\sigma^\varphi}_0(U)$ as the weak$^*$ closed subspace of $M$ spanned by
	\begin{align*}
		\left\{\sigma_f^\varphi(x)\colon x\in M,\ f\in A(\R_+^\times)\text{ with }\text{supp}(f)\subset U\right\}.
	\end{align*}
Then $\lambda\in \R_+^\times$ belongs to $\text{Sp}(\sigma^\varphi)$ if and only if $M^{\sigma^\varphi}_0(U)\neq \{0\}$ for every open neighborhood $U$ of $\lambda$.
\end{lem}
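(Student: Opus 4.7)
\emph{Proof proposal.} The plan is to recognize this as a statement about the hull of a closed ideal in the regular Fourier algebra $A(\R_+^\times)\cong\mc{F}(L^1(\R))$, and to translate the vanishing condition $M^{\sigma^\vphi}_0(U)=\{0\}$ into an ideal-theoretic one. First I would verify that $I_0:=\bigcap_{x\in M} I(x)$ is a norm-closed ideal in $A(\R_+^\times)$: norm-closedness follows from the estimate $\|\sigma_f^\vphi(x)\|\leq \|\check{f}\|_{L^1(\R)}\|x\|$ for fixed $x$, and the ideal property follows from the convolution identity $\sigma_{gf}^\vphi=\sigma_g^\vphi\circ\sigma_f^\vphi$ (which corresponds under the Fourier transform to products in $A(\R_+^\times)$ going to convolutions in $L^1(\R)$). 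By definition $\text{Sp}(\sigma^\vphi)$ is then exactly the hull $h(I_0):=\{\lambda:f(\lambda)=0\text{ for all }f\in I_0\}$, and a short check shows $M^{\sigma^\vphi}_0(U)=\{0\}$ if and only if every $f\in A(\R_+^\times)$ with $\text{supp}(f)\subset U$ lies in $I_0$.

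For the forward direction I would argue by contradiction using regularity: if $\lambda\in\text{Sp}(\sigma^\vphi)$ but $M^{\sigma^\vphi}_0(U)=\{0\}$ for some open $U\ni\lambda$, then every $f$ supported in $U$ lies in $I_0$; by regularity of $A(\R_+^\times)$ one can choose such an $f$ with $f(\lambda)=1$, contradicting $\lambda\in h(I_0)$. For the reverse direction I would take the contrapositive: given $\lambda\notin h(I_0)$, pick $g\in I_0$ with $g(\lambda)\neq 0$ and invoke a \emph{local inversion} property in the Fourier algebra to produce an open neighborhood $U$ of $\lambda$ and $h\in A(\R_+^\times)$ with $hg\equiv 1$ on $U$. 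Then for any $f\in A(\R_+^\times)$ with $\text{supp}(f)\subset U$ we have $f=fhg$, which lies in $I_0$ since $g\in I_0$ and $I_0$ is an ideal, so $M^{\sigma^\vphi}_0(U)=\{0\}$.

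The main obstacle is the local inversion step, where the concrete structure of $A(\R_+^\times)$ as a Fourier algebra is needed (abstract regularity alone would not suffice). Concretely, one would take a precompact open $U\ni\lambda$ on which $|g|$ is bounded below, produce a reciprocal of $g$ on a neighborhood of $\bar{U}$ via a Wiener--L\'evy style functional calculus argument, and truncate by a regularity bump function to obtain the required $h\in A(\R_+^\times)$. Once the local inversion is in hand, the remainder of the proof is purely formal manipulation of $I_0$ and its hull.
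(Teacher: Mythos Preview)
The paper does not give its own proof of this lemma; it is stated as a citation from \cite[Lemma XI.1.3.(v)]{Tak03} and used as a black box. Your proposal is a correct and complete sketch of the standard argument, and is essentially how Takesaki proves it: one identifies $I_0=\bigcap_{x\in M}I(x)$ as a closed ideal in the regular commutative Banach algebra $A(\R_+^\times)$, recognizes $\text{Sp}(\sigma^\vphi)$ as its hull, and then uses regularity for one direction and Wiener--L\'evy local inversion for the other. There is nothing to compare against in the present paper, and no gap in your argument worth flagging.
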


Given a projection $p\in M^\varphi$, the restriction of $\sigma^\varphi$ to $pMp$ is denoted $(\sigma^\varphi)^p$. The \emph{Connes spectrum of $\sigma^\varphi$} \cite[Definition 2.2.1]{Con73} is defined
	\begin{align*}
		\Gamma(\sigma^\varphi) := \bigcap_{p} \text{Sp}( (\sigma^\varphi)^p),
	\end{align*}
where the intersection is over non-zero projections $p\in M^\vphi$.

\begin{lem}[{\cite[Proposition 2.2.2.(c)]{Con73}}]\label{Connes_second_lemma}
If $M$ is a factor with faithful normal state $\varphi$ such that $M^\varphi$ is a factor, then $\Gamma(\sigma^\varphi)=\text{Sp}(\sigma^\varphi)$.
\end{lem}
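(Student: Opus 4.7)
The inclusion $\Gamma(\sigma^\varphi)\subseteq \text{Sp}(\sigma^\varphi)$ is immediate, since taking $p=1$ in the defining intersection of $\Gamma(\sigma^\varphi)$ yields $\text{Sp}(\sigma^\varphi)$ itself. The content lies in the reverse inclusion: I will fix $\lambda\in \text{Sp}(\sigma^\varphi)$ and an arbitrary non-zero projection $p\in M^\varphi$, and show that $\lambda\in \text{Sp}((\sigma^\varphi)^p)$. Invoking Lemma \ref{M_0_Takesaki_Lemma} on both sides, this reduces to exhibiting, for every open neighborhood $U$ of $\lambda$, a non-zero element of $(pMp)^{(\sigma^\varphi)^p}_0(U)$, taking as input that $M^{\sigma^\varphi}_0(U)\neq \{0\}$.

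From the latter hypothesis I can choose $x\in M$ and $f\in A(\R_+^\times)$ with $\text{supp}(f)\subset U$ and $y:=\sigma_f^\varphi(x)\neq 0$. The key observation is that for any $a,b\in M^\varphi$, since $p,a,b$ are all fixed by $\sigma_t^\varphi$, they factor out of the defining integral to give
\begin{equation*}
\sigma_f^\varphi(paxbp)=pa\,\sigma_f^\varphi(x)\,bp=paybp.
\end{equation*}
Because $p\in M^\varphi$ the element $paxbp$ lies in $pMp$, and the restricted flow $(\sigma^\varphi)^p$ is just $\sigma^\varphi$ acting inside $pMp$; thus the above displays an element of $(pMp)^{(\sigma^\varphi)^p}_0(U)$. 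The task therefore becomes finding $a,b\in M^\varphi$ such that $paybp\neq 0$.

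This is where factoriality of $M^\varphi$ enters. Since $Z(M^\varphi)=\C$, the central support of $p$ in $M^\varphi$ equals $1$, so a standard Zorn argument produces a family $\{v_i\}\subset M^\varphi$ of partial isometries with $v_i^*v_i\leq p$ and $\sum_i v_iv_i^*=1$ strongly. Expanding $y=\sum_{i,j}v_i(v_i^*yv_j)v_j^*$, the nonvanishing of $y$ forces $v_i^*yv_j\neq 0$ for at least one pair of indices $i,j$. The bound $v_i^*v_i\leq p$ implies $v_ip=v_i$ and $pv_i^*=v_i^*$, so the choice $a:=v_i^*$, $b:=v_j$ yields $paybp=v_i^*yv_j\neq 0$, completing the argument. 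The main obstacle is the partition-of-unity step: although standard, it genuinely requires $M^\varphi$ to be a factor, since without this hypothesis one cannot in general spread a generic projection $p$ across the identity of the centralizer.
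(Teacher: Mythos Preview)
Your proof is correct. The paper does not supply its own proof of this lemma; it simply cites \cite[Proposition 2.2.2.(c)]{Con73}. Your argument is the standard one and matches Connes's: the essential point is that factoriality of $M^\varphi$ forces the central support of any nonzero projection $p\in M^\varphi$ to be $1$, so one can cover the identity by partial isometries with source under $p$, and then at least one compressed piece $v_i^*\,\sigma_f^\varphi(x)\,v_j$ survives, witnessing $\lambda\in\text{Sp}((\sigma^\varphi)^p)$.
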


For a faithful semi-finite normal weight $\psi$ on $M$, let $S_\psi$ denote the closure of the map $M\ni x\mapsto x^*$ when viewed as a densely defined operator on $L^2(M,\psi)$. The polar decomposition $S_\psi=J_\psi \Delta_\psi^{1/2}$ yields an anti-linear isometry $J_\psi$ and the modular operator $\Delta_\psi$, which is a positive non-singular (unbounded) operator densely defined on $L^2(M,\psi)$. Suppose $M$ is a factor. Denoting the set of all faithful semi-finite normal weights on $M$ by $\mc{W}_0$, the \emph{modular spectrum of $M$} \cite[Definition 3.1.1]{Con73} is defined
	\begin{align*}
		S(M):=\bigcap_{\psi\in \mc{W}_0} \text{spectrum}(\Delta_\psi).
	\end{align*}
If $S(M)=\{1\}$, then $M$ is semi-finite. Otherwise $M$ is a type $\rm{III}$ factor and $S(M)$ determines its type classification:
	\begin{align*}
		\text{$M$ is of type}\left\{\begin{array}{cl}
				\rm{III}_0 & \text{if }S(M)=\{0,1\}\\
				\rm{III}_\lambda & \text{if }S(M)=\{0\}\cup\lambda^\Z \text{ for } 0<\lambda<1\\
				\rm{III}_1 & \text{if }S(M)=[0,+\infty).
			\end{array}\right.
	\end{align*}
	
\begin{lem}[{\cite[Theorem 3.2.1, Corollary 3.2.7]{Con73}}]\label{Connes_first_lemma}
If $M$ is a factor with faithful normal state $\varphi$ then $S(M)\cap \R_+^\times = \Gamma(\sigma^\varphi)$, and if $M^\varphi$ is a factor then $S(M)=\text{spectrum}(\Delta_\varphi)$.
\end{lem}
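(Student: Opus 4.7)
The plan is to derive both equalities from the single identity
\[
\text{Sp}(\sigma^\psi) \;=\; \text{spectrum}(\Delta_\psi) \cap \R_+^\times,
\]
valid for every $\psi \in \mc{W}_0$. This identity comes from evaluating on the GNS cyclic vector $\xi_\psi$: because $\sigma_t^\psi$ is implemented on $L^2(M,\psi)$ by $\Delta_\psi^{it}$ and $\Delta_\psi \xi_\psi = \xi_\psi$, Fourier inversion gives $\sigma_f^\psi(x)\xi_\psi = f(\Delta_\psi)\,x\xi_\psi$ for $f \in A(\R_+^\times)$ and $x \in M$. Since $\xi_\psi$ is cyclic and separating for $M$, $\sigma_f^\psi \equiv 0$ is equivalent to $f(\Delta_\psi) = 0$, which in turn is equivalent to $f$ vanishing on $\text{spectrum}(\Delta_\psi)\cap \R_+^\times$ (using that elements of $A(\R_+^\times)$ automatically vanish at $0$ and $\infty$ by Riemann--Lebesgue).

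For the equality $S(M) \cap \R_+^\times = \Gamma(\sigma^\varphi)$, observe that for any non-zero projection $p \in M^\varphi$ the compressed state $\varphi_p := \varphi(p)^{-1}\varphi|_{pMp}$ is faithful normal on $pMp$ with modular group $(\sigma^\varphi)^p$, and extends to a weight in $\mc{W}_0$ by forming a direct sum with any faithful normal semifinite weight on $(1-p)M(1-p)$. The identity above then yields $\text{Sp}((\sigma^\varphi)^p) \supseteq S(M) \cap \R_+^\times$, and intersecting over $p$ gives $\Gamma(\sigma^\varphi) \supseteq S(M) \cap \R_+^\times$. For the reverse inclusion one invokes Connes's cocycle Radon--Nikodym theorem to write an arbitrary $\psi \in \mc{W}_0$ as a cocycle perturbation of $\varphi$ and then transfers spectral information to $\Delta_\psi$ via a carefully chosen centralizer projection together with Lemma \ref{M_0_Takesaki_Lemma}. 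This reverse inclusion, which really exploits the cocycle machinery, is the main technical obstacle.

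For $S(M) = \text{spectrum}(\Delta_\varphi)$ under the assumption that $M^\varphi$ is a factor, Lemma \ref{Connes_second_lemma} collapses $\Gamma(\sigma^\varphi)$ to $\text{Sp}(\sigma^\varphi)$, so combining with the first equality and the fundamental identity yields $S(M)\cap \R_+^\times = \text{spectrum}(\Delta_\varphi) \cap \R_+^\times$. To upgrade this to $S(M) = \text{spectrum}(\Delta_\varphi)$, it remains to check that $0$ belongs to both or neither. Using $J_\varphi \Delta_\varphi J_\varphi = \Delta_\varphi^{-1}$, one sees that $0 \in \text{spectrum}(\Delta_\varphi)$ iff $\Delta_\varphi$ is unbounded, while $0 \in S(M)$ iff $M$ is non-semifinite, equivalently iff $\sigma^\varphi$ is non-trivial. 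If $\sigma^\varphi$ is trivial then $\Delta_\varphi = 1$ and neither set contains $0$. If $\sigma^\varphi$ is non-trivial then $\text{Sp}(\sigma^\varphi) = \text{spectrum}(\Delta_\varphi) \cap \R_+^\times$ is a non-trivial closed subgroup of $\R_+^\times$ (a subgroup because $\Gamma(\sigma^\varphi) = \text{Sp}(\sigma^\varphi)$ here), hence unbounded, forcing $\Delta_\varphi$ to be unbounded and $0 \in \text{spectrum}(\Delta_\varphi)$.
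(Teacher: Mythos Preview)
The paper does not prove this lemma; it is quoted as a black box from \cite{Con73}, so there is no in-paper argument to compare against and your sketch must be assessed on its own terms.

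Your fundamental identity $\text{Sp}(\sigma^\psi)=\text{spectrum}(\Delta_\psi)\cap\R_+^\times$ and its derivation via $\sigma_f^\psi(x)\xi_\psi=f(\Delta_\psi)x\xi_\psi$ are correct, and your treatment of the second equality $S(M)=\text{spectrum}(\Delta_\varphi)$ is essentially right, including the case analysis for $0$. One caveat: the equivalence ``$0\in S(M)\Leftrightarrow\sigma^\varphi$ non-trivial'' is false in general (take $M=\mc{B}(\H)$ with a non-tracial normal state), but it does hold once $M^\varphi$ is assumed to be a factor, since then $M$ semifinite forces $\varphi$ to be a trace. You should make that dependence on the hypothesis explicit.

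There is, however, a genuine gap in your argument for the inclusion $\Gamma(\sigma^\varphi)\supseteq S(M)\cap\R_+^\times$. Extending $\varphi_p$ to a weight $\psi\in\mc{W}_0$ on $M$ by adjoining an arbitrary faithful normal semifinite weight on $(1-p)M(1-p)$ gives you only $S(M)\subseteq\text{spectrum}(\Delta_\psi)$. But $\text{spectrum}(\Delta_\psi)$ now contains the spectrum contributed by the complementary block \emph{and} by the off-diagonal pieces $pL^2(M,\psi)(1-p)$, so there is no way to conclude $S(M)\cap\R_+^\times\subseteq\text{Sp}\bigl((\sigma^\varphi)^p\bigr)$ from this. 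What is actually needed here is either the fact that $S(pMp)=S(M)$ for corners of a factor (after which $S(M)=S(pMp)\subseteq\text{spectrum}(\Delta_{\varphi_p})$ is immediate), or a construction that, given $p$ with $\lambda\notin\text{Sp}\bigl((\sigma^\varphi)^p\bigr)$, produces a weight on all of $M$ whose modular spectrum omits $\lambda$. Either route is substantially more than the direct-sum extension you wrote, and in Connes's treatment both inclusions ultimately rest on the cocycle invariance of $\Gamma$ that you deferred to the ``hard'' direction.
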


%%%%%%%%%%%%%%%%%%%%%%%%%%%%%%%%%%%%%%%%%%%%%%%%%%%%%%%%%%%%%%%%%%%%%%%%%%%%%

\subsection{Full von Neumann algebras}\label{full}

Let $\text{Aut}(M)$ denote the group of automorphisms on $M$, and let $\text{Int}(M)$ denote the group of inner automorphisms (i.e. those automorphisms implemented via conjugation by a unitary $u\in M$). On $\text{Aut}(M)$ we consider the topology of point-wise norm convergence in $M_*$: a net $\{\alpha_\iota\}\subset \text{Aut}(M)$ converges to $\alpha\in \text{Aut}(M)$ if for every $\phi\in M_*$ we have
	\begin{align*}
		0=\lim_{\iota} \| \phi\circ(\alpha_\iota-\alpha)\|_{M_*} = \lim_{\iota} \sup_{x\in M} \frac{|\phi(\alpha_\iota(x) - \alpha(x))|}{\|x\|}.
	\end{align*}

\begin{defi}[{\cite[Definition 3.5]{Con74}}]
A von Neumann algebra $M$ with separable predual $M_*$ is \emph{full} when $\text{Int}(M)$ is closed in $\text{Aut}(M)$ with respect to the above topology.
\end{defi}

Given $\phi\in M_*$ and $x\in M$ we define $[x,\phi]\in M_*$ by $[x,\phi](y):=\phi([y,x])$.

\begin{defi}
For $\phi\in M_*$, an operator-norm-bounded sequence $(z_j)_{j\in\N}$ in $M$ is said to be in the \emph{asymptotic centralizer with respect to $\phi$} if $\|[z_j,\phi]\|_{M_*}\to 0$.
\end{defi}

Theorem 3.1 of \cite{Con74} tells us the following two conditions are equivalent for a von Neumann algebra $M$ with separable predual:
	\begin{enumerate}
		\item[(i)] $M$ is full.
		\item[(ii)] Whenever a bounded sequence $(z_j)_{j\in \N}$ in $M$ is in the asymptotic centralizer with respect to $\phi$ for all $\phi\in M_*$, there exists a bounded sequence of scalars $(c_j)_{j\in \N}$ so that $z_j-c_j\to 0$ $*$-strongly.
	\end{enumerate}
For any faithful normal state $\psi$, we define a norm on $M$ by
	\[
		\|x\|_\psi^\sharp :=\sqrt{ \|x\|_\psi^2+\|x^*\|_\psi^2}\qquad x\in M,
	\]
and recall that for uniformly bounded sequences in $M$ convergence with respect to this norm coincides with $*$-strong convergence. It is an easy exercise to see that the sequence of scalars in condition (ii) can be replaced with $(\psi(z_j))_{j\in \N}$ for any faithful normal state $\psi$ on $M$. Moreover, $z_j$ can be replaced with $z_j - \psi(z_j)$ so that we need only consider $\psi$-centered sequences. Finally, when $M$ is a $\rm{II}_1$ factor this condition is equivalent to not having property $\Gamma$.

%%%%%%%%%%%%%%%%%%%%%%%%%%%%%%%%%%%%%%%%%%%%%%%%%%%%%%%%%%%%%%%%%%%%%%%%%%%%%

\subsection{Entire elements and a bimodule structure for $L^2(M,\varphi)$}

While $L^2(M,\varphi)$ is a left $M$-module, unlike in the tracial case, the right action of $M$ on itself does not in general extend to a bounded action on $L^2(M,\varphi)$. However, there is a $*$-subalgebra of $M$ containing the centralizer $M^\varphi$ for which the right action extends to a bounded action on $L^2(M,\varphi)$.

Recall that a map from a complex domain valued in a Banach space is analytic if it can locally be expressed as a norm-convergent power series with coefficients in the Banach space. Such a map is entire if the domain is $\C$.

\begin{defi}[{\cite[Definition VIII.2.2]{Tak03}}]
An element $x\in M$ is said to be \emph{entire} if the $M$-valued map $\R\ni t\mapsto \sigma_t^\varphi(x)$ has an extension to an entire function, denoted by $f_x\colon \C\to M$. The set of entire elements will be denoted $M_\infty$.
\end{defi}

Given $x\in M_\infty$ and $a\in M$, we compute
	\begin{align*}
		\| ax \|_\varphi = \| J_\varphi ax \|_\varphi = \| \Delta_\varphi^{1/2} x^* a^*\|_\varphi \leq \| f_{x^*}(-i/2)\| \|\Delta_\varphi^{1/2} a^*\|_\varphi = \|f_{x^*}(-i/2)\| \| a\|_\varphi.
	\end{align*}
Thus this right action of $M_\infty$ extends to $L^2(M,\varphi)$ and $L^2(M,\varphi)$ is an $M$-$M_\infty$ bimodule.

\begin{rem}\label{right_action} We note that for $\xi\in L^2(M,\varphi)$ and $x\in M_\infty$ the right action we are considering is equivalent to
	\begin{align*}
		\xi\cdot x = (J_\varphi \sigma_{i/2}^\varphi(x)^* J_\varphi) \xi= S_\vphi x^* S_\vphi \xi,
	\end{align*}
whereas the usual right action of $M$ on $L^2(M,\vphi)$ is given by $J_\vphi x^* J_\vphi \xi$. We make this adjustment for the sake of the derivations; that is, so that when $\xi\in M$, the action of $x$ is simply right-multiplication. In Section \ref{technical_estimate_section} we will make use of the usual right action.
\end{rem}

%%%%%%%%%%%%%%%%%%%%%%%%%%%%%%%%%%%%%%%%%%%%%%%%%%%%%%%%%%%%%%%%%%%%%%%%%%%%%

\subsection{Dirichlet forms}

We refer the reader to \cite[Section 4]{Cip97} and \cite[Section 4]{Cip08} for further details. We begin with a discussion of standard forms of a von Neumann algebra. Dirichlet forms are defined in terms of a certain monotonicity condition on Hilbert spaces, and hence require a choice of positive cone. In our context, the Hilbert space will be $L^2(M,\varphi)$ and a standard form offers a convenient choice of positive cone.

Since $M\subset \dom(S_\varphi)=\dom(\Delta_\vphi^{1/2})$, we have $M\subset \dom(\Delta_\vphi^{1/4})$. Let $L^2_+(M,\varphi)=\overline{\Delta_\varphi^{1/4}M_+}$, then  $(M,L^2(M,\varphi),L^2_+(M,\varphi),J_\varphi)$ is a standard form of $M$. A vector $\xi\in L^2(M,\varphi)$ is \emph{$J_\varphi$-real} if $J_\varphi \xi=\xi$; we let $L^2_{s.a.}(M,\varphi)$ denote the closed subspace of $J_\vphi$-real vectors. We note that
	\[
		L^2_{s.a.}(M,\varphi)=\overline{\Delta_\varphi^{1/4} M_{s.a.}}.
	\]

We will consider complex-valued, sesquilinear forms (linear in the right entry)
	\[
		\mathscr{E}\colon \dom(\mathscr{E})\times \dom(\mathscr{E})\to \C,
	\]
defined on a dense subspace $\dom(\mathscr{E})\subset L^2(M,\varphi)$, along with the associated quadratic form $\mathscr{E}[\xi] := \mathscr{E}(\xi,\xi)$, $\xi\in\dom(\mathscr{E})$. The quadratic form is \emph{$J_\varphi$-real} if $J_\varphi \dom(\mathscr{E})\subset \dom(\mathscr{E})$ and  $\mathscr{E}[J_\varphi \xi]= \overline{\mathscr{E}[\xi]}$ for all $\xi\in \dom(\mathscr{E})$. We will restrict our attention to the case when $\mathscr{E}[\cdot]$ is valued in $[0,\infty)$. Such forms are \emph{closed} if $\dom(\mathscr{E})$ is a Hilbert space with the inner product
	\[
		\<\xi,\eta\>_\mathscr{E}:=\<\xi,\eta\>_\varphi + \mathscr{E}(\xi,\eta),
	\]
and are \emph{closable} if the identity map from $\dom(\mathscr{E})$ to $L^2(M,\varphi)$ extends injectively to the Hilbert space completion of $\dom(\mathscr{E})$ with respect to $\|\cdot\|_{\mathscr{E}}$. Equivalently, $\mathscr{E}$ is closed if whenever $\{\xi_n\}_{n\in\N}\subset \dom(\mathscr{E})$ is a Cauchy sequence with respect to $\|\cdot\|_\mathscr{E}$ and $\xi_n \to \xi$ with respect to $\|\cdot\|_\varphi$, then $\xi\in \dom(\mathscr{E})$ and $\xi_n \to \xi$ with respect to $\|\cdot\|_{\mathscr{E}}$. $\mathscr{E}$ is closable if whenever  $\{\xi_n\}_{n\in\N}\subset \dom(\mathscr{E})$ is a Cauchy sequence with respect to $\|\cdot\|_{\mathscr{E}}$ and $\xi_n\to 0$ with respect to $\|\cdot\|_\varphi$, then $\lim_n \mathscr{E}[\xi_n]=0$. In particular, if $\mathscr{E}[\xi]=\| T\xi\|_{\varphi}^2$ for some operator $T$ with $\dom(T)=\dom(\mathscr{E})$, then $\mathscr{E}$ is closed (resp. closable) if and only if $T$ is closed (resp. closable). 

If one extends a closed form $\mathscr{E}$ to all of $L^2(M,\varphi)$ by letting $\mathscr{E}\equiv +\infty$ outside of $\dom(\mathscr{E})$, then $\mathscr{E}$ is lower semicontinuous: whenever $\{\xi_n\}_{n\in\N}\subset \dom(\mathscr{E})$ converges to $\xi\in L^2(M,\varphi)$ we have
	\[
		\mathscr{E}[\xi]\leq \liminf_{n\to\infty} \mathscr{E}[\xi_n].
	\]
In this case, to show $\xi\in \dom(\mathscr{E})$ (for the unextended form) it suffices to show $\mathscr{E}[\xi]<+\infty$.

Consider the following closed, convex set:
	\[
		C=\overline{\{\Delta_\varphi^{1/4}x \colon x\in M_{s.a.},\ x\leq 1\}}^{\|\cdot\|_\vphi}
	\]
For $\xi\in L^2_{s.a.}(M,\varphi)$, we let $\xi\wedge 1$ denote the projection of $\xi$ onto $C$.

A quadratic form ($\mathscr{E},\dom(\mathscr{E}))$ is \emph{Markovian} if for every $J_\varphi$-real $\xi\in \dom(\mathscr{E})$, $\xi\wedge 1\in\dom(\mathscr{E})$ with
	\[
		\mathscr{E}[\xi\wedge 1]\leq \mathscr{E}[\xi].
	\]
A \emph{Dirichlet form} is a closed Markovian form.

For any $n\in \N$, one has a canonical extension of $\mathscr{E}$ to $L^2(M_n(M),\frac{1}{n}\varphi\circ \Tr)$ in terms of the quadratic form $\mathscr{E}^{(n)}$ defined on the subspace $\{ \left[\xi_{i,j}\right]_{i,j=1}^n\colon \xi_{i,j}\in \dom(\mathscr{E}),\ 1\leq i,j\leq n\}$  by
	\[
		\mathscr{E}^{(n)}[ \left[\xi_{i,j}\right]_{i,j=1}^n ] = \sum_{i,j=1}^n \mathscr{E}[\xi_{i,j}].
	\]
A form is \emph{completely Markovian} (resp. \emph{completely Dirichlet}) if $\mathscr{E}^{(n)}$ is Markovian (resp. Dirichlet) for every $n\geq 1$.

%%%%%%%%%%%%%%%%%%%%%%%%%%%%%%%%%%%%%%%%%%%%%%%%%%%%%%%%%%%%%%%%%%%%%%%%%%%%%
%              			Regarding the Generators  	      		 	 %
%%%%%%%%%%%%%%%%%%%%%%%%%%%%%%%%%%%%%%%%%%%%%%%%%%%%%%%%%%%%%%%%%%%%%%%%%%%%%
\section{Eigenoperators}\label{Generators}

We will assume that $M$ is reasonably well-behaved under the action of the the modular automorphism group $\sigma^\vphi$; that is, $M$ is generated by operators for whom the action of $\sigma^\vphi$ is merely multiplication by a scalar. Such operators will be known as ``eigenoperators'' of $\sigma^\vphi$. We begin with some equivalent conditions for having such generators and the analytic implications of their existence.

%%%%%%%%%%%%%%%%%%%%%%%%%%%%%%%%%%%%%%%%%%%%%%%%%%%%%%%%%%%%%%%%%%%%%%%%%%%%%
\subsection{Regarding the generators}\label{Regarding}

We begin with a proposition that, given a finitely generated von Neumann algebra $M$ with faithful normal state $\varphi$ and minimal assumptions on the generators, will allow us to freely switch to generators with more convenient behavior under the modular operator $\Delta_\varphi$.

\begin{prop}\label{equivalent_forms_of_generators}
Let $M$ be a von Neumann algebra with a faithful normal state $\varphi$. Then the following are equivalent:
	\begin{enumerate}
		\item[(i)] $M$ is generated by $\{e_1,\ldots, e_n\}\subset M_{s.a.}$ such that $\Delta_\varphi e_j\in \text{span}\{e_1,\ldots, e_n\}$ for each $j=1,\ldots, n$.
		
		\item[(ii)] $M$ is generated by $\{x_1,\ldots, x_n\}\subset M_{s.a.}$ such that $\Delta_\varphi x_j= \sum_{k=1}^n [A]_{jk} x_k$  for each $j=1,\ldots, n$ where $A\in M_n(\C)$ is a positive definite matrix of the form $A=\text{diag}(A_1,\ldots, A_k,1,\ldots, 1)$ where for each $j=1,\ldots, k$
			\begin{align*}
				A_j=\frac{1}{2}\left(\begin{array}{cc} \lambda_j + \lambda_j^{-1} & -i(\lambda_j - \lambda_j^{-1})\\	
											i(\lambda_j-\lambda_j^{-1}) & \lambda_j + \lambda_j^{1} \end{array}\right)
			\end{align*}
		for some $\lambda_j \in (0,1)$. Furthermore, the covariance of these generators is given by
			\begin{align*}
				\varphi(x_kx_j) = \left[\frac{2}{1+A}\right]_{jk}.
			\end{align*}
		
		\item[(iii)] $M$ is generated by $\{c_1,c_1^*,\ldots, c_k,c_k^*,z_{2k+1},\ldots, z_n\}$ where $\Delta_\varphi c_j = \lambda_j c_j$ for some $\lambda_j\in (0,1)$, $j=1,\ldots, k$, and $z_j$ are self-adjoint elements satisfying $\Delta_\varphi z_j=z_j$, $j=2k+1,\ldots, n$.
	\end{enumerate}
Moreover, elements of $GL(n,\C)$ linearly relate the three sets of generators in the following sense. If $V=(v_1,\ldots, v_n)$ and $W=(w_1,\ldots,w_n)$ are $n$-tuples formed by two sets of the above generators, then there is a $Q\in GL(n,\C)$ such that $V=QW$.

Consequently the modular automorphism group $\{\sigma_t^\varphi\}_{t\in\R}$ of $\varphi$ can be extended to $\sigma_z^\varphi$, $z\in\C$, on
	\begin{align*}
		\C\<e_1,\ldots, e_n\>=\C\<x_1,\ldots, x_n\>=\C\<c_1,c_1^*,\ldots,c_k,c_k^*,z_{2k+1},\ldots,z_n\>.
	\end{align*}
In particular, for any $z\in \C$ we have
	\begin{align*}
		\begin{array}{ll}
		\sigma_{z}^\varphi(x_j)=\sum_{k=1}^n [A^{iz}]_{jk} x_k	&\forall j\in\{1,\ldots, n\},\\
		\sigma_z^\varphi(c_j) = \lambda_j^{iz} c_j	&\forall j\in\{1,\ldots, k\},\ \text{and}\\
		\sigma_z^\varphi(z_j) = z_j	&\forall j\in\{2k+1,\ldots,n\}.
		\end{array}
	\end{align*}
\end{prop}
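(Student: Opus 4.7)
The plan is to recognize that conditions (i), (ii), and (iii) each describe a different basis for a common $\Delta_\varphi$-invariant finite-dimensional subspace $V := \mathrm{span}_\C\{e_1,\ldots,e_n\} \subset L^2(M,\varphi)$, where we identify $a \in M$ with its image in $L^2(M,\varphi)$. I would prove the chain (iii) $\Rightarrow$ (ii) $\Rightarrow$ (i) $\Rightarrow$ (iii) and then read off the remaining assertions.

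For (iii) $\Rightarrow$ (ii) I would set $x_{2j-1} := c_j + c_j^*$ and $x_{2j} := i(c_j - c_j^*)$ for $j \le k$, together with $x_{2k+l} := z_l$, after first normalizing the eigenoperators so that $\varphi(c_j^* c_j) = (1+\lambda_j)^{-1}$ and the $z_l$'s are orthonormal in $L^2(M,\varphi)$. Inverting gives $c_j = \tfrac{1}{2}(x_{2j-1} - i x_{2j})$, and a short calculation using $\Delta_\varphi c_j = \lambda_j c_j$ and $\Delta_\varphi c_j^* = \lambda_j^{-1} c_j^*$ produces the block $A_j$ (and trivial $1\times 1$ blocks for the $z_l$'s). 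The implication (ii) $\Rightarrow$ (i) is immediate, since $\Delta_\varphi x_j = \sum_k A_{jk} x_k$ keeps the action of $\Delta_\varphi$ inside the span.

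The main conceptual step is (i) $\Rightarrow$ (iii). The subspace $V$ is $\Delta_\varphi$-invariant by hypothesis, hence lies in $\dom(\Delta_\varphi^\alpha)$ for all $\alpha \in \C$. Self-adjointness of each $e_j$ gives $S_\varphi e_j = e_j$, whence $J_\varphi e_j = \Delta_\varphi^{-1/2} e_j \in V$, so $V$ is $J_\varphi$-invariant as well. The restriction $\Delta_\varphi|_V$ is positive self-adjoint with respect to the inherited inner product, hence admits a basis of eigenvectors with positive eigenvalues; and the intertwining $J_\varphi \Delta_\varphi J_\varphi = \Delta_\varphi^{-1}$ forces the eigenvalues $\ne 1$ to come in $J_\varphi$-paired pairs $\{\lambda,\lambda^{-1}\}$. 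Choosing one eigenoperator $c_j$ with $\lambda_j \in (0,1)$ from each such pair (so that $c_j^*$ is, up to scalar, the $\lambda_j^{-1}$-eigenoperator obtained by applying $J_\varphi$) and a self-adjoint orthonormal basis $\{z_l\}$ of the $1$-eigenspace yields the generators of (iii). I view this $J_\varphi$-pairing argument as the conceptual core of the proposition.

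The $GL(n,\C)$ assertion is then immediate: all three sets are $\C$-bases of $V$, so any two are related by an invertible change-of-basis matrix. For the analytic extension of $\sigma_t^\varphi$, I would set $\sigma_z^\varphi(c_j) := \lambda_j^{iz} c_j$ and $\sigma_z^\varphi(z_j) := z_j$ for $z \in \C$, verify these agree with the modular group on $\R$, and use the change of basis (with $A > 0$, so that $A^{iz}$ is entire in $z$) to transport the formula to the $x_j$-basis as $\sigma_z^\varphi(x_j) = \sum_k [A^{iz}]_{jk} x_k$; extending multiplicatively embeds the polynomial $*$-algebra into $M_\infty$. The most tedious piece is the covariance formula $\varphi(x_k x_j) = [2/(1+A)]_{jk}$; I expect this to be the main practical obstacle, though not a conceptual one. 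It follows from the KMS identity $\varphi(x_k x_j) = \varphi(x_j \sigma_{-i}^\varphi(x_k)) = \sum_\ell A_{k\ell} \varphi(x_j x_\ell)$ together with the Hermiticity of the Gram matrix (from $x_j = x_j^*$) and the normalizations chosen above, verified block-by-block using the explicit inverse of $1 + A_j$, whose determinant simplifies to $2 + \lambda_j + \lambda_j^{-1}$.
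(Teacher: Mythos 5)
Your proof is correct in substance, but it runs the cycle in the opposite direction from the paper and with a genuinely different key step. The paper proves (iii)$\Rightarrow$(i) trivially and then puts all the work into (i)$\Rightarrow$(ii): after Gram--Schmidt orthonormalizing the $e_j$ with respect to $\Re\<\cdot,\cdot\>_\varphi$, it studies the Gram matrix $\Lambda$ and shows $A=\Lambda^T\Lambda^{-1}$ is positive definite with $A^T=A^{-1}$, so that $t\mapsto A^{it}$ is a real orthogonal one-parameter group which can be block-rotated into the $2\times 2$ rotation blocks; the covariance identity $\Lambda=\frac{2}{1+A}$ drops out of $\Lambda+\Lambda^T=2$. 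You instead prove (iii)$\Rightarrow$(ii)$\Rightarrow$(i) directly and put the content into (i)$\Rightarrow$(iii) via the spectral/Tomita structure: $V:=\text{span}\{e_j\}$ is $\Delta_\varphi$- and $J_\varphi$-invariant, $\Delta_\varphi|_V$ diagonalizes, and the relation $J_\varphi\Delta_\varphi J_\varphi=\Delta_\varphi^{-1}$ pairs the eigenvalues. This is conceptually cleaner --- it makes the $\{\lambda,\lambda^{-1}\}$ pairing and the existence of the $c_j, z_l$ basis transparent, whereas in the paper they emerge indirectly from properties of $A^{it}$. The trade-off is that your route does not give the covariance formula for free: you have to fix the normalization $\varphi(c_j^*c_j)=(1+\lambda_j)^{-1}$ by hand and then verify $\varphi(x_kx_j)=[2/(1+A)]_{jk}$ block-by-block via the KMS relation, as you acknowledge; the paper's $\Lambda+\Lambda^T=2$ computation is the slicker shortcut there. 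One small slip: from $S_\varphi e_j=e_j$ and $S_\varphi=J_\varphi\Delta_\varphi^{1/2}$ you get $J_\varphi e_j=\Delta_\varphi^{1/2}e_j$, not $\Delta_\varphi^{-1/2}e_j$ --- harmless, since either way $J_\varphi e_j\in V$, which is all you use. You should also note (as the paper does) that for higher-multiplicity eigenvalues you choose a whole basis of each $\lambda_j$-eigenspace, not a single vector, and that the extension $\sigma_z^\varphi$ is most cleanly defined by $\sigma_z^\varphi(p)=\Delta_\varphi^{iz}p\Delta_\varphi^{-iz}$, which is automatically consistent with any algebraic relations among the generators, rather than by multiplicative extension of formulas on generators.
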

\begin{proof}
We first note that (iii)$\Rightarrow$(i) is clear.

\begin{imp}[i)$\Rightarrow$(ii]
Since $\varphi$ is faithful, we assume without loss of generality that $\{e_1,\ldots, e_n\}\subset L^2(M,\varphi)$ is a linearly independent set. Furthermore, $\Re\<\cdot,\cdot\>_\varphi$ is a positive definite symmetric bilinear form and so by a Gram-Schmidt process we may assume $\<e_j,e_k\>_\varphi \in i\R $ when $j\neq k$ and $\|e_j\|_\varphi=1$ while still maintaining $e_j=e_j^*$ for each $j=1,\ldots,n$.

Now, the modular operator $\Delta_\varphi$ is a positive, non-singular operator on $L^2(M,\varphi)$. The condition on $\{e_1,\ldots,e_n\}$ implies that $\text{span}\{e_1,\ldots, e_n\}$ is a $\Delta_\varphi$-invariant subspace. Define $A\in M_n(\C)$ by
	\begin{align*}
		\Delta_\varphi e_j = \sum_{k=1}^n [A]_{jk} e_k\qquad \forall j=1,\ldots, n.
	\end{align*}
We first claim that $A=A^*$. Let $\Lambda\in M_n(\C)$ be the covariance matrix: $[\Lambda]_{jk}:=\<e_k,e_j\>_\varphi$ for all $j,k=1,\ldots, n$. Note that $\Lambda$ is positive definite since $\{e_1,\ldots, e_n\}$ is a linearly independent set. Also,
	\begin{align*}
		[\Lambda^T\Lambda]_{jk} = \sum_{l=1}^n [\Lambda]_{lj} [\Lambda]_{lk} &= \sum_{l=1}^n \<e_j, e_l\>_\varphi\<e_k, e_l\>_\varphi\\
			&= \sum_{l=1}^n \overline{\<e_l, e_j\>_\varphi}\overline{\<e_l, e_k\>_\varphi }\\
			&= \sum_{l=1}^n -\<e_l, e_j\>_\varphi\cdot -\<e_l, e_k\>_\varphi = [\Lambda\Lambda^T]_{jk},
	\end{align*}
which implies $\Lambda^T\Lambda^{-1}=\Lambda^{-1}\Lambda^T$. Now, recall $S_\varphi = J_\varphi \Delta_\varphi^{1/2}=\Delta_\varphi^{-1/2}J_\varphi$, where $J_\varphi$ is an anti-linear isometry. We have for each $j,k\in\{1,\ldots, n\}$
	\begin{align*}
		[A\Lambda]_{jk} &= \sum_{l=1}^n [A]_{jl} \<e_k, e_l\>_\varphi= \< e_k, \sum_{l=1}^n [A]_{jl} e_l\>_\varphi = \< e_k, \Delta_\varphi e_j\>_\varphi\\
			&= \< J_\varphi \Delta_\varphi e_j, J_\varphi e_k\>_\varphi= \<\Delta_\varphi^{-1/2}J_\varphi \Delta_\varphi^{1/2} e_j, J_\vphi e_k\>_\varphi= \<S_\varphi e_j, S_\varphi e_k\>_\varphi\\
			&=\<e_j, e_k\>_\varphi = [\Lambda^T]_{jk},
	\end{align*}
so that $A=\Lambda^T\Lambda^{-1}=\Lambda^{-1}\Lambda^T$. By a similar computation we have $\Lambda A^*=\Lambda^T$ or $A^*=\Lambda^{-1}\Lambda^T = A$. We also note that this computation also shows
	\begin{align*}
		A & = \Lambda^{-1/2} \Lambda^T \Lambda^{-1/2}\\
		A^T & = (\Lambda^{-1})^T \Lambda = (\Lambda^{-1} \Lambda^T)^{-1} = A^{-1},
	\end{align*}
hence $A$ is positive definite with inverse $A^{-1}=A^T$.

Thus for each $t\in\R$, $U_t:= A^{it}$ is a unitary matrix because $A>0$ and an orthogonal matrix because
	\begin{align*}
		U_t^T = (A^{it})^T = A^{-it} = U_t^{-1}.
	\end{align*}
Hence, the entries of $U_t$ are real and $t\mapsto U_t$ is an orthogonal representation of $\R$ on $\R^n$. It follows that up to conjugating by some orthogonal matrix with real entries, $\forall t\in \R$ we have $U_t=\text{diag}(R_1(t),\ldots, R_k(t),1,\ldots, 1)$ where for each $j=1,\ldots, k$
	\begin{align*}
		R_j(t)= \left(\begin{array}{cc} \cos(t\log{\lambda_j}	) & -\sin(t\log{\lambda_j})\\	
											\sin(t\log{\lambda_j}) & \cos(t\log{\lambda_j}) \end{array}\right)
	\end{align*}
for some $\lambda_j\in (0,1]$. Using the formula
	\begin{align*}
		i\log(A) t v= \lim_{t\to 0} \frac{1}{t}(U_t - 1)v\qquad \forall v\in \C^n,
	\end{align*}
we see that $A$ has the desired form up to conjugation by an orthogonal matrix with real entries. Redefine $A$ to be the desired conjugated form and let $x_1,\ldots, x_n\in M_{s.a.}$ be the image of $\{e_1,\ldots, e_n\}$ under this orthogonal change of basis. Then clearly $\{x_1,\ldots, x_n\}$ generate $M$ and we have $\Delta_\varphi x_j= \sum_{k=1}^n [A]_{jk}x_k$ for each $j=1,\ldots, n$.

Redefine $\Lambda$ to be the covariance matrix of $\{x_1,\ldots, x_n\}$. Note that $\{x_1,\ldots, x_n\}$ is orthonormal with respect to $\Re\<\cdot,\cdot\>_\varphi$ since these elements are obtained from $\{e_1,\ldots, e_n\}$ by an orthogonal change of basis. Hence
	\begin{align*}
		\left[\Lambda + \Lambda^T\right]_{jk} = 2 \Re\< x_k, x_j\>_\varphi = [2]_{jk},
	\end{align*}
so that $1+\Lambda^T\Lambda^{-1} = 2\Lambda^{-1}$. By the same computation as above we have that $\Lambda^T\Lambda^{-1}=A$. It follows that
	\begin{align*}
		\Lambda = \frac{2}{1+A}.
	\end{align*}
\end{imp}

\begin{imp}[ii)$\Rightarrow$(iii]
Define $Q\in M_n(\C)$ by $Q=\text{diag}(Q_1,\ldots,Q_k,1,\ldots,1)$ where for each $j=1,\ldots, n$
	\begin{align*}
		Q_j = \frac{1}{\sqrt{2}}\left(\begin{array}{cc} 1 & -i\\
								1 & i \end{array}\right).
	\end{align*}
Then it is easy to check that $Q A Q^* = \text{diag}(\lambda_1,\lambda_1^{-1},\ldots, \lambda_k,\lambda_k^{-1},1,\ldots, 1)$. Write $X=(x_1,\ldots, x_n)$ and define the new generators as the entries of the $n$-tuple $QX$. If we write
	\begin{align*}
		(c_1,b_1,\ldots, c_k,b_k,z_{2k+1},\ldots, z_n)=QX,
	\end{align*}
then because $x_1,\ldots,x_n$ are self-adjoint it is clear from the definition of $Q$ that $b_j=c_j^*$, $j=1,\ldots,k$, and $z_j=z_j^*$, $j=2k+1,\ldots, n$. Furthermore,
	\begin{align*}
		\Delta_{\varphi} c_j = \sum_{k=1}^n \Delta_\varphi [Q]_{jk} x_k = \sum_{k,l=1}^n [Q]_{jk} [A]_{kl} x_l = \sum_{l=1}^n \lambda_j [Q]_{jl} x_l = \lambda_j c_j,
	\end{align*}
for each $j=1,\ldots, k$. A similar computation yields $\Delta_\varphi z_j = z_j$ for each $j=2k+1,\ldots, n$.
\end{imp}

It is clear from their constructions that the various sets of generators are linearly related by invertible matrices.

Finally, the extension of the modular automorphism group is given by
	\begin{align*}
		\sigma_z^\varphi(x):= \Delta_\varphi^{iz} x \Delta_\varphi^{-iz}.
	\end{align*}
The action of $\Delta_\varphi$ on the vectors $x_j$, $c_j$, and $z_j$ then implies the claimed formulas.
\end{proof}

\begin{rem}
The relationship between the sets of generators in (ii) and (iii) is precisely the relationship between quasi-free semicircular random variables and generalized circular elements (\emph{cf.} \cite[Section 4]{Shl97} and \cite[Section 3]{Nel15b}).
\end{rem}

\begin{rem}
If $k=0$ in either condition (ii) or (iii), then all of the generators are fixed points of $\Delta_\varphi$ and hence $\varphi$ is a trace.
\end{rem}

\begin{rem}\label{no_worries_about_adjoints}
Suppose $y\in M\subset L^2(M,\vphi)$ is an eigenvector of $\Delta_\varphi$ with eigenvalue $\lambda>0$. Then $y^*$ is also an eigenvector with eigenvalue $\lambda^{-1}$ because $\Delta_\varphi S_\varphi = S_\varphi \Delta_{\varphi}^{-1}$. Therefore if $y=y^*$ then $\lambda=1$. Hence if $M$ is generated by a finite set $G=G^*$ of eigenvectors of $\Delta_\varphi$, then $G$ is of the form in condition (iii).
\end{rem}

\begin{defi}
If $y\in M\subset L^2(M,\varphi)$ is an eigenvector of $\Delta_\varphi$ with eigenvalue $\lambda>0$, we say that $y$ is an \emph{ eigenoperator of the modular automorphism group $\sigma^\varphi=\{\sigma_t^\varphi\}_{t\in\R}$ with eigenvalue $\lambda$}.
\end{defi}

Observe that every element of $M^\vphi$ is an eigenoperator with eigenvalue $1$. In particular, if $\vphi=\tau$ is a trace, then every element of $M$ is an eigenoperator.

\begin{prop}\label{prop:eigenop_polar_decomp}
Let $M$ be a von Neumann algebra with faithful normal state $\vphi$. Suppose $y\in M$ is a eigenoperator of $\sigma^\vphi$ with eigenvalue $\lambda$. If $y=v|y|$ is the polar decomposition, then $|y|\in M^\vphi$ and $v$ is an eigenoperator with eigenvalue $\lambda$.
\end{prop}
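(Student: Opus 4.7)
The plan is to use the uniqueness of polar decomposition, after first verifying that $|y|$ lies in the centralizer. Throughout, the key identity to exploit is that $\sigma_t^\varphi$ is a normal $*$-automorphism, so it commutes with the adjoint, with products, and with continuous functional calculus on self-adjoint elements.

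\emph{Step 1: $|y|\in M^\vphi$.} Since $\sigma_t^\vphi(y) = \lambda^{it} y$ and $\sigma_t^\vphi$ preserves the involution, $\sigma_t^\vphi(y^*) = \lambda^{-it} y^*$. Then
\[
\sigma_t^\vphi(y^*y) = \sigma_t^\vphi(y^*)\sigma_t^\vphi(y) = \lambda^{-it}y^*\cdot \lambda^{it}y = y^*y,
\]
so $y^*y\in M^\vphi$. Because $\sigma_t^\vphi$ is a normal $*$-automorphism, it commutes with the continuous functional calculus on $y^*y$, so $|y|=(y^*y)^{1/2}\in M^\vphi$. By the same reasoning, every spectral projection of $|y|$ is in $M^\vphi$; in particular the support projection $s(|y|)=v^*v$ lies in $M^\vphi$.

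\emph{Step 2: Eigenoperator property of $v$ via uniqueness of polar decomposition.} Apply $\sigma_t^\vphi$ to $y=v|y|$ and use $|y|\in M^\vphi$:
\[
\lambda^{it} y = \sigma_t^\vphi(y) = \sigma_t^\vphi(v)\sigma_t^\vphi(|y|) = \sigma_t^\vphi(v)|y|.
\]
Rearranging, $y = \bigl(\lambda^{-it}\sigma_t^\vphi(v)\bigr)|y|$. The operator $w_t:=\lambda^{-it}\sigma_t^\vphi(v)$ is a partial isometry because $v$ is, and
\[
w_t^*w_t = \sigma_t^\vphi(v^*v) = v^*v
\]
by Step 1. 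Thus $w_t$ is a partial isometry with initial projection equal to $s(|y|)$, and $y = w_t|y|$ is a polar decomposition of $y$. By uniqueness of polar decomposition, $w_t=v$, i.e., $\sigma_t^\vphi(v)=\lambda^{it}v$ for every $t\in\R$, so $v$ is an eigenoperator with eigenvalue $\lambda$.

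There is essentially no obstacle here: the argument is a direct consequence of the fact that $\sigma_t^\vphi$ is a $*$-automorphism together with uniqueness of polar decomposition. The only subtlety worth flagging is the appeal, in Step 1, to the fact that the support projection of a self-adjoint element of $M^\vphi$ lies again in $M^\vphi$ — this is what forces $\sigma_t^\vphi(v)$ to have the same initial projection as $v$ and lets the uniqueness argument close.
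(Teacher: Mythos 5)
Your proof is correct and follows essentially the same path as the paper: compute $\sigma_t^\vphi(y^*y) = y^*y$ to place $|y|\in M^\vphi$, then apply $\sigma_t^\vphi$ to $y=v|y|$ and invoke uniqueness of the polar decomposition. You simply spell out the detail the paper leaves implicit, namely that $w_t:=\lambda^{-it}\sigma_t^\vphi(v)$ is a partial isometry with the same initial projection $v^*v$ as $v$, which is exactly what is needed for the uniqueness argument to apply.
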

\begin{proof}
We note that
	\[
		\sigma_t^\vphi(y^*y) = \sigma_t^\vphi(y)^* \sigma_t^\vphi(y) = \lambda^{-it} y^* \lambda^{it} y = y^*y.
	\]
Thus $y^*y\in M^\vphi$, and consequently $|y|=\sqrt{y^*y}\in M^\vphi$. It follows that $y=\lambda^{-it}\sigma_t^\vphi(v)|y|$, and so by uniqueness of the polar decomposition we have $\sigma_t^\vphi(v)=\lambda^{it} v$.
\end{proof}

Henceforth $M$ will be a von Neumann algebra with a faithful normal state $\varphi$, which is generated by a finite set $G=G^*$ of eigenoperators of $\sigma^\varphi$. By Remark \ref{no_worries_about_adjoints} we may use Proposition \ref{equivalent_forms_of_generators} to call on generators $\{x_1,\ldots, x_n\}$ of the form in (ii). We will often switch between these two generating sets depending on convenience, but will always denote generators of the form in (ii) by $x_1,\ldots, x_n$ whereas elements of $G$ will generally be denoted by $y$. Denote
	\begin{align*}
		\P:=\C\<G\>=\C\<x_1,\ldots, x_n\>.
	\end{align*}
We also note that $M$ has a separable predual since it can be faithfully represented on $L^2(M,\varphi)$ and is finitely generated.

\begin{rem}
The formulas at the end of Proposition \ref{equivalent_forms_of_generators} imply $\P\subset M_\infty$ with $f_p(z)=\Delta_\varphi^{iz}p\Delta_{\varphi}^{-iz}=\sigma_z^\vphi(p)$ for $p\in \P$. Furthermore, using the density of $\P$ in $L^2(M,\varphi)$ and its invariance under $\Delta_\varphi^{iz}$ for every $z\in \C$, one can deduce from the proof of \cite[Lemma VI.2.3]{Tak03} that $f_x(z)=\Delta_\varphi^{iz}x\Delta_\varphi^{-iz}$ for every $x\in M_\infty$.
\end{rem}

%%%%%%%%%%%%%%%%%%%%%%%%%%%%%%%%%%%%%%%%%%%%%%%%%%%%%%%%%%%%%%%%

\subsection{Implications of eigenoperators as generators}\label{Implications}

Observe that since $\sigma_{-i}^\varphi$ is a homomorphism, any product of eigenoperators is again an eigenoperator. In particular, any monomial in $\C\<G\>$ is an eigenvector of $\Delta_\varphi$ and so the modular operator $\Delta_\varphi$ can be written
	\begin{align*}
		\Delta_\varphi = \sum_{\lambda \in \R_+^\times} \lambda \pi_\lambda,
	\end{align*}
for pairwise orthogonal projections $\{\pi_\lambda\}_{\lambda\in \R_+^\times}$ on $L^2(M,\varphi)$; that is, $\varphi$ is almost periodic (\emph{cf.} \cite{Con72}). Moreover, $\sum_{\lambda\in \R_+^\times} \pi_\lambda$ converges strongly to the identity. For each $\lambda\in \R_+^\times$, let $E_\lambda$ denote $\pi_\lambda L^2(M,\varphi)$, the eigenspace of $\Delta_\varphi$ corresponding to the eigenvalue $\lambda$.

For each $\lambda\in \R_+^\times$, define a map $\mc{E}_\lambda$ on $\mc{B}(L^2(M,\varphi))$ by
	\begin{align*}
		\mc{E}_\lambda(T) = \sum_{\mu\in \R_+^\times} \pi_{\lambda\mu} T\pi_\mu.
	\end{align*}
Using the orthogonality of the projections $\pi_\lambda$, it is easy to see that $\mc{E}_\lambda(T)$ is bounded with $\|\mc{E}_\lambda(T)\|\leq \|T\|$ and that $\mc{E}_\lambda\circ\mc{E}_\lambda=\mc{E}_\lambda$. Moreover, since each $\pi_\lambda \in (M^\varphi)'\cap \mc{B}(L^2(M,\varphi))$, $\mc{E}_\lambda$ is left and right $M^\varphi$-linear.

\begin{lem}
Let $\lambda\in \R_+^\times$. If $p\in M$ is an eigenoperator of $\sigma^\varphi$ with eigenvalue $\mu\in\R_+^\times$, then $\mc{E}_\lambda(p)=\delta_{\lambda=\mu} p$. Furthermore, $\mc{E}_\lambda(M)\subset M$.
\end{lem}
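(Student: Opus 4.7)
The plan is to treat the two claims separately. For the first claim, since $p$ is an eigenoperator with eigenvalue $\mu$, the relation $\sigma_t^\vphi(p)=\mu^{it}p$ rearranges to the intertwining $\Delta_\vphi^{it}\,p = \mu^{it}\,p\,\Delta_\vphi^{it}$ on $L^2(M,\vphi)$. Applying this to $\xi\in E_\nu$ gives $\Delta_\vphi^{it}(p\xi) = (\mu\nu)^{it}(p\xi)$ for all $t\in\R$, which by the spectral theorem (e.g.\ via the Bohr mean $\pi_\kappa \zeta = \lim_{T\to\infty}\tfrac{1}{2T}\int_{-T}^T \kappa^{-it}\Delta_\vphi^{it}\zeta\,dt$) forces $p\xi \in E_{\mu\nu}$. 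Equivalently $\pi_\kappa\, p\,\pi_\nu = \delta_{\kappa=\mu\nu}\, p\,\pi_\nu$, so substituting into the definition of $\mc{E}_\lambda$ yields
\[
\mc{E}_\lambda(p) = \sum_\nu \pi_{\lambda\nu}\,p\,\pi_\nu = \delta_{\lambda=\mu}\sum_\nu p\,\pi_\nu = \delta_{\lambda=\mu}\,p,
\]
where the final step uses the strong convergence of $\sum_\nu \pi_\nu$ to the identity together with the boundedness of $p$.

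For the second claim, I would reduce to part one by approximation. Since $\sigma_t^\vphi$ is a $*$-homomorphism, products of eigenoperators are again eigenoperators, so every monomial in $G$ is an eigenoperator, and hence $\P=\C\<G\>$ is a $*$-subalgebra spanned by eigenoperators; by part one, $\mc{E}_\lambda$ maps $\P$ into $\P\subset M$. For general $x\in M$ with $\|x\|\leq 1$, Kaplansky density (applicable since $\P=\P^*$ is weak-operator dense in $M$) produces a net $(p_\alpha)\subset \P$ with $\|p_\alpha\|\leq 1$ converging to $x$ in the strong-$*$ topology. A direct computation shows that for $\xi\in E_\nu$ and $\eta\in E_\kappa$,
\[
\<\mc{E}_\lambda(T)\xi,\eta\> = \delta_{\kappa=\lambda\nu}\,\<T\xi,\eta\>,
\]
so WOT convergence $p_\alpha\to x$ transfers to $\mc{E}_\lambda(p_\alpha)\to \mc{E}_\lambda(x)$ on such matrix coefficients. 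Combined with the uniform bound $\|\mc{E}_\lambda(p_\alpha)\|\leq 1$ and the density of the algebraic direct sum $\sum_\nu E_\nu$ in $L^2(M,\vphi)$, this yields WOT convergence on all of $L^2(M,\vphi)$. Since each $\mc{E}_\lambda(p_\alpha)\in M$ and $M$ is WOT-closed, we conclude $\mc{E}_\lambda(x)\in M$.

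The lemma is fairly routine once one recognizes that the second claim is not a direct computation but an approximation argument. The one point requiring care is the unboundedness of $\Delta_\vphi$ in part one: rather than writing $\Delta_\vphi(p\xi) = \mu\nu\,(p\xi)$ directly, one must work with the bounded unitaries $\Delta_\vphi^{it}$ and invoke the spectral theorem to place $p\xi$ in the correct eigenspace. Otherwise, I do not anticipate any serious obstacle.
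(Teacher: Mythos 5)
Your proof is correct and follows essentially the same strategy as the paper: the first claim is the eigenspace computation $p\xi\in E_{\mu\nu}$ for $\xi\in E_\nu$ (the paper treats this as clear; you supply the Bohr-mean justification, which is a reasonable way to make it precise), and the second claim is established by Kaplansky density plus approximation by the polynomial algebra $\C\<G\>$, which $\mc{E}_\lambda$ preserves. The only technical difference is that the paper verifies convergence $\mc{E}_\lambda(x_n)\to\mc{E}_\lambda(x)$ directly in the $\sigma$-strong topology via the estimate $\sum_\mu\|\pi_{\lambda\mu}(x-x_n)\pi_\mu\xi\|_\vphi^2\le\sum_\mu\|(x-x_n)\pi_\mu\xi\|_\vphi^2$, whereas you check weak-operator convergence on matrix coefficients against the dense algebraic sum $\sum_\nu E_\nu$ and appeal to the uniform bound $\|\mc{E}_\lambda(\cdot)\|\le\|\cdot\|$; both routes close $M$ under $\mc{E}_\lambda$, so this is a minor variant rather than a different method.
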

\begin{proof}
If $\xi\in E_\nu$, then clearly $p\xi \in E_{\mu\nu}$ and therefore $\pi_{\lambda\nu} p\xi =\delta_{\lambda=\mu} p\xi$, establishing the first claim. Now, given $x\in M$, we can use Kaplansky's density theorem to find $x_n\in \C\<G\>$ converging strongly to $x$ and satisfying $\|x_n\|\leq \|x\|$; in particular, $x_n$ converges $\sigma$-strongly to $x$. We claim that $\mc{E}_\lambda(x_n)$ converges strongly to $\mc{E}_\lambda(x)$. Indeed, we fix $\xi\in L^2(M,\varphi)$ and compute
	\begin{align*}
		\| \mc{E}_\lambda (x - x_n) \xi\|_\varphi^2 = \sum_{\mu\in\R_+^\times} \| \pi_{\lambda\mu}(x- x_n)\pi_{\mu} \xi\|_\varphi^2 \leq \sum_{\mu\in \R_+^\times} \| (x - x_n)\pi_\mu \xi\|_\varphi^2.
	\end{align*}
This tends to zero by the $\sigma$-strong convergence of $x_n$ to $x$ since
	\begin{align*}
		\sum_{\mu\in\R_+^\times} \|\pi_\mu \xi\|_\varphi^2 = \|\xi\|_\varphi^2<\infty.
	\end{align*}
Since each $x_n$ is a sum of eigenoperators of $\sigma^\varphi$, $\mc{E}_\lambda(x_n)\in \C\<G\>$ by the above discussion. Hence $\mc{E}_\lambda(x)\in M$ as the strong limit of polynomials.
\end{proof}

From this lemma we can deduce that $\mc{E}_\lambda(M)$ consists of all eigenoperators in $M$ with eigenvalue $\lambda$ and that $\varphi\circ\mc{E}_\lambda= \delta_{\lambda=1}\varphi$ on $M$. We think of $\mc{E}_\lambda$ as a ``conditional expectation'' onto $\mc{E}_\lambda(M)$, but recognize that $\mc{E}_\lambda(M)$ is not an algebra for $\lambda\neq 1$. However, $\mc{E}_1=\mc{E}_\varphi$, which is the actual conditional expectation onto the centralizer $M^\varphi$.

\begin{cor}\label{cor:bicentralizer}
$(M^\vphi)'\cap M = (M^\vphi)'\cap M^\vphi$.
\end{cor}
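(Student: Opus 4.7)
The inclusion $\supseteq$ is immediate, so I focus on the other direction. The plan is to decompose any $x \in (M^\vphi)' \cap M$ via the spectral maps $\mc{E}_\lambda$ and reduce to showing that every nonzero eigenoperator with eigenvalue $\lambda \neq 1$ commuting with $M^\vphi$ must vanish.

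For the decomposition, since $\Delta_\vphi 1 = 1$ the cyclic vector lies in $E_1$, so $\mc{E}_\lambda(x) \cdot 1 = \sum_\mu \pi_{\lambda\mu} x \pi_\mu(1) = \pi_\lambda x$ as elements of $L^2(M,\vphi)$. As $\sum_\lambda \pi_\lambda$ converges strongly to the identity, $x = \sum_\lambda \mc{E}_\lambda(x)$ in $L^2(M,\vphi)$. The left and right $M^\vphi$-linearity of $\mc{E}_\lambda$ noted just above shows each $\mc{E}_\lambda(x) \in (M^\vphi)' \cap M$, and the previous lemma identifies it as an eigenoperator with eigenvalue $\lambda$. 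Thus it suffices to prove $\mc{E}_\lambda(x) = 0$ whenever $\lambda \neq 1$, for then $x = \mc{E}_\vphi(x) \in M^\vphi$.

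Suppose for contradiction $y \in (M^\vphi)' \cap M$ is a nonzero eigenoperator with eigenvalue $\lambda \neq 1$. Both $y^*y$ and $yy^*$ lie in $M^\vphi$ since $\sigma_t^\vphi(y^*y) = y^*y$ and $\sigma_t^\vphi(yy^*) = yy^*$. Because $y$ commutes with $M^\vphi$, a straightforward induction gives $(y^*)^n y^n = (y^*y)^n$ and $y^n(y^*)^n = (yy^*)^n$ for every $n \geq 1$. Since $y^n$ is entire with $\sigma_{-i}^\vphi(y^n) = \lambda^n y^n$, the KMS condition yields
\[
\vphi\bigl((yy^*)^n\bigr) = \vphi\bigl(y^n(y^*)^n\bigr) = \vphi\bigl((y^*)^n \sigma_{-i}^\vphi(y^n)\bigr) = \lambda^n \vphi\bigl((y^*y)^n\bigr).
\]
Since $\vphi|_{M^\vphi}$ is a faithful normal tracial state (traciality via KMS applied to any $\sigma^\vphi$-fixed element), the identity $\vphi(A^n)^{1/n} \to \|A\|$ holds for any nonzero positive $A \in M^\vphi$ (spectral theorem plus faithfulness, which forces the support of the spectral distribution to equal $\text{spectrum}(A)$). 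Applying this to $y^*y$ and $yy^*$ and taking $n$-th roots in the identity above yields $\|yy^*\| = \lambda \|y^*y\|$; the $C^*$-identity $\|yy^*\| = \|y^*y\| = \|y\|^2 > 0$ then forces $\lambda = 1$, a contradiction. The main point is simply spotting that the $C^*$-identity neutralizes the asymmetry introduced by the KMS scaling factor $\lambda^n$ in the $n \to \infty$ limit.
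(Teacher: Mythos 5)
Your proof is correct, and it uses a genuinely different argument from the paper's to kill the components $\mc{E}_\lambda(x)$, $\lambda\neq1$. Both proofs begin the same way, reducing via the left/right $M^\vphi$-linearity of $\mc{E}_\lambda$ to showing that a nonzero eigenoperator $y$ lying in $(M^\vphi)'\cap M$ must have eigenvalue $1$. The paper then passes to the polar decomposition $y=v|y|$, invokes Proposition~\ref{prop:eigenop_polar_decomp} to see that $v$ is an eigenoperator, iterates the conjugation $\theta(a)=v^*av$ to get $\vphi(|y|a)=\vphi(|y|\theta^k(a))$ for all $k$, and finally bounds $\|y\|_\vphi^2$ by $\lambda^{-k}\|y\|\|y\|_\vphi$, which tends to $0$; this also requires the preliminary reductions to self-adjoint $x$ and to $\lambda>1$. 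You avoid the polar decomposition entirely: the commutation $y,y^*\in(M^\vphi)'$ together with $y^*y,yy^*\in M^\vphi$ gives $(y^*)^ny^n=(y^*y)^n$ and $y^n(y^*)^n=(yy^*)^n$, the KMS condition turns this into $\vphi\bigl((yy^*)^n\bigr)=\lambda^n\vphi\bigl((y^*y)^n\bigr)$, and the fact that $\vphi(A^n)^{1/n}\to\|A\|$ for positive $A$ under a faithful normal state (since faithfulness forces the spectral distribution of $A$ to have support equal to its spectrum) reduces everything to the $C^*$-identity $\|yy^*\|=\|y^*y\|$. Your route is slicker, handles all $\lambda\neq1$ symmetrically without needing a reduction to self-adjoint $x$ or to $\lambda>1$, and does not depend on Proposition~\ref{prop:eigenop_polar_decomp}; the paper's argument is a more direct $\|\cdot\|_\vphi$-norm estimate that stays closer to the style used elsewhere in the paper. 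One small remark: the traciality of $\vphi|_{M^\vphi}$ you mention in passing is true but not actually needed for the $n$-th root limit; faithfulness of $\vphi$ on $W^*(y^*y)$ and $W^*(yy^*)$ is what does the work.
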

\begin{proof}
Let $x\in (M^\vphi)'\cap M$. Without loss of generality, we may assume $x$ is self-adjoint. Since
	\[
		\| x - \mc{E}_\vphi(x)\|_\vphi^2 = \sum_{\lambda\neq 1} \|\mc{E}_\lambda(x)\|_\vphi^2,
	\]
it suffices to show $\mc{E}_\lambda(x)=0$ for all $\lambda\neq 1$. Moreover, because $\mc{E}_\lambda(x)^*=\mc{E}_{\lambda^{-1}}(x)$, we need only consider $\lambda>1$. Fix such a $\lambda$, and note that $\mc{E}_\lambda(x)\in (M^\vphi)'\cap M$ since $\mc{E}_\lambda$ is left and right $M^\vphi$-linear. Let $\mc{E}_\lambda(x)=v|\mc{E}_\lambda(x)|$ be the polar decomposition, so that $v$ is an eigenoperator with eigenvalue $\lambda$ by Proposition \ref{prop:eigenop_polar_decomp}. Observe that for $a\in M^\vphi$,
	\[
		v^* a v|\mc{E}_\lambda(x)| = v^* a \mc{E}_\lambda(x) = v^* \mc{E}_\lambda(x) a  = |\mc{E}_\lambda(x)| a.
	\]
Thus, if we define $\theta\colon M^\vphi\to M^\vphi$ by $\theta(a):=v^*av$, then
	\[
		\vphi(|\mc{E}_\lambda(x)| a ) = \vphi( \theta(a) |\mc{E}_\lambda(x)|) = \vphi( |\mc{E}_\lambda(x)| \theta(a)) = \cdots =\vphi(  |\mc{E}_\lambda(x)| \theta^k(a))
	\]
for any $k\in \N$. Consequently,
	\begin{align*}
		\| \mc{E}_\lambda(z) \|_\vphi^2 &= \vphi( |\mc{E}_\lambda(x)| |\mc{E}_\lambda(x)|)= \vphi\left( |\mc{E}_\lambda(x)| \theta^k(|\mc{E}_\lambda(x)|)\right)\\
			&= \vphi\left( \sigma_i^\vphi(v^k) |\mc{E}_\lambda(x)| (v^*)^k |\mc{E}_\lambda(x)|\right)= \lambda^{-k} \vphi( v^k |\mc{E}_\lambda(x)| (v^*)^k |\mc{E}_\lambda(x)|)\\
			&= \lambda^{-k} \< v^k |\mc{E}_\lambda(x)| (v^*)^k, |\mc{E}_\lambda(x)|\>_\vphi\leq \lambda^{-k} \|v^k |\mc{E}_\lambda(x)| (v^*)^k\| \| \mc{E}_\lambda(x) \|_\vphi\\
			&\leq \lambda^{-k} \| \mc{E}_\lambda(x) \| \|\mc{E}_\lambda(x)\|_\vphi.
	\end{align*}
Letting $k\to \infty$ we see $\|\mc{E}_\lambda(z)\|_\vphi =0$. Thus $x=\mc{E}_\vphi(x)\in M^\vphi$.
\end{proof}

For a subset $I\subset \R_+^\times$, let
	\begin{align*}
		\pi_I:=\sum_{\lambda\in I} \pi_\lambda
	\end{align*}
and
	\begin{align*}
		E_I=\overline{\text{span}}\{E_\lambda\colon \lambda\in I\} = \pi_I L^2(M,\varphi).
	\end{align*}
We define
	\[
		M_0:=\{x\in M\colon x \in E_{[1/n,n]}\text{ for some }n\in\N\},
	\]
which is easily seen to be a $*$-algebra containing $M^\varphi$ and $\P$, and hence is strongly dense in $M$. Clearly $M_0\subset M_\infty$.

We will use the following lemma frequently to simplify the analysis of the modular operator.

\begin{lem}\label{P_is_core}
Let $\mc{D}\subset M$ be an algebra generated by eigenoperators, and assume $\mc{D}$ is dense in $L^2(M,\vphi)$. Then for every $z\in \C$, $\mc{D}$ is a core of $\Delta_\varphi^z$.
\end{lem}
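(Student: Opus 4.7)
The basic strategy is to show that $\mc{D}$ is dense in $\dom(\Delta_\vphi^z)$ with respect to the graph norm $\xi \mapsto (\|\xi\|_\vphi^2 + \|\Delta_\vphi^z \xi\|_\vphi^2)^{1/2}$. First I would record the key algebraic observation: because $\sigma_{-i}^\vphi$ is a homomorphism, any product of eigenoperators is again an eigenoperator, so $\mc{D}$ is the \emph{linear span} of eigenoperators of $\Delta_\vphi$. If $d \in \mc{D}$ decomposes as $d = \sum_{j} d_j$ with each $d_j$ an eigenoperator of eigenvalue $\lambda_j > 0$, then $d \in \dom(\Delta_\vphi^z)$ for every $z \in \C$ with $\Delta_\vphi^z d = \sum_j \lambda_j^z d_j$. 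Thus $\mc{D} \subset \dom(\Delta_\vphi^z)$.

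Next, because $\mc{D}$ is dense in $L^2(M,\vphi)$ and lies inside $\overline{\bigoplus_{\lambda \in \R_+^\times} E_\lambda}$, the operator $\Delta_\vphi$ in fact has pure point spectrum on our Hilbert space, so the notation $\pi_I = \sum_{\lambda \in I} \pi_\lambda$ from the preceding discussion makes sense as the full spectral projection of $\Delta_\vphi$ onto the interval $I \subset \R_+^\times$. The crucial compatibility is that for any $d \in \mc{D}$ decomposed as above, $\pi_I d = \sum_{\lambda_j \in I} d_j \in \mc{D}$, since a finite linear combination of eigenoperators with eigenvalues in $I$ remains in the algebra $\mc{D}$. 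Moreover, on the range of $\pi_{[1/n,n]}$ the operator $\Delta_\vphi^z$ is bounded with norm at most $n^{|\Re z|}$.

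Given any $\xi \in \dom(\Delta_\vphi^z)$, set $\xi_n := \pi_{[1/n,n]} \xi$. The functional calculus shows $\xi_n \to \xi$ in $\|\cdot\|_\vphi$ and $\Delta_\vphi^z \xi_n = \pi_{[1/n,n]} \Delta_\vphi^z \xi \to \Delta_\vphi^z \xi$ in $\|\cdot\|_\vphi$. For each fixed $n$, I use the density of $\mc{D}$ in $L^2(M,\vphi)$ to select $d_{n,k} \in \mc{D}$ with $\|d_{n,k} - \xi_n\|_\vphi \to 0$ as $k \to \infty$, and then replace $d_{n,k}$ by $\pi_{[1/n,n]} d_{n,k}$, which still lies in $\mc{D}$ by the previous paragraph and still converges to $\pi_{[1/n,n]} \xi_n = \xi_n$. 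Because $\Delta_\vphi^z$ is bounded on $E_{[1/n,n]}$ by $n^{|\Re z|}$, this $L^2$-convergence automatically upgrades to graph-norm convergence, i.e.\ $\Delta_\vphi^z \pi_{[1/n,n]} d_{n,k} \to \Delta_\vphi^z \xi_n$ as $k \to \infty$. A standard diagonal extraction then produces a single sequence in $\mc{D}$ converging to $\xi$ in the graph norm.

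I do not anticipate a serious obstacle here: all the analytic content is packaged into the spectral projections $\pi_{[1/n,n]}$, and the only nontrivial point is the algebraic remark that applying $\pi_I$ to an element of $\mc{D}$ keeps it inside $\mc{D}$ — which is immediate from $\mc{D}$ being the linear span of eigenoperators. The two-scale approximation (first truncate the spectrum, then approximate within a spectral band where $\Delta_\vphi^z$ is bounded) is the standard template for producing cores out of dense subspaces invariant under spectral projections.
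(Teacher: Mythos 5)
Your proof is correct, but it takes a genuinely different route from the paper's. You construct the core directly: truncate to the spectral band $E_{[1/n,n]}$ where $\Delta_\vphi^z$ is bounded by $n^{|\Re z|}$, approximate inside that band using the $L^2$-density of $\mc{D}$ together with the key fact that $\pi_I \mc{D}\subset\mc{D}$, and diagonalize. The paper instead argues indirectly via the graph: since $\Delta_\vphi^z$ is closed, it suffices to show that the orthogonal complement of $\mc{G}(\Delta_\vphi^z\mid_\mc{D})$ inside $\mc{G}(\Delta_\vphi^z)$ is trivial; for $\xi$ in that complement one computes
\[
0 = \<\xi, x\>_\vphi + \<\Delta_\vphi^z\xi, \Delta_\vphi^z x\>_\vphi = \<\xi, (1+\Delta_\vphi^{2\Re z})x\>_\vphi \qquad \forall x\in\mc{D},
\]
and then uses that $(1+\Delta_\vphi^{2\Re z})\mc{D}=\mc{D}$ (each eigenoperator is merely rescaled by the nonzero factor $1+\lambda^{2\Re z}$) together with density to conclude $\xi=0$. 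Both proofs hinge on the same structural fact — that $\mc{D}$ splits as a direct sum of its eigencomponents, each of which stays in $\mc{D}$ — but the paper exploits it through the algebraic surjectivity of $1+\Delta_\vphi^{2\Re z}$ on $\mc{D}$ while you exploit it through stability under the spectral truncations $\pi_{[1/n,n]}$. The paper's version is shorter; yours is more constructive and makes the approximating sequence explicit, which can be useful if one later wants quantitative control, but at the cost of a two-index limit and a diagonal extraction.
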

\begin{proof}
Since $\Delta_\varphi^z$ is closed, its graph $\mc{G}(\Delta_\varphi^z)$ has the following orthogonal decomposition:
	\[
		\mc{G}(\Delta_\varphi^z) = \overline{\mc{G}(\Delta_\varphi^z\mid_\mc{D})}\oplus \mc{G}(\Delta_\varphi^z\mid_\mc{D})^\perp.
	\]
Thus, it suffices to show $\mc{G}(\Delta_\varphi^z\mid_\mc{D})^\perp=0$. If $\xi\in \dom(\Delta_\varphi^z)$ satisfies
	\[
		\<\xi,x\>_\varphi + \<\Delta_\varphi^z \xi, \Delta_\varphi^z x\>_\varphi=0,\qquad \forall x\in \mc{D},
	\]
then $\<\xi, \left(1+\Delta_\varphi^{2\Re(z)}\right) x\>_\varphi =0$ for all $x\in \mc{D}$. Noting that $\left(1+\Delta_\varphi^{2\Re(z)}\right)\mc{D}=\mc{D}$ (since an eigenoperator with eigenvalue $\lambda$ is simply scaled by $1+\lambda^{2\Re(z)}$), the density of $\mc{D}$ in $L^2(M,\varphi)$ implies $\xi=0$.
\end{proof}

\begin{rem}\label{mod_op_at_one_quarter_convergence}
An easy consequence of this lemma when $z=\frac14$ is that $\overline{\mc{D}}^{\|\cdot\|_\varphi^\sharp}\subset \dom(\Delta_\varphi^{1/4})$. In fact,
	\[
		\|\Delta_\vphi^{1/4}  x\|_\vphi^2 =\<  x, \Delta_\vphi^{1/2}  x\>_\vphi \leq \| x\|_\vphi \| x\|
	\]
implies that $\Delta_\vphi^{1/4}  x_n \to \Delta_\vphi^{1/4}  x$ so long as $ x_n\to  x\in L^2(M,\vphi)$ and $\{x_n\}_{n\in\N}$ is uniformly bounded. 
\end{rem}

From this remark we can somewhat simplify our later analysis of Dirichlet forms. Recall that for $\xi\in L^2_{s.a.}(M,\vphi)$, $\xi\wedge 1$ is the projection of $\xi$ onto the closed, convex set
	\[
		C=\overline{\{\Delta_\varphi^{1/4}x \colon x\in M_{s.a.},\ x\leq 1\}}^{\|\cdot\|_\vphi}.
	\]

\begin{lem}\label{convex_proj_for_M}
For $x\in M^\vphi$ be self-adjoint, $[\Delta_\vphi^{1/4} x]\wedge 1= \Delta_\vphi^{1/4} f(x)$, where $f(t)=\min\{t,1\}$ for $t\in\R$.
\end{lem}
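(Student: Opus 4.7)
My plan is to use the variational characterization of the projection onto a closed convex subset of a Hilbert space: an element $\eta\in C$ equals the projection of $\xi$ onto $C$ if and only if $\text{Re}\<\xi-\eta,\zeta-\eta\>_\vphi\leq 0$ for every $\zeta\in C$. Since $x\in M^\vphi$ is self-adjoint, functional calculus places $f(x)$ in $M^\vphi$ with $f(x)^*=f(x)$ and $f(x)\leq 1$, so $\Delta_\vphi^{1/4}f(x)\in C$. By continuity of the inner product and the density of $\{\Delta_\vphi^{1/4}y:y\in M_{s.a.},\ y\leq 1\}$ in $C$, it suffices to verify
\[
	\text{Re}\<\Delta_\vphi^{1/4}(x-f(x)),\Delta_\vphi^{1/4}(y-f(x))\>_\vphi\leq 0
\]
for every $y\in M_{s.a.}$ with $y\leq 1$.

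I would then introduce $a:=x-f(x)=(x-1)_+$ and $c:=1-f(x)=(1-x)_+$; both are positive elements of $M^\vphi$, and the spectral calculus of $x$ yields the crucial orthogonality $ac=0$. Since $a,c\in M^\vphi$ are fixed vectors of the modular operator, one has $\Delta_\vphi^{1/4}a=a$ and $\Delta_\vphi^{1/4}c=c$ as vectors in $L^2(M,\vphi)$. Decomposing $y-f(x)=(y-1)+c$, the target inner product splits as
\[
	\text{Re}\<\Delta_\vphi^{1/4}a,\Delta_\vphi^{1/4}(y-1)\>_\vphi+\text{Re}\<a,c\>_\vphi,
\]
and the second summand vanishes because $\<a,c\>_\vphi=\vphi(ac)=0$.

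For the remaining term I would move one copy of $\Delta_\vphi^{1/4}$ across the inner product (using $y-1\in M\subset \dom(\Delta_\vphi^{1/2})$) and invoke $S_\vphi=J_\vphi\Delta_\vphi^{1/2}$ together with $(y-1)^*=y-1$ to write $\Delta_\vphi^{1/2}(y-1)=J_\vphi(y-1)$. The anti-unitarity of $J_\vphi$ together with $J_\vphi a=a$ (which holds since $a\in M^\vphi$ is self-adjoint) then collapses the expression to $\text{Re}\,\vphi(a(y-1))=\vphi(ay)-\vphi(a)$. Finally, the KMS condition applied to $a^{1/2}\in M^\vphi$ (with $\sigma_{-i}^\vphi(a^{1/2})=a^{1/2}$) gives $\vphi(ay)=\vphi(a^{1/2}ya^{1/2})$, and $y\leq 1$ yields $\vphi(a^{1/2}ya^{1/2})\leq \vphi(a)$, completing the inequality. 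The main obstacle is handling the factor $\Delta_\vphi^{1/4}$ acting on $y\notin M^\vphi$, for which no explicit description is available; the decisive maneuver is absorbing $\Delta_\vphi^{1/2}$ into $J_\vphi$ via $S_\vphi=J_\vphi\Delta_\vphi^{1/2}$, which transports the entire computation into $M^\vphi$, where $\vphi$ behaves tracially.
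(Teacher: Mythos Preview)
Your proof is correct and follows essentially the same route as the paper: both reduce to the variational characterization of the projection onto $C$, test against the dense subset $\{\Delta_\vphi^{1/4}y : y\in M_{s.a.},\ y\leq 1\}$, exploit that $x-f(x)$ and $1-f(x)$ lie in $M^\vphi$ and are orthogonal, and then verify the remaining inequality. The only difference is in this last step: the paper observes that $x-f(x)=\Delta_\vphi^{1/4}(x-f(x))$ and $\Delta_\vphi^{1/4}(1-y)$ both lie in the self-dual cone $L^2_+(M,\vphi)$ and so have non-negative inner product, whereas you unpack this by hand via $\Delta_\vphi^{1/2}(y-1)=J_\vphi(y-1)$ and the KMS identity $\vphi(ay)=\vphi(a^{1/2}ya^{1/2})$. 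Your computation is effectively a direct verification of the self-duality in this special case; the paper's version is a bit shorter but relies on knowing that property of the standard form, while yours is more self-contained.
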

\begin{proof}
Since $x\in M^\vphi$, we have $\Delta_\vphi^{1/4}x=x$ and $\Delta_\vphi^{1/4} f(x)=f(x)$ in $L^2_{s.a.}(M,\vphi)$. Note that $f(x)\in C$. By \cite[Theorem 5.2]{Bre11}, the projection $x\wedge 1$ is characterized by
	\begin{align}\label{proj_char}
		\<x - x\wedge 1, x\wedge 1-\xi\>\geq 0\qquad \forall \xi\in C.
	\end{align}
Note that by the functional calculus we have $x-f(x)\in L^2_+(M,\vphi)$ and that
	\[
		\<x-f(x), f(x)\>_{\vphi}= \<x-f(x), 1\>_\vphi.
	\]
So for $a\in M_{s.a.}$ with $a\leq 1$, we have $\Delta_\vphi^{1/4}(1-a)\in L_+^2(M,\vphi)$ and hence
	\[
		\<x-f(x), f(x) - \Delta_\vphi^{1/4}a\>_\vphi = \<x-f(x), 1- a\>_\vphi \geq 0.
	\]
Since such elements $a$ are dense in $C$, this proves that $f(x)$ satisfies (\ref{proj_char}).
\end{proof}

%%%%%%%%%%%%%%%%%%%%%%%%%%%%%%%%%%%%%%%%%%%%%%%%%%%%%%%%%%%%%%%%%%%%%%%%%%%%%
%              		   Non-tracial differential calculus   	       	 %
%%%%%%%%%%%%%%%%%%%%%%%%%%%%%%%%%%%%%%%%%%%%%%%%%%%%%%%%%%%%%%%%%%%%%%%%%%%%%

\section{Non-tracial Differential Calculus}\label{diff_calc}

In the tracial case, derivations on a von Neumann algebra have proven to be powerful tools in both Popa's deformation/rigidity theory and free probability. Provided the modular automorphism group interacts with the derivation in a nice way, one should expect similar results in the non-tracial case. Here we study, in particular, the notion of ``$\mu$-modularity,'' and give several examples of derivations exhibiting this behavior. We will also examine derivations previously considered in \cite{Nel15}, which will be used to prove an $L^2$-homology type result in Subsection \ref{technical_estimate_section}.

%%%%%%%%%%%%%%%%%%%%%%%%%%%%%%%%%%%%%%%%%%%%%%%%%%%%%%%%%%%%%%%%%%%%%%%%%%%%%

\subsection{Non-tracial derivatives, $\mu$-modularity, conjugate variables, and free Fisher information}\label{Definitions}

All of the derivations we consider will be valued in $M\otimes M^{op}$ or $L^2(M\bar\otimes M^{op}, \vphi\otimes\vphi^{op})$, so we begin with some conventions on these spaces. First, we shall usually denote the latter space simply by $L^2(M\bar\otimes M^{op})$, as the only GNS representation of $M\bar\otimes M^{op}$ we will consider is the one with respect to $\varphi\otimes\vphi^{op}$. For elements $x^\circ\in M^{op}$, we will also usually suppress the ``$\circ$'' notation, and use the notation $\#$ for the natural multiplication on this space:
	\begin{align*}
		(a\otimes b)\#(c\otimes d)= (ac)\otimes (db).
	\end{align*}
On $M\otimes M^{op}$ we consider three involutions:
	\begin{align*}
		(a\otimes b)^* &:=a^*\otimes b^*\\
		(a\otimes b)^\dagger &:= b^*\otimes a^*\\
		(a\otimes b)^\diamond &:= b\otimes a.
	\end{align*}
The first involution is used in the definition of $\<\ \cdot\,,\cdot\ \>_{\text{HS}}$:
	\begin{align*}
		\<a\otimes b,c\otimes d\>_{\text{HS}}:=\varphi\otimes \varphi^{op}( (a\otimes b)^*\# c\otimes d).
	\end{align*}
The second involution corresponds to the adjoint on $\text{HS}(L^2(M,\varphi))$ when we identify it with $L^2(M\bar{\otimes }M^{op})$ via the map
	\begin{align*}
		\Psi(a\otimes b):= aP_1 b,
	\end{align*}
where $P_1\in \B(L^2(M,\varphi))$ is the projection onto the cyclic vector $1\in L^2(M,\vphi)$. The third involution arises as the composition of the first two.

Note that
	\begin{align*}
		x\cdot (a\otimes b) &:= (xa)\otimes b\\
		(a\otimes b)\cdot y &:= a\otimes (by)
	\end{align*}
extend to operator-norm-bounded left and right actions of $M$ on $L^2(M\bar\otimes M^{op})$.

\begin{defi}
Let $B\subset M$ be a $*$-subalgebra. A \emph{derivation} is a $\C$-linear map $\delta\colon B\to M\otimes M^{op}$ satisfying the Leibniz rule:
	\[
		\delta(ab)=\delta(a)\cdot b + a\cdot \delta(b).
	\]
We call $B$ the \emph{domain} of $\delta$ and write $\dom(\delta):=B$. The \emph{conjugate derivation to $\delta$}, denoted by $\hat\delta$, is a derivation with $\dom(\hat\delta)=\dom(\delta)$ defined by $\hat\delta(x)=\delta(x^*)^\dagger$ for $x\in \dom(\delta)$.
\end{defi}

Given the required codomain, this definition is more restrictive than the usual definition of a derivation, but is sufficiently general for our present work. Note that if $\C\subset\dom(\delta)$, the Leibniz rule implies $\delta(z)=0$ for all $z\in \C$.

Most of the derivations we consider will, by virtue of the regularity conditions imposed on the generators of $M$, interact nicely with the modular automorphism group. We give this property the following name.

\begin{defi}
Suppose $B\subset M_\infty$, and that $\delta\colon B\to M_\infty\otimes M_\infty^{op}$ is a derivation. For $\mu>0$, we say that $\delta$ is \emph{$\mu$-modular} if it satisfies
	\begin{align}\label{deriv_mod_op_scaling}
		\delta\circ \sigma_z^\vphi(x) = \mu^{iz} (\sigma_z^\vphi\otimes\sigma_z^\vphi)\circ\delta(x)\qquad \forall z\in \C,\ x\in B.
	\end{align}
\end{defi}

\begin{rem}
Note that if $\delta$ is $\mu$-modular, then its conjugate derivation $\hat\delta$ is $\mu^{-1}$-modular.
\end{rem}

We will assume initially that $G$ is an algebraically free set: no non-trivial polynomial is zero. However, we will see that this assumption is in fact redundant in the context of our main results (\emph{cf}. Remark \ref{alg_free}). Note that the linear relation between $G$ and $\{x_1,\ldots, x_n\}$ implies that $x_1,\ldots, x_n$ are algebraically free as well. For each $y\in G$, we define
	\[
		\delta_y(y')=\delta_{y=y'}1\otimes 1,
	\]
then extend $\delta_y$ to $\P$ by the Leibniz rule and linearity. Then $\delta_y\colon \P\to\P\otimes\P^{op}$ is a derivation. If $\lambda_y>0$ is the eigenvalue of $y$, then using the formulas at the end of Proposition \ref{equivalent_forms_of_generators} it is easy to see that $\delta_y$ is $\lambda_y$-modular. Also, the conjugate derivation $\hat\delta_y$ is simply $\delta_{y^*}$.

When $\vphi$ is a trace (and consequently $y=y^*$ for each $y\in G$), the derivations $\delta_y$ are examples of Voiculescu's free difference quotients (\emph{cf.} \cite{VoiII}). When $\vphi$ is not a trace, we still consider them to be free difference quotients, but under the following slightly more general definition (the main difference being that the defining variable $a$ need not be self-adjoint).

\begin{defi}
Given a $*$-subalgebra $B\subset M$, let $a\in M$ be algebraically free from $B$ and denote by $B[a]$ the $*$-algebra generated by $B$ and $a$. If $a$ is not self-adjoint further assume $a^*$ is algebraically free from $a$. The \emph{free difference quotient with respect to $a$} is a derivation
	\begin{align*}
		\delta_a\colon B[a]\to B[a]\otimes B[a]^{op}
	\end{align*}
defined by
	\begin{align*}
		\delta_a(a)&=1\otimes 1,\\
		\delta_a(b)&=0 \ \ \ \forall b\in B,
	\end{align*}
and the Leibniz rule. If $a$ is not self-adjoint, we also set $\delta_a(a^*)=0$.
\end{defi}

When $a=a^*$ we have $\hat\delta_a=\delta_a$, and when $a$ is not self-adjoint we have $\hat\delta_a=\delta_{a^*}$, provided the latter derivation is well-defined. If $B\subset M_\infty$, then $\delta_a$ is $\mu$-modular if $a$ is an eigenoperator with eigenvalue $\mu$.

In the spirit of \cite[Definitions 3.1 and 6.1]{VoiV}, we make the following definitions.

\begin{defi}\label{conjugate_variables_definition}
For a derivation $\delta$, the \emph{conjugate variable to $\delta$ with respect to $\vphi$} is defined to be a vector $\xi\in L^2(W^*(\dom(\delta)),\vphi)$ such that
	\begin{align}\label{conjugate_variable_defining_property}
		\<\xi, x\>_\vphi = \<1\otimes 1, \delta(x)\>_{\vphi\otimes\vphi^{op}}\qquad \forall x\in \dom(\delta),
	\end{align}
provided such a vector exists. Note that the conjugate variable is uniquely determined by (\ref{conjugate_variable_defining_property}). Also, observe that $\delta^*(1\otimes 1)$ is the conjugate variable to $\delta$ provided $1\otimes 1\in\dom(\delta^*)$ when
	\[
		\delta\colon L^2(W^*(\dom(\delta)),\vphi)\to L^2(M\bar\otimes M^{op})
	\]
is viewed as a densely defined operator. For a free difference quotient $\delta_a$, $a\in M$, with $\dom(\delta_a)=B[a]$, we denote the conjugate variable by $J_\vphi(a\colon B)$.
\end{defi}

\begin{defi}
For $a\in M$ algebraically free from a subalgebra $B\subset M$, the \emph{free Fisher information for $a$ over $B$ with respect to $\vphi$} is defined as
	\begin{align*}
		\Phi_\vphi^*(a\colon B):= \|J_\vphi(a\colon B)\|_\vphi^2
	\end{align*}
when $J_\vphi(a\colon B)$ exists, and as $+\infty$ otherwise. For multiple elements $a_1,\ldots, a_n\in M$ that are algebraically free over $B$, let $B[a_k\colon k\neq j]$ denote the $*$-algebra generated by $B$ and the elements $\{a_k\}_{k\neq j}$. Then the \emph{free Fisher information for $a_1,\ldots, a_n$ over $B$} is defined as
	\begin{align*}
		\Phi_\vphi^*(a_1,\ldots, a_n\colon B):=\sum_{j=1}^n \Phi_\vphi^*(a_j\colon B[a_k\colon k\neq j])
	\end{align*}
when $J_\vphi(a_j\colon B[a_k\colon k\neq j])$ exists for each $j=1,\ldots, n$, and as $+\infty$ otherwise. When $a_1,\ldots, a_n$ are just algebraically free (over $\C$), then the \emph{free Fisher information for $a_1,\ldots, a_n$} is defined
	\begin{align*}
		\Phi_\vphi^*(a_1,\ldots, a_n):=\Phi_\vphi^*(a_1,\ldots, a_n\colon \C).
	\end{align*}
\end{defi}

\begin{rem}
Notions of conjugate variables and free Fisher information for non-tracial von Neumann algebras were previously considered by Shlyakhtenko in \cite{Shl03}. We will see in Section \ref{technical_estimate_section} that our current definitions are strongly related (\emph{cf.} Remark \ref{conjugate_variables_are_same_as_dima}).
\end{rem}

\begin{rem}\label{alg_free}
When $\vphi=\tau$ is a trace, \cite[Theorem 2.5]{MSW15} implies that finite free Fisher information for $a_1,\ldots, a_n$ (or equivalently the existence of the conjugate variables to their free difference quotients) guarantees that $a_1,\ldots, a_n$ are algebraically free. In fact, the proof of this theorem never invokes the trace condition on $\tau$ and therefore holds when $\vphi$ is merely a faithful normal state, as in our context. Thus we will not require in our results that $G$ be algebraically free, as this will be a consequence of $\Phi_\vphi^*(G)<\infty$.
\end{rem}

For each $y\in G$, we let $\xi_y$ denote the conjugate variable to $\delta_y$ with respect to $\vphi$, provided it exists. One immediate consequence of the existence of conjugate variables is that the derivation is closable, as we shall see in the following lemma. In fact, when we assume that the free Fisher information is finite, it is usually to make use of this property.

\begin{lem}\label{adjoint_formula}
Let $\delta\colon \dom(\delta)\to M_\infty\otimes M_\infty^{op}$ be a derivation with $\dom(\delta)\subset M_\infty$ dense in $L^2(M,\vphi)$. If $\eta\in \dom(\delta^*)$ when viewed as a densely defined map $\delta\colon L^2(M,\varphi)\to L^2(M\bar{\otimes} M^{op})$, then for $a,b\in \dom(\delta)$ we have $a\cdot \eta, \eta\cdot b\in \dom(\delta^*)$ with
	\begin{align*}
		\delta^*(a\cdot \eta)&=a\cdot \delta^*(\eta) - (\varphi\otimes\sigma_{-i}^\varphi)\left[\eta\#(\sigma_{-i}^\varphi\otimes \sigma_{i}^\varphi) \left(\hat\delta(a)^\diamond\right)\right]\\
		\delta^*(\eta\cdot b)&=\delta^*(\eta)\cdot \sigma_{-i}^\varphi(b) - (1\otimes\varphi)\left[\eta\#(\sigma_{-i}^\varphi\otimes \sigma_{i}^\varphi)\left(\hat\delta(b)^\diamond\right)\right].
	\end{align*}
In particular, if the conjugate variable $\xi$ to $\delta$ exists then $\dom(\delta)\otimes\dom(\delta)^{op}\subset \dom(\delta^*)$ with
	\begin{align*}
		\delta^*(a\otimes b)=a\cdot \xi\cdot \sigma_{-i}^\varphi(b) - m(1\otimes \varphi\otimes \sigma_{-i}^\varphi)( 1\otimes \hat\delta + \hat\delta\otimes 1)( a\otimes b)
	\end{align*}
(where $m(a\otimes b)=ab$), and $\delta$ is closable.
\end{lem}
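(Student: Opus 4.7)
The plan is to prove both formulas by the same strategy: exploit the Leibniz rule to rewrite the left/right action of $a$ (resp.\ $b$) so that after pairing with $\eta$ it produces a boundary term involving $\delta(a^*)$ (resp.\ $\delta(b^*)$), then use the KMS property to recast that boundary term as an inner product against $x$ in $L^2(M,\vphi)$. The closability statement is then a soft consequence of $\dom(\delta^*)$ being dense.

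Concretely, for the first formula I would fix $x\in\dom(\delta)$ and $\eta\in\dom(\delta^*)$, use the adjoint of the (bounded) left action to write
\[
\langle\delta(x),a\cdot\eta\rangle_{\mathrm{HS}}=\langle a^{*}\!\cdot\delta(x),\eta\rangle_{\mathrm{HS}},
\]
and then apply the Leibniz rule $\delta(a^{*}x)=\delta(a^{*})\cdot x+a^{*}\!\cdot\delta(x)$ to split this as $\langle\delta(a^{*}x),\eta\rangle-\langle\delta(a^{*})\cdot x,\eta\rangle$. The first summand equals $\langle a^{*}x,\delta^{*}(\eta)\rangle_{\vphi}=\langle x,a\cdot\delta^{*}(\eta)\rangle_{\vphi}$ by definition of $\delta^{*}$ and the $L^{2}(M,\vphi)$ inner product. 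For the second summand I would write $\delta(a^{*})=\sum_{i}p_{i}\otimes q_{i}\in M_{\infty}\otimes M_{\infty}^{op}$ (valid since $a^{*}\in M_{\infty}$), reduce to pure tensors $\eta=\eta_{1}\otimes\eta_{2}$, and compute
\[
\langle p\otimes qx,\eta_{1}\otimes\eta_{2}\rangle_{\mathrm{HS}}=\vphi(p^{*}\eta_{1})\,\vphi(\eta_{2}x^{*}q^{*}).
\]
Applying the KMS identity (legitimate because $\eta_{2}$ can be handled as an entire element after first passing to a dense set, or rewriting $\vphi(\eta_{2}x^{*}q^{*})=\vphi(x^{*}q^{*}\sigma_{-i}^{\vphi}(\eta_{2}))$ thought of as an integration-by-parts move with respect to the modular group) collapses this to $\langle x, q^{*}\sigma_{-i}^{\vphi}(\eta_{2})\vphi(p^{*}\eta_{1})\rangle_{\vphi}$. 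The final step is purely formal bookkeeping: one checks that the combinatorial expression $(\vphi\otimes\sigma_{-i}^{\vphi})\big[\eta\#(\sigma_{-i}^{\vphi}\otimes\sigma_{i}^{\vphi})(\hat\delta(a)^{\diamond})\big]$ unpacks, via the definitions of $\hat\delta$, the flip $\diamond$, and $\#$, to exactly $\vphi(p^{*}\eta_{1})\,q^{*}\sigma_{-i}^{\vphi}(\eta_{2})$ (one more KMS move $\vphi(\eta_{1}\sigma_{-i}^{\vphi}(p^{*}))=\vphi(p^{*}\eta_{1})$ is needed here). Identifying this with the functional $x\mapsto\langle x,\cdot\rangle_{\vphi}$ yields $a\cdot\eta\in\dom(\delta^{*})$ together with the claimed formula.

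The second formula is entirely parallel: start from $\langle\delta(x),\eta\cdot b\rangle_{\mathrm{HS}}=\langle\delta(x)\cdot b^{*},\eta\rangle_{\mathrm{HS}}$, apply Leibniz in the form $\delta(x)\cdot b^{*}=\delta(xb^{*})-x\cdot\delta(b^{*})$, and repeat the same KMS manipulation; the only new wrinkle is that pushing $b$ across $x^{*}\delta^{*}(\eta)$ via KMS produces the shift $\sigma_{-i}^{\vphi}(b)$ acting on the right, giving $\delta^{*}(\eta)\cdot\sigma_{-i}^{\vphi}(b)$. Once both formulas are in hand, the ``in particular'' statement follows by writing $a\otimes b=(a\cdot(1\otimes 1))\cdot b$, plugging $\delta^{*}(1\otimes 1)=\xi$ into the left-action formula to get $\delta^{*}(a\otimes 1)=a\xi-m(\vphi\otimes 1)(\hat\delta(a)^{\diamond})$ (the $\sigma_{\pm i}^{\vphi}$'s in the correction term collapse against $\vphi$), then applying the right-action formula with $\eta=a\otimes 1$ and collecting terms into the combined $(1\otimes\hat\delta+\hat\delta\otimes 1)$ expression. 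Closability is immediate: $\dom(\delta^{*})\supset\dom(\delta)\otimes\dom(\delta)^{op}$, which is dense in $L^{2}(M\bar\otimes M^{op})$ since $\dom(\delta)$ is dense in $L^{2}(M,\vphi)$.

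The main obstacle is analytic rather than conceptual: the formulas involve $\sigma_{-i}^{\vphi}$ nominally acting on an $L^{2}$ factor of $\eta$, which is only densely defined. My plan is to avoid that by performing all computations with the $\sigma$'s pushed onto the $M_{\infty}$-valued factors $\hat\delta(a)^{\diamond}$ via the KMS identity $\vphi(y\eta)=\vphi(\eta\sigma_{-i}^{\vphi}(y))$ for $y\in M_{\infty}$, so that every unbounded operator is only ever applied to an element of $M_{\infty}$. After that, the identification with the compact $(\vphi\otimes\sigma_{-i}^{\vphi})[\eta\#\cdots]$ form becomes a pure piece of tensorial bookkeeping using the definitions of $\dagger$, $\diamond$, and $\#$.
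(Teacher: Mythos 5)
Your approach is essentially the paper's: both pass the derivation through the adjoint of the bounded $\dom(\delta)$-actions on $L^2(M\bar\otimes M^{op})$, apply the Leibniz rule, invoke the definition of $\delta^*$, use KMS to recast the resulting boundary term, and deduce closability from $\dom(\delta)\otimes\dom(\delta)^{op}\subset\dom(\delta^*)$ being dense. The only difference is cosmetic: you work out the left-action formula $\delta^*(a\cdot\eta)$ in detail and bootstrap $a\otimes b=(a\otimes 1)\cdot b$, whereas the paper works out the right-action case $\delta^*(\eta\cdot b)$ and bootstraps $a\otimes b=a\cdot(1\otimes b)$; the paper's choice is slightly cleaner since its detailed case only ever applies the bounded map $(1\otimes\varphi)$ and the bounded right $M_\infty$-action to $\eta$, sidestepping the unbounded $(\varphi\otimes\sigma_{-i}^\varphi)$ acting on an $L^2$ factor of $\eta$ (an analytic subtlety you correctly flag, and which in any case evaporates in the ``in particular'' step where $\eta=1\otimes 1$).
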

\begin{proof}
Let $\eta\in \dom(\delta^*)$, $b\in \dom(\delta)$, and $x\in \dom(\delta)$. We compute
	\begin{align*}
		\<\eta\cdot b, \delta(x)\>_{\text{HS}}&= \<\eta, (1\otimes b^*)\# \delta(x)\>_{\text{HS}}\\
			&= \<\eta, \delta(xb^*) - x\cdot \delta(b^*)\>_{\text{HS}}\\
			&= \<\delta^*(\eta), xb^*\>_\varphi - \< \eta\# (\sigma_{-i}^\varphi\otimes\sigma_i^\varphi)(\delta(b^*)^*), x\otimes 1\>_{\text{HS}}\\
			&= \< \delta^*(\eta)\cdot \sigma_{-i}^\vphi(b) - (1\otimes\varphi)\left[\eta \# (\sigma_{-i}^\varphi\otimes \sigma_i^\varphi)(\hat\delta(b)^\diamond)\right],x\>_\varphi.
	\end{align*}
A similar computation yields the formula for $\delta^*(a\cdot \eta)$.

Now, if the conjugate variable $\xi$ to $\delta$ exists, recall that $\xi= \delta^*(1\otimes 1)$. So applying the previous two formulas we have for $a,b\in \dom(\delta)$:
	\begin{align*}
		\delta^*(a\otimes b) &= a\delta^*(1\otimes b) - (\sigma_{-i}^\varphi\otimes\varphi)\left[(\sigma_{i}^\varphi\otimes \sigma_{-i}^\varphi)\circ\hat\delta(a)\#b\otimes 1\right]\\
			&= a\left[ \xi\cdot \sigma_{-i}^\varphi(b) - (\varphi\otimes 1)\left[(\sigma_{i}^\varphi\otimes \sigma_{-i}^\varphi)\circ\hat\delta(b)\right]\right] - (1\otimes \varphi)\left[ \hat\delta(a)\# \sigma_{-i}^\varphi(b)\otimes 1\right]\\
			&= a\cdot \xi\cdot \sigma_{-i}^\varphi(b) - (\varphi\otimes \sigma_{-i}^\varphi)\left[a\otimes 1\# \hat\delta(b)\right] - (1\otimes \varphi)\left[ \hat\delta(a)\# \sigma_{-i}^\varphi(b)\otimes 1\right]\\
			&= a\cdot \xi\cdot \sigma_{-i}^\varphi(b) - m(1\otimes \varphi\otimes \sigma_{-i}^\varphi)( 1\otimes \hat\delta + \hat\delta\otimes 1)( a\otimes b).
	\end{align*}
Thus $\delta$ is closable since $\dom(\delta^*)$ contains the dense set $\dom(\delta)\otimes\dom(\delta)^{op}$.
\end{proof}

We conclude this subsection by noting that $\mu$-modularity forces the conjugate variable to a derivation to be well-behaved under the modular operator. In particular, this will apply to $\xi_y$, $y\in G$, when they exist.

\begin{lem}\label{conj_var_entire}
Let $\delta$ be a $\mu$-modular derivation for some $\mu>0$, with $\dom(\delta)$ a $*$-algebra generated by eigenoperators that is dense in $L^2(M,\vphi)$. If the conjugate variable $\xi$ to $\delta$ exists, then $\xi\in \dom(\Delta_\vphi^z)$ for all $z\in \C$ with
	\begin{align}\label{modular_operator_of_conjugate_variables_to_free_difference_quotient}
		\Delta_\vphi^z \xi = \mu^z \xi.
	\end{align}
Furthermore, $\xi\in \dom(S_\vphi)$ with
	\begin{align}\label{adjoint_of_conjugate_variables_to_free_difference_quotient}
		S_\vphi \xi = \mu \hat\xi,
	\end{align}
where $\hat\xi$ is the conjugate variable to $\hat\delta$, which exists if and only if $\xi$ does. In particular,
	\begin{align}\label{F_of_conjugate_variable_to_free_difference_quotient}
		\Delta_\vphi S_\vphi \xi = \hat\xi.
	\end{align}
\end{lem}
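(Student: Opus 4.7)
My plan is to first pin down the $\Delta_\vphi$-spectrum of $\xi$ using $\mu$-modularity, then compute $\hat\xi$ explicitly via $J_\vphi$ acting on eigenvectors, and finally assemble the two displayed identities from these ingredients.

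For the spectral claim, let $x \in \dom(\delta)$ be an eigenoperator of eigenvalue $\lambda$, so $\sigma_z^\vphi(x) = \lambda^{iz} x$ for every $z \in \C$. The $\mu$-modularity of $\delta$ then gives
\[
    (\sigma_z^\vphi \otimes \sigma_z^\vphi)(\delta(x)) = (\lambda/\mu)^{iz}\,\delta(x).
\]
Pair both sides with $1\otimes 1$ in $L^2(M\bar\otimes M^{op})$. The right-hand inner product equals $(\lambda/\mu)^{iz}\<\xi,x\>_\vphi$, while the left equals $\<1\otimes 1,\delta(x)\>_{\rm{HS}} = \<\xi,x\>_\vphi$ since $\vphi\circ\sigma_z^\vphi = \vphi$ on $M_\infty$ (by analytic continuation of the classical invariance on $\R$). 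Valid for all $z \in \C$, this forces $\<\xi,x\>_\vphi = 0$ whenever $\lambda \neq \mu$. Because $\dom(\delta)$ is the algebraic span of eigenoperators and each spectral projection $\pi_\lambda$ stabilizes this span, the density of $\dom(\delta)$ in $L^2(M,\vphi)$ implies $\dom(\delta) \cap E_\lambda$ is dense in $E_\lambda$ for every $\lambda$. Hence $\xi \perp E_\lambda$ for all $\lambda \neq \mu$, so $\xi \in E_\mu$, and (\ref{modular_operator_of_conjugate_variables_to_free_difference_quotient}) follows by the Borel functional calculus.

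For $\hat\xi$, unfolding the dagger involution yields
\[
    \<1\otimes 1, \hat\delta(x)\>_{\rm{HS}} = \<1\otimes 1, \delta(x^*)^\dagger\>_{\rm{HS}} = \overline{\<\xi, x^*\>_\vphi}.
\]
Writing $x = \sum_\lambda x_\lambda$ in its eigen-decomposition and using $x_\lambda^* \in E_{\lambda^{-1}}$ together with $\xi \in E_\mu$, only the component $x_{1/\mu}$ contributes. From $S_\vphi x_{1/\mu} = x_{1/\mu}^*$ and $\Delta_\vphi^{1/2} x_{1/\mu} = \mu^{-1/2} x_{1/\mu}$ one obtains $x_{1/\mu}^* = \mu^{-1/2}\, J_\vphi x_{1/\mu}$, and unwinding via the anti-linear isometry identity $\<\alpha, J_\vphi\beta\>_\vphi = \overline{\<J_\vphi\alpha,\beta\>_\vphi}$ gives
\[
    \<1\otimes 1, \hat\delta(x)\>_{\rm{HS}} = \mu^{-1/2}\<J_\vphi \xi, x\>_\vphi.
\]
This functional is bounded and represented by the vector $\mu^{-1/2} J_\vphi \xi \in L^2(W^*(\dom(\delta)),\vphi)$, so $\hat\xi$ exists and equals $\mu^{-1/2} J_\vphi \xi$. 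The converse implication (existence of $\xi$ from that of $\hat\xi$) follows by symmetry, applying the same argument to the $\mu^{-1}$-modular derivation $\hat\delta$ and using $\hat{\hat\delta} = \delta$.

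Since $\xi \in E_\mu \subset \dom(\Delta_\vphi^{1/2}) = \dom(S_\vphi)$,
\[
    S_\vphi \xi = J_\vphi \Delta_\vphi^{1/2}\xi = \mu^{1/2} J_\vphi \xi = \mu\,\hat\xi,
\]
giving (\ref{adjoint_of_conjugate_variables_to_free_difference_quotient}), and applying $\Delta_\vphi$ with $\hat\xi \in E_{1/\mu}$ yields $\Delta_\vphi S_\vphi \xi = \mu\cdot\mu^{-1}\hat\xi = \hat\xi$, proving (\ref{F_of_conjugate_variable_to_free_difference_quotient}). The most delicate point is the density claim in the first paragraph: the orthogonality $\<\xi,x\>_\vphi = 0$ for every eigenoperator of eigenvalue $\neq \mu$ forces $\xi \in E_\mu$ only because $\pi_\lambda$ preserves $\dom(\delta)$, so that $\dom(\delta) \cap E_\lambda$ is dense in $E_\lambda$.
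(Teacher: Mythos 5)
Your proof is correct, and its overall architecture mirrors the paper's: both arguments rest on the $\mu$-modularity of $\delta$, pairing with $1\otimes 1$, and the fact that $\dom(\delta)$ sits densely (as a sum of eigenspaces) inside $L^2(M,\vphi)$. The difference is one of organization: the paper invokes Lemma \ref{P_is_core} to treat $\dom(\delta)$ as a core for $\Delta_\vphi^{\bar z}$ and computes $\<\xi,\Delta_\vphi^{\bar z}x\>_\vphi$ directly, and then separately computes $\<S_\vphi\xi,x\>_\vphi$ to exhibit $\hat\xi=\mu^{-1}S_\vphi\xi$; you instead first localize $\xi$ to the eigenspace $E_\mu$ via an orthogonality argument (which amounts to re-proving the relevant density inline) and then read off everything, including the explicit formula $\hat\xi=\mu^{-1/2}J_\vphi\xi$, from the eigenvector structure and the interplay of $J_\vphi$, $\Delta_\vphi^{1/2}$, and $S_\vphi$. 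This reorganization buys a cleaner intermediate statement ($\xi\in E_\mu$) and a more transparent formula for $\hat\xi$, at the cost of handling the density/core fact by hand rather than citing the lemma. Both paths use the analytic continuation $\vphi\circ\sigma_z^\vphi=\vphi$ on $M_\infty$ and the involution $\hat\delta(\cdot)=\delta((\cdot)^*)^\dagger$ in essentially the same way.
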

\begin{proof}
For $x\in \dom(\delta)$ we compute
	\begin{align*}
		\<\xi, \Delta_\varphi^{\bar{z}}  x \>_\varphi &=\< 1\otimes 1, \delta\circ\sigma_{\bar{-iz}}^\vphi( x)\>_{\text{HS}}\\
			& = \< 1\otimes 1, \mu^{\bar{z}} (\sigma_{\bar{-iz}}^\vphi\otimes\sigma_{\bar{-iz}}^\vphi)\circ \delta(x)\>_{\text{HS}}\\
			& = \< \mu^z 1\otimes 1, \delta(x)\>_{\text{HS}}\\
			& = \< \mu^z \xi,  x\>_\varphi.
	\end{align*}
This computation suffices since $\dom(\delta)$ is a core of $\Delta_\varphi^{\bar{z}}$ by Lemma \ref{P_is_core}.
	
Since $\dom(S_\varphi)=\dom(\Delta_\varphi^{1/2})$, the previous argument implies $\xi\in \dom(S_\varphi)$. So we compute for $x\in\dom(\delta)$:
	\begin{align*}
		\<S_\varphi \xi,  x\>_\varphi &= \<  J_\varphi \Delta^{1/2}_\varphi  x, \Delta_\varphi\xi\>_\varphi\\
						&= \<x^*,\mu \xi\>_\varphi\\
						&=\mu \< \delta(x^*),1\otimes 1\>_{\text{HS}}\\
						&=\mu \varphi\otimes\varphi^{op}(\hat\delta(x)^\diamond)\\
						&=\mu \varphi\otimes\varphi^{op}(\hat\delta(x))\\
						&=\<\mu 1\otimes 1, \hat\delta( x)\>_{\text{HS}},
	\end{align*}
which shows $\hat\xi$ exists and equals $\mu^{-1}S_\varphi \xi$. Finally, (\ref{F_of_conjugate_variable_to_free_difference_quotient}) follows from combining (\ref{modular_operator_of_conjugate_variables_to_free_difference_quotient}) for $z=1$ and (\ref{adjoint_of_conjugate_variables_to_free_difference_quotient}).\end{proof}

%%%%%%%%%%%%%%%%%%%%%%%%%%%%%%%%%%%%%%%%%%%%%%%%%%%%%%%%%%%%%%%%%%%%%%%%%%%%%

\subsection{Quasi-free difference quotients}\label{Qausi-free}

Let $A$ be the matrix from Proposition \ref{equivalent_forms_of_generators}. For each $j=1,\ldots, n$ we define
	\[
		\partial_j(x_k)=\left[\frac{2}{1+A}\right]_{kj} 1\otimes 1,
	\] 
and then extend $\partial_j$ to $\P$ by the Leibniz rule and linearity. Then $\partial_j$ is a derivation with conjugate derivation $\hat\partial_j$ determined by
	\[
		\hat\partial_j(x_k)=\left[\frac{2}{1+A}\right]_{jk} 1\otimes 1.
	\]
Furthermore, it follows from $\left(\frac{2}{1+A}\right)^T = \frac{2}{1+A^{-1}}$ that for $p\in\P$
	\begin{align}\label{modularity_of_quasi-free_diff_quotients}
		(\sigma_i^\vphi\otimes\sigma_i^{\vphi})\circ \partial_j\circ \sigma_{-i}^\vphi( p) = \hat{\partial}_j(p).
	\end{align}
If we let $\delta_j$ denote the free difference quotient with respect to $x_j$, $j=1,\ldots, n$, then the $\{\partial_j\}_{j=1}^n$ and $\{\delta_j\}_{j=1}^n$ are linearly related as follows:
	\[
		\partial_j = \sum_{k=1}^n \left[\frac{2}{1+A}\right]_{kj} \delta_k \qquad{\text{and}}\qquad \hat\partial_j = \sum_{k=1}^n \left[\frac{2}{1+A}\right]_{jk} \delta_k.
	\]
Such derivations have been previously considered in \cite{Nel15}, and we formally define them here.

\begin{defi}
Suppose $a_1,\ldots, a_n\in M_\infty$ are self-adjoint and that there is an $n\times n$ matrix $A>0$ which determines the covariance and action of the modular operator:
	\begin{align*}
		\vphi(a_ka_j) &= \left[\frac{2}{1+A}\right]_{jk}\\
		\sigma_{-i}^\vphi(a_j) &= \sum_{k=1}^n [A]_{jk} a_k.
	\end{align*}
If $a_1,\ldots, a_n$ are algebraically free then the \emph{quasi-free difference quotients} are defined on $\C\<a_1,\ldots, a_n\>$ as
	\begin{align*}
		\partial_{a_j} := \sum_{k=1}^n \left[\frac{2}{1+A}\right]_{kj} \delta_{a_k}.
	\end{align*}
The conjugate variables to $\partial_{a_j}$ with respect to $\vphi$ are defined as in Definition \ref{conjugate_variables_definition} and denoted by $J_\vphi^A(a_j\colon \C[a_k\colon k\neq j])$, provided they exist.
\end{defi}

The conjugate variables to $\partial_1,\ldots, \partial_n$ will be denoted $\xi_1,\ldots, \xi_n$, respectively. All quasi-free difference quotients satisfy the corresponding version of (\ref{modularity_of_quasi-free_diff_quotients}). One consequence of this is the following lemma, analogous to part of Lemma \ref{conj_var_entire}.

\begin{lem}\label{conj_var_to_quasi-free_diff_quot_is_self-adjoint}
If $\xi\in L^2(M,\varphi)$ is the conjugate variable to a quasi-free difference quotient $\partial$, $\dom(\partial)=\P$, then $\xi\in \dom(S_\varphi)$ with $S_\varphi \xi= \xi$.
\end{lem}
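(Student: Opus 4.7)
The plan follows the blueprint of Lemma~\ref{conj_var_entire}'s proof, substituting the modular identity (\ref{modularity_of_quasi-free_diff_quotients}) for $\mu$-modularity. Via the Tomita duality $\<S_\vphi\xi, y\>_\vphi = \overline{\<\xi, F_\vphi y\>_\vphi}$, where $F_\vphi = J_\vphi\Delta_\vphi^{-1/2} = S_\vphi^*$, the statement $S_\vphi\xi = \xi$ reduces to checking $\<\xi, F_\vphi y\>_\vphi = \<y, \xi\>_\vphi$ on a core of $F_\vphi$; Lemma~\ref{P_is_core} applied to $\Delta_\vphi^{-1/2}$ identifies $\P$ as such a core, and boundedness of $J_\vphi$ propagates this to $F_\vphi$.

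Fix $y\in\P$. Since $\P\subset M_\infty$ is a $*$-algebra invariant under $\sigma_z^\vphi$ for every $z\in\C$, the analytic continuation of $S_\vphi x = x^*$ yields $F_\vphi y = \sigma_{-i}^\vphi(y^*)\in\P$ (for an eigenoperator $y$ with eigenvalue $\lambda$ this reads $F_\vphi y = \lambda^{-1}y^*$). Using the defining property of $\xi$, the rearrangement $\partial\circ\sigma_{-i}^\vphi = (\sigma_{-i}^\vphi\otimes\sigma_{-i}^\vphi)\circ\hat\partial$ of (\ref{modularity_of_quasi-free_diff_quotients}), and $\hat\partial(y^*) = \partial(y)^\dagger$, I would compute
\begin{align*}
	\<\xi, F_\vphi y\>_\vphi &= \<1\otimes 1, \partial(\sigma_{-i}^\vphi(y^*))\>_{\text{HS}}\\
	&= \<1\otimes 1, (\sigma_{-i}^\vphi\otimes\sigma_{-i}^\vphi)(\partial(y)^\dagger)\>_{\text{HS}}\\
	&= \vphi\otimes\vphi^{op}(\partial(y)^\dagger),
\end{align*}
where the last equality uses $\vphi\circ\sigma_{-i}^\vphi = \vphi$ on $M_\infty$. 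The elementary identity $\vphi\otimes\vphi^{op}(A^\dagger) = \overline{\vphi\otimes\vphi^{op}(A)}$ then delivers $\<\xi, F_\vphi y\>_\vphi = \overline{\<\xi, y\>_\vphi} = \<y, \xi\>_\vphi$, which is the required pairing identity.

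To conclude, the antilinear functional $y\mapsto \<\xi, F_\vphi y\>_\vphi$ is bounded by $\|\xi\|_\vphi\|y\|_\vphi$ on $\P$, and hence extends continuously to all of $\dom(F_\vphi)$, placing $\xi\in\dom(F_\vphi^*) = \dom(S_\vphi)$. The adjoint relation then reads $\<y, S_\vphi\xi\>_\vphi = \<y, \xi\>_\vphi$ for $y\in\P$, and density of $\P$ in $L^2(M,\vphi)$ forces $S_\vphi\xi = \xi$. The main subtlety I foresee, in contrast to the setting of Lemma~\ref{conj_var_entire}, is that $\xi$ need not be an eigenvector of $\Delta_\vphi$, so the modular behavior of $\xi$ has to be accessed indirectly through (\ref{modularity_of_quasi-free_diff_quotients}) combined with the $\sigma^\vphi$-invariance of $\vphi$, and some care is required to carry the sesquilinear/antilinear conventions through the Tomita adjoint argument at the end.
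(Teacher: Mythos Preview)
Your proof is correct and follows essentially the same route as the paper's: both compute the pairing of $\xi$ with $F_\vphi p = \sigma_{-i}^\vphi(p^*)$ on $\P$, invoke the conjugate variable relation together with the modularity identity (\ref{modularity_of_quasi-free_diff_quotients}) and the $\sigma^\vphi$-invariance of $\vphi\otimes\vphi^{op}$, and then conclude via Lemma~\ref{P_is_core} that $\P$ is a core for $F_\vphi=S_\vphi^*$. The only cosmetic difference is that the paper routes the final step through the $\diamond$-involution (writing $\vphi\otimes\vphi^{op}(\partial(p)^\diamond)=\vphi\otimes\vphi^{op}(\partial(p))$) whereas you use $\dagger$ and the identity $\vphi\otimes\vphi^{op}(A^\dagger)=\overline{\vphi\otimes\vphi^{op}(A)}$; these are equivalent rearrangements of the same computation.
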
	
\begin{proof}
Recall that $S_\varphi =J_\varphi \Delta_\varphi^{1/2}= \Delta_\varphi^{-1/2} J_\varphi$. We compute for $p\in \P$
	\begin{align*}
		\<\Delta_\varphi^{1/2}J_\varphi  p, \xi\>_\varphi &= \< \Delta_\varphi J_\varphi \Delta_\varphi^{1/2}  p, \xi\>_\varphi\\
			&= \<\sigma_{-i}^\vphi(p^*), \xi\>_\varphi\\
			&= \<\partial\circ\sigma_{-i}^\vphi(p^*), 1\otimes 1\>_{\text{HS}}\\
			&= \< (\sigma_{-i}^\vphi\otimes\sigma_{-i}^\vphi) \circ \hat{\partial}(p^*),1\otimes 1\>_{\text{HS}}\\
			&= \< \hat{\partial}(p^*),1\otimes 1\>_{\text{HS}}\\
			&= \varphi\otimes\varphi^{op}( \partial(p)^\diamond)\\
			&= \varphi\otimes\varphi^{op}( \partial(p))= \<\xi,  p\>_\varphi.
	\end{align*}
Now, the adjoint of $S_\varphi$, $J_\varphi \Delta_\varphi^{-1/2}$, has domain $\dom(\Delta_\varphi^{-1/2})$. Recall from Lemma \ref{P_is_core} that $\P$ is a core of $\Delta_\varphi^{-1/2}$ (and hence of $J_\varphi \Delta_\varphi^{-1/2}$). Therefore the above computation shows $\xi\in \dom(S_\varphi)$ with the claimed formula.
\end{proof}

\begin{rem}
Recall from Proposition \ref{equivalent_forms_of_generators} that there exists $Q\in GL(n,\C)$ such that
	\begin{align*}
		(c_1,c_1^*,\ldots, c_k,c_k^*,z_{2k+1},\ldots, z_n) = Q\cdot (x_1,\ldots, x_n).
	\end{align*}
It follows that the free difference quotients are related by $(Q^{-1})^T$:
	\begin{align*}
		(\delta_{c_1},\delta_{c_1^*},\ldots, \delta_{c_k},\delta_{c_k^*},\delta_{z_{2k+1}},\ldots, \delta_{z_n}) = (Q^{-1})^T\cdot (\delta_{x_1},\ldots, \delta_{x_n}).
	\end{align*}
Moreover, the conjugate variables $\{J_\varphi(x_j\colon \C[x_k\colon k\neq j])\}_{j=1}^n$ exist if and only if the conjugate variables $\{\xi_y\}_{y\in G}$ exist, and are related by $(Q^{-1})^*$:
	\begin{align*}
		(\xi_{c_1},\xi_{c_1^*},\ldots, \xi_{c_k},\xi_{c_k^*},\xi_{z_{2k+1}},\ldots, \xi_{z_n}) = (Q^{-1})^*\cdot \left(J_\varphi(x_1\colon \C[x_k\colon k\neq 1]),\ldots, J_\varphi(x_n\colon \C[x_k\colon k\neq n])\right).
	\end{align*}
Consequently $\Phi_\varphi^*(x_1,\ldots, x_n)$ is finite if and only if $\Phi_\varphi^*(G)$ is finite. Similarly, the linear relation between the quasi-free difference quotients $\partial_j$ and the free difference quotients $\delta_j$ implies $\{\xi_j\}_{j=1}^n$ exist if and only if $\{J_\varphi(x_j\colon \C[x_k\colon k\neq j])\}_{j=1}^n$ exist, and are related by
	\begin{align}\label{free_to_quasi-free_conjugate_variables}
		\xi_j = \sum_{k =1}^n \left[\frac{2}{1+A}\right]_{jk} J_\varphi(x_k\colon \C[x_\l\colon \l\neq k]).
	\end{align}
\end{rem}

%%%%%%%%%%%%%%%%%%%%%%%%%%%%%%%%%%%%%%%%%%%%%%%%%%%%%%%%%%%%%%%%%%%%%%%%%%%%%
%					The Leibniz Rule						 %
%%%%%%%%%%%%%%%%%%%%%%%%%%%%%%%%%%%%%%%%%%%%%%%%%%%%%%%%%%%%%%%%%%%%%%%%%%%%%

\section{Closable $\mu$-Modular Derivations}\label{Closable}

In order to gain insights into the von Nuemann algebra $M$ (as opposed to just the $*$-algebra $\P$) we must necessarily consider $\mu$-modular derivations $\delta$ which are closable, such as when the conjugate variable exists. By employing the theory of Dirichlet forms, we will see that the restriction of $\bar\delta$ to $M^\vphi$ satisfies the Leibniz rule. This result will allow us to establish a type of Kaplansky's density theorem (\emph{cf.} Theorem \ref{Kaplansky}), as well as some bounds for $\bar\delta$ (when the conjugate variable exists) that imply the domain of $\delta^*$ is in fact quite large.

%%%%%%%%%%%%%%%%%%%%%%%%%%%%%%%%%%%%%%%%%%%%%%%%%%%%%%%%%%%%%%%%%%%%%%%%%%%%%

\subsection{Some preliminary observations}\label{Prelim_Obs}

Any closable operator is assumed to have dense domain. In order to simplify the exposition, for any (unbounded) closed operator $T\colon \H_1\to \H_2$, we denote
	\[
		\|\xi\|_T = \sqrt{\|\xi\|_{\H_1}^2 + \|T\xi\|_{\H_2}^2} \qquad \xi\in \dom(T).
	\]
We first observe that $\mu$-modularity for a closable derivation has implications for how the closure restricts to the eigenspaces of $\Delta_\vphi$.

\begin{lem}\label{derivation_on_eigenspaces}
Let $\delta\colon L^2(M,\vphi)\to L^2(M\bar\otimes M^{op})$ be a closable $\mu$-modular derivation for some $\mu>0$, with $\dom(\delta)$ generated by eigenoperators of $\sigma^\vphi$. If we denote the closure by $\bar\delta$, then for $I,J\subset \R_+^\times$ disjoint subsets
	\begin{itemize}
	\item[(i)] $\pi_I\dom(\bar{\delta})\subset \dom(\bar{\delta})$;
	\item[(ii)] $\dom(\delta)\cap E_I$ is a core of $\bar{\delta}\mid_{\pi_I\dom(\bar{\delta})}$; and
	\item[(iii)] $\bar{\delta}(\pi_I\dom(\bar{\delta}))\perp \bar{\delta}(\pi_J\dom(\bar{\delta}))$.
	\end{itemize}
Furthermore, if the conjugate derivation $\hat\delta$ is closable with closure $\bar{\hat\delta}$, then $S_\vphi(\pi_I\dom(\bar{\delta}) ) \subset \dom(\bar{\hat\delta})$ with $\bar{\hat\delta}\circ S_\vphi\circ\pi_I(\cdot) = [\bar{\delta}\circ\pi_I(\cdot)]^\dagger$ for any subset $I\subset \R_+^\times$ bounded above.
\end{lem}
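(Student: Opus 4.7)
The plan is to exploit $\mu$-modularity to obtain an orthogonal decomposition of $\delta$ along the eigenspaces of $\Delta_\vphi$, and then transfer this orthogonality to the closure $\bar\delta$ by approximation.

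First I would check that $\vphi\otimes\vphi^{op}$ is invariant under $\sigma_t^\vphi\otimes\sigma_t^\vphi$, so that this automorphism group of $M\bar\otimes M^{op}$ extends to a strongly continuous unitary group $\{V_t\}_{t\in\R}$ on $L^2(M\bar\otimes M^{op})$. If $y\in \dom(\delta)$ is an eigenoperator of $\sigma^\vphi$ with eigenvalue $\lambda$, then $\mu$-modularity gives
\begin{align*}
V_t\,\delta(y) = \mu^{-it}\,\delta(\sigma_t^\vphi(y)) = (\lambda/\mu)^{it}\,\delta(y),
\end{align*}
so $\delta(y)$ lies in the $(\lambda/\mu)$-spectral subspace of $V$. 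Consequently $\delta(y_\lambda)\perp \delta(y_{\lambda'})$ in $L^2(M\bar\otimes M^{op})$ whenever $y_\lambda, y_{\lambda'}$ are eigenoperators in $\dom(\delta)$ with distinct eigenvalues.

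Since products of eigenoperators are eigenoperators, every $x\in \dom(\delta)$ is a finite linear combination of eigenoperators, which I regroup by eigenvalue as $x=\sum_\lambda x_\lambda$ with $x_\lambda\in \dom(\delta)\cap E_\lambda$. In particular $\pi_I x = \sum_{\lambda\in I} x_\lambda\in \dom(\delta)\cap E_I$ for every $I\subset \R_+^\times$, and the previous orthogonality yields $\|\delta(x)\|_{\text{HS}}^2 = \sum_\lambda \|\delta(x_\lambda)\|_{\text{HS}}^2$. Given $\xi\in \dom(\bar\delta)$ and $\{x_n\}\subset \dom(\delta)$ converging to $\xi$ in graph norm, I would apply this identity to $x_n-x_m$ to get
\begin{align*}
\|\delta(\pi_I x_n) - \delta(\pi_I x_m)\|_{\text{HS}}^2 = \sum_{\lambda\in I}\|\delta(x_{n,\lambda}-x_{m,\lambda})\|_{\text{HS}}^2 \leq \|\delta(x_n-x_m)\|_{\text{HS}}^2 \to 0,
\end{align*}
while $\pi_I x_n\to \pi_I\xi$ in $L^2$ by boundedness of $\pi_I$. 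Closability of $\delta$ then puts $\pi_I\xi\in \dom(\bar\delta)$ with $\bar\delta(\pi_I\xi) = \lim_n \delta(\pi_I x_n)$, proving (i); since each $\pi_I x_n\in \dom(\delta)\cap E_I$, this simultaneously proves (ii). Item (iii) for the closure then follows by approximating $\xi\in \pi_I\dom(\bar\delta)$ and $\eta\in \pi_J\dom(\bar\delta)$ from $\dom(\delta)\cap E_I$ and $\dom(\delta)\cap E_J$ (via (ii)) and passing to the limit in the orthogonality between their $\delta$-images.

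For the final claim, $I$ bounded above makes $\Delta_\vphi^{1/2}$ bounded on $E_I$, so $S_\vphi|_{E_I}=J_\vphi\Delta_\vphi^{1/2}|_{E_I}$ extends to a bounded operator. For $x\in \dom(\delta)\cap E_I$, the definition $\hat\delta(y):=\delta(y^*)^\dagger$ gives $\hat\delta(S_\vphi x) = \hat\delta(x^*) = \delta(x)^\dagger$. Picking a sequence $\{x_n\}\subset \dom(\delta)\cap E_I$ converging to $\xi\in \pi_I\dom(\bar\delta)$ in graph norm (as supplied by (ii)), boundedness of $S_\vphi|_{E_I}$ gives $S_\vphi x_n\to S_\vphi\xi$ in $L^2$, while $\hat\delta(S_\vphi x_n) = \delta(x_n)^\dagger\to \bar\delta(\xi)^\dagger$ since $\dagger$ is an isometry on $L^2(M\bar\otimes M^{op})$. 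Closability of $\hat\delta$ then puts $S_\vphi\xi\in \dom(\bar{\hat\delta})$ with $\bar{\hat\delta}(S_\vphi\xi)=\bar\delta(\xi)^\dagger$. The main obstacle is the orthogonality step: one must identify the unitary implementation of $\sigma_t^\vphi\otimes\sigma_t^\vphi$ on $L^2(M\bar\otimes M^{op})$ and extract from $\mu$-modularity that $\delta$ carries each $\dom(\delta)\cap E_\lambda$ into a single spectral subspace of $V$. Once that orthogonality is secured, the remaining claims reduce to routine approximation arguments using closability.
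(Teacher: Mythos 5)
Your proof is correct and follows essentially the same approach as the paper. The paper asserts the orthogonality $\<\delta(p),\delta(q)\>_{\text{HS}}=0$ for eigenoperators $p,q$ in $E_I,E_J$ directly from $\mu$-modularity; you make this explicit by constructing the unitary implementation $V_t$ of $\sigma_t^\vphi\otimes\sigma_t^\vphi$ on $L^2(M\bar\otimes M^{op})$ and identifying $\delta(\dom(\delta)\cap E_\lambda)$ with a single spectral subspace of $V$, while the remaining steps — decomposing the Cauchy sequence along $\pi_I$, passing to the closure, and the bounded-$S_\vphi$ argument for the final assertion — coincide with the paper's.
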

\begin{proof}
Since $\dom(\delta)$ is generated by eigenoperators, $\pi_I\dom(\delta)=\dom(\delta)\cap E_I$ for any $I\subset \R_+^\times$. The $\mu$-modularity implies that if $p\in \dom(\delta)\cap E_I$ and $q\in \dom(\delta)\cap E_J$ for disjoint sets $I$ and $J$, then $\<\delta(p),\delta(q)\>_{\text{HS}}=0$. Now, given $\xi\in \dom(\bar{\delta})$, let $\{p_n\}_{n\in\N}\subset \dom(\delta)$ approximate $\xi$ in the $\|\cdot\|_{\bar{\delta}}$-norm. Then 
	\begin{align*}
		\| \xi -  p_n \|_\varphi^2 = \| \pi_I(\xi -  p_n)\|_\varphi^2 + \| (1-\pi_I)(\xi - p_n)\|_\varphi^2
	\end{align*}
implies $\pi_I  p_n$ converges to $\pi_I \xi$ in $L^2(M,\varphi)$. Also, since $1-\pi_I=\pi_J$ for $J=R_+^\times\setminus I$, $\mu$-modularity implies
	\begin{align*}
		\| \delta( p_n -  p_m)\|_\varphi^2 = \| \delta(\pi_I( p_n -  p_m))\|_\varphi^2 + \|\delta((1-\pi_I)( p_n-  p_m))\|_\varphi^2.
	\end{align*}
So $\delta(\pi_I p_n)$ is a Cauchy sequence and must converges to some $\eta\in L^2(M\bar{\otimes}M^{op})$; that is, $\pi_I \xi\in \dom(\bar{\delta})$. This establishes (i), and our proof establishes (ii) and (iii).

Finally, let $I\subset \R_+^\times$ be bounded above by $\lambda>0$. Then clearly $E_I\subset\dom(\Delta_\varphi^{1/2})=\dom(S_\varphi)$. Given $\xi\in \pi_I\dom(\bar{\delta})$, let $\{ p_n\}_{n\in\N}\in \pi_I\dom(\delta)$ approximate $\xi$ in the $\|\cdot\|_{\bar{\delta}}$-norm. Since $\Delta_\vphi$ is bounded on $E_I$ (and hence so is $S_\vphi=J_\vphi \Delta_\vphi^{1/2}$), $S_\varphi  p_n$ converges to $S_\varphi \xi$. Since $\dagger$ is an isometry we have that $\hat{\delta}(p_n^*)= \delta( p_n)^\dagger$ converges to $\bar{\delta}(\xi)^\dagger$.
\end{proof}

From the above lemma, we obtain an immediate (albeit partial) extension of the Leibniz rule, which we shall use later to obtain a more robust result. Recall that $M_\vphi$ acts boundedly on $L^2(M,\varphi)$ as $S_\vphi M_\infty S_\vphi$.

\begin{lem}\label{extension_of_Leibniz_rule}
Let $\delta\colon L^2(M,\vphi)\to L^2(M\bar\otimes M^{op})$ be a closable $\mu$-modular derivation, for some $\mu>0$, with $\dom(\delta)$ generated by eigenoperators of $\sigma^\vphi$. Then $\bar\delta$ is defined on the products $\dom(\bar\delta)\cdot \dom(\delta)$ and $\dom(\delta)\cdot\left(\pi_I\dom(\bar\delta)\right)$, $I\subset \R_+^\times$ bounded above, and satisfies the Leibniz rule on these products. Moreover, $\bar\delta$ is defined on $\ker(\delta)\cdot \dom(\bar\delta)\cdot \ker(\delta)$ and satisfies the Leibniz rule on this product.
\end{lem}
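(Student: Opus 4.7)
The plan is to establish each claimed product by approximating $\xi\in \dom(\bar\delta)$ in the graph norm $\|\cdot\|_{\bar\delta}$ by sequences $p_n\in \dom(\delta)$, applying the algebraic Leibniz rule to $\delta(p_n \cdot \text{factor})$, and identifying the limits via closedness of $\bar\delta$. The bounded left and right actions of $M$ and of $M_\infty$ on $L^2(M,\vphi)$ and $L^2(M\bar\otimes M^{op})$ provide the tools for convergence. For the first product $\xi b$ with $b\in \dom(\delta)\subset M_\infty$, choose $p_n\to \xi$ in graph norm; then $p_nb\to \xi b$ in $L^2(M,\vphi)$ via the bounded right action of $b\in M_\infty$, and the two terms of $\delta(p_nb)=\delta(p_n)\cdot b + p_n\cdot \delta(b)$ converge (the first through the bounded right action of $b\in M$ on $L^2(M\bar\otimes M^{op})$, and the second by expanding $\delta(b)\in M_\infty\otimes M_\infty^{op}$ as a finite sum of elementary tensors and applying the bounded right action of $M_\infty$ on $L^2(M,\vphi)$ to each first tensor coordinate). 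Closedness then yields the first Leibniz rule.

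For the second product, approximate $\xi\in \pi_I\dom(\bar\delta)$ by $p_n\in \dom(\delta)\cap E_I$ using Lemma \ref{derivation_on_eigenspaces}(ii). The bounded left action of $a\in M$ yields $ap_n\to a\xi$ in $L^2(M,\vphi)$ and $a\cdot \delta(p_n)\to a\cdot \bar\delta(\xi)$ in $L^2(M\bar\otimes M^{op})$. The delicate term is $\delta(a)\cdot p_n$, since the right action of $M$ on $L^2(M,\vphi)$ does not in general extend to $L^2$. Writing $\delta(a)=\sum_i u_i\otimes v_i$ finitely in $M_\infty\otimes M_\infty^{op}$, the Hilbert--Schmidt norm of each summand $u_i\otimes v_ip_n$ is controlled by $\|u_i\|_\vphi\cdot \|f_{v_i}(-i/2)\|\cdot \|p_n^*\|_\vphi$, using the description from Remark \ref{right_action} of the right action of $v_i^*\in M_\infty$ on $L^2$. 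Since $I$ is bounded above, $S_\vphi$ is bounded on $E_I$, so $p_n^*\to S_\vphi\xi$ in $L^2$ and $\{\delta(a)\cdot p_n\}$ is Cauchy, converging to a limit $\delta(a)\cdot \xi$ depending only on $\xi$. Closedness then yields the second Leibniz rule; this is the main obstacle in the proof.

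For the third product, we reduce to the second claim via an eigenspace decomposition. The $\mu$-modularity of $\delta$ sends distinct $\Delta_\vphi$-eigenspaces in $\dom(\delta)$ into mutually orthogonal subspaces of $L^2(M\bar\otimes M^{op})$, so any $a\in \ker(\delta)$ decomposes as a finite sum $\sum_\lambda a_\lambda$ of eigenvector components with each $a_\lambda\in \ker(\delta)$. For fixed $a_\lambda$ and $\xi\in \dom(\bar\delta)$, the partial sums $\xi_N:=\sum_{\mu\in F_N}\pi_\mu \xi$ (finite $F_N\subset \R_+^\times$) converge to $\xi$ in graph norm by Lemma \ref{derivation_on_eigenspaces}(i)--(iii). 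Each $\xi_N\in \pi_{F_N}\dom(\bar\delta)$ with $F_N$ bounded, so the second claim yields $\bar\delta(a_\lambda \xi_N)=a_\lambda\cdot \bar\delta(\xi_N)$ (the $\delta(a_\lambda)$ term vanishes). Closedness then gives $a_\lambda \xi\in \dom(\bar\delta)$ with $\bar\delta(a_\lambda\xi)=a_\lambda\cdot \bar\delta(\xi)$; summing over $\lambda$ and then applying the first claim to right multiplication by $b\in \ker(\delta)$ completes the proof.
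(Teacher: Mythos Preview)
Your arguments for the first two products are essentially the paper's own: approximate in graph norm, expand $\delta(p)$ as a finite sum of elementary tensors in $M_\infty\otimes M_\infty^{op}$, and use the bounded right action of $M_\infty$ on $L^2(M,\vphi)$ (respectively the boundedness of $S_\vphi$ on $E_I$ for $I$ bounded above) to control the delicate term.

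For the third product your route is correct but more elaborate than necessary. You decompose $a\in\ker(\delta)$ into eigencomponents $a_\lambda$ (each in $\ker(\delta)$ by $\mu$-modularity, as you correctly argue), then approximate $\xi$ by finite eigenspace partial sums $\xi_N$ so as to invoke the already-proved second claim, and finally pass to the limit. The paper instead argues directly in one step: for $p,q\in\ker(\delta)\subset\dom(\delta)\subset M_\infty$ and $\xi\in\dom(\bar\delta)$ approximated in graph norm by $x_n\in\dom(\delta)$, one has $px_nq\in\dom(\delta)$ with $\delta(px_nq)=p\cdot\delta(x_n)\cdot q$ (Leibniz and $\delta(p)=\delta(q)=0$). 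Then $px_nq\to p\cdot\xi\cdot q$ in $L^2(M,\vphi)$ (bounded left action of $p\in M$ and bounded right action of $q\in M_\infty$), while $p\cdot\delta(x_n)\cdot q\to p\cdot\bar\delta(\xi)\cdot q$ in $L^2(M\bar\otimes M^{op})$ (the right action $(a\otimes b)\cdot q=a\otimes bq$ is bounded by $\|q\|$). Closedness finishes. Your detour through the second claim works, but the eigenspace decomposition and the passage through $\xi_N$ are not needed here precisely because the vanishing of $\delta(p)$ and $\delta(q)$ removes the only terms that would otherwise require control of $\|x_n^*\|_\vphi$.
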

\begin{proof}
For $\xi\in \dom(\bar{\delta})$, let $\{x_n\}_{n\in\N}\subset \dom(\delta)$ approximate $\xi$ in the $\|\cdot\|_{\bar{\delta}}$-norm. Then for $p\in \dom(\delta)$ we have
	\begin{align*}
		\| \xi\cdot p - x_n p\|_\varphi \leq \| \sigma_{i/2}^\varphi (p)\|\|\xi - x_n\|_\varphi \to 0.
	\end{align*}
Similarly, say $\delta(p)=\sum_j a_j\otimes b_j \in M_\infty\otimes M_\infty^{op}$ (a finite sum) then
	\begin{align*}
		\| (\xi-x_n)\cdot \delta(p)\|_{\text{HS}} \leq \sum_j \| [(\xi - x_n)\cdot a_j]\otimes b_j\|_{\text{HS}} \leq \sum_j \|\sigma_{i/2}^\varphi(a_j)\|\|\xi - x_n\|_\varphi \|b_j^*\|_\varphi\to 0.
	\end{align*}
Therefore it is clear that
	\begin{align*}
		\| \bar{\delta}(\xi)\cdot p + \xi\cdot \delta(p) - \delta(x_np)\|_{\text{HS}} =\| \bar{\delta}(\xi)\cdot p + \xi\cdot \delta(p) - \delta(x_n)\cdot p - x_n\cdot \delta(p)\|_{\text{HS}} \to 0,
	\end{align*}
so that $\xi\cdot p\in \dom(\bar{\delta})$ with $\bar{\delta}(\xi\cdot p)= \bar{\delta}(\xi)\cdot p + \xi\cdot \delta(p)$.

For $\xi\in \pi_I\dom(\bar{\delta})$, $I\subset R_+^\times$ bounded above, the proof of the final assertion in Lemma \ref{derivation_on_eigenspaces} shows that if $\{x_n\}_{n\in\N}\subset \dom(\delta)\cap E_I$ approximates $\xi$ in the $\|\cdot\|_{\bar{\delta}}$-norm, then in fact $\|\xi - x_n \|_\varphi^\#\to 0$. Thus if $p\in \dom(\delta)$ and $\delta(p)=\sum_j a_j\otimes b_j$, then
	\begin{align*}
		\| \delta(p)\cdot (\xi - x_n)\|_{\text{HS}} \leq \sum_j \|a_j\|_\varphi \| S_\varphi b_j\cdot(\xi - x_n)\|_\varphi \leq \sum_j \|a_j\|_\varphi \|\sigma_{-i/2}^\varphi(b_j)\| \| S_\varphi (\xi - x_n)\|_\varphi \to 0.
	\end{align*}
We of course also have
	\begin{align*}
		\| p\cdot \xi - px_n\|_\varphi\to 0,\qquad\text{and}\qquad\| p\cdot \bar\delta(\xi) - p\cdot \delta(x_n)\|_{\text{HS}}\to 0,
	\end{align*}
so that $p\cdot \xi\in \dom(\bar{\delta})$ with $\bar{\delta}(p\cdot \xi) = \delta(p)\cdot \xi + p\cdot \bar{\delta}(\xi)$.

Finally, for $p,q\in \ker(\delta)\subset \dom(\delta)$ and $\xi\in \dom(\bar{\delta})$ $\|\cdot\|_{\bar{\delta}}$-approximated by $\{x_n\}_{n\in\N} \subset \dom(\delta)$, we easily observe that $px_nq$ $\|\cdot\|_{\bar{\delta}}$-approximates $p\cdot \xi\cdot q$ since $\delta(px_nq)=p\cdot \delta(x_n)\cdot q$.
\end{proof}

We will also frequently make use of the following lemma, which is a variation of \cite[Lemma 7.2]{CS03}. We first establish some notation. Let $a\in M$ be self-adjoint with spectrum contained in a compact interval $I\subset \R$. We consider a representation $LR_a$ of $C(I)\otimes C(I)=C(I\times I)$ in $C^*(a)\otimes C^*(a)^{op}$ defined by
	\[
		LR_a(f\otimes g)=f(a)\otimes g(a)
	\]
for $f,g\in C(I)$. In particular, if $h\in C(I\times I)$ factors as $h(s,t)=f(s)g(t)$, then $LR_a(h)=f(a)\otimes g(a)$. For $f\in C^1(I)$, denote
	\[
		\tilde f(s,t):=\left\{\begin{array}{cl}	f'(t) & \text{if }t=s\\
						\frac{f(t)-f(s)}{t-s} & \text{otherwise}\end{array}\right. \in C(I\times I).
	\]

\begin{lem}\label{C^1_functional_calculus}
Let $\delta\colon L^2(M,\vphi)\to L^2(M\bar\otimes M^{op})$ be a closable derivation with closure $\bar\delta$ and $\dom(\delta)$ a unital $*$-algebra. If $a=a^*\in \dom(\delta)$ has spectrum contained in a compact interval $I\subset \R$, then for every $f\in C^1(I)$ we have $f(a)\in \dom(\bar{\delta})$ with $\bar\delta(f(a)) = LR_a(\tilde f)\# \delta(a)$. Moreover, if $g\in C(I)$ is Lipschitz with constant $C$, then $g(a)\in \dom(\bar\delta)$ with $\|\bar\delta (g(a))\|_{\text{HS}}\leq C \|\delta(a)\|_{\text{HS}}$.

\end{lem}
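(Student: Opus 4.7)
The plan is a three-step approximation argument: verify the identity on polynomials by the Leibniz rule, upgrade to $C^1(I)$ via polynomial approximation together with a uniform bound on the $\#$-action of $LR_a$, and finally obtain the Lipschitz estimate by mollification combined with weak compactness in $L^2(M\bar\otimes M^{op})$. For the polynomial case, induction on $n$ with the Leibniz rule yields $\delta(a^n)=\sum_{k=0}^{n-1} a^k\cdot\delta(a)\cdot a^{n-1-k}$, which one recognises as $LR_a(\tilde q_n)\#\delta(a)$ for $q_n(x)=x^n$, since $\tilde q_n(s,t)=\sum_{k=0}^{n-1} s^k t^{n-1-k}$. By $\C$-linearity the formula $\delta(p(a))=LR_a(\tilde p)\#\delta(a)$ then holds on every polynomial in $a$.

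The key analytic input for the extension to $C^1(I)$ is the uniform bound
\[
\|LR_a(h)\#\eta\|_{\text{HS}}\leq \|h\|_\infty\,\|\eta\|_{\text{HS}},\qquad h\in C(I\times I),\ \eta\in L^2(M\bar\otimes M^{op}).
\]
Since $C^*(a)\otimes C^*(a)^{op}$ is commutative it carries a unique $C^*$-norm, which agrees with the supremum norm under the identification with $C(I\times I)$; on the other hand, a direct estimate on simple tensors (using only that $\|c^*a^*ac\|\leq \|a\|^2\|c^*c\|$ and $\|dbb^*d^*\|\leq\|b\|^2\|dd^*\|$ inside $\vphi$ and $\vphi^{op}$ respectively) shows that $\#$-left multiplication extends to a $*$-representation of $M\otimes M^{op}$ on $L^2(M\bar\otimes M^{op})$ whose operator norm is dominated by the minimal $C^*$-tensor norm, and the bound follows. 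Granted this, for $f\in C^1(I)$ a standard Weierstrass argument applied to $f'$ followed by integration produces polynomials $p_n$ with $\|p_n-f\|_\infty+\|p_n'-f'\|_\infty\to 0$, and the mean value theorem forces $\|\tilde p_n-\tilde f\|_\infty\leq\|p_n'-f'\|_\infty\to 0$. Therefore $p_n(a)\to f(a)$ in operator norm while $\delta(p_n(a))=LR_a(\tilde p_n)\#\delta(a)\to LR_a(\tilde f)\#\delta(a)$ in $L^2(M\bar\otimes M^{op})$, and closedness of $\bar\delta$ places $f(a)\in\dom(\bar\delta)$ with the asserted formula.

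For the Lipschitz case, extend $g$ to $\R$ preserving the Lipschitz constant $C$, then convolve with a smooth compactly supported mollifier to obtain $g_n\in C^1(I)$ with $\|g_n-g\|_\infty\to 0$ and Lipschitz constants still bounded by $C$. Divided differences of a Lipschitz function are bounded by its Lipschitz constant, so $\|\tilde g_n\|_\infty\leq C$, and the $C^1$ case yields $\|\bar\delta(g_n(a))\|_{\text{HS}}\leq C\|\delta(a)\|_{\text{HS}}$. Since $g_n(a)\to g(a)$ in $L^2(M,\vphi)$ while $\bar\delta(g_n(a))$ is norm-bounded in $L^2(M\bar\otimes M^{op})$, extract a weakly convergent subsequence $\bar\delta(g_{n_k}(a))\rightharpoonup\eta$; the graph of $\bar\delta$ is convex and norm-closed, hence weakly closed by Mazur, which places $g(a)\in\dom(\bar\delta)$ with $\bar\delta(g(a))=\eta$, and lower semicontinuity of $\|\cdot\|_{\text{HS}}$ under weak limits delivers the claimed estimate.

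The main technical obstacle I anticipate is precisely the estimate $\|LR_a(h)\#\eta\|_{\text{HS}}\leq\|h\|_\infty\|\eta\|_{\text{HS}}$ in the non-tracial setting: one must carefully verify that the $\#$-left action of $M\otimes M^{op}$ on $L^2(M\bar\otimes M^{op},\vphi\otimes\vphi^{op})$ is controlled by the minimal tensor norm despite the asymmetry introduced by $\vphi$ acting on one factor and $\vphi^{op}$ on the other, so that the commutative $C^*$-identification of $C^*(a)\otimes C^*(a)^{op}$ with $C(I\times I)$ translates into the desired sup-norm control. Once that bound is in hand the rest is standard polynomial/mollification approximation together with weak closedness of the graph of $\bar\delta$.
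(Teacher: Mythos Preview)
Your proposal is correct and follows essentially the same route as the paper, which simply cites \cite[Lemma 7.2]{CS03} for the $C^1$ case and then says to approximate Lipschitz functions by $C^1$ functions. Your concern about the bound $\|LR_a(h)\#\eta\|_{\text{HS}}\leq\|h\|_\infty\|\eta\|_{\text{HS}}$ in the non-tracial setting is unwarranted: since the paper's $\#$-action is precisely the multiplication in $M\otimes M^{op}$ (with the $\circ$ suppressed), the action of $C^*(a)\otimes C^*(a)^{op}$ on $L^2(M\bar\otimes M^{op},\vphi\otimes\vphi^{op})$ is just left multiplication in the GNS representation, hence automatically bounded by the $C^*$-tensor norm, which for a commutative algebra is the sup norm.
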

\begin{proof}
The proof of the first part is identical to that in \cite[Lemma 7.2]{CS03}, but we note that $\delta(1)=0$ allows us to consider $f\in C^1(I)$ with $f(0)\neq 0$. For Lipschitz functions, approximate by functions in $C^1(I)$.
\end{proof}

%%%%%%%%%%%%%%%%%%%%%%%%%%%%%%%%%%%%%%%%%%%%%%%%%%%%%%%%%%%%%%%%%%%%%%%%%%%%%

\subsection{Dirichlet forms arising from non-tracial derivations}\label{Dirichlet}

Fix a $\mu$-modular derivation $\delta$ with $\dom(\delta)$ a $*$-algebra generated by eigenoperators of $\sigma^\vphi$. We consider the following quadratic form on $L^2(M^\vphi,\vphi)$:
	\[
		\mc{E}[\ \cdot\ ]:=\|\delta(\ \cdot\ )\|_{\text{HS}}^2 + \|\hat\delta(\ \cdot\ )\|_{\text{HS}}^2,
	\]
with $\dom(\mc{E})=M^\vphi\cap \dom(\delta)$. If $\delta=\hat\delta$, $\mc{E}$ is defined by the above formula divided by a factor of two. While $(M^\vphi, \vphi)$ is a tracial von Neumann algebra, the results of this section are not entirely subsumed by the tracial case since $\delta$ may be valued outside of $M^\vphi\otimes (M^\vphi)^{op}$.

We claim that $\mc{E}$ is $J_\vphi$-real. Indeed, recall that $J_\vphi\mid_{M^\vphi}=S_\vphi\mid_{M^\vphi}$ and that $\dagger$ is an isometry for $\|\cdot\|_{\text{HS}}$. So, for $p\in \dom(\mathscr{E})$ we have
	\begin{align*}
		\mathscr{E}[J_\varphi p] &= \|\delta(p^*)\|_{\text{HS}}^2 + \|\hat\delta(p^*)\|_{\text{HS}}^2\\
			&= \| \hat\delta( p)^\dagger\|_{\text{HS}}^2 + \|\delta( p)^\dagger\|_{\text{HS}}^2 = \mathscr{E}[p].
	\end{align*}

We also note that if the conjugate variables to either $\delta$ or $\hat\delta$ exist (and hence both exist by Lemma \ref{conj_var_entire}), then clearly $\mathscr{E}$ is closable, say with closure $\bar{\mathscr{E}}$. One has that $\xi \in \dom(\bar{\mathscr{E}})$ if and only if $\xi\in L^2(M^\vphi,\vphi)\cap \dom(\bar{\delta})\cap \dom(\bar{\hat\delta})$ and there exists a sequence $(p_n)_{n\in\N}\subset M^\vphi\cap \dom(\delta)$ such that $p_n\to \xi$ in $L^2(M^\vphi,\varphi)$ and simultaneously $\delta(p_n)\to \bar\delta(\xi)$ and $\hat\delta(p_n)\to \bar{\hat\delta}(\xi)$ in $L^2(M\bar\otimes M^{op})$.

\begin{prop}\label{completely_Dirichlet}
Let $\delta\colon \dom(\delta)\to M_\infty\otimes M_\infty^{op}$ be a $\mu$-modular derivation for some $\mu>0$, with $\dom(\delta)$ generated by eigenoperators of $\sigma^\vphi$. Assume the conjugate variable to $\delta$ exists so that $\mathscr{E}$ is closable with closure $\bar{\mathscr{E}}$. Then $\bar{\mathscr{E}}$ is a completely Dirichlet form on $L^2(M^\vphi,\vphi)$.
\end{prop}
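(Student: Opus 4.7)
The plan is to verify the Markovian condition at each matrix level by combining Lemma \ref{C^1_functional_calculus} with the explicit description of the cone projection from Lemma \ref{convex_proj_for_M}. The crucial observation is that $\bar{\mathscr{E}}$ lives on $L^2(M^\vphi,\vphi)$, which is the $L^2$-space of the \emph{tracial} algebra $M^\vphi$, so functional-calculus tools apply cleanly inside the centralizer. Moreover, the function $f(t):=\min\{t,1\}$ is Lipschitz with constant $1$, and the conjugate derivation $\hat\delta$ is $\mu^{-1}$-modular and also admits a conjugate variable by Lemma \ref{conj_var_entire}, so it too is closable.

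First I would handle the bounded case. For self-adjoint $x\in\dom(\mathscr{E})=\dom(\delta)\cap M^\vphi$, Lemma \ref{convex_proj_for_M} identifies $x\wedge 1$ with $f(x)\in M^\vphi$. Applying Lemma \ref{C^1_functional_calculus} separately to $\delta$ and to $\hat\delta$ places $f(x)\in\dom(\bar\delta)\cap\dom(\bar{\hat\delta})$ with the joint contraction estimate
\[
\|\bar\delta(f(x))\|_{\text{HS}}^2+\|\bar{\hat\delta}(f(x))\|_{\text{HS}}^2\leq\|\delta(x)\|_{\text{HS}}^2+\|\hat\delta(x)\|_{\text{HS}}^2=\mathscr{E}[x].
\]
The key point is that the $C^1$-approximants of $f$ from the proof of that lemma yield polynomial approximants of $f(x)$ in $x$, and because $x$ lies simultaneously in the algebra $\dom(\delta)$ and the von Neumann subalgebra $M^\vphi$, all such polynomials belong to $\dom(\mathscr{E})$ and approximate $f(x)$ simultaneously in $\|\cdot\|_{\bar\delta}$ and $\|\cdot\|_{\bar{\hat\delta}}$. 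Hence $f(x)\in\dom(\bar{\mathscr{E}})$ with $\bar{\mathscr{E}}[f(x)]\leq\mathscr{E}[x]$. To extend to an arbitrary self-adjoint $\xi\in\dom(\bar{\mathscr{E}})$, I would use $J_\vphi$-reality of $\mathscr{E}$ to extract a self-adjoint approximating sequence $x_n\in\dom(\mathscr{E})$; the operator-Lipschitz property for self-adjoint operators in the tracial algebra $M^\vphi$ gives $\|f(x_n)-f(\xi)\|_\vphi\leq\|x_n-\xi\|_\vphi\to 0$, and lower semicontinuity of the closed form $\bar{\mathscr{E}}$ together with the bounded case yields $\xi\wedge 1=f(\xi)\in\dom(\bar{\mathscr{E}})$ with $\bar{\mathscr{E}}[\xi\wedge 1]\leq\bar{\mathscr{E}}[\xi]$.

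For the complete Markovian condition I would amplify to $M_n(M)$ under the state $\tfrac{1}{n}(\vphi\otimes\text{Tr})$: the centralizer is $M_n(M^\vphi)$, the modular automorphism group acts entrywise, and the matricial derivation $\delta^{(n)}([x_{ij}]):=[\delta(x_{ij})]$ is again $\mu$-modular with $\dom(\delta^{(n)})$ generated by eigenoperators. Since $\mathscr{E}^{(n)}[X]=\|\delta^{(n)}(X)\|_{\text{HS}}^2+\|\hat\delta^{(n)}(X)\|_{\text{HS}}^2$, the previous two steps apply verbatim to $\delta^{(n)}$ and $\mathscr{E}^{(n)}$, yielding the matricial inequality $\bar{\mathscr{E}^{(n)}}[X\wedge 1]\leq\bar{\mathscr{E}^{(n)}}[X]$ and hence the completely Dirichlet property. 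The main subtlety I anticipate is ensuring that the polynomial approximants delivered by Lemma \ref{C^1_functional_calculus} really land in $\dom(\mathscr{E})$ rather than merely in $\dom(\bar\delta)$; this is precisely where the hypothesis $x\in M^\vphi$ is essential, because it forces every polynomial in $x$ to remain inside the centralizer.
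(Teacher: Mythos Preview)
Your proposal is correct and follows essentially the same route as the paper: identify $\xi\wedge 1$ with $f(\xi)$ via Lemma~\ref{convex_proj_for_M}, handle the algebraic case using the Lipschitz bound from Lemma~\ref{C^1_functional_calculus} applied to both $\delta$ and $\hat\delta$, pass to the closure via lower semicontinuity, and amplify to matrices. The only cosmetic difference is that the paper obtains the convergence $p_n\wedge 1\to\eta\wedge 1$ from the continuity of the Hilbert-space projection onto a closed convex set, whereas you invoke the operator-Lipschitz inequality in the tracial algebra $M^\vphi$; both arguments are valid and lead to the same conclusion.
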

\begin{proof}
Let $\eta=J_\varphi \eta\in \dom(\bar{\mathscr{E}})$, and let $(p_n)_{n\in \N}\subset \dom(\mathscr{E})$ be a sequence converging to $\eta$ with respect to $\|\cdot\|_{\bar{\mathscr{E}}}$. By replacing $p_n$ with $p_n + J_\varphi p_n \in \dom(\mathscr{E})$, we may assume each $p_n$ is $J_\varphi$-real. Since $J_\vphi\mid_{M^\vphi}=S\mid_{M^\vphi}$, each $p_n$ is self-adjoint, and so by Lemma \ref{convex_proj_for_M} $p_n\wedge 1 = f(p_n)$ where $f(t)=\min\{t,1\}$. We first claim $p_n\wedge 1\in \dom(\bar{\mathscr{E}})$ with
	\[
		\bar{\mathscr{E}}[p_n\wedge 1] \leq \mathscr{E}[p_n].
	\]
Indeed, let $I\subset \R$ be an interval containing $1$ and the spectrum of $p_n$. Then there exists a sequence $\{g_k\}_{k\in \N}$ of polynomials with real coefficients such that $g_k$ approximate $f$ uniformly on $I$, and $g_k'$ are uniformly bounded by say $1+\frac{1}{k}$ on $I$. Then $\{g_k(p_n)\}_{k\in \N}$ is a sequence of self-adjoint operators in $\dom(\mathscr{E})$ that by Lemma \ref{C^1_functional_calculus} satisfy
	\[
		\mathscr{E}[ g_k(p_n)] \leq \left(1+\frac{1}{k}\right)^2\mathscr{E}[p_n].
	\]
Moreover, $g_k(p_n)\to f(p_n)$ in $L^2(M^\vphi,\varphi)$. Extending $\bar{\mathscr{E}}$ to all of $L^2(M^\vphi,\varphi)$ by letting $\bar{\mathscr{E}}\equiv +\infty$ outside of $\dom(\bar{\mathscr{E}})$, we have
	\[
		\bar{\mathscr{E}}[p_n\wedge 1] = \bar{\mathscr{E}}[f(p_n)] \leq \liminf_{k\to\infty} \mathscr{E}[g_k(p_n)] \leq \mathscr{E}[p_n]
	\]
from the lower semicontinuity guaranteed by $\bar{\mathscr{E}}$ being closed. In particular, $\bar{\mathscr{E}}[p_n\wedge 1]<+\infty$ so that $p_n\wedge 1\in \dom(\bar{\mathscr{E}})$.

Now, since $(\cdot)\wedge 1$ is a projection onto a closed convex set, we know $(p_n\wedge 1)_{n\in \N}$ converges to $\eta\wedge 1$ with respect to $\|\cdot\|_\varphi$. Thus, using lower semicontinuity again we have
	\[
		\bar{\mathscr{E}}[\eta\wedge 1] \leq \liminf_{n\to\infty} \bar{\mathscr{E}}[p_n\wedge 1] \leq \liminf_{n\to\infty} \mathscr{E}[p_n]=\bar{\mathscr{E}}[\eta].
	\]
Thus, $\bar{\mathscr{E}}$ is Markovian and hence Dirichlet.

Given $n\in\N$, we note that the canonical extension $\bar{\mathscr{E}}^{(n)}$ is defined for $T\in M_n(\dom(\bar{\mathscr{E}}))$ by
	\begin{align*}
		\bar{\mathscr{E}}^{(n)}\left[ T\right] &= \left\| \left(\bar\delta\otimes I_n\right)\left(T\right)\right\|_{\frac{1}{n}(\varphi\otimes\varphi^{op})\circ\Tr}^2 + \left\| \left( \bar{\hat\delta}\otimes I_n\right)\left(T\right)\right\|_{\frac{1}{n}(\varphi\otimes\varphi^{op})\circ\Tr}^2.
	\end{align*}
If $\xi$ is the conjugate variable to $\delta$, let $\hat\xi$ denote the conjugate variable to $\hat\delta$, which exists by Lemma \ref{conj_var_entire}. It is easy to see that $\delta\otimes I_n$ and $\hat\delta\otimes I_n$ are derivations on $M_n(\dom(\mathscr{E}))$ with conjugate variables $\xi\otimes I_n$ and $\hat\xi\otimes I_n$, respectively. Consequently, by the same argument preceding the proposition, we see that $\bar{\mathscr{E}}^{(n)}$ is closed. The argument showing that $\bar{\mathscr{E}}$ is a Dirichlet form relied only on the functional calculus, hence we repeat the argument to see that $\bar{\mathscr{E}}^{(n)}$ is also a Dirichlet form. Thus $\bar{\mathscr{E}}$ is completely Dirichlet.
\end{proof}

Using the proof of \cite[Proposition 4.7]{Cip08}, we obtain the following as an immediate corollary.

\begin{cor}\label{Dirichlet_alg}
Let $\delta\colon \dom(\delta)\to M_\infty\otimes M_\infty^{op}$ be a $\mu$-modular derivation for some $\mu>0$, with $\dom(\delta)$ generated by eigenoperators of $\sigma^\vphi$. Assume the conjugate variable to $\delta$ exists so that $\mathscr{E}$ is closable with closure $\bar{\mathscr{E}}$. Then the set $M^\vphi\cap \dom(\bar{\mathscr{E}})$ is a $*$-algebra.
\end{cor}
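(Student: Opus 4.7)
The strategy is to invoke the standard tracial result \cite[Proposition 4.7]{Cip08}, which asserts that the bounded part of the domain of a completely Dirichlet form on a tracial von Neumann algebra is a $*$-subalgebra. In our setting the tracial pair is $(M^\vphi, \vphi|_{M^\vphi})$ and, by Proposition \ref{completely_Dirichlet} above, $\bar{\mathscr{E}}$ is a completely Dirichlet form on $L^2(M^\vphi, \vphi)$, so the corollary follows at once. Below I outline how the argument specializes to our context.

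Closure of $M^\vphi \cap \dom(\bar{\mathscr{E}})$ under the $*$-operation is immediate from the $J_\vphi$-reality of $\bar{\mathscr{E}}$ (noted at the start of the subsection), since $J_\vphi$ implements the adjoint on $M^\vphi$. For products, splitting each element into self-adjoint and anti-self-adjoint parts reduces to showing $ab \in \dom(\bar{\mathscr{E}})$ for self-adjoint $a, b \in M^\vphi \cap \dom(\bar{\mathscr{E}})$. The key analytic input is a Lipschitz functional calculus at the form level: for self-adjoint $a \in M^\vphi \cap \dom(\bar{\mathscr{E}})$ and $g$ Lipschitz with constant $C$ on an interval containing $\text{spec}(a)$, one has $g(a) \in \dom(\bar{\mathscr{E}})$ with $\bar{\mathscr{E}}[g(a)] \leq C^2\,\bar{\mathscr{E}}[a]$. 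This is derived exactly as in the proof of Proposition \ref{completely_Dirichlet}: approximate $g$ uniformly on $\text{spec}(a)$ by polynomials $g_k$ with $|g_k'| \leq C + 1/k$, apply Lemma \ref{C^1_functional_calculus} to both $\delta$ and $\hat\delta$ to uniformly bound $\mathscr{E}[g_k(a)]$, and pass to the limit via the lower semicontinuity of $\bar{\mathscr{E}}$. The same argument applies to the $M_2$-amplification $\bar{\mathscr{E}}^{(2)}$, which is completely Dirichlet by Proposition \ref{completely_Dirichlet}.

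Granting the functional calculus, two applications of $g(t) = t^2$ conclude the proof. First, applying it to $a$, $b$, and $a + b$ separately gives $ab + ba = (a+b)^2 - a^2 - b^2 \in \dom(\bar{\mathscr{E}})$. Second, applying it to the self-adjoint matrix
\[
T = \begin{pmatrix} 0 & a + ib \\ a - ib & 0 \end{pmatrix} \in M_2(M^\vphi) \cap \dom(\bar{\mathscr{E}}^{(2)})
\]
yields $T^2 \in \dom(\bar{\mathscr{E}}^{(2)})$, whose $(2,2)$-entry $(a-ib)(a+ib) = a^2 + b^2 + i(ab - ba)$ forces $ab - ba \in \dom(\bar{\mathscr{E}})$. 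Adding these gives $2ab \in \dom(\bar{\mathscr{E}})$, as desired.

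The main technicality is extending Lemma \ref{C^1_functional_calculus} from elements of $\dom(\delta)$ to elements of $\dom(\bar{\mathscr{E}})$. Concretely, given self-adjoint $a \in \dom(\bar{\mathscr{E}})$, one must approximate it in the $\|\cdot\|_{\bar{\mathscr{E}}}$-norm by bounded self-adjoint elements of $\dom(\mathscr{E})$ whose operator norms are controlled by $\|a\|$. For Dirichlet forms this Kaplansky-type step is automatic thanks to the Markovian truncation $\xi \mapsto \xi \wedge \|a\|$ (together with its negative-part analog), which is exactly the mechanism underlying the proof of \cite[Proposition 4.7]{Cip08}.
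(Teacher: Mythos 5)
Your top-level strategy is identical to the paper's: Corollary \ref{Dirichlet_alg} is proved there with a single sentence citing \cite[Proposition 4.7]{Cip08}, exactly as you do, so your proof is correct as written. Your additional sketch of the Davies--Lindsay/Cipriani argument (Lipschitz functional calculus, polarization, $M_2$-amplification) is a sound outline, but one step is stated imprecisely: the Markovian truncation of an approximating sequence $(p_n)\subset\dom(\mathscr{E})$ lands only in $\dom(\bar{\mathscr{E}})$, not in $\dom(\mathscr{E})$, so it does not directly produce bounded approximants to which Lemma \ref{C^1_functional_calculus} applies. The cleaner route is to skip truncating $p_n$ altogether: fix a bounded Lipschitz extension $\tilde g$ of $g$ with the same constant, apply Lemma \ref{C^1_functional_calculus} to get $\tilde g(p_n)\in M^\vphi\cap\dom(\bar{\mathscr{E}})$ with $\bar{\mathscr{E}}[\tilde g(p_n)]\le C^2\mathscr{E}[p_n]$, use the $L^2$-Lipschitz estimate $\|\tilde g(x)-\tilde g(y)\|_\vphi\le C\|x-y\|_\vphi$ (valid for the trace on $M^\vphi$) to get $\tilde g(p_n)\to g(a)$ in $\|\cdot\|_\vphi$, and close with lower semicontinuity.
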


For the remainder of this section, we will assume that the conjugate variable to $\delta$ exists, so that the conjugate variable to $\hat\delta$ also exists and both $\delta$ and $\hat\delta$ are closable with closures $\bar\delta$ and $\bar{\hat\delta}$, respectively. We define $\overline{\dom{}}(\delta\oplus\hat\delta)$ to be the subspace of elements in $\dom(\bar\delta)\cap\dom(\bar{\hat\delta})$ that can be approximated by elements of $\dom(\delta)$ simultaneously in the $\|\cdot\|_{\bar\delta}$ and $\|\cdot\|_{\bar{\hat\delta}}$ norms. When $\delta=\hat\delta$, this set is simply $\dom(\bar\delta)$.

\begin{prop}\label{closure_is_der}
Let $\delta\colon \dom(\delta)\to M_\infty\otimes M_\infty^{op}$ be a $\mu$-modular derivation for some $\mu>0$, with $\dom(\delta)$ generated by eigenoperators of $\sigma^\vphi$. Assume the conjugate variable to $\delta$ exists. Then the set $M^\vphi\cap \overline{\dom{}}(\delta\oplus \hat\delta)$ is a $*$-algebra on which both $\bar\delta$ and $\bar{\hat\delta}$ satisfy the Leibniz rule.
\end{prop}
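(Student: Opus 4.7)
The plan is to first identify $M^\vphi\cap\overline{\dom{}}(\delta\oplus\hat\delta)$ with $\dom(\bar{\mathscr{E}})$, so that the $*$-algebra structure follows from Corollary \ref{Dirichlet_alg}, and then to use uniformly bounded approximations in $\dom(\mathscr{E})$ to transfer the Leibniz rule from $\dom(\delta)$ to the closure.

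I would first show $M^\vphi\cap\overline{\dom{}}(\delta\oplus\hat\delta)=\dom(\bar{\mathscr{E}})$. The inclusion $\supseteq$ is immediate. For the reverse, given $\xi\in M^\vphi\cap\overline{\dom{}}(\delta\oplus\hat\delta)$ with $\{p_n\}\subset\dom(\delta)$ approximating $\xi$ in both $\|\cdot\|_{\bar\delta}$ and $\|\cdot\|_{\bar{\hat\delta}}$, each $p_n$ decomposes into a finite sum of eigenoperators lying in $\dom(\delta)$; in particular $\pi_1 p_n\in\dom(\delta)\cap M^\vphi=\dom(\mathscr{E})$. The $\mu$-modularity (via the orthogonality computation in Lemma \ref{derivation_on_eigenspaces}) makes $\pi_1$ contractive in both graph norms, so $\pi_1 p_n\to\xi$ simultaneously in $\|\cdot\|_\vphi$, $\|\cdot\|_{\bar\delta}$, and $\|\cdot\|_{\bar{\hat\delta}}$. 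Hence $\xi\in\dom(\bar{\mathscr{E}})$, and Corollary \ref{Dirichlet_alg} yields the $*$-algebra property.

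Next, I would show that every self-adjoint $\xi\in\dom(\bar{\mathscr{E}})$ with $\|\xi\|\leq c$ is the $\|\cdot\|_{\bar{\mathscr{E}}}$-limit of self-adjoint $\tilde p_n\in\dom(\mathscr{E})$ satisfying $\|\tilde p_n\|\leq c+1/n$. Starting from any self-adjoint approximation $p_n\in\dom(\mathscr{E})$ (obtained after symmetrizing $(p_n+p_n^*)/2$) and reusing the polynomial construction from the proof of Proposition \ref{completely_Dirichlet}, choose real polynomials $g_n$ approximating $f_c(t)=\mathrm{sign}(t)\min(|t|,c)$ uniformly on an interval containing the spectra of $p_n$ and $\xi$, with $|g_n'|\leq 1+1/n$. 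Then $g_n(p_n)\in\dom(\mathscr{E})$ is uniformly bounded by $c+1/n$, and Lemma \ref{C^1_functional_calculus} gives $\|\delta(g_n(p_n))\|_{\text{HS}}\leq(1+1/n)\|\delta(p_n)\|_{\text{HS}}$ with the analogous bound for $\hat\delta$. Since $M^\vphi$ is tracial, the Birman--Solomyak--Kittaneh $L^2$-Lipschitz estimate produces $g_n(p_n)\to f_c(\xi)=\xi$ in $\|\cdot\|_\vphi$. Graph-closedness of $\bar\delta$ forces every weak-subsequential limit of $\{\delta(g_n(p_n))\}$ to equal $\bar\delta(\xi)$, and combined with $\limsup\|\delta(g_n(p_n))\|_{\text{HS}}\leq\|\bar\delta(\xi)\|_{\text{HS}}$ a standard weak-plus-norm argument upgrades this to strong convergence in $\|\cdot\|_{\bar\delta}$, and similarly in $\|\cdot\|_{\bar{\hat\delta}}$.

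Finally, for the Leibniz rule, take $\xi,\eta\in M^\vphi\cap\dom(\bar{\mathscr{E}})$, WLOG self-adjoint by the $*$-algebra property, and let $\tilde p_n\to\xi$, $\tilde q_n\to\eta$ be uniformly bounded approximations as above. Then $\tilde p_n\tilde q_n\in\dom(\mathscr{E})$ converges to $\xi\eta$ in $L^2$ using the bounded left/right actions of $\tilde q_n,\eta,\xi\in M^\vphi$ on $L^2(M,\vphi)$. The Leibniz rule in $\dom(\delta)$ gives $\delta(\tilde p_n\tilde q_n)=\delta(\tilde p_n)\cdot\tilde q_n+\tilde p_n\cdot\delta(\tilde q_n)$. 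The first summand tends to $\bar\delta(\xi)\cdot\eta$ via the uniformly bounded right action of $\tilde q_n$ and SOT convergence $\tilde q_n\to\eta$ (which follows from $L^2$-convergence of uniformly bounded sequences). For the second summand, the decomposition
\[
\tilde p_n\cdot\delta(\tilde q_n)=(\tilde p_n-\xi)\cdot(\delta(\tilde q_n)-\bar\delta(\eta))+(\tilde p_n-\xi)\cdot\bar\delta(\eta)+\xi\cdot\delta(\tilde q_n)
\]
handles the three pieces in $\|\cdot\|_{\text{HS}}$ by (i) uniform bound times vanishing HS-norm, (ii) SOT convergence of left multiplication by $\tilde p_n-\xi$ applied to the fixed vector $\bar\delta(\eta)$, and (iii) boundedness of left multiplication by $\xi$ times HS-convergence $\delta(\tilde q_n)\to\bar\delta(\eta)$. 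Closedness of $\bar\delta$ then yields $\xi\eta\in\dom(\bar\delta)$ with the desired Leibniz rule; since $\tilde p_n\tilde q_n\in\dom(\delta)$, we also obtain $\xi\eta\in\overline{\dom{}}(\delta\oplus\hat\delta)$, and the identical argument works for $\bar{\hat\delta}$. The main obstacle is the uniformly bounded $\dom(\mathscr{E})$-approximation: the Markovian structure of $\bar{\mathscr{E}}$ alone produces approximations only in $\dom(\bar{\mathscr{E}})$, so one must marry the polynomial-truncation trick with a weak-compactness/closed-graph argument to remain within $\dom(\mathscr{E})$ while retaining both the operator-norm bound and convergence in the graph norms.
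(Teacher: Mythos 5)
Your proof is correct, but it takes a genuinely different route from the paper's. The paper's proof never constructs uniformly bounded approximations: it fixes $a,b\in M^\vphi\cap\overline{\dom{}}(\delta\oplus\hat\delta)$, approximates only $a$ by $(p_n)\subset M^\vphi\cap\dom(\delta)$ (no norm control), applies Lemma \ref{extension_of_Leibniz_rule} to get $\bar\delta(p_nb)=\delta(p_n)\cdot b+p_n\cdot\bar\delta(b)$, and then tests the weakly convergent sequence $\delta(p_n)\cdot b+p_n\cdot\bar\delta(b)$ against elements of $\dom(\delta)\otimes\dom(\delta)^{op}\subset\dom(\delta^*)$ (Lemma \ref{adjoint_formula}). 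Since $ab$ is already known to lie in $\dom(\bar\delta)$ from Corollary \ref{Dirichlet_alg}, the identity $\<\bar\delta(p_nb),\eta\>=\<p_nb,\delta^*(\eta)\>\to\<ab,\delta^*(\eta)\>$ closes the argument by density. You instead approximate both factors and force strong HS convergence of $\delta(\tilde p_n\tilde q_n)$, which indeed requires a uniformly bounded approximation in $\dom(\mathscr{E})$; what you build in your second step is essentially a self-contained re-derivation of Theorem \ref{Kaplansky} (which the paper states afterwards, attributed to \cite[Proposition 6]{Dab10}). That re-derivation is sound: the $|g_n'|\leq 1+1/n$ bound gives the $\limsup$ inequality, weak lower semicontinuity gives the matching $\liminf$, and graph-closedness of $\bar\delta$ pins the weak limit; the $L^2$-Lipschitz estimate $\|f(a)-f(b)\|_2\leq\mathrm{Lip}(f)\|a-b\|_2$ in the tracial algebra $M^\vphi$ is the easy ($p=2$) case of the Birman--Solomyak machinery, so that step is legitimate. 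The paper's route is shorter because it leverages the already-proved partial Leibniz rule and the richness of $\dom(\delta^*)$, while yours is more self-contained (and actually yields the Kaplansky-type density statement as a byproduct of the proof rather than as a separate downstream theorem). One small wording correction: the paper identifies $M^\vphi\cap\overline{\dom{}}(\delta\oplus\hat\delta)$ with $\dom(\bar{\mathscr{E}})$ by fiat citing the remarks before Proposition \ref{completely_Dirichlet}; your explicit $\pi_1$-contraction argument via Lemma \ref{derivation_on_eigenspaces} is the right way to justify that citation in full.
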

\begin{proof}
Since $M^\vphi\cap \overline{\dom{}}(\delta\oplus\hat\delta)=M^\vphi\cap \dom(\bar{\mathscr{E}})$ by the remarks preceding Proposition \ref{completely_Dirichlet}, we see this set is a $*$-algebra by Corollary \ref{Dirichlet_alg}.

Let $a,b\in M^\vphi\cap \overline{\dom{}}(\delta\oplus\hat\delta)$. We will show $\bar\delta(ab)=\bar\delta(a)\cdot b + a\cdot \bar\delta(b)$ (the proof for $\bar{\hat\delta}$ being similar). Let $(p_n)_{n\in\N}\subset M^\vphi\cap\dom(\delta)$ approximate $a$ in the $\|\cdot\|_{\bar\delta}$-norm. Since $L^2(M,\varphi)$ and $L^2(M\bar\otimes M^{op})$ admit bounded right actions of $M_\infty$ we have
	\[
		\|p_n b - ab\|_\vphi\to 0\qquad \text{and}\qquad \|\delta(p_n)\cdot b - \bar\delta(a)\cdot b\|_{\text{HS}}\to 0.
	\]
Also, since $\|p_n^* - a^*\|_\vphi=\|p_n - a\|_\vphi\to 0$, $p_n\cdot \bar\delta(b)$ converges to $a\cdot\bar\delta(b)$ weakly against $M_\infty\otimes M_\infty^{op}\subset L^2(M\bar\otimes M^{op})$. Thus $\delta(p_n)\cdot b + p_n\cdot\bar\delta(b)$ converges weakly to $\bar\delta(a)\cdot b + a\cdot\bar\delta(b)$ against $M_\infty\otimes M_\infty^{op}$. On the other hand, from Lemma \ref{extension_of_Leibniz_rule} we know $p_n b\in \dom(\bar\delta)$ with $\bar\delta(p_n b) = \delta(p_n)\cdot b + p_n\cdot\bar\delta(b)$. Consequently, for any $\eta\in \dom(\delta)\otimes \dom(\delta)^{op}$ (which also lies in $\dom(\delta^*)$ by Lemma \ref{adjoint_formula}) we have
	\begin{align*}
		\<\bar\delta(a)\cdot b + a \cdot\bar\delta(b), \eta\>_{\text{HS}} &= \lim_{n\to\infty} \< \delta(p_n)\cdot b + p_n\cdot \bar\delta(b), \eta\>_{\text{HS}}\\
			&= \lim_{n\to\infty} \< \bar\delta(p_n b), \eta\>_{\text{HS}}= \lim_{n\to\infty} \< p_n b, \delta^*(\eta)\>_{\vphi}\\
			&= \< ab, \delta^*(\eta)\>_{\vphi}= \<\bar\delta(ab), \eta\>_{\text{HS}}.
	\end{align*}
Since $\dom(\delta)\otimes\dom(\delta)^{op}$ is dense, we have $\bar\delta(ab)=\bar\delta(a)\cdot b + a\cdot\bar\delta(b)$.
\end{proof}

One particular consequence of this proposition is that for any $x\in M^\vphi\cap \overline{\dom{}}(\delta\oplus \hat\delta)$, we have $p(x)\in M^\vphi\cap \overline{\dom{}}(\delta\oplus \hat\delta)$ for any polynomial $p$. By the same argument as in Lemma \ref{C^1_functional_calculus} we obtain the following corollary.

\begin{cor}\label{extC1}
Let $\delta\colon \dom(\delta)\to M_\infty\otimes M_\infty^{op}$ be a $\mu$-modular derivation for some $\mu>0$, with $\dom(\delta)$ generated by eigenoperators of $\sigma^\vphi$. Assume the conjugate variable to $\delta$ exists. Let $a=a^*\in M^\vphi\cap \overline{\dom{}}(\delta\oplus \hat\delta)$ with spectrum contained in a compact interval $I\subset \R$. Then for any $f\in C^1(I)$, $f(a)\in M^\vphi\cap\overline{\dom{}}(\delta\oplus \hat\delta)$ with $\bar\delta( f(a)) = LR_a(\tilde f)\# \bar\delta(a)$ (and similarly for $\bar{\hat\delta}$). Moreover, if $g\in C(I)$ is Lipschitz with constant $C$, then $g(a)\in M^\vphi\cap \overline{\dom{}}(\delta\oplus \hat\delta)$ with $\|\bar\delta(g(a))\|_{\text{HS}}\leq C \|\bar\delta(a)\|_{\text{HS}}$ (and similarly for $\bar{\hat\delta}$).
\end{cor}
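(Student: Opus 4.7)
The strategy is to mimic the proof of Lemma \ref{C^1_functional_calculus}, with Proposition \ref{closure_is_der} playing the role that the Leibniz rule on $\dom(\delta)$ played there. The key new subtlety is that membership in $\overline{\dom{}}(\delta\oplus\hat\delta)$ requires simultaneous approximation in the $\|\cdot\|_{\bar\delta}$ and $\|\cdot\|_{\bar{\hat\delta}}$ norms by elements of $\dom(\delta)$, which will force a diagonal argument at the end.

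\emph{Step 1 (polynomial case).} By Proposition \ref{closure_is_der}, the set $M^\vphi\cap\overline{\dom{}}(\delta\oplus\hat\delta)$ is a $*$-algebra on which both $\bar\delta$ and $\bar{\hat\delta}$ satisfy the Leibniz rule. In particular, $p(a)$ lies in this $*$-algebra for every polynomial $p\in\C[t]$, and iterating the Leibniz rule on monomials yields
\[
    \bar\delta(p(a))=LR_a(\tilde p)\#\bar\delta(a),\qquad \bar{\hat\delta}(p(a))=LR_a(\tilde p)\#\bar{\hat\delta}(a).
\]

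\emph{Step 2 ($C^1$ case).} Given $f\in C^1(I)$, use Weierstrass to find polynomials $p_n$ with $p_n\to f$ and $p_n'\to f'$ uniformly on $I$; then $\tilde p_n\to\tilde f$ uniformly on $I\times I$. The map $LR_a\colon C(I\times I)\to\B(L^2(M,\vphi))\bar\otimes\B(L^2(M,\vphi))^{op}$ extends by density to a $*$-representation, and for any $\xi\in L^2(M\bar\otimes M^{op})$ one has $\|LR_a(h)\#\xi\|_{\text{HS}}\leq\|h\|_\infty\|\xi\|_{\text{HS}}$. Hence
\[
    \|LR_a(\tilde p_n)\#\bar\delta(a)-LR_a(\tilde f)\#\bar\delta(a)\|_{\text{HS}}\leq\|\tilde p_n-\tilde f\|_\infty\|\bar\delta(a)\|_{\text{HS}}\to 0,
\]
and similarly for $\bar{\hat\delta}$. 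Since also $p_n(a)\to f(a)$ in operator norm (hence in $\|\cdot\|_\vphi$), we see that $(p_n(a))_{n\in\N}$ is Cauchy simultaneously in $\|\cdot\|_{\bar\delta}$ and $\|\cdot\|_{\bar{\hat\delta}}$. By Step 1 each $p_n(a)\in M^\vphi\cap\overline{\dom{}}(\delta\oplus\hat\delta)$, so for each $n$ we may pick $q_{n,k}\in\dom(\delta)$ with $q_{n,k}\to p_n(a)$ as $k\to\infty$ in both norms; a diagonal sequence $q_{n,k_n}$ then approximates $f(a)$ in both norms, placing $f(a)$ in $M^\vphi\cap\overline{\dom{}}(\delta\oplus\hat\delta)$. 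Closedness of $\bar\delta$ and $\bar{\hat\delta}$ then gives the stated formula
\[
    \bar\delta(f(a))=LR_a(\tilde f)\#\bar\delta(a),\qquad \bar{\hat\delta}(f(a))=LR_a(\tilde f)\#\bar{\hat\delta}(a).
\]

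\emph{Step 3 (Lipschitz case).} Given $g\in C(I)$ with Lipschitz constant $C$, extend $g$ to $\R$ preserving the Lipschitz constant and mollify: take $f_n=g*\rho_n$ with $\rho_n$ a standard $C^\infty$ approximate identity. Then $f_n\to g$ uniformly on $I$, each $f_n\in C^1(I)$ and $\|f_n'\|_\infty\leq C$, so $\|\widetilde{f_n}\|_\infty\leq C$. By Step 2, $f_n(a)\in M^\vphi\cap\overline{\dom{}}(\delta\oplus\hat\delta)$ with
\[
    \|\bar\delta(f_n(a))\|_{\text{HS}}\leq C\|\bar\delta(a)\|_{\text{HS}},\qquad \|\bar{\hat\delta}(f_n(a))\|_{\text{HS}}\leq C\|\bar{\hat\delta}(a)\|_{\text{HS}}.
\]
Since $f_n(a)\to g(a)$ in $\|\cdot\|_\vphi$ and both sequences $\bar\delta(f_n(a))$ and $\bar{\hat\delta}(f_n(a))$ are bounded in $L^2(M\bar\otimes M^{op})$, passing to a weakly convergent subsequence and invoking closedness of $\bar\delta$ and $\bar{\hat\delta}$ shows $g(a)\in\dom(\bar\delta)\cap\dom(\bar{\hat\delta})$ with the claimed norm bounds. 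A final diagonal argument (applied to simultaneous $\dom(\delta)$-approximants of each $f_n(a)$ obtained in Step 2) places $g(a)\in M^\vphi\cap\overline{\dom{}}(\delta\oplus\hat\delta)$.

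The main obstacle is purely bookkeeping: Steps 1--2 are direct transcriptions of Lemma \ref{C^1_functional_calculus} using the algebra structure from Proposition \ref{closure_is_der}, and the only place where care is required is the iterated diagonal argument needed to verify that the limit elements lie in $\overline{\dom{}}(\delta\oplus\hat\delta)$ rather than merely in $\dom(\bar\delta)\cap\dom(\bar{\hat\delta})$.
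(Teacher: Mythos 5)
Your Steps 1 and 2 are correct and follow the paper's route: Proposition \ref{closure_is_der} supplies the algebra and the Leibniz rule, and the simultaneous Cauchy argument in $\|\cdot\|_{\bar\delta}$ and $\|\cdot\|_{\bar{\hat\delta}}$ with the diagonal gives $f(a)\in\overline{\dom{}}(\delta\oplus\hat\delta)$ for $f\in C^1(I)$.

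Step 3 has a genuine gap. You only obtain $\bar\delta(f_n(a))\rightharpoonup\bar\delta(g(a))$ and $\bar{\hat\delta}(f_n(a))\rightharpoonup\bar{\hat\delta}(g(a))$ \emph{weakly} (after passing to a subsequence). The ``final diagonal argument'' you invoke requires $f_n(a)\to g(a)$ \emph{strongly} in both $\|\cdot\|_{\bar\delta}$ and $\|\cdot\|_{\bar{\hat\delta}}$, which is exactly what weak convergence fails to give you: for a diagonal sequence $q_{n,k_n}\in\dom(\delta)$ to approximate $g(a)$ in the graph norms, you need $\|f_n(a)-g(a)\|_{\bar\delta}\to 0$ and $\|f_n(a)-g(a)\|_{\bar{\hat\delta}}\to 0$, but a mollified approximating sequence need not satisfy this. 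Thus your argument proves $g(a)\in\dom(\bar\delta)\cap\dom(\bar{\hat\delta})$ with the norm bound, but not the stronger assertion $g(a)\in\overline{\dom{}}(\delta\oplus\hat\delta)$.

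The correct closing move is the one the surrounding Dirichlet-form machinery was set up for: by the remarks preceding Proposition \ref{completely_Dirichlet}, $M^\vphi\cap\overline{\dom{}}(\delta\oplus\hat\delta)=M^\vphi\cap\dom(\bar{\mathscr{E}})$, and $\bar{\mathscr{E}}$ is a \emph{closed} form, hence lower semicontinuous on $L^2(M^\vphi,\vphi)$ after extending by $+\infty$. From your bound $\bar{\mathscr{E}}[f_n(a)]\leq C^2(\|\bar\delta(a)\|_{\text{HS}}^2+\|\bar{\hat\delta}(a)\|_{\text{HS}}^2)$ and $f_n(a)\to g(a)$ in $\|\cdot\|_\vphi$, lower semicontinuity gives $\bar{\mathscr{E}}[g(a)]<\infty$, so $g(a)\in\dom(\bar{\mathscr{E}})$, which is exactly what is needed. (This is the same argument used inside the proof of Proposition \ref{completely_Dirichlet} for $f(t)=\min\{t,1\}$.) Alternatively, a Mazur argument applied to the triples $(f_n(a),\bar\delta(f_n(a)),\bar{\hat\delta}(f_n(a)))$ produces convex combinations $(h_k)$ of the $f_n$, still Lipschitz with constant $\leq C$, for which $h_k(a)\to g(a)$ strongly in both graph norms; then your diagonal argument does apply.
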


We conclude with the following analogue of \cite[Proposition 6]{Dab10}, which we obtain via the same proof. This result is a version of Kaplansky's density theorem for elements in the domain of a closed derivation. 

\begin{thm}\label{Kaplansky}
Let $\delta\colon \dom(\delta)\to M_\infty\otimes M_\infty^{op}$ be a $\mu$-modular derivation for some $\mu>0$, with $\dom(\delta)$ generated by eigenoperators of $\sigma^\vphi$. Assume the conjugate variable to $\delta$ exists so that $\delta$ and $\hat\delta$ are closable with closures $\bar\delta$ and $\bar{\hat\delta}$, respectively. For any $x\in M^\vphi\cap \overline{\dom{}}(\delta\oplus \hat\delta)$, $\lambda>0$, there exists a sequence $(p_n)_{n\in\N}\subset M^\vphi\cap\dom(\delta)$ converging $*$-strongly to $x$ and approximating $x$ simultaneously in the $\|\cdot\|_{\bar\delta}$ and $\|\cdot\|_{\bar{\hat\delta}}$ norms such that $\|p_n\|\leq \|x\|$ for all $n\in\N$.
\end{thm}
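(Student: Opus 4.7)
I would prove this in two stages: first reduce to the self-adjoint case via a $2 \times 2$ matrix amplification, then use a Lipschitz truncation via functional calculus.

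For the reduction, amplify $\delta$ to $\delta \otimes I_2$ acting on $M_2(\dom(\delta))$, viewed as a derivation on $M_2(M)$ equipped with the faithful normal state $\psi = \frac{1}{2}\vphi\circ \Tr$. A direct check (as in the proof of Proposition \ref{completely_Dirichlet}) shows this amplified derivation is $\mu$-modular, has domain generated by eigenoperators, admits the conjugate variable $\xi \otimes I_2$, and has centralizer $M_2(M^\vphi)$. For $x \in M^\vphi\cap\overline{\dom{}}(\delta\oplus\hat\delta)$ the matrix
\[
X = \begin{pmatrix} 0 & x \\ x^* & 0 \end{pmatrix}
\]
is self-adjoint in $M_2(M^\vphi) \cap \overline{\dom{}}((\delta\otimes I_2) \oplus (\hat\delta\otimes I_2))$ with $\|X\| = \|x\|$ (the matrices $\begin{pmatrix} 0 & q_n \\ q_n^* & 0\end{pmatrix}$ inherit simultaneous graph-norm convergence from any approximants $(q_n)$ of $x$). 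Self-adjoint approximants $P_n$ for $X$ with $\|P_n\| \leq \|X\|$ then yield $p_n := (P_n)_{12}$ with $\|p_n\| \leq \|x\|$, inheriting both the $*$-strong and graph-norm convergence.

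For the self-adjoint case, let $x = x^*$ with $c := \|x\|$. By definition of $\overline{\dom{}}(\delta\oplus\hat\delta)$, pick a simultaneous approximating sequence $(q_n) \subset M^\vphi\cap\dom(\delta)$; replace $q_n$ with $(q_n + q_n^*)/2$ to make each self-adjoint, using the identities $\delta(q_n^*) = \hat\delta(q_n)^\dagger$ and $\bar\delta(x) = \bar{\hat\delta}(x)^\dagger$ (valid for self-adjoint $x$) to verify that averaging preserves the convergence in both graph norms. Apply the Lipschitz function $f_c(t) = \max(-c, \min(t, c))$, which equals the identity on $[-c, c]$. By Corollary \ref{extC1}, $f_c(q_n) \in M^\vphi \cap \overline{\dom{}}(\delta\oplus\hat\delta)$ with $\|f_c(q_n)\| \leq c$ and $\|\bar\delta(f_c(q_n))\|_{\text{HS}} \leq \|\delta(q_n)\|_{\text{HS}}$ (and likewise for $\hat\delta$). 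Since $f_c(x) = x$, the standard Lipschitz operator inequality in the tracial setting $(M^\vphi, \vphi\mid_{M^\vphi})$ gives $\|f_c(q_n) - x\|_\vphi \leq \|q_n - x\|_\vphi \to 0$. The uniform HS-bound on $\bar\delta(f_c(q_n))$ yields a weak subsequential limit which, by the closedness (hence weak closedness) of $\bar\delta$, must equal $\bar\delta(x)$; matching $\|\bar\delta(f_c(q_n))\|_{\text{HS}} \leq \|\delta(q_n)\|_{\text{HS}} \to \|\bar\delta(x)\|_{\text{HS}}$ with the lower-semicontinuity bound $\liminf \|\bar\delta(f_c(q_n))\|_{\text{HS}} \geq \|\bar\delta(x)\|_{\text{HS}}$ upgrades weak to strong HS-convergence. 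The same argument yields $\bar{\hat\delta}(f_c(q_n)) \to \bar{\hat\delta}(x)$ in HS.

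Finally, each $f_c(q_n)$ lies only in $\overline{\dom{}}(\delta\oplus\hat\delta)$, not necessarily in $\dom(\delta)$. To rectify this, I would repeat the polynomial approximation from the proof of Proposition \ref{completely_Dirichlet}: for fixed $n$, choose polynomials $g_{n,k}$ with real coefficients that approximate $f_c$ uniformly on the spectrum of $q_n$, with $g_{n,k}'$ bounded by $1 + 1/k$ there. Then $g_{n,k}(q_n) \in M^\vphi \cap \dom(\delta)$, $\|g_{n,k}(q_n)\| \to \|f_c(q_n)\| \leq c$, and both $\delta(g_{n,k}(q_n)) \to \bar\delta(f_c(q_n))$ and $\hat\delta(g_{n,k}(q_n)) \to \bar{\hat\delta}(f_c(q_n))$ in HS as $k \to \infty$. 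A mild rescaling by a factor tending to $1$ enforces $\|g_{n,k}(q_n)\| \leq c$ exactly, and a diagonal extraction yields the desired sequence $(p_n)$. The $*$-strong convergence follows from operator-norm-bounded $L^2$-convergence in $(M, \vphi)$.

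The main obstacle is the middle stage: because the derivation is not itself Lipschitz in any direct sense and $\widetilde{f_c}$ has discontinuities at $(\pm c, \pm c)$, one cannot simply pass the truncation through $\bar\delta$ by a functional-calculus identity; the argument instead relies on the interplay of Corollary \ref{extC1}, weak compactness, closedness of $\bar\delta$, and the norm-matching trick to upgrade weak to strong convergence.
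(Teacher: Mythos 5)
Your overall strategy (reduce to self-adjoint via a $2\times 2$ amplification, truncate, then approximate by polynomials) is the standard route and matches the spirit of the proof the paper invokes. The middle stage — applying the $1$-Lipschitz truncation $f_c$ and upgrading the resulting weak HS-limit to a strong one by matching $\limsup\|\bar\delta(f_c(q_n))\|_{\text{HS}}\leq\lim\|\delta(q_n)\|_{\text{HS}}=\|\bar\delta(x)\|_{\text{HS}}$ against weak lower semicontinuity — is correct and is exactly the right idea.

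The gap is in the final polynomial-approximation step. You assert that for fixed $n$, polynomials $g_{n,k}$ with $g_{n,k}\to f_c$ uniformly on $\operatorname{spec}(q_n)$ and $\|g_{n,k}'\|_\infty\leq 1+1/k$ satisfy $\delta(g_{n,k}(q_n))\to\bar\delta(f_c(q_n))$ in HS-\emph{norm}. This does not follow from the assumptions and can fail. By Lemma \ref{C^1_functional_calculus}, $\delta(g_{n,k}(q_n))=LR_{q_n}(\widetilde{g_{n,k}})\#\delta(q_n)$, and the only operator-norm control you have is $\|LR_{q_n}(\widetilde{g_{n,k}}-\widetilde{g_{n,k'}})\|\leq\|\widetilde{g_{n,k}}-\widetilde{g_{n,k'}}\|_\infty\leq\|g_{n,k}'-g_{n,k'}'\|_\infty$, which you do \emph{not} control: uniform convergence of $g_{n,k}$ with a uniform derivative bound does not force the derivatives themselves to converge (and they cannot converge to $f_c'$, which has jumps). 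Concretely, if $q_n$ has a spectral atom at $\pm c$, the diagonal values $\widetilde{g_{n,k}}(\pm c,\pm c)=g_{n,k}'(\pm c)$ can oscillate arbitrarily within $[-1-1/k,1+1/k]$, so the sequence $\delta(g_{n,k}(q_n))$ need not converge in norm at all; the most your hypotheses yield is weak convergence (after passing to a subsequence) plus the bound $\limsup_k\|\delta(g_{n,k}(q_n))\|_{\text{HS}}\leq\|\delta(q_n)\|_{\text{HS}}$ — and this upper bound is $\|\delta(q_n)\|_{\text{HS}}$, \emph{not} $\|\bar\delta(f_c(q_n))\|_{\text{HS}}$, so the norm-matching trick does not upgrade it for fixed $n$. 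In short, stage~3 hits exactly the discontinuity problem you flagged, but you only applied the weak-compactness/norm-matching machinery in stage~2, not in stage~3 where it is also needed.

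Two ways to repair it. Either (a) replace $f_c$ by a $C^1$ cutoff $\phi_\epsilon$ with $\phi_\epsilon=\mathrm{id}$ on $[-c,c]$, $\|\phi_\epsilon'\|_\infty\leq 1$, $\|\phi_\epsilon\|_\infty\leq c(1+\epsilon)$, and approximate $\phi_\epsilon$ by polynomials in the $C^1$-norm; then $\|\widetilde{g_k}-\widetilde{\phi_\epsilon}\|_\infty\leq\|g_k'-\phi_\epsilon'\|_\infty\to 0$ gives the needed HS-norm convergence, and the extra factor $1+\epsilon$ in the operator norm is removed by rescaling along a diagonal in $(n,\epsilon)$. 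Or (b) keep $f_c$ but run the weak-compactness argument on the \emph{full} diagonal: use separability of the HS space to choose $k(n)\to\infty$ so that $\delta(g_{n,k(n)}(q_n))\oplus\hat\delta(g_{n,k(n)}(q_n))$ converges weakly to $\bar\delta(x)\oplus\bar{\hat\delta}(x)$, note that $\limsup_n\|\delta(g_{n,k(n)}(q_n))\|_{\text{HS}}\leq\lim_n(1+1/k(n))\|\delta(q_n)\|_{\text{HS}}=\|\bar\delta(x)\|_{\text{HS}}$ (and likewise for $\hat\delta$), and conclude strong convergence of the diagonal by the same weak-limit-plus-norm-matching reasoning you used in stage~2. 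Either repair closes the gap; as written, the claimed norm convergence for fixed $n$ is unjustified.
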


%%%%%%%%%%%%%%%%%%%%%%%%%%%%%%%%%%%%%%%%%%%%%%%%%%%%%%%%%%%%%%%%%%%%%%%%%%%%%

\subsection{Norm boundedness of $\delta^*$}\label{Boundedness}

By exhibiting some boundedness conditions for a derivation $\delta$, we will be able to see that the domain of $\delta^*$ is in fact quite large. The following proposition is the non-tracial analogue of \cite[Lemma 12]{Dab10}.

\begin{prop}\label{one_component_of_derivative_is_bounded}
Let $\delta\colon \dom(\delta)\to M_\infty\otimes M_\infty^{op}$ be a $\mu$-modular derivation for some $\mu>0$, with $\dom(\delta)$ generated by eigenoperators of $\sigma^\vphi$. Assume the conjugate variable $\xi$ to $\delta$ exists so that $\delta$ and $\hat\delta$ are closable with closures $\bar\delta$ and $\bar{\hat\delta}$, respectively. Then for $u\in M^\vphi\cap\overline{\dom{}}(\delta\oplus\hat\delta)$ unitary 
	\begin{align*}
		\| u\cdot\xi- (1\otimes \varphi)(\bar{\hat\delta}(u)) \|_\varphi &= \|\xi\|_\varphi,\text{ and}\\
		\| \xi\cdot u - (\varphi\otimes \sigma_{-i}^\varphi)(\bar{\hat\delta}(u)) \|_\varphi &=\|\xi\|_\varphi.
	\end{align*}
For $a\in M^\vphi\cap \overline{\dom{}}(\delta\oplus\hat\delta)$ self-adjoint we have
	\begin{align*}
		\| a\cdot\xi - (1\otimes \varphi)(\bar{\hat\delta}(a)) \|_\varphi &\leq \|a\| \|\xi\|_\varphi,\text{ and}\\
		\| \xi\cdot a - (\varphi\otimes \sigma_{-i}^\varphi)(\bar{\hat\delta}(a)) \|_\varphi &\leq \|a\| \|\xi\|_\varphi.
	\end{align*}
For any $x\in M^\vphi\cap \overline{\dom{}}(\delta\oplus\hat\delta)$ we have
	\begin{align*}
		\| x\cdot\xi - (1\otimes \varphi)(\bar{\hat\delta}(x)) \|_\varphi &\leq 2\|x\| \|\xi\|_\varphi,\text{ and}\\
		\| \xi\cdot x - (\varphi\otimes \sigma_{-i}^\varphi)(\bar{\hat\delta}(x)) \|_\varphi &\leq 2\|x\| \|\xi\|_\varphi.
	\end{align*}
Consequently, for $x\in M^\vphi\cap \overline{\dom{}}(\delta\oplus\hat\delta)$ we have
	\begin{align}\label{partial_derivative_bounds}
		\| (1\otimes \varphi)(\bar{\hat\delta}(x)\|_\varphi&\leq 3\|x\| \|\xi\|_\varphi,\text{ and}\\
		\| (\varphi\otimes\sigma_{-i}^\varphi(\bar{\hat\delta}(x))\|_\varphi & \leq 3\| x\| \|\xi\|_\varphi.\nonumber
	\end{align}
Moreover, for any $\lambda\in\R_+^\times$ and any $p\in \mc{E}_{\lambda}(\dom(\delta))$ we have
	\begin{align*}
		\| (1\otimes \varphi)(\bar{\hat\delta}(p)\|_\varphi&\leq 3\|p\| \|\xi\|_\varphi,\text{ and}\\
		\| (\varphi\otimes\sigma_{-i}^\varphi(\bar{\hat\delta}(p))\|_\varphi & \leq 3\lambda^{1/2}\| p\| \|\xi\|_\varphi.
	\end{align*}
\end{prop}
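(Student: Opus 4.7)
The plan is to first exhibit $\delta^*(a\otimes 1)$ and $\delta^*(1\otimes a)$ in closed form. For $a \in M^\vphi \cap \dom(\delta)$, setting $b=1$ (resp.\ $a=1$) in the general formulas of Lemma \ref{adjoint_formula} and using $\sigma_{-i}^\vphi(a)=a$ yields $\delta^*(a\otimes 1) = a\xi - (1\otimes\vphi)(\hat\delta(a))$ and $\delta^*(1\otimes a) = \xi\cdot a - (\vphi\otimes\sigma_{-i}^\vphi)(\hat\delta(a))$. I extend both identities to $a\in M^\vphi\cap\overline{\dom{}}(\delta\oplus\hat\delta)$ by invoking Theorem \ref{Kaplansky} to find a uniformly bounded approximating sequence $p_n\in M^\vphi\cap\dom(\delta)$ converging to $a$ simultaneously in $\|\cdot\|_{\bar\delta}$ and $\|\cdot\|_{\bar{\hat\delta}}$; this makes $(1\otimes\vphi)(\hat\delta(p_n))$ and $(\vphi\otimes\sigma_{-i}^\vphi)(\hat\delta(p_n))$ Cauchy in $L^2(M,\vphi)$, and closedness of $\delta^*$ delivers the formulas in the closure. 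Analogous formulas hold for $p\in\mc{E}_\lambda(\dom(\delta))$ with $\sigma_{-i}^\vphi(p)$ replaced by $\lambda p$.

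The crux is the sharp equality $\|\delta^*(u\otimes 1)\|_\vphi = \|\xi\|_\vphi = \|\delta^*(1\otimes u)\|_\vphi$ for unitary $u\in M^\vphi\cap\overline{\dom{}}(\delta\oplus\hat\delta)$. Setting $\zeta := (1\otimes\vphi)(\bar{\hat\delta}(u))$ and expanding gives $\|\delta^*(u\otimes 1)\|_\vphi^2 = \|\xi\|_\vphi^2 - 2\mathrm{Re}\<u\xi,\zeta\>_\vphi + \|\zeta\|_\vphi^2$, so the equality reduces to showing $\|\zeta\|_\vphi^2 = 2\mathrm{Re}\<u\xi,\zeta\>_\vphi$. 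Using Proposition \ref{closure_is_der} to place $u^*\zeta\in\dom(\bar\delta)$, applying the defining property of $\xi$, and using the closure-level Leibniz identity $\bar\delta(u^*) = -u^*\cdot\bar\delta(u)\cdot u^*$ (obtained from $\bar\delta(uu^*)=0$) allows one to compute $\<u\xi,\zeta\>_\vphi$ explicitly and obtain the cancellation after matching KMS terms; the argument for $1\otimes u$ is symmetric.

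For self-adjoint $a\in M^\vphi\cap\overline{\dom{}}(\delta\oplus\hat\delta)$ with $\|a\|\leq 1$, the functional-calculus decomposition $a = \tfrac{u+u^*}{2}$ with $u = a + i\sqrt{1-a^2}$ (and Corollary \ref{extC1} to ensure $u\in M^\vphi\cap\overline{\dom{}}(\delta\oplus\hat\delta)$), combined with the triangle inequality on the formulas above, yields the self-adjoint estimates; decomposing general $x$ into real and imaginary parts supplies the factor $2$. The partial-derivative bounds (\ref{partial_derivative_bounds}) then follow from the triangle inequality $\|(1\otimes\vphi)(\bar{\hat\delta}(x))\|_\vphi \leq \|x\xi\|_\vphi + \|x\xi - (1\otimes\vphi)(\bar{\hat\delta}(x))\|_\vphi \leq 3\|x\|\|\xi\|_\vphi$. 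For $p\in\mc{E}_\lambda(\dom(\delta))$, the bounds on $\|(1\otimes\vphi)(\hat\delta(p))\|_\vphi$ and $\|(\vphi\otimes\sigma_{-i}^\vphi)(\hat\delta(p))\|_\vphi$ follow from the triangle inequality applied to the formulas $\delta^*(p\otimes 1) = p\xi - (1\otimes\vphi)(\hat\delta(p))$ and $\delta^*(1\otimes p) = \lambda\,\xi\cdot p - (\vphi\otimes\sigma_{-i}^\vphi)(\hat\delta(p))$, where the norms of $\delta^*(p\otimes 1)$ and $\delta^*(1\otimes p)$ are bounded via the polar decomposition $p = v|p|$ (Proposition \ref{prop:eigenop_polar_decomp}), writing $p\otimes 1 = v\cdot(|p|\otimes 1)$ and transferring the $M^\vphi$-bound on $|p|$ through Lemma \ref{adjoint_formula}. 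The factor $\lambda^{1/2}$ in the second bound traces back to the coefficient $\lambda$ in the formula for $\delta^*(1\otimes p)$ combined with the scaling $\|\xi\cdot p\|_\vphi = \lambda^{-1/2}\|p\|\|\xi\|_\vphi$, itself a consequence of $\Delta_\vphi\xi = \mu\xi$, $\sigma_{-i}^\vphi(p)=\lambda p$, and $\|\hat\xi\|_\vphi = \mu^{-1/2}\|\xi\|_\vphi$ (Lemma \ref{conj_var_entire}).

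The main obstacle is the sharp equality for unitaries: this is not merely an estimate but a precise identity requiring a delicate combination of the closure-level Leibniz rule, two applications of the defining property of $\xi$, and KMS manipulations to ensure that the cross term exactly cancels $\|\zeta\|_\vphi^2$. A secondary technical issue is checking that the partial-expectation maps $(1\otimes\vphi)$ and $(\vphi\otimes\sigma_{-i}^\vphi)$ extend continuously to the range of $\bar{\hat\delta}$ so that all formulas make sense in $L^2(M,\vphi)$ when $u$ (or $p$) lies only in the closure.
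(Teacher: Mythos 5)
Your overall strategy for the $M^\vphi$ estimates (closed forms for $\delta^*(a\otimes 1)$ and $\delta^*(1\otimes a)$, sharp equality for unitaries, averaging two unitaries for self-adjoints, real/imaginary decomposition for general $x$) matches the paper, as do the scaling factors $\mu^{1/2}\|\hat\xi\|_\vphi = \|\xi\|_\vphi$. However, two steps do not hold up as written.

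First, in the unitary case you propose to compute $\<u\xi,\zeta\>_\vphi$ by ``using Proposition \ref{closure_is_der} to place $u^*\zeta\in\dom(\bar\delta)$'' and then invoking the defining property of $\xi$. But $\zeta = (1\otimes\vphi)(\bar{\hat\delta}(u))$ is merely a vector in $L^2(M,\vphi)$; it is not an element of $M^\vphi\cap\overline{\dom{}}(\delta\oplus\hat\delta)$, so Proposition \ref{closure_is_der} gives no information about $u^*\zeta$. The paper avoids this by establishing the key identity at the polynomial level first, where $b(p)=(1\otimes\vphi)(\hat\delta(p))$ is automatically in $\dom(\delta)$ (it is a finite linear combination of elements of $\dom(\delta)$), so $b(p)\otimes 1\in\dom(\hat\delta^*)$ and the computation is algebraic; one then derives the equality $\|x\xi - b(x)\|_\vphi^2 = \|x\xi\|_\vphi^2 - \<\bar\delta(x^*x),1\otimes S_\vphi\xi\>_{\text{HS}}$ for $x$ in the closure by Kaplansky density. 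For unitary $u$ the correction term is $\<\bar\delta(1),\cdot\>=0$ and the equality drops out. Your reduction to ``$\|\zeta\|_\vphi^2 = 2\Re\<u\xi,\zeta\>$'' is precisely this identity, but your stated route to it has a real gap.

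Second, your treatment of the eigenoperator bounds takes a genuinely different and, I believe, unsalvageable route. You want to bound $\|\delta^*(p\otimes 1)\|_\vphi$ and $\|\delta^*(1\otimes p)\|_\vphi$ via the polar decomposition $p=v|p|$ and Lemma \ref{adjoint_formula}. But the partial isometry $v$ lies only in $\mc{E}_\lambda(M)$, not in $\dom(\delta)$, so Lemma \ref{adjoint_formula} does not apply to $v\cdot(|p|\otimes 1)$ as stated. The result that would bail you out, $\mc{E}_\lambda(M)\otimes\mc{E}_\gamma(M)^{op}\subset\dom(\delta^*)$, is Corollary \ref{boundedness_of_adjoint_formula}, which is proved \emph{from} this proposition and is not yet available. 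Likewise $|p|=\sqrt{p^*p}\notin\dom(\delta)$ in general. Also, ``$\|\xi\cdot p\|_\vphi = \lambda^{-1/2}\|p\|\|\xi\|_\vphi$'' is at best an inequality, not an equality. The paper's route here is cleaner and entirely circularity-free: since $p\in\dom(\delta)$, the polynomial-level identity gives $\|p\xi-b(p)\|_\vphi^2 = \|p\xi\|_\vphi^2 - \<\xi,b(p^*p)\>_\vphi$; since $p^*p\in M^\vphi$, the already-established bound $\|b(p^*p)\|_\vphi\leq 3\|p^*p\|\|\xi\|_\vphi$ applies, giving $\|b(p)\|_\vphi\leq 3\|p\|\|\xi\|_\vphi$; the second eigenoperator bound then follows by the $J_\vphi$-conjugation trick $(\vphi\otimes\sigma_{-i}^\vphi)(\bar{\hat\delta}(p)) = J_\vphi\mu^{1/2}\lambda^{1/2}(1\otimes\vphi)(\bar\delta(p^*))$ applied to $p^*\in\mc{E}_{\lambda^{-1}}(\dom(\delta))$, which is where the $\lambda^{1/2}$ comes from.
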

\begin{proof}
It is clear that the estimates in (\ref{partial_derivative_bounds}) follow immediately from the previous ones (along with the well known inequality $\| wx\|_\varphi \leq \| \sigma_{-i/2}^\varphi(x^*)\| \|w\|_\varphi$), which we now prove.

For notational simplicity, we write $b(x):=(1\otimes \varphi)\left[\bar{\hat\delta}(x)\right]$ for $x\in M^\vphi\cap \overline{\dom{}}(\delta\oplus\hat\delta)$. We first establish the inequalities which involve $b$. We let $\hat\xi$ denote the conjugate variable to $\hat\delta$, which exists by Lemma \ref{conj_var_entire}. First, we consider $p\in \dom(\delta)$. Using Lemma \ref{adjoint_formula} we have
	\begin{align*}
		\|b(p)\|_\varphi^2 &= \< b(p)\otimes 1, \hat\delta(p)\>_{\text{HS}}\\
						&= \< b(p)\hat\xi - (1\otimes \varphi)(\delta(b(p))), p\>_\varphi\\
						&= \<b(p)\hat\xi - (1\otimes \varphi\otimes \varphi)( \delta\otimes 1)(\hat\delta(p)),p\>_\varphi\\
						&= \<b(p) \hat\xi - (1\otimes \varphi\otimes \varphi)(1\otimes \hat\delta)(\delta(p)), p\>_\varphi\\
						&= \<b(p) \hat\xi - (1\otimes \varphi)(\delta(p)\# 1\otimes S_\vphi(\hat\xi) ), p\>_\varphi\\
						&= \<b(p) \hat\xi - (1\otimes \varphi) (\delta(p)\cdot \xi), p\>_\varphi\\
						&=\<b(p), p\xi\>_\varphi - \<\delta(p), p\otimes (S_\varphi \xi)\>_{\text{HS}},
	\end{align*}
where we have used (\ref{F_of_conjugate_variable_to_free_difference_quotient}) in the final two equalities. We can obtain the equality of the first and last expressions for any $x\in M^\vphi\cap\overline{\dom{}}(\delta\oplus\hat\delta)$ by applying Theorem \ref{Kaplansky} to approximate $x$ by polynomials $p$ with $\|p\|\leq \|x\|$. We next compute
	\begin{align*}
		\|b(x)\|_\varphi^2 &= \<b(x), x\xi\>_\varphi - \<\bar\delta(x), x\otimes (S_\varphi \xi)\>_{\text{HS}}\\
			&=\<b(x), x\xi\>_\varphi - \< x^*\cdot\bar\delta(x), 1\otimes (S_\vphi\xi)\>_{\text{HS}}\\
			&=\<b(x), x\xi\>_\varphi - \< \bar\delta(x^*x), 1\otimes (S_\vphi \xi)\>_{\text{HS}} + \<\bar\delta(x^*)\cdot x,1\otimes (S_\vphi\xi)\>_{\text{HS}}.
	\end{align*}
We focus on the third term above:
	\begin{align*}
		\<\bar\delta(x^*)\cdot x,1\otimes (S_\vphi \xi)\>_{\text{HS}} &= \<\bar{\hat\delta}(x)^\dagger, 1\otimes S_\vphi(x\xi)\>_{\text{HS}}\\
			&= \<(x\xi)\otimes 1, \bar{\hat\delta}(x)\>_{\text{HS}}\\
			&= \<x\xi, b(x)\>_\varphi.
	\end{align*}
Thus we have shown
	\begin{align*}
		\|b(x)\|_\varphi^2 &= 2\Re{\<b(x), x\xi\>_\varphi} - \<\bar\delta(x^*x), 1\otimes S_\vphi(\xi)\>_{\text{HS}},
	\end{align*}
and consequently
	\begin{align}\label{eigenvector_jump_in}
		\| x\xi - b(x)\|_\varphi^2  = \|x\xi\|_\varphi^2 - \<\bar\delta(x^*x), 1\otimes S_\vphi(\xi)\>_{\text{HS}}.
	\end{align}
	
Now, if $x=u$ is a unitary then the above reduces to
	\begin{align*}
		\| u \xi - b(u)\|_\varphi^2 = \| u \xi\|_\varphi^2 = \|\xi\|_\varphi^2.
	\end{align*}
For self-adjoint $x=a$, let $\alpha>1$ and write $\frac{a}{\alpha\|a\|}=\frac{u_1+u_2}{2}$ as a sum of two unitaries $u_1,u_2$. In particular,
	\begin{align*}
		u_1&= \frac{a}{\alpha\|a\|} + i \sqrt{ 1- \frac{a^2}{\alpha^2\|a\|^2}}\\
		u_2&= \frac{a}{\alpha\|a\|} - i \sqrt{ 1- \frac{a^2}{\alpha^2\|a\|^2}}.
	\end{align*}
Then $u_1, u_2\in M^\vphi\cap \overline{\dom{}}(\delta\oplus\hat\delta)$ by Corollary \ref{extC1} and by the first part of the proof we have
	\begin{align*}
		\| a\xi - b(a)\|_\varphi &\leq \frac{\alpha\|a\|}{2}\left( \|u_1\xi - b(u_1)\|_\varphi + \|u_2\xi - b(u_2)\|_\varphi\right)= \alpha\|a\|\|\xi\|_\varphi^2.
	\end{align*}
Letting $\alpha\to 1$ yields the desired inequality. Finally, for generic $x$ simply write it as the sum of its real and imaginary parts, use the triangle inequality, and apply the previous bound.

Towards proving the inequalities without $b$, we recall that the Tomita operator $S_\varphi$ has the polar decomposition $S_\varphi =J_\varphi \Delta_\varphi^{1/2} $, where $J_\varphi$ is an anti-linear isometry and $\Delta_\varphi$ is the modular operator. In particular, $1=S_\varphi S_\varphi = J_\varphi \Delta_\varphi^{1/2} S_\varphi$. Hence for $x\in M^\vphi\cap\overline{\dom{}}(\delta\oplus\hat\delta)$ we have
	\begin{align*}
		\xi x - (\varphi\otimes \sigma_{-i}^\varphi)(\bar{\hat\delta}(x)) &= J_\varphi \Delta_\varphi^{1/2}\left[ x^* \mu \hat\xi - (\sigma_i^\varphi\otimes \varphi)(\bar\delta(x^*))\right]\\
			&= J_\varphi \left[ \sigma_{-i/2}^\varphi(x^*)\mu \Delta_\vphi^{1/2}\hat\xi - (1\otimes \varphi)( \sigma_{i/2}^\varphi\otimes \sigma_{-i/2}^\varphi)(\bar\delta(x^*))\right]\\
			&= J_\varphi \mu^{1/2} \left[ \sigma_{-i/2}^\varphi(x^*)\hat\xi - (1\otimes\varphi)(\bar\delta(\sigma_{i/2}^\varphi(x^*)))\right]\\
			&=J_\varphi \mu^{1/2} \left[ x^*\hat\xi - (1\otimes\varphi)(\bar\delta(x^*))\right]
	\end{align*}
where we have used (\ref{modular_operator_of_conjugate_variables_to_free_difference_quotient}) with $z=-i/2$ in the second-to-last equality. Consequently
	\begin{align}\label{J_trick}
		\| \xi x- (\varphi\otimes \sigma_{-i}^\varphi)(\bar{\hat\delta}(x))\|_\varphi = \mu^{1/2} \|x^*\hat\xi - (1\otimes\vphi)(\bar\delta(x^*))\|_\varphi
	\end{align}
If $x=u$ is unitary, then this and the previously established equality for unitaries yields
	\begin{align*}
		\| \xi u - (\varphi\otimes \sigma_{-i}^\varphi)(\bar{\hat\delta}(u)) \|_\varphi = \mu^{1/2} \|\hat\xi\|_\varphi = \|\xi\|_\varphi
	\end{align*}
where the last equality follows from a simple computation using (\ref{adjoint_of_conjugate_variables_to_free_difference_quotient}) and  (\ref{modular_operator_of_conjugate_variables_to_free_difference_quotient}).
	
The inequalities for $a,x\in M^\vphi\cap \overline{\dom{}}(\delta\oplus\hat\delta)$ self-adjoint and generic, respectively, also follow from (\ref{J_trick}) by using the previously established inequalities and $\mu^{1/2}\|\hat{\xi}\|_\vphi=\|\xi\|_\vphi$. 

Now, fix $\lambda\in \R_+^\times$, and let $p\in \mc{E}_\lambda(\dom(\delta))$. Then (\ref{eigenvector_jump_in}) holds for $p$:
	\begin{align*}
		\|p\xi - b(p)\|_\vphi^2&=\|p\xi\|_\vphi^2 - \< \delta(p^*p), 1\otimes S_\vphi(\xi)\>_{\text{HS}}\\
			&= \|p\xi\|_\vphi^2 - \< \xi\otimes 1, \delta(p^*p)^\dagger\>_{\text{HS}}\\
			&= \|p\xi\|_\vphi^2 - \<\xi, b(p^*p)\>_{\text{HS}}.
	\end{align*}
Then, since $p^*p\in M^\vphi$, we can use (\ref{partial_derivative_bounds}) to obtain
	\begin{align*}
		\|p\xi - b(p)\|_\vphi^2&\leq \|p\xi\|_\vphi^2 + \|\xi\|_\vphi 3\|p^*p\| \|\xi\|_\vphi\\
			&\leq 4\|p\|^2\|\xi\|_\vphi^2.
	\end{align*}
Hence
	\[
		\|b(p)\|_\vphi \leq \|p\xi\|_\vphi + \|p\xi - b(p)\|_\vphi \leq 3 \|p\| \|\xi\|_\vphi
	\]
The final estimate then follows from
	\begin{align*}
		(\vphi\otimes \sigma_{-i}^\vphi)( \bar{\hat{\delta}}(p)) &= J_\vphi \Delta_\vphi^{1/2} (\sigma_i^\vphi\otimes \vphi)(\bar{\delta}(p^*))\\
			&= J_\vphi  (1\otimes\vphi)(\sigma_{i/2}^\vphi\otimes \sigma_{i/2}^\vphi)(\bar\delta(p^*))\\
			&= J_\vphi \mu^{1/2}  (1\otimes \vphi)(\bar\delta( \sigma_{i/2}^\vphi(p^*)))\\
			&= J_\vphi \mu^{1/2}\lambda^{1/2} (1\otimes \vphi)(\bar\delta( p^*)),
	\end{align*}
and the fact that $\mu^{1/2}\|\hat{\xi}\|_\vphi=\|\xi\|_\vphi$.
\end{proof}

\begin{cor}\label{boundedness_of_adjoint_formula}
Let $\delta\colon \dom(\delta)\to M_\infty\otimes M_\infty^{op}$ be a $\mu$-modular derivation for some $\mu>0$, with $\dom(\delta)$ generated by eigenoperators of $\sigma^\vphi$. Assume that the conjugate variable $\xi$ to $\delta$ exists. Then the closures of the densely defined maps $(1\otimes\varphi)\circ\hat\delta$ and $(\varphi\otimes\sigma_{-i}^\varphi)\circ\hat\delta$ on $L^2(M,\varphi)$ (both with domain $\dom(\delta)$) satisfy for any $\lambda\in \R_+^\times$ and $x\in \mc{E}_\lambda(M)$
	\begin{align*}
		\left\|\overline{(1\otimes \varphi)\circ\hat\delta}(x)\right\|_\varphi &\leq 3\|x\|\|\xi\|_\varphi ,\qquad \text{and} \\
		\|\overline{(\varphi\otimes \sigma_{-i}^\varphi)\circ\hat\delta}(x)\|_\varphi &\leq 3\lambda^{1/2}\|x\| \|\xi\|_\varphi.
	\end{align*}
Furthermore, for every $\lambda,\gamma\in \R_+^\times$ we have $\mc{E}_\lambda(M)\otimes \mc{E}_\gamma(M)^{op} \subset \dom{(\delta^*)}$.
\end{cor}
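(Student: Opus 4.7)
The plan is to deduce both claims from Proposition \ref{one_component_of_derivative_is_bounded} via a density argument that approximates elements of $\mc{E}_\lambda(M)$ and $\mc{E}_\gamma(M)$ by elements of $\mc{E}_\lambda(\dom(\delta))$ and $\mc{E}_\gamma(\dom(\delta))$ with controlled operator norms.

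First I would prove the norm estimates. Given $x\in \mc{E}_\lambda(M)$, Kaplansky's density theorem (applied to the $*$-algebra $\dom(\delta)$, which is strongly dense in $M$) produces $y_n\in \dom(\delta)$ with $\|y_n\|\leq \|x\|$ and $y_n\to x$ $*$-strongly. Setting $p_n := \mc{E}_\lambda(y_n)$, the argument preceding Corollary \ref{cor:bicentralizer} (showing $\mc{E}_\lambda(M)\subset M$) shows $p_n\to \mc{E}_\lambda(x) = x$ in $L^2(M,\vphi)$, with $\|p_n\|\leq \|x\|$ and $p_n\in \mc{E}_\lambda(\dom(\delta))$ (since $\dom(\delta)$ is generated by eigenoperators and is therefore stable under $\mc{E}_\lambda$). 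The last two estimates in Proposition \ref{one_component_of_derivative_is_bounded} yield
\[
\|(1\otimes\vphi)(\hat\delta(p_n))\|_\vphi \leq 3\|x\|\|\xi\|_\vphi, \qquad \|(\vphi\otimes\sigma_{-i}^\vphi)(\hat\delta(p_n))\|_\vphi \leq 3\lambda^{1/2}\|x\|\|\xi\|_\vphi.
\]
These $L^2$-bounded sequences admit weakly convergent subsequences, so by Mazur's lemma I can pass to a sequence of convex combinations $\tilde{p}_n$ (still $L^2$-converging to $x$, still of operator norm at most $\|x\|$) whose images under both maps converge in $L^2$. This places $x$ in the domains of both closures, and lower semicontinuity of the norm gives the stated bounds.

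For the claim about $\dom(\delta^*)$, fix $a\in \mc{E}_\lambda(M)$ and $b\in \mc{E}_\gamma(M)$. Using the construction above, I can choose sequences $a_n\in \mc{E}_\lambda(\dom(\delta))$ and $b_n\in \mc{E}_\gamma(\dom(\delta))$ with $\|a_n\|\leq\|a\|$, $\|b_n\|\leq\|b\|$, $a_n\to a$ and $b_n\to b$ in $L^2$, such that $(1\otimes\vphi)(\hat\delta(a_n))$ and $(\vphi\otimes\sigma_{-i}^\vphi)(\hat\delta(b_n))$ converge in $L^2$ to the respective closures evaluated at $a$ and $b$. Since $a_n\otimes b_n \in \dom(\delta)\otimes\dom(\delta)^{op}\subset \dom(\delta^*)$, Lemma \ref{adjoint_formula} gives
\[
\delta^*(a_n\otimes b_n) = a_n\cdot\xi\cdot\sigma_{-i}^\vphi(b_n) - a_n\cdot (\vphi\otimes\sigma_{-i}^\vphi)(\hat\delta(b_n)) - (1\otimes\vphi)(\hat\delta(a_n))\cdot\sigma_{-i}^\vphi(b_n).
\]
Using $\sigma_{-i}^\vphi(b_n)=\gamma b_n$, each of the three terms converges in $L^2(M,\vphi)$: the first by the $*$-strong, norm-bounded convergence of $a_n$ and $b_n$ under the bounded two-sided action on $L^2$; the second and third by combining the established $L^2$-convergences with the boundedness of the remaining factor. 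Simultaneously $a_n\otimes b_n\to a\otimes b$ in $L^2(M\bar\otimes M^{op})$, and closedness of $\delta^*$ (as the adjoint of a densely defined operator) then yields $a\otimes b\in \dom(\delta^*)$.

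The main technical point is ensuring that a \emph{single} pair of approximating sequences $(a_n),(b_n)$ works for all three terms in the formula for $\delta^*(a_n\otimes b_n)$ simultaneously. This forces the use of the Mazur convex-combination maneuver from the first part, not just any norm-bounded Kaplansky approximation.
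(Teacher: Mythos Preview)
Your approach is basically sound but has one genuine gap and is more elaborate than the paper's argument.

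The gap: you never establish that $B := (1\otimes\vphi)\circ\hat\delta$ and $C := (\vphi\otimes\sigma_{-i}^\vphi)\circ\hat\delta$ are closable, yet you freely speak of their closures. Closability is part of what the corollary asserts and does not follow from anything you have cited. The fix is short: since $\xi$ exists, Lemma~\ref{conj_var_entire} produces the conjugate variable $\hat\xi$ to $\hat\delta$, and then Lemma~\ref{adjoint_formula} gives $\dom(\delta)\otimes\dom(\delta)^{op}\subset\dom(\hat\delta^*)$. For $p,x\in\dom(\delta)$ one computes $\<p,B(x)\>_\vphi = \<p\otimes 1,\hat\delta(x)\>_{\text{HS}} = \<\hat\delta^*(p\otimes 1),x\>_\vphi$, so $\dom(\delta)\subset\dom(B^*)$ and $B$ is closable; the argument for $C$ is analogous.

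Once closability is in hand, your Mazur maneuver works but is unnecessary. The paper argues via the double adjoint: with the same norm-bounded approximants $p_n\in\mc{E}_\lambda(\dom(\delta))$ you construct, for any $w\in\dom(B^*)$ one has
\[
|\<x,B^*(w)\>_\vphi| = \lim_n |\<B(p_n),w\>_\vphi| \leq 3\|x\|\,\|\xi\|_\vphi\,\|w\|_\vphi,
\]
using only the uniform bound from Proposition~\ref{one_component_of_derivative_is_bounded}. This immediately gives $x\in\dom((B^*)^*)=\dom(\bar B)$ with the stated estimate, with no need for weak subsequences or convex combinations. For the inclusion $\mc{E}_\lambda(M)\otimes\mc{E}_\gamma(M)^{op}\subset\dom(\delta^*)$, the paper similarly just bounds $\<a\otimes b,\bar\delta(w)\>_{\text{HS}}$ via the approximants and the formula of Lemma~\ref{adjoint_formula}, rather than proving $L^2$-convergence of $\delta^*(a_n\otimes b_n)$. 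Your route through term-by-term convergence is correct (it uses that norm-bounded $\|\cdot\|_\vphi$-convergence implies strong convergence in the GNS representation, together with the boundedness of $S_\vphi$ on each eigenspace $E_\gamma$), but the weak-type argument is shorter and makes the Mazur step, and hence your final paragraph about needing a single simultaneous approximating sequence, superfluous.
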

\begin{proof}
Denote the two above maps by $B$ and $C$, respectively. We first show $B$ and $C$ are closable by showing the $L^2$-dense set $\dom(\delta)$ lies in the domain of their adjoints. Given $p,x\in \dom(\delta)$ we have
	\begin{align*}
		|\< p, B(x)\>_\varphi |&= |\varphi(p^* (1\otimes \varphi)(\hat{\delta}(x)))|\\
			&= |\varphi\otimes\varphi^{op}( (p^*\otimes 1)\# \hat{\delta}(x)) |\\
			&= |\<p\otimes 1, \hat{\delta}(x)\>_\varphi|\\
			&\leq \|\hat{\delta}^*(p\otimes 1)\|_\varphi \|x\|_\varphi,
	\end{align*}
and hence $p\in \dom(B^*)$. Similarly, we have
	\begin{align*}
		|\<p, C(x)\>_\varphi | &= |\varphi( p^* (\varphi\otimes \sigma^\vphi_{-i})(\hat{\delta}(x)))|\\
			&= |\varphi( (\varphi\otimes 1)(\hat{\delta}(x))p^*)|\\
			&= |\varphi\otimes\varphi^{op}( (1\otimes p^*)\# \hat{\delta}(x))|\\
			&\leq \| \hat{\delta}^*(1\otimes p)\|_\varphi \|x\|_\varphi,
	\end{align*}
so that $p\in \dom(C^*)$. Thus $B$ and $C$ are closable, and we let $\bar{B}$ and $\bar{C}$ denote their closures.

Let $x\in \mc{E}_\lambda(M)$ for some $\lambda\in \R_+^\times$, then by Kaplansky's density theorem we can find a sequence $(p_n)_{n\in\N}\subset \dom(\delta)$ which converges to $x$ in $L^2(M,\varphi)$ and satisfies $\|p_n\|\leq \|x\|$ for each $n$. By replacing $p_n$ with $\mc{E}_\lambda(p_n)$ for each $n\in \N$, we may assume $p_n\in \mc{E}_\lambda(\dom(\delta))$. For $w\in \dom(B^*)$, using the final bounds in Proposition \ref{one_component_of_derivative_is_bounded} we have
	\begin{align*}
		|\<x, B^*(w)\>_\varphi| &= \lim_{n\to\infty} |\<p_n, B^*(w)\>_\varphi|\\
			&=\lim_{n\to\infty} |\< (1\otimes \varphi)(\hat{\delta}(p_n), w\>_\varphi|\\
			&\leq \limsup_{n\to \infty} 3\|p_n\| \|\xi\|_\varphi \|w\|_\varphi\\
			&\leq 3\|x\| \|\xi\|_\varphi \|w\|_\varphi,
	\end{align*}
which shows that $x$ is in the domain of $(B^*)^*=\bar{B}$ with $\|\bar{B}(x)\|_\varphi\leq 3\|x\|\|\xi\|_\varphi$. Similarly, we have $x\in \dom(\bar{C})$ with $\|\bar{C}(x)\|_\varphi \leq 3\lambda^{1/2} \|x\| \|\xi\|_\varphi$. 

Now, for $\lambda,\gamma\in \R_+^\times$ let $a\in \mc{E}_\lambda(M)$ and $b\in \mc{E}_\gamma(M)$. As above, we let $(p_n)_{n\in\N}\subset \mc{E}_\lambda(\dom(\delta))$ and $(q_n)_{n\in\N}\subset \mc{E}_\gamma(\dom(\delta))$ be sequences converging to $a$ and $b$ in $L^2(M,\vphi)$, respectively, and satisfying $\|p_n\|\leq \|a\|$ and $\|q_n\|\leq \|b\|$ for all $n\in \N$. Since $S_\vphi$ is bounded on $E_\gamma$, we note that $(q_n^*)_{n\in\N}$ converges to $b^*$ in $L^2(M,\vphi)$. Consequently, $(p_n\otimes q_n)_{n\in\N}$ converges to $a\otimes b$ in $L^2(M\bar\otimes M^{op})$. Using the formula in Lemma \ref{adjoint_formula}, we have for any $w\in\dom(\bar\delta)$
	\begin{align*}
		\<a\otimes b, \bar\delta(w)\>_{\text{HS}}&=\lim_{n\to\infty} \< p_n\otimes q_n , \bar\delta(w)\>_{\text{HS}}\\
			&=\lim_{n\to\infty} \< p_n \xi \sigma_{-i}^\vphi(q_n) - B(p_n)\sigma_{-i}^\vphi(q_n) - p_n C(q_n), w\>_\vphi\\
			&\leq \limsup_{n\to\infty} \left[ \|\sigma_{-i/2}^\vphi(q_n)\| \|p_n\xi\|_\vphi + \|\sigma_{-i/2}^\vphi(q_n)\| \|B(p_n)\|_\vphi + \|p_n\| \|C(q_n)\|_\vphi\right] \|w\|_\vphi\\
			&\leq \limsup_{n\to\infty} 7\gamma^{1/2} \|q_n\| \|p_n\| \|\xi\|_\vphi \|w\|_\vphi\leq 7\gamma^{1/2} \|b\| \|a\| \|\xi\|_\vphi \|w\|_\vphi,
	\end{align*}
Thus $a\otimes b\in \dom(\delta^*)$.
\end{proof}
%Now, let $x_1\in M$ and $x_2\in E_\lambda$ for some $\lambda>0$. Since $x_1\in \dom{(\bar{B}_y)}$ and $x_2\in \dom{(\bar{C}_y)}$, there exists sequences $(p_n)_{n\geq 1}\subset \dom{(B_y)}=\P$ and $(q_m)_{m\geq 1}\subset \dom{(C_y)}=\P$ converging to $x_1$ and $x_2$ in $L^2(M,\varphi)$, respectively, such that $B_y(p_n)\to \bar{B}_y(x_2)$ and $C_y(q_m)\to \bar{C}_y(x_2)$ in $L^2(M,\varphi)$. In particular, for $z\in \dom{(\delta_{y^*})}$ and using Lemma \ref{adjoint_formula} we have
%	\begin{align*}
%		\<x_1\otimes x_2, \delta_{y^*}(z)\>_{\text{HS}} &= \lim_{n,m\to\infty} \< p_n\otimes q_m, \delta_{y^*}(z)\>_{\text{HS}}\\
%			&= \lim_{n,m\to\infty} \< p_n\xi_{y^*} \sigma_{-i}^\varphi(q_m) - B_y(p_n)\sigma_{-i}^\varphi(q_m) - p_n C_y(q_m), z\>_\varphi\\
%			&= \< x_1\xi_{y^*} \sigma_{-i}^\varphi(x_2) - \bar{B}_y(x_1)\sigma_{-i}^\varphi(x_2) - x_1 \bar{C}_y(x_2), z\>_\varphi.
%	\end{align*}

%%%%%%%%%%%%%%%%%%%%%%%%%%%%%%%%%%%%%%%%%%%%%%%%%%%%%%%%%%%%%%%%%%%%%%%%%%%%%
\subsection{Contraction resolvent arising as deformations of $\delta^*\bar\delta$}\label{cont_resolv}

Let $\delta\colon L^2(M,\vphi)\to L^2(M\bar\otimes M^{op})$ be a closable $\mu$-modular derivation, for some $\mu>0$, with $\dom(\delta)$ generated by eigenoperators of $\sigma^\vphi$. One of the key steps in the proofs of our main theorems will be to show that certain central elements $z$ lie in $\ker(\bar\delta)$, and so it will be useful to have a way to approximate such $z$ with elements from $\dom(\bar\delta)$. Towards this end, in this subsection we replicate in $M^\vphi$ the analysis of contraction resolvents given in \cite[Section 1]{Dab10}. The lemma we prove shows that we can in fact approximate such $z$ with similarly central elements in $\dom(\bar\delta)$. We refer the reader to \cite[Chapter I]{MR92} for a more general treatment of contraction resolvents.

We consider $ L:=\delta^*\bar{\delta}$, a self-adjoint operator with dense domain. For each $t>0$ define $T_t:=e^{- t L}$. Then $\{T_t\}_{t>0}$ is a \emph{strongly continuous contraction semigroup} with infinitesimal generator $- L$. For each $\alpha>0$ define $\eta_\alpha=\alpha(\alpha+L)^{-1}$. Proposition 1.10 of \cite{MR92} implies
	\begin{align*}
		\eta_\alpha = \alpha\int_0^\infty e^{-\alpha s} T_s\ ds,
	\end{align*}
and that $\{\frac{1}{\alpha}\eta_\alpha\}_{\alpha>0}$ is a \emph{strongly continuous contraction resolvent} (\emph{cf.} \cite[Definition 1.4]{MR92}). In particular:
	\begin{enumerate}
		\item For all $\alpha>0$, $\eta_\alpha$ is a $\|\cdot\|_\varphi$-contraction; and
		
		\item $\eta_\alpha$ converges strongly to the identity as $\alpha\to \infty$ .
	\end{enumerate}
Moreover, $\text{Range}(\eta_\alpha)=\dom(L)\subset \dom(\bar{\delta})$ (\emph{cf.} the proof of \cite[Proposition 1.5]{MR92}).

Define $\zeta_\alpha:=(\eta_\alpha)^{1/2}$. Then we have by \cite[Lemma 3.2]{Pet09}
	\begin{align}\label{zeta_formula}
		\zeta_\alpha = \frac{1}{\pi}\int_0^\infty \frac{1}{\sqrt{t}} \eta_\alpha (t+ \eta_\alpha)^{-1}\ dt = \frac{1}{\pi}\int_0^\infty \frac{1}{\sqrt{t}(1+t)} \eta_{\alpha(1+t)/t}\ dt.
	\end{align}
Furthermore, $\text{Range}(\zeta_\alpha)=\dom(L^{1/2})=\dom(\bar{\delta})$, where the latter equality follows from $\|L^{1/2}(x)\|_\varphi = \|\bar{\delta}(x)\|_\varphi$ for all $x\in \dom(L^{1/2})$. Consequently, $\bar{\delta}\circ\zeta_\alpha$ defines a bounded operator. For $x\in L^2(M,\varphi)$ we have
	\begin{align*}
		\lim_{\alpha\to\infty} \| x - \zeta_\alpha(x)\|_\varphi &= \lim_{\alpha\to\infty}\left\| \frac{1}{\pi} \int_0^\infty \frac{1}{\sqrt{t}(1+t)}\left( x - \eta_{\alpha(1+t)/t}(x)\right)\ dt\right\|_\varphi\\
			&\leq \lim_{\alpha\to\infty}\frac{1}{\pi} \int_0^\infty \frac{1}{\sqrt{t}(1+t)} \left\| x - \eta_{\alpha(1+t)/t}(x)\right\|_\varphi\ dt.
	\end{align*}
Using property (1) above, we see that the integrand is dominated by $\frac{1}{\sqrt{t}(1+t)} 2\|x\|_\varphi$ and so the dominated convergence theorem implies
	\begin{align}\label{zeta_to_identity}
		\lim_{\alpha\to\infty} \| x - \zeta_\alpha(x)\|_\varphi=0.
	\end{align}

Recalling that $L^2(M,\vphi)$ admits bounded left \emph{and right} actions of $M^\vphi$, we have the following lemma.

\begin{lem}\label{comm_for_cont_res}
For $x\in \ker(\delta)\cap\ker(\hat\delta)\cap M^\vphi$ and $\xi\in L^2(M,\vphi)$
	\[
		\zeta_\alpha(\xi\cdot x)=\zeta_\alpha(\xi)\cdot x\qquad\text{ and }\qquad \zeta_\alpha(x\cdot \xi)=x\cdot\zeta_\alpha(\xi).
	\]
\end{lem}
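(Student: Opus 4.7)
The plan is to first establish the analogous commutation relations at the level of $\eta_\alpha$, and then pass to $\zeta_\alpha$ via the integral formula (\ref{zeta_formula}). The key point is that our hypotheses on $x$ are precisely tailored to kill the ``error terms'' in both the Leibniz extension (Lemma \ref{extension_of_Leibniz_rule}) and the adjoint formula (Lemma \ref{adjoint_formula}).

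First I would show: for every $\eta \in \dom(L)$, the elements $x\cdot \eta$ and $\eta\cdot x$ lie in $\dom(L) = \dom(\delta^*\bar\delta)$ with
\[
L(x\cdot \eta) = x\cdot L(\eta) \qquad\text{and}\qquad L(\eta\cdot x) = L(\eta)\cdot x.
\]
Since $x\in \ker(\delta)$, Lemma \ref{extension_of_Leibniz_rule} applies to the product $\ker(\delta)\cdot \dom(\bar\delta)\cdot \ker(\delta)$, giving $x\cdot\eta,\eta\cdot x\in \dom(\bar\delta)$ with $\bar\delta(x\cdot\eta)=x\cdot\bar\delta(\eta)$ and $\bar\delta(\eta\cdot x)=\bar\delta(\eta)\cdot x$ (the ``$\bar\delta(x)$'' terms drop). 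Next, because $\bar\delta(\eta)\in\dom(\delta^*)$ and $x\in \dom(\delta)$, Lemma \ref{adjoint_formula} gives
\[
\delta^*(x\cdot \bar\delta(\eta)) = x\cdot \delta^*(\bar\delta(\eta)) - (\vphi\otimes\sigma_{-i}^\vphi)\bigl[\bar\delta(\eta)\#(\sigma_{-i}^\vphi\otimes\sigma_i^\vphi)(\hat\delta(x)^\diamond)\bigr],
\]
and the error term vanishes since $\hat\delta(x)=0$. Similarly,
\[
\delta^*(\bar\delta(\eta)\cdot x) = \delta^*(\bar\delta(\eta))\cdot \sigma_{-i}^\vphi(x) - (1\otimes\vphi)\bigl[\bar\delta(\eta)\#(\sigma_{-i}^\vphi\otimes\sigma_i^\vphi)(\hat\delta(x)^\diamond)\bigr] = L(\eta)\cdot x,
\]
using both $\sigma_{-i}^\vphi(x)=x$ (from $x\in M^\vphi$) and $\hat\delta(x)=0$. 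This establishes the commutation of $L$ with left/right multiplication by $x$ on $\dom(L)$.

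Given this, for $\xi\in L^2(M,\vphi)$ let $\eta := \eta_\alpha(\xi)\in\dom(L)$, so $(\alpha+L)\eta = \alpha\xi$. Then
\[
(\alpha+L)(x\cdot \eta) = \alpha x\cdot\eta + x\cdot L(\eta) = x\cdot (\alpha+L)\eta = \alpha(x\cdot \xi),
\]
so $\eta_\alpha(x\cdot \xi) = x\cdot \eta = x\cdot \eta_\alpha(\xi)$; the same computation with $\eta\cdot x$ gives $\eta_\alpha(\xi\cdot x) = \eta_\alpha(\xi)\cdot x$. Finally, recalling from Remark \ref{right_action} that both left and right multiplication by $x\in M^\vphi\subset M_\infty$ are bounded on $L^2(M,\vphi)$, the integral formula (\ref{zeta_formula}) together with dominated convergence yields
\[
\zeta_\alpha(x\cdot\xi) = \frac{1}{\pi}\int_0^\infty \frac{1}{\sqrt{t}(1+t)}\eta_{\alpha(1+t)/t}(x\cdot\xi)\,dt = x\cdot \zeta_\alpha(\xi),
\]
and likewise $\zeta_\alpha(\xi\cdot x)=\zeta_\alpha(\xi)\cdot x$.

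The main bookkeeping obstacle is verifying that the adjoint formula applies in the form above to $\bar\delta(\eta)\in \dom(\delta^*)$ rather than just to $\eta\in\dom(\delta^*)$, but this is exactly what Lemma \ref{adjoint_formula} delivers. Once both error terms are seen to vanish thanks to the three conditions on $x$ ($x\in\ker\delta$, $x\in\ker\hat\delta$, $x\in M^\vphi$), the rest is the standard resolvent/semigroup passage.
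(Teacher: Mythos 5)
Your proposal is correct and follows essentially the same approach as the paper: establish that $L=\delta^*\bar\delta$ commutes with left and right multiplication by $x$ on $\dom(L)$ (via Lemma \ref{extension_of_Leibniz_rule} for the product $\ker(\delta)\cdot\dom(\bar\delta)\cdot\ker(\delta)$ and Lemma \ref{adjoint_formula} with the error terms killed by $\hat\delta(x)=0$ and $\sigma_{-i}^\vphi(x)=x$), deduce the same for $\eta_\alpha = \alpha(\alpha+L)^{-1}$, and then pass to $\zeta_\alpha$ through the integral representation (\ref{zeta_formula}). Your write-up is in fact slightly more explicit than the paper's about how the three hypotheses on $x$ each get used.
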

\begin{proof}
First note that $\delta(x)=0$ implies $ L(x)=0$, and so using $\hat\delta(x)=0$, Lemma \ref{adjoint_formula}, and Lemma \ref{extension_of_Leibniz_rule} we have $ L(\xi\cdot x)= L(\xi)\cdot x$ and $ L(x\cdot \xi) =x\cdot L(\xi)$ for all $\xi\in \dom(L)$. We next claim $\eta_\alpha(\xi\cdot x)=\eta_\alpha(\xi)\cdot x$ and $\eta_\alpha(x\cdot\xi)=x\cdot \eta_\alpha(\xi)$ for all $\xi\in L^2(M,\vphi)$. Suppose $\eta= \eta_\alpha(\xi\cdot x)$ and $\eta'=\eta_\alpha(\xi)$, which we note are contained in $\dom(L)\subset\dom(\bar\delta)$. Then
	\begin{align*}
		\frac{1}{\alpha}(\alpha+ L)(\eta - \eta'\cdot x) = \xi\cdot x  - \eta'\cdot x - \frac{1}{\alpha} L(\eta')\cdot x = \xi\cdot x - \frac{1}{\alpha} (\alpha +  L)(\eta')\cdot x = \xi\cdot x - \xi\cdot x =0,
	\end{align*}
hence $\eta_\alpha(\xi\cdot x)=\eta=\eta'\cdot x=\eta_\alpha(\xi)\cdot x$. Similarly, $\eta_\alpha(x\cdot\xi) = x\cdot \eta_\alpha(\xi)$. Finally, using (\ref{zeta_formula}) we have $\zeta_\alpha(\xi\cdot x)= \zeta_\alpha(\xi)\cdot x$ and $\zeta_\alpha(x\cdot\xi) = x\cdot \zeta_\alpha(\xi)$.
\end{proof}

%%%%%%%%%%%%%%%%%%%%%%%%%%%%%%%%%%%%%%%%%%%%%%%%%%%%%%%%%%%%%%%%%%%%%%%%%%%%%%%%%%%%
%                      Diffuse elements iin the centralizer                         %
%%%%%%%%%%%%%%%%%%%%%%%%%%%%%%%%%%%%%%%%%%%%%%%%%%%%%%%%%%%%%%%%%%%%%%%%%%%%%%%%%%%%

\section{Diffuse Elements in the Centralizer}\label{diffuse}

In this section we will show that there is an abundance of diffuse elements in the centralizer $M^\vphi$. Recall our notation from Section \ref{Generators}:
	\[
		\P=\C\<G\>=\C\<x_1,\ldots, x_n\>,
	\]
where $G=G^*$ consists of eigenoperators of $\sigma^\vphi$ and $x_1,\ldots, x_n$ are generators of the form in Proposition \ref{equivalent_forms_of_generators}.(ii). Then, more precisely, we will give a condition for when elements in $\C\<G\>\cap M^\vphi$ are diffuse. We begin by replicating \cite[Theorem 4.4]{CS05} in our non-tracial context.

%%%%%%%%%%%%%%%%%%%%%%%%%%%%%%%%%%%%%%%%%%%%%%%%%%%%%%%%%%%%%%%%%%%%%%%%%%%%%%%%%%%%
\subsection{An $L^2$-homology estimate}\label{technical_estimate_section}

Let $P_1\in \mc{B}(L^2(M,\varphi))$ denote the projection onto the cyclic vector. We let $\Psi\colon M\otimes M^{op}\to \text{FR}(L^2(M,\varphi))$ be the isometry into the finite-rank operators defined by
	\begin{align*}
		\Psi(a\otimes b)\xi = aP_1b\xi,\qquad a,b\in M,\ \xi\in L^2(M,\varphi).
	\end{align*}

For the matrix $A\in M_n(\C)$ as in Proposition \ref{equivalent_forms_of_generators}, let $\Gamma(\R^n, A^{it})''$ be the free Araki-Woods factor corresponding to the orthogonal group $\{A^{it}\}_{t\in\R}$ (\emph{cf.} \cite{Shl97}). It is generated by quasi-free semicircular elements $s_1,\ldots, s_n$ and admits a free quasi-free state $\varphi_A$ satisfying for each $j=1,\ldots,n$
	\begin{align*}
		\sigma_{z}^\varphi(s_j)=\sum_{k=1}^n [A^{iz}]_{jk}s_k.
	\end{align*}
The covariance of the system is given by $\varphi_A(s_js_k)=\left[\frac{2}{1+A}\right]_{kj}$. 

Let $\H_A=L^2(\Gamma(\R^n,A^{it})'', \varphi_A)$, then $\H_A$ can be identified with a Fock space on which each $s_j=\l(e_j)+\l(e_j)^*$ is a sum of left creation and left annihilation operators for an orthonormal basis $\{e_1,\ldots,e_n\}$ of $\R^n$ obliquely embedded in $\C^n$. Letting $r_j:=r(e_j)$, $j=1,\ldots,n$, be the corresponding right creation operators, we have that
	\begin{align*}
		[s_j,r_k] = \<e_j,e_k\>_{H_A} P_1=\<s_j,r_k\>_{\varphi_A} P_1= \left[\frac{2}{1+A}\right]_{kj} P_1.
	\end{align*}
We say that $\{r_1,\ldots, r_n\}$ is a \emph{quasi-dual system to $\{s_1,\ldots,s_n\}$ with covariance $\frac{2}{1+A}$}.

We consider the free product $(\M,\theta)=(M,\vphi)*(\Gamma(\R^n, A^{it})'',\vphi_A)$. Then, by using the right regular representation for $r_1,\ldots, r_n$ on $L^2(\M,\theta)=(L^2(M,\varphi),1)* (\H_A,\Omega)$, we can realize these operators in $\mc{B}(L^2(\M,\theta))$, where they satisfy $[x,r_k]=0$ for all $x\in M$.

\begin{rem}\label{conjugate_variables_are_same_as_dima}
Observe that $\Gamma(\R^n,A^{it})'' \cong \Gamma( M_{s.a.}\subset M)''$ since $s_1,\ldots, s_n$ have the same covariance as the generators $x_1,\ldots, x_n$ and also vary under the modular operator in the same way. Consequently, the maps $\P\ni p\mapsto \delta_j(p)\# s_j$, $j=1,\ldots, n$, are exactly the derivations considered in \cite{Shl03}, in which  Shlyakhtenko defined the conjugate variable to this derivation as an element $\xi\in L^2(M,\varphi)$ satisfying
	\begin{align*}
		\<\xi, p\>_\varphi = \<s_j, \delta_j(p)\# s_j\>_\theta\qquad \forall p\in \P,
	\end{align*}
provided it exists. The freeness condition implies that for $a,b\in M$ we have
	\begin{align*}
		\<s_j, as_jb\>_\theta=\theta(s_jas_jb) = \varphi(a)\varphi(b)\varphi(s_j^2) = \<1\otimes 1,a\otimes b\>_{\text{HS}}.
	\end{align*}
Thus if $J_\varphi(x_j\colon \C[x_k\colon k\neq j])$ exists then
	\begin{align*}
		\<J_\varphi(x_j\colon \C[x_k\colon k\neq j]), p\>_\varphi = \<1\otimes 1,\delta_j(p)\>_{\text{HS}} = \<s_j, \delta_j(p)\# s_j\>_\theta.
	\end{align*}
Hence $J_\varphi(x_j\colon \C[x_k\colon k\neq j])$ is also a conjugate variable in the sense of \cite{Shl03}. Moreover, since $\|x_j\|_\vphi=1$ for each $j=1,\ldots, n$, this also implies $\Phi_\varphi^*(x_1,\ldots, x_n)$ equals the free Fisher information from \cite[Definition 2.5]{Shl03}.
\end{rem}

For each $\epsilon>0$ and $j=1,\ldots, n$ define $x_j(\epsilon)=x_j+\sqrt{\epsilon} s_j$ and let $M_\epsilon= W^*(x_j(\epsilon)\colon j=1,\ldots, n)\subset \M$. Since $x_j$ and $s_j$ vary linearly in the same way under the action of $\sigma^\theta$, it is easy to see that $\forall \epsilon>0$ $M_\epsilon$ is globally invariant under $\sigma^\theta$. Therefore, by \cite[Theorem IX.4.2]{Tak03}, there is a conditional expectation $\mc{E}_\epsilon\colon \M\to M_\epsilon$. For each $\epsilon>0$ let $p_\epsilon$ denote the orthogonal projection from $L^2(\M,\theta)$ to $L^2(M_\epsilon, \theta)$ so that for $x\in \M$ we have $p_\epsilon xp_\epsilon = \mc{E}_\epsilon(x) p_\epsilon$. Define for each $j=1,\ldots, n$ $r_j(\epsilon)= p_\epsilon\frac{1}{\sqrt{\epsilon}} r_j p_\epsilon$ so that
	\begin{align*}
		[x_j(\epsilon),r_k(\epsilon)] = p_\epsilon \frac{1}{\sqrt{\epsilon}} [x_j+\sqrt{\epsilon} s_j,r_k]p_\epsilon = p_\epsilon[s_j,r_k]p_\epsilon = \left[\frac{2}{1+A}\right]_{kj} P_1,
	\end{align*}
where we have used the fact that $M_\epsilon$ is unital to conclude $P_1=p_\epsilon P_1 p_\epsilon$. Hence $\{r_1(\epsilon),\ldots, r_n(\epsilon)\}$ is a quasi-dual system to $\{x_1(\epsilon),\ldots, x_n(\epsilon)\}$ with covariance $\frac{2}{1+A}$.

We will associate the following quantity to $x_1,\ldots, x_n$, which can be thought of as a type of \emph{free entropy dimension}:
	\[
		d_A^\star(x_1,\ldots, x_n):= n - \liminf_{\epsilon\to 0} \epsilon \sum_{j=1}^n \left\|J_\theta^A(x_j(\epsilon)\colon \C[x_k(\epsilon)\colon k\neq j])\right\|_\theta^2.
	\]
This is the non-tracial analogue of the quantity $\delta^\star$ that appears in the tracial case as the result of formally applying L'H\^opital's rule to the free entropy dimension $\delta^*$ (\emph{cf.} \cite[Section 4.1]{CS05}). We note that we present this quantity merely as a convenient notation. Since our only examples are those where the $\liminf$ in the definition of $d_A^\star$ is zero, and since we do not have a corresponding non-tracial non-microstates free entropy to compare this quantity with, it is unclear if this is really the correct definition for a non-tracial free entropy dimension. In any case, the following lemma, which is proved using arguments from \cite[Propositions 3.6 and 3.7]{VoiV} adapted to the present context, tells us that $d_A^\star(x_1,\ldots, x_n)=n$ in the situations we will consider.

\begin{lem}\label{conjugate_variables_for_reguaralized_generators}
For each $\epsilon>0$ and $j\in\{1,\ldots, n\}$,
	\begin{align*}
		J_\theta^A(x_j(\epsilon)\colon \C[x_k(\epsilon)\colon k\neq j]) = \frac{1}{\sqrt{\epsilon}} \mc{E}_\epsilon(s_j),
	\end{align*}
and if $J_\varphi^A(x_j\colon \C[x_k\colon k\neq j])$ exists then
	\begin{align*}
		J_\theta^A(x_j(\epsilon)\colon \C[x_k(\epsilon)\colon k\neq j])=p_\epsilon J_\varphi^A(x_j\colon \C[x_k\colon k\neq j]).
	\end{align*}
In particular, when $J_\varphi^A(x_j\colon \C[x_k\colon k\neq j])$ exists for each $j=1,\ldots, n$, we have $d_A^\star(x_1,\ldots, x_n)=n$.
\end{lem}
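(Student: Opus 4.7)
The argument has two main steps---the first identity via a commutator trick with the quasi-dual system, and the second via extending the derivation to $\M$ using freeness---followed by a short norm bound for the $d_A^\star$ conclusion.

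\emph{First identity.} Since $r_k^*$ commutes with $M\subset\M$ and $[s_j, r_k^*] = -\overline{[\tfrac{2}{1+A}]_{kj}}P_1 = -[\tfrac{2}{1+A}]_{jk}P_1$ (using Hermiticity of $\tfrac{2}{1+A}$), one has $[x_j(\epsilon), r_k^*] = -\sqrt\epsilon[\tfrac{2}{1+A}]_{jk}P_1$. For a monomial $q = x_{j_1}(\epsilon)\cdots x_{j_m}(\epsilon)$, the Leibniz rule yields
\[
	[q, r_k^*] = -\sqrt\epsilon\sum_l\left[\tfrac{2}{1+A}\right]_{j_l k}x_{j_1}(\epsilon)\cdots x_{j_{l-1}}(\epsilon)P_1 x_{j_{l+1}}(\epsilon)\cdots x_{j_m}(\epsilon).
\]
Applying this to the vacuum $1\in L^2(\M,\theta)$ and taking $\theta$, and using $r_k^*\cdot 1 = 0$ (right annihilation on the vacuum) together with $r_k\cdot 1 = s_k\cdot 1$ (both act on $\Omega$ to give $e_k$), one gets $\langle s_k, q\rangle_\theta = \sqrt\epsilon\langle 1\otimes 1, \partial_{x_k(\epsilon)}(q)\rangle_{\text{HS}}$. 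Since $q\in M_\epsilon$, $\langle s_k, q\rangle_\theta = \langle\mc{E}_\epsilon(s_k), q\rangle_\theta$, which shows $\tfrac{1}{\sqrt\epsilon}\mc{E}_\epsilon(s_j)$ is the conjugate variable.

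\emph{Second identity.} By uniqueness of the conjugate variable it suffices to verify that $p_\epsilon\xi_j$ (with $\xi_j := J_\vphi^A(x_j\colon\C[x_k\colon k\neq j])$) also satisfies the defining equation. Since $p_\epsilon q = q$ for $q\in L^2(M_\epsilon,\theta)$, this reduces to showing $\langle\xi_j, q\rangle_\theta = \langle 1\otimes 1, \partial_{x_j(\epsilon)}(q)\rangle_{\text{HS}}$ for $q\in\C[x_1(\epsilon),\ldots, x_n(\epsilon)]$. The idea is to extend the derivation: define $\tilde\partial_j$ on $\C\langle x_1,\ldots, x_n, s_1,\ldots, s_n\rangle\subset\M$ by $\tilde\partial_j(x_k) = [\tfrac{2}{1+A}]_{kj}\cdot 1\otimes 1$ and $\tilde\partial_j(s_k) = 0$, extended by the Leibniz rule. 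Then $\tilde\partial_j$ and $\partial_{x_j(\epsilon)}$ agree on every generator $x_k(\epsilon) = x_k + \sqrt\epsilon s_k$, hence on $\C[x_1(\epsilon),\ldots, x_n(\epsilon)]$, so it is enough to show $\xi_j$ is the conjugate variable to $\tilde\partial_j$ in $L^2(\M,\theta)$. For this I would use Voiculescu's orthogonal decomposition of $L^2(\M,\theta)$ by reduced alternating words of centered elements in $\P^\circ$ and $\C\langle s_l\rangle^\circ$: since $\vphi(\xi_j) = 0$, $\xi_j$ lies in $L^2(M,\vphi)^\circ$, so any reduced alternating word $w$ of length $\geq 2$ gives $\langle\xi_j, w\rangle_\theta = 0$ by orthogonality, while $\tilde\partial_j(w)$ expands into elementary tensors $L_i\otimes R_i$ whose factors are themselves reduced alternating (centered) words where at least one side has positive length, so $\theta(L_i)\theta(R_i) = 0$ by freeness. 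For length-one $w$ the identity either reduces to the defining property of $\xi_j$ (if $w\in\P^\circ$) or gives $0=0$ (if $w$ is centered in $\C\langle s_l\rangle$, using $\tilde\partial_j(s_k) = 0$), and linearity completes the verification.

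\emph{Conclusion and obstacle.} The bound $\|J_\theta^A(x_j(\epsilon)\colon\cdot)\|_\theta = \|p_\epsilon\xi_j\|_\theta \leq \|\xi_j\|_\vphi < \infty$ immediately forces $\epsilon\sum_j\|J_\theta^A(x_j(\epsilon)\colon\cdot)\|_\theta^2 \to 0$ as $\epsilon\to 0$, hence $d_A^\star(x_1,\ldots, x_n) = n$. The main obstacle is the freeness-based case analysis in the second step: one must carefully verify that after applying $\tilde\partial_j$ to an alternating word and pairing with $1\otimes 1$, every surviving term really does vanish, which requires tracking how the Leibniz expansion interacts with the alternating/centered structure of the free product.
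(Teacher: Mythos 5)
Your proof is correct, and the first identity is handled by a genuinely different (and arguably cleaner) route than the paper's. The paper instead observes that $\partial_{x_j(\epsilon)}=\tfrac{1}{\sqrt\epsilon}\partial_{s_j}$ on $\C\<x_1(\epsilon),\ldots,x_n(\epsilon)\>$ and then proves, by an explicit induction on alternating blocks using free independence, that $s_j$ satisfies the conjugate-variable relation for this restricted derivation; your commutator argument with the (extended) right annihilation operator $r_k^*$ --- $[x_j(\epsilon),r_k^*]=-\sqrt\epsilon[\tfrac{2}{1+A}]_{jk}P_1$, applied to the vacuum and paired with $1$, together with $r_k^*1=0$ and $r_k1=s_k1$ --- yields $\<s_k,q\>_\theta=\sqrt\epsilon\<1\otimes1,\partial_{x_k(\epsilon)}(q)\>_{\text{HS}}$ in one stroke, avoiding the induction entirely. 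Both approaches are valid; the dual-system trick buys you a shorter proof, at the price of having to verify that the commutation relations for $r_k$, $r_k^*$ extend correctly from $\H_A$ to all of $L^2(\M,\theta)$ (a point the paper also glosses over when computing $[x_j(\epsilon),r_k(\epsilon)]$). For the second identity the paper simply says ``holds via a similar argument''; your $\tilde\partial_j$ construction and the case analysis over reduced alternating words is essentially what ``similar'' means here, just organized differently from the paper's explicit induction --- the crucial observation is, as you note, that the alternating structure forces the neighbors of any $w_i\in\P^\circ$ to lie in $\C\<s_k\>^\circ$, after which centering $a_s$ and $b_s$ produces only alternating words of length $\geq 2$ or products with a centered factor. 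One small point you use without comment: $\vphi(\xi_j)=0$, which follows from the defining relation applied to $p=1$. The $d_A^\star$ conclusion is identical to the paper's.
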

\begin{proof} We first note that for any $p\in \C\<x_1(\epsilon),\ldots, x_n(\epsilon)\>$ and any $j\in\{1,\ldots, n\}$
	\begin{align*}
		\partial_{x_j(\epsilon)}(p) = \frac{1}{\sqrt{\epsilon}} \partial_{s_j}(p)
	\end{align*}
Consequently,
	\begin{align*}
		J_\theta^A(x_j(\epsilon)\colon \C[x_k(\epsilon)\colon k\neq j]) = \frac{1}{\sqrt{\epsilon}} p_\epsilon J_\theta^A (s_j \colon \C[x_k(\epsilon)\colon k\neq j]).
	\end{align*}
Next, we claim that
	\begin{align*}
		J_\theta^A (s_j\colon \C[x_k(\epsilon)\colon k\neq j]) = J_{\varphi_A}^A(s_j\colon \C[s_k\colon k\neq j]) = s_j.
	\end{align*}
The last equality follows by \cite[Proposition 2.2]{Nel15}. Thus to show the first equality we must demonstrate
	\begin{align*}
		\<s_j, p\>_\theta = \< 1\otimes 1, \partial_{s_j} p \>_{\theta\otimes\theta^{op}}\qquad \forall p\in \C\<x_1(\epsilon),\ldots, x_k(\epsilon)\>.
	\end{align*}
By expanding each $x_k(\epsilon)=x_k+ \sqrt{\epsilon} s_k$, it suffices to show
	\begin{align}\label{induction_step}
		\<s_j, c_0p_1c_1\cdots p_mc_m\>_\theta = \sum_{k=1}^m \theta\otimes\theta^{op}( c_0p_1c_1\cdots c_	{k-1} \partial_{s_j}(p_k) c_k\cdots p_mc_m),
	\end{align}
where $c_0,\ldots, c_m\in \P$ and $p_1,\ldots, p_m\in \C\<s_1,\ldots, s_n\>$. We proceed by induction on $m$. Recalling that $\theta(s_j)=\varphi_A(s_j)=0$ and using free independence (twice) we have
	\begin{align*}
		\<s_j, c_0p_1c_1\>_\theta &= \varphi(c_0)\varphi(c_1)\varphi_A(s_j p_1)\\
						&= \varphi(c_0)\varphi(c_1) \varphi_A\otimes\varphi_A^{op}(\partial_{s_j}(p_1))\\
						&= \theta\otimes\theta^{op}( c_0\partial_{s_j}(p_1) c_1).
	\end{align*}
Let $m\geq 2$ and suppose (\ref{induction_step}) holds for elements of the form $c_0p_1c_1\cdots p_\l c_\l$ with $\l< m$. For $a\in \M$, write $\mathring{a}=a-\theta(a)$. Upon writing $p_k=\mathring{p}_k+\theta(p_k)$ for each $k=1,\ldots, m$ and $c_k=\mathring{c}_k+\theta(c_k)$ for each $k=1,\ldots,m-1$ and expanding, it suffices by the induction hypothesis to consider the case when $p_1,\ldots, p_m$ and $c_1,\ldots, c_{m-1}$ are centered with respect to $\theta$. We then have by free independence
	\begin{align*}
		\<s_j,c_0p_1\cdots p_mc_m\>_\theta = \theta(s_j\mathring{c}_0p_1\cdots p_mc_m) + \theta(c_0)\theta(s_jp_1\cdots p_mc_m) = 0.
	\end{align*}
On the other hand, if we write $\partial_{s_j}(p_k) = \sum_{\l} a_\l^k\otimes b_\l^k$ for each $k=1,\ldots, m$ then
	\begin{align*}
		 \sum_{k=1}^m\sum_\l \theta( c_0p_1c_1\cdots c_{k-1}a_\l^k) \theta(b_\l^k c_k\cdots p_mc_m).
	\end{align*}
The factors $\theta( c_0p_1c_1\cdots c_{k-1}a_\l^k)$ vanish if $k\geq 2$, while the factors $\theta(b_\l^k c_k\cdots p_mc_m)$ vanish if $k\leq m-1$. Since $m\geq 2$, at least one of these conditions always holds and we have proved the claim. 

That
	\begin{align*}
		J_\theta^A(x_j(\epsilon)\colon \C[x_k(\epsilon)\colon k\neq j])=p_\epsilon J_\varphi^A(x_j\colon \C[x_k\colon k\neq j]),
	\end{align*}
holds via a similar argument. Hence, when $J_\varphi^A(x_j\colon \C[x_k\colon k\neq j])$ exists for each $j=1,\ldots, n$, we have
	\[
		\liminf_{\epsilon\to 0} \epsilon \sum_{j=1}^n \left\| J_\theta^A(x_j(\epsilon)\colon \C[x_k(\epsilon)\colon k\neq j])\right\|_\theta^2 \leq \lim_{\epsilon\to0} \epsilon \sum_{j=1}^n \left\|J_\varphi^A(x_j\colon \C[x_k\colon k\neq j])\right\|_\vphi^2=0.
	\]
Thus $d_A^\star(x_1,\ldots, x_n)=n$.
\end{proof}

We let $1$ denote the cyclic vector in $L^2(\M,\theta)$. Define $S_\theta$ to be the closure of the operator $y1\mapsto y^*1$, densely defined on $\M 1\subset L^2(\M,\theta)$, with polar decomposition $S_\theta = J_\theta \Delta_\theta^{1/2}$. The restrictions of $S_\theta$ to the subspaces $L^2(M,\varphi)$ and $\H_A$ yield $S_\varphi=J_\varphi \Delta_\varphi^{1/2}$ and $S_{\varphi_A} = J_{\varphi_A} \Delta_{\varphi_A}^{1/2}$, respectively.

For $T\in \B(L^2(\M,\theta))$ we define
	\begin{align*}
		\rho(T):=J_\theta T^* J_\theta,
	\end{align*}
which we recall from Remark \ref{right_action} is the usual right action of $\mc{M}$ on $L^2(\mc{M},\theta)$, but differs from the one we have been considering thus far. An easy computation shows that $\rho$ is an isometry on $\Psi( M\otimes M^{op})$ with respect to the Hilbert-Schmidt norm. 
%Proof too simple for inclusion
%	\begin{align*}
%		\< \rho(aP_1b), \rho(cP_1d)\>_{\text{HS}} &=\sum_{j=1}^\infty \<J_\varphi b^* P_1 a^* J_\varphi e_j, J_\varphi d^* P_1 c^* J_\varphi e_j\>_\varphi\\
%			&= \sum_{j=1}^\infty \< d^* P_1 c^* J_\varphi e_j, b^* P_1 a^* J_\varphi e_j\>_\varphi\\
%			&= \sum_{j=1}^\infty \<c^* J_\varphi e_j, 1\>_\varphi \<1, a^* J_\varphi e_j\>_\varphi \< d^*, b^*\>_\varphi\\
%			&=\sum_{j=1}^\infty \< J_\varphi c, e_j\>_\varphi \< e_j, J_\varphi a\>_\varphi \<d^*, b^*\>_\varphi\\
%			&= \< J_\varphi c, J_\varphi a\>_\varphi \<d^*, b^*\>_\varphi\\
%			&= \< aP_1b, cP_1d\>_{\text{HS}}
%	\end{align*}
We also note that for $a,b\in M$,  $f,g\in M_\infty$, and $\xi\in L^2(M,\vphi)$
	\begin{align*}
		\rho(f)aP_1b\rho(g)\xi &= \<1, b\rho(g)\xi\>_\varphi \rho(f)a 1\\
			&= \<g^*J_\varphi b \xi, 1\>_\varphi a J_\varphi f^* 1\\
			&= \< J_\varphi g1, b\xi\>_\varphi a \sigma_{-i/2}^\varphi(f)1\\
			&= \<\sigma_{-i/2}^\varphi(g^*)1, b\xi\>_\varphi a\sigma_{-i/2}^\varphi(f) 1\\
			&= a\sigma_{-i/2}^\varphi(f) P_1 \sigma_{i/2}^\varphi(g) b \xi,
	\end{align*}
thus we define the actions
	\begin{align}\label{rho_action}
		\rho(f)\cdot (a\otimes b)\cdot \rho(g):= a\sigma_{-i/2}^\varphi(f)\otimes \sigma_{i/2}^\varphi(g) b.
	\end{align}

\begin{lem}\label{theta_on_conjugations_of_r}
For $x\in M$ and $r_j, s_j$ as above, we have $\rho(r_j)x1=0$. If $a,b\in M_\infty$ we have
	\begin{align*}
		\rho(a\rho(r_j) b)1 =\sigma_{-i/2}^\varphi(a)s_j\sigma_{-i/2}^\varphi(b) 1.
	\end{align*}
\end{lem}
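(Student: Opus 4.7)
The plan is to verify both claims by direct computation in the free-product Fock decomposition
\[
L^2(\M,\theta) = \C\,1 \oplus \bigoplus_{k\ge 1}\bigoplus_{i_1\neq i_2\neq\cdots\neq i_k}\mathring H_{i_1}\otimes\cdots\otimes \mathring H_{i_k},
\]
with $H_1 = L^2(M,\vphi)$ and $H_2 = \H_A$. The one modular-theoretic ingredient I will use is that the modular theory of $\theta$ restricts on the subspace $L^2(M,\vphi)\subset L^2(\M,\theta)$ to that of $\vphi$; in particular $J_\theta|_{L^2(M,\vphi)} = J_\vphi$, so $J_\theta$ preserves $\C\,1\oplus\mathring H_1$, and $J_\theta y^* 1 = \sigma_{-i/2}^\vphi(y) 1$ for every $y \in M_\infty$.

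For the first identity, I would write $\rho(r_j) x 1 = J_\theta r_j^* J_\theta x 1 = J_\theta r_j^* (J_\vphi x 1)$. Since $J_\vphi x 1 \in L^2(M,\vphi) = \C\,1\oplus\mathring H_1$ and since $r_j^*$ is a right annihilation operator that vanishes on every vector whose rightmost free-product slot is not of type $2$ ending in $e_j$, a direct inner-product check against the spanning simple tensors shows that $r_j^*$ annihilates all of $\C\,1\oplus\mathring H_1$. This immediately gives $\rho(r_j) x 1 = 0$.

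For the second identity, I would expand
\[
\rho(a\rho(r_j)b)\,1 = J_\theta b^* J_\theta\, r_j\, J_\theta a^*\, 1 = R(b)\, r_j\, \sigma_{-i/2}^\vphi(a)\, 1,
\]
where $R(b) := J_\theta b^* J_\theta$ is the standard right action of $b$. Using $R(b)\in \M'$, $R(b) 1 = \sigma_{-i/2}^\vphi(b)\,1$, $e_j = s_j 1$, and the ``append $e_j$'' description of $r_j$ on type-$1$ vectors, one computes
\[
r_j\,\sigma_{-i/2}^\vphi(a)\,1 = \vphi(a)\,e_j + \mathring{\sigma_{-i/2}^\vphi(a)1}\otimes e_j,
\]
and then $R(b)$ applied to this, via $R(b) s_j = s_j R(b)$ and the free-product concatenation rules, produces a four-term sum in the free-product basis. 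Independently, expanding the target $\sigma_{-i/2}^\vphi(a) s_j \sigma_{-i/2}^\vphi(b) 1$ by first computing $s_j \sigma_{-i/2}^\vphi(b) 1 = \vphi(b) e_j + e_j\otimes\mathring{\sigma_{-i/2}^\vphi(b)1}$ and then left-acting by $\sigma_{-i/2}^\vphi(a)$, yields the identical four-term sum, finishing the proof.

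The main obstacle is not conceptual but combinatorial: one must track the free-product concatenation rules (a type-$1$ element acting from the left on a vector whose first tensor factor is type $2$ produces a new type-$1$ slot in front, and symmetrically) and carefully split each intermediate vector into its vacuum and centered parts. No further analytic issues arise, since the assumption $a,b \in M_\infty$ makes $\sigma_{-i/2}^\vphi(a), \sigma_{-i/2}^\vphi(b)$ bounded elements of $M$.
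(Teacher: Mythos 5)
Your proposal is correct and takes essentially the same route as the paper: both the first and second identities are verified by direct computation in the free-product Fock decomposition, using that $J_\theta$ restricts to $J_\vphi$ on $L^2(M,\vphi)$, that $r_j^*$ annihilates anything without a type-$2$ rightmost slot, and that $J_\theta c 1 = \sigma_{-i/2}^\vphi(c^*)1$ for $c\in M_\infty$. The paper's version of the second identity is slightly slicker (it pushes $J_\theta$ through the simple tensor $b^*\otimes\Delta_{\varphi_A}^{1/2}e_j\otimes a^*$ in one pass rather than matching two four-term expansions), but the underlying ingredients and decomposition are identical.
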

\begin{proof}
For $x\in M$, we have $J_\theta x 1 = J_\vphi x 1 = \Delta_\vphi^{1/2} x^* 1 \in L^2(M,\vphi)$. Therefore
	\[
		\rho(r_j) x1 = J_\theta r_j^* (\Delta_\vphi^{1/2} x^* 1) = 0.
	\]
Next, we note the following identity for $c\in M_\infty$:
	\begin{align*}
		J_\theta c 1 = \Delta_\theta^{1/2} S_\theta c1 = \Delta_\varphi^{1/2} c^*1 = \sigma_{-i/2}^\varphi(c^*) 1.
	\end{align*}
We compute
	\begin{align*}
		\rho( a \rho(r_j) b)1 &= J_\theta b^* J_\theta r_j J_\theta a^* J_\theta 1 = J_\theta b^* J_\theta r_j \sigma_{-i/2}^\varphi(a) 1\\
			&= J_\theta b^* J_\theta \left[\sigma_{-i/2}^\varphi(a)\otimes e_j\right]= J_\theta b^* \left[\Delta_{\varphi_A}^{1/2}e_j\otimes a^*\right]\\
			&= J_\theta \left[b^*\otimes \Delta_{\varphi_A}^{1/2}e_j\otimes a^*\right]= \sigma_{-i/2}^\varphi(a) \otimes e_j\otimes \sigma_{-i/2}^\varphi(b)\\
			&= \sigma_{-i/2}^\varphi(a)s_j\sigma_{-i/2}^\varphi(b) 1,
	\end{align*}
as claimed.
\end{proof}

\begin{lem}\label{P1_on_commutator}
For all $x\in \mc{M}$ and $T\in \B(L^2(\M,\theta))$ we have
	\begin{align*}
		\text{Tr}(P_1[T_1,\rho(x)]) = \<\rho(x^*)1, \left(\Delta_\theta^{-1/2} \rho(T) - T\right)1\>_\theta
	\end{align*}
\end{lem}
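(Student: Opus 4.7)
The plan is a direct computation using the identity $\Tr(P_1 S) = \langle 1, S 1\rangle_\theta$, the expansion of the commutator into two pieces, and the Tomita--Takesaki identities linking $J_\theta$, $\Delta_\theta^{1/2}$, and $S_\theta$ (in particular $S_\theta(y 1) = y^* 1$ for $y \in \mc{M}$). First I would write
\[
\Tr(P_1[T,\rho(x)]) = \langle 1, T\rho(x) 1\rangle_\theta - \langle 1, \rho(x) T 1\rangle_\theta
\]
and handle the two terms separately.

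For the easier second term, the identity $\rho(x)^* = J_\theta x J_\theta = \rho(x^*)$ gives $\langle 1, \rho(x) T 1\rangle_\theta = \langle \rho(x^*) 1, T 1\rangle_\theta$, already matching the second half of the claimed formula. For the first term, moving $T$ to the bra side yields $\langle T^* 1, \rho(x) 1\rangle_\theta$; then $\rho(x) 1 = J_\theta x^* 1$ and $\rho(T) 1 = J_\theta T^* 1$, so the anti-linear isometry property $\langle J_\theta \xi, J_\theta \eta\rangle_\theta = \langle \eta, \xi\rangle_\theta$ (taking $\xi = J_\theta T^* 1$) converts $\langle T^*1,J_\theta x^* 1\rangle_\theta$ into $\langle x^* 1, \rho(T) 1\rangle_\theta$.

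The final step is to recognise $\langle x^* 1, \rho(T) 1\rangle_\theta$ as $\langle \rho(x^*) 1, \Delta_\theta^{-1/2} \rho(T) 1\rangle_\theta$. For any $x\in \mc{M}$, the relation $S_\theta(x^* 1) = x 1$ rearranges to $\Delta_\theta^{1/2} x^* 1 = J_\theta x 1 = \rho(x^*) 1$, so $\rho(x^*) 1 \in \dom(\Delta_\theta^{-1/2})$ with $\Delta_\theta^{-1/2}\rho(x^*) 1 = x^* 1$; the self-adjointness of $\Delta_\theta^{-1/2}$ then yields the identification.

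The only real subtlety, and the hard part of the bookkeeping, is the domain issue for $\Delta_\theta^{-1/2}\rho(T)$: for a generic $T\in \B(L^2(\mc{M},\theta))$ the vector $\rho(T) 1$ need not lie in $\dom(\Delta_\theta^{-1/2})$, so the right-hand side of the claim should be read as the sesquilinear form $\langle \rho(x^*) 1, \Delta_\theta^{-1/2}\rho(T) 1\rangle_\theta := \langle \Delta_\theta^{-1/2}\rho(x^*) 1, \rho(T) 1\rangle_\theta$, which is well defined because $\rho(x^*) 1 \in \dom(\Delta_\theta^{-1/2})$. With that convention the three steps above assemble immediately into the claimed identity.
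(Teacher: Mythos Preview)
Your proof is correct and follows essentially the same direct computation as the paper: expand $\Tr(P_1[T,\rho(x)])$ as $\langle 1, T\rho(x)1\rangle_\theta - \langle 1, \rho(x)T1\rangle_\theta$, use $\rho(x)^*=\rho(x^*)$ and $\rho(x)1=J_\theta x^*1$, apply the anti-linearity of $J_\theta$, and then invoke $x^*1 = S_\theta(x1) = \Delta_\theta^{-1/2}J_\theta x1 = \Delta_\theta^{-1/2}\rho(x^*)1$. Your explicit remark that the right-hand side must be read as the form pairing $\langle \Delta_\theta^{-1/2}\rho(x^*)1, \rho(T)1\rangle_\theta$ (since $\rho(T)1$ need not lie in $\dom(\Delta_\theta^{-1/2})$) is a worthwhile clarification that the paper's computation leaves implicit.
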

\begin{proof}
We simply compute
	\begin{align*}
	 	\text{Tr}(P_1[T,\rho(x)]) &= \<1, T\rho(x)1 - \rho(x)T1\>_\theta \\
	 		&= \<T^*1,\rho(x)1\>_\theta - \<\rho(x)^*1, T1\>_\theta\\
	 		&= \<x^* 1, J_\theta T^* 1\>_\theta - \< \rho(x^*)1, T1\>_\theta\\
	 		&= \<\Delta_\theta^{-1/2} J_\theta x 1, J_\theta T^* 1\>_\theta - \< \rho(x^*)1, T1\>_\theta\\
	 		&= \<\rho(x^*)1, (\Delta_\theta^{-1/2} \rho(T) - T)1\>_\theta.
	\end{align*}
\end{proof}

\begin{lem}\label{approx_in_M_epsilon}
Let $\delta>0$. Given $T=\sum_\ell a_\ell P_1b_\ell \in \Psi(M\otimes M^{op})$, there exists $\epsilon_0>0$ so that for each $\epsilon\in (0,\epsilon_0)$ we can find $x_\ell, y_\ell\in M_\epsilon$ such that if
	\begin{align*}
		T(\epsilon):=\sum_\ell x_\ell P_1 y_\ell,
	\end{align*}
then
	\begin{align*}
		\| T - T(\epsilon)\|_{\text{HS}} \leq \| T - T(\epsilon)\|_1 <\delta.
	\end{align*}
\end{lem}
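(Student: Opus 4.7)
The natural strategy is to take $x_\ell := \mc{E}_\epsilon(a_\ell)$ and $y_\ell := \mc{E}_\epsilon(b_\ell)$, where $\mc{E}_\epsilon \colon \M \to M_\epsilon$ is the $\theta$-preserving conditional expectation (available since $M_\epsilon$ is globally $\sigma^\theta$-invariant). These lie in $M_\epsilon$ by construction and the conditional expectation is $\ast$-preserving, so $\mc{E}_\epsilon(b_\ell^*) = y_\ell^*$.

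First I would record the rank-one bookkeeping: for any $a,b \in M$ the operator $aP_1b$ equals $|a1\rangle\langle b^*1|$, so $\|aP_1b\|_1 = \|a\|_\vphi \|b^*\|_\vphi$, and in general $\|\cdot\|_{\text{HS}} \leq \|\cdot\|_1$ (Cauchy--Schwarz on singular values). Splitting
\begin{align*}
    T - T(\epsilon) = \sum_\ell (a_\ell - x_\ell)P_1 b_\ell + \sum_\ell x_\ell P_1(b_\ell - y_\ell)
\end{align*}
and applying the triangle inequality together with $\|\mc{E}_\epsilon(a_\ell)\|_\vphi \leq \|a_\ell\|_\vphi$ yields
\begin{align*}
    \|T - T(\epsilon)\|_1 \leq \sum_\ell \|a_\ell - \mc{E}_\epsilon(a_\ell)\|_\vphi \|b_\ell^*\|_\vphi + \|a_\ell\|_\vphi \|b_\ell^* - \mc{E}_\epsilon(b_\ell^*)\|_\vphi.
\end{align*}

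The remaining task, which is the only substantive point, is to show that $\|c - \mc{E}_\epsilon(c)\|_\theta \to 0$ as $\epsilon \to 0$ for every $c \in M$. Since $\mc{E}_\epsilon$ coincides with the $L^2$-projection onto $L^2(M_\epsilon, \theta)$, it suffices to exhibit approximants in $M_\epsilon$. Given $\eta > 0$, first pick $p \in \P = \C\langle x_1, \ldots, x_n\rangle$ with $\|c - p\|_\vphi < \eta/2$ (available since $\P$ is $L^2$-dense in $M$). Then replace $x_j$ by $x_j(\epsilon) = x_j + \sqrt{\epsilon} s_j$ in $p$ to obtain $p(\epsilon) \in M_\epsilon$. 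The difference $p - p(\epsilon)$ is a polynomial expression in which every monomial contains at least one factor of $\sqrt{\epsilon} s_j$, and since the $s_j$ are bounded operators with finite $L^2$-norm in $\theta$, we get $\|p - p(\epsilon)\|_\theta \leq C_p \sqrt{\epsilon}$ for some constant depending only on $p$. So for $\epsilon$ small, $\|c - p(\epsilon)\|_\theta < \eta$, proving the required convergence of $\mc{E}_\epsilon(c)$ to $c$.

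Choosing $\epsilon_0$ small enough that both $\|a_\ell - \mc{E}_\epsilon(a_\ell)\|_\vphi$ and $\|b_\ell^* - \mc{E}_\epsilon(b_\ell^*)\|_\vphi$ are smaller than $\delta/(2L(1+\max_\ell(\|a_\ell\|_\vphi + \|b_\ell^*\|_\vphi)))$ for each of the $L$ summands finishes the proof, and the $\|\cdot\|_{\text{HS}} \leq \|\cdot\|_1$ bound is automatic. There is no serious obstacle; the only thing to be slightly careful about is the polynomial-to-$M_\epsilon$ substitution, but the quantitative $\sqrt{\epsilon}$-estimate on each monomial makes this routine.
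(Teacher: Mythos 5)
Your proof is correct and takes essentially the same route as the paper's. The paper simply cites \cite[Lemma 4.2]{CS05} and records the single non-tracial wrinkle, namely that $\|aP_1b\|_1 = \|a\|_\vphi\|b^*\|_\vphi$ (so one must approximate $b^*$, not $b$, in $L^2$); you have identified exactly that formula via the rank-one identification $aP_1b = |a1\rangle\langle b^*1|$, and your decomposition and triangle-inequality estimate correctly track $\|b_\ell^* - y_\ell^*\|$. Your variation of routing the approximation through $\mc{E}_\epsilon = p_\epsilon$ (the $\theta$-preserving conditional expectation, which agrees with the $L^2$-orthogonal projection) is cosmetically tidier than CS05's direct choice of $p_\ell(\epsilon), q_\ell(\epsilon)$, but the substantive mechanism is the same: $L^2$-approximate $a_\ell$ and $b_\ell^*$ by polynomials in $x_1,\ldots,x_n$, substitute $x_j \mapsto x_j(\epsilon) = x_j + \sqrt{\epsilon}s_j$, and use that each monomial in the difference carries a factor $\sqrt{\epsilon}$.
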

\begin{proof}
This follows from exactly the same argument as in \cite[Lemma 4.2]{CS05}, except we must note that
	\begin{align*}
		\| aP_1 b\|_1 &= \sup_{\|T\|_\infty =1} |\<T, aP_1b\>_{Tr}|\\
				&= \sup_{\|T\|_\infty=1} | Tr(P_1 bT^*aP_1)|\\
				&= \sup_{\|T\|_\infty=1} |\<Tb^*1,a1\>_\varphi| = \|a\|_\varphi \|b^*\|_\varphi.
	\end{align*}
So we must approximate $b^*$ (rather than $b$) by polynomials in the $\|\cdot\|_\varphi$-norm.
\end{proof}

Note that for $a,b\in M_\epsilon$ and $\xi\in L^2(\M,\theta)$ we have
	\begin{align*}
		aP_1bp_\epsilon \xi = \<b^*1, p_\epsilon \xi\>_\theta a1 = \<b^*1, \xi\>_\theta p_\epsilon a 1 = p_\epsilon a P_1b \xi,
	\end{align*}
so that $[T(\epsilon),p_\epsilon]=0$ for $T(\epsilon)\in \Psi(M_\epsilon\otimes M_\epsilon^{op})$ as in the previous lemma.

\begin{prop}\label{orthogonal_to_P_1}
With the notation as above, suppose that $d_A^\star(x_1,\ldots, x_n)=n$. Then any set $\{T_{jk}\}_{j,k=1}^{n}\subset \Psi(M_\infty\otimes M_\infty^{op})$ such that $\sum_{j=1}^n [T_{jk},\rho(x_j)]=0$ for each $k=1,\ldots, n$ satisfies
	\begin{align*}
		\sum_{j,k=1}^n \< P_1, T_{jk} \left[\frac{2}{1+A}\right]_{kj}\>_{\text{HS}}=0.
	\end{align*}
\end{prop}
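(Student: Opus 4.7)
The plan adapts the argument of \cite[Theorem 4.4]{CS05} to the non-tracial context, exploiting the free product $(\mc{M},\theta)$ and the regularized quasi-dual system $\{r_k(\epsilon)\}_{k=1}^n$, whose key commutation relation $[x_j(\epsilon), r_k(\epsilon)] = \bigl[\tfrac{2}{1+A}\bigr]_{kj} P_1$ is precisely what will let me convert the target scalar into operator commutators that can meet the hypothesis $\sum_j[T_{jk},\rho(x_j)]=0$.

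First I would invoke Lemma \ref{approx_in_M_epsilon}: for every $\delta>0$ and all $\epsilon$ sufficiently small there exist approximants $T_{jk}(\epsilon)\in\Psi(M_\epsilon\otimes M_\epsilon^{op})$ with $\|T_{jk}-T_{jk}(\epsilon)\|_1<\delta$. Since $\rho$ is a linear anti-homomorphism that fixes both scalars and $P_1$, the commutation relation transforms into $[\rho(r_k(\epsilon)),\rho(x_j(\epsilon))]=\bigl[\tfrac{2}{1+A}\bigr]_{kj}P_1$. Combining this with the cyclic identity $\text{Tr}([A,B]C)=\text{Tr}(A[B,C])$ yields
\begin{align*}
\sum_{j,k}\left\langle P_1,\ T_{jk}(\epsilon)\bigl[\tfrac{2}{1+A}\bigr]_{kj}\right\rangle_{\text{HS}}
= -\sum_{j,k}\text{Tr}\bigl(\rho(r_k(\epsilon))\,[T_{jk}(\epsilon),\rho(x_j(\epsilon))]\bigr).
\end{align*}
Writing $\rho(x_j(\epsilon))=\rho(x_j)+\sqrt{\epsilon}\,\rho(s_j)$ and invoking the hypothesis $\sum_j[T_{jk},\rho(x_j)]=0$ reduces the right-hand side to two error terms: one involving $[T_{jk}(\epsilon)-T_{jk},\rho(x_j)]$ (controlled by $\delta$ via the approximation) and a second prefactored by $\sqrt{\epsilon}$ involving $[T_{jk}(\epsilon),\rho(s_j)]$.

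The main obstacle will be estimating these errors, because the operator norm $\|\rho(r_k(\epsilon))\|$ may diverge as $\epsilon\to 0$ and naive operator-norm bounds do not suffice. Instead, one works at the vector level: since each $T_{jk}(\epsilon)$ is a finite sum of elementary operators of the form $aP_1b$ with $a,b\in M_\epsilon$, the individual trace terms collapse into inner products of the form $\langle 1,\, b\,[\rho(s_j),\rho(r_k(\epsilon))]\,a\,1\rangle_\theta$ and analogous expressions with $\rho(x_j)$ in place of $\rho(s_j)$. Using $\rho$-antimultiplicativity to rewrite such commutators as $\rho$ of commutators in $\mc{M}$ and then appealing to Lemma \ref{theta_on_conjugations_of_r} to identify the resulting vectors concretely in terms of $s_k$ and the modular operator, these pairings can be bounded by constants times $\|\mc{E}_\epsilon(s_k)\|_\theta$. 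By Lemma \ref{conjugate_variables_for_reguaralized_generators}, $\|\mc{E}_\epsilon(s_k)\|_\theta=\sqrt{\epsilon}\,\|\xi_k^\epsilon\|_\theta$, where $\xi_k^\epsilon=J_\theta^A(x_k(\epsilon)\colon\C[x_\ell(\epsilon)\colon \ell\neq k])$.

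Finally, the hypothesis $d_A^\star(x_1,\ldots,x_n)=n$ furnishes a sequence $\epsilon_m\to 0$ along which $\epsilon_m\sum_k\|\xi_k^{\epsilon_m}\|_\theta^2\to 0$; in particular each $\sqrt{\epsilon_m}\|\xi_k^{\epsilon_m}\|_\theta\to 0$. Along this subsequence the $\sqrt{\epsilon}$-prefactored error vanishes while the remaining error is $O(\delta)$. Sending first $\epsilon_m\to 0$ and then $\delta\to 0$ yields $\sum_{j,k}\bigl\langle P_1,\ T_{jk}\bigl[\tfrac{2}{1+A}\bigr]_{kj}\bigr\rangle_{\text{HS}}=0$.
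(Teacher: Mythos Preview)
Your overall strategy is correct and mirrors the paper's, but there is a genuine gap in the order of operations, and it is precisely the place you flag as ``the main obstacle.'' You approximate first and only then invoke the hypothesis $\sum_j[T_{jk},\rho(x_j)]=0$; this leaves you with the error term
\[
\sum_{j,k}\Tr\bigl(\rho(r_k(\epsilon))\,[T_{jk}(\epsilon)-T_{jk},\rho(x_j)]\bigr),
\]
which you assert is ``controlled by $\delta$.'' It is not: the only available bound is $\|\rho(r_k(\epsilon))\|\cdot\|\rho(x_j)\|\cdot\|T_{jk}(\epsilon)-T_{jk}\|_1\lesssim \delta/\sqrt{\epsilon}$, and this does not vanish when $\epsilon\to0$ first. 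Your proposed vector-level fix does not rescue this term either, because for $T_{jk}(\epsilon)=\sum_\ell a_\ell P_1 b_\ell$ with $a_\ell,b_\ell\in M_\epsilon$ one gets inner products of the form $\langle 1, b_\ell\,\rho([r_k(\epsilon),x_j])\,a_\ell 1\rangle$, and there is no reason $[r_k(\epsilon),x_j]$ (or its $\rho$-image acting on $M_\epsilon$-vectors) should be small or expressible in terms of $\|\mc{E}_\epsilon(s_k)\|_\theta$. Moreover, Lemma~\ref{theta_on_conjugations_of_r} requires the flanking elements to lie in $M_\infty\subset M$, not in $M_\epsilon$, so it cannot be applied directly to the decomposition of $T_{jk}(\epsilon)$.

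The paper resolves this by reversing the order: it applies the commutation relation and the hypothesis to $T_{jk}$ itself \emph{before} any approximation, arriving at $\sum_{j,k}\Tr\bigl(T_{jk}\,[\sqrt{\epsilon}\,\rho(r_k(\epsilon)),\rho(s_j)]\bigr)$, where the commutator has operator norm uniformly bounded by $4$. Only then does one pass to $T_{jk}(\epsilon)$ with a genuine $O(\delta)$ error. The point of this switch is that $[T_{jk}(\epsilon),p_\epsilon]=0$ allows one to transfer the compressions $p_\epsilon$ from $r_k(\epsilon)$ onto $s_j$, converting the expression to $\sum_{j,k}\Tr\bigl(T_{jk}(\epsilon)\,[\rho(r_k),\rho(\sqrt{\epsilon}\,\xi_j(\epsilon))]\bigr)$ with $\xi_j(\epsilon)=\tfrac{1}{\sqrt{\epsilon}}\mc{E}_\epsilon(s_j)$. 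The new commutator again has uniformly bounded norm (since $\|\sqrt{\epsilon}\,\xi_j(\epsilon)\|\leq\|s_j\|$), so one can switch \emph{back} to $T_{jk}$ with another $O(\delta)$ error. Now the flanking elements $a_\ell^{jk},b_\ell^{jk}$ lie in $M_\infty$, Lemmas~\ref{theta_on_conjugations_of_r} and~\ref{P1_on_commutator} apply, and Cauchy--Schwarz yields a bound by $(\epsilon\sum_j\|\xi_j(\epsilon)\|_\theta^2)^{1/2}$, which the hypothesis $d_A^\star=n$ drives to zero. This back-and-forth between $T_{jk}$ and $T_{jk}(\epsilon)$ is the missing maneuver in your sketch.
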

\begin{proof}
For each $1\leq j,k\leq n$ we can write
	\begin{align*}
		T_{jk} = \sum_\l a_\l^{jk} P_1 b_\l^{jk},
	\end{align*}
for some $a_\l^{ij}, b_\l^{ij}\in M_\infty$. We compute
	\begin{align*}
		\sum_{j,k=1}^n \<P_1, \left[\frac{2}{1+A}\right]_{kj} T_{jk}\>_{\text{HS}} &= \sum_{j,k=1}^n \Tr\left( T_{jk} \rho\left(\left[\frac{2}{1+A}\right]_{kj} P_1\right)\right)\\
			&= \sum_{j,k=1}^n \Tr( T_{jk} \rho([x_j(\epsilon),r_k(\epsilon)]))\\
			&= \sum_{j,k=1}^n \Tr( [\rho(x_j(\epsilon)), T_{jk}] \rho(r_k(\epsilon)) )\\
			&= \sum_{j,k=1}^n \Tr( [\rho(\sqrt{\epsilon} s_j), T_{jk}] \rho(r_k(\epsilon)) )\\
			&= \sum_{j,k=1}^n \Tr( T_{jk}[\sqrt{\epsilon}\rho(r_k(\epsilon)), \rho(s_j)]).
	\end{align*}
Let $\delta>0$, then by Lemma \ref{approx_in_M_epsilon} there exists $\epsilon_0>0$ such that if $\epsilon<\epsilon_0$ then for each $T_{ij}$ we can find $T_{ij}(\epsilon)\in \Psi(M_\epsilon\otimes M_\epsilon^{op})$ satisfying $\|T_{ij} - T_{ij}(\epsilon)\|_1\leq \delta$. Then since 
	\begin{align*}
		\|[\sqrt{\epsilon}\rho(r_k(\epsilon)), \rho(s_j)]\| = \left\| \sqrt{\epsilon} J_\theta [p_\epsilon \frac{1}{\sqrt{\epsilon}} r_j^* p_\epsilon, s_j] J_\theta \right\| \leq 2\|r_j^*\| \|s_j\|\leq 4,
	\end{align*}
we have 
	\begin{align*}
		\left| \sum_{j,k=1}^n \<P_1, \left[\frac{2}{1+A}\right]_{kj} T_{jk}\>_{\text{HS}}\right| \leq 4n^2\delta + \left|\sum_{j,k=1}^n  \Tr(T_{jk}(\epsilon)[\sqrt{\epsilon}\rho(r_k(\epsilon)), \rho(s_j)])\right|.
	\end{align*}
We focus on the latter term. Note that $[p_\epsilon, J_\theta]=0$ and denote $\xi_j(\epsilon)=\frac{1}{\sqrt{\epsilon}} p_\epsilon s_j p_\epsilon$. We have
	\begin{align*}
		\sum_{j,k=1}^n  \Tr(T_{jk}(\epsilon)[\sqrt{\epsilon}\rho(r_k(\epsilon)), \rho(s_j)]) &= \sum_{j,k=1}^n  \Tr(T_{jk}(\epsilon)[\rho(r_k), \rho(\sqrt{\epsilon}\xi_j(\epsilon))]).
	\end{align*}
Using $\|\sqrt{\epsilon}\xi_j(\epsilon)\|\leq \|s_j\|\leq 2$ and switching back to $T_{ij}$ from $T_{ij}(\epsilon)$ we have
	\begin{align*}
		\left| \sum_{j,k=1}^n \<P_1, \left[\frac{2}{1+A}\right]_{kj} T_{jk}\>_{\text{HS}}\right| \leq 8n^2\delta + \left|\sum_{j,k=1}^n  \Tr(T_{jk}[\rho(r_k), \rho(\sqrt{\epsilon}\xi_j(\epsilon))])\right|.
	\end{align*}
Now, we have that $J_\theta x J_\theta \in \M'\cap \B(L^2(\mc{M},\theta))$ for $x\in \M$ so $a_\l^{jk}$ and $b_\l^{jk}$ commute with $\rho(\sqrt{\epsilon}\xi_j(\epsilon))$. Hence considering the last term in the above inequality we have
	\begin{align*}
		\sum_{j,k=1}^n  \Tr(T_{jk}[\rho(r_k), \rho(\sqrt{\epsilon}\xi_j(\epsilon))]) &= \sum_{j,k=1}^n\sum_{\l} \Tr( P_1 [b_\l^{jk}\rho(r_k)a_\l^{jk}, \rho(\sqrt{\epsilon}\xi_j(\epsilon))])\\
			&= \sum_{j,k=1}^n\sum_{\l} \< \rho(\sqrt{\epsilon}\xi_j(\epsilon)^*) 1, \left(\Delta^{-1/2}_\theta \rho(b_\l^{jk}\rho(r_k) a_\l^{jk}) - b_\l^{jk}\rho(r_k)a_\l^{jk}\right)1\>_\theta\\
			&= \sum_{j,k}^n\sum_{\l} \<\rho(\sqrt{\epsilon}\xi_j(\epsilon)^*) 1, \Delta_\theta^{-1/2} \sigma_{-i/2}^\varphi(b_\l^{jk}) s_k \sigma_{-i/2}^\varphi(a_\l^{jk})1\>_\theta\\
			&= \sum_{j,k}^n\sum_{\l} \< \Delta_\theta^{-1/2} J_\theta \sqrt{\epsilon}\xi_j(\epsilon)  1, \sigma_{-i/2}^\varphi(b_\l^{jk}) s_k \sigma_{-i/2}^\varphi(a_\l^{jk})1\>_\theta\\
			&= \sum_{j,k}^n\sum_{\l} \< \sqrt{\epsilon}\xi_j(\epsilon)  1, \sigma_{-i/2}^\varphi(b_\l^{jk}) s_k \sigma_{-i/2}^\varphi(a_\l^{jk})1\>_\theta
	\end{align*}
where we have used Lemmas \ref{P1_on_commutator} and \ref{theta_on_conjugations_of_r}, and that $\Delta_\theta^{-1/2}J_\theta \xi_j(\epsilon)1=S_\theta \xi_j(\epsilon) 1=\xi_j(\epsilon) 1$ since $s_j$ is self-adjoint. So by applying the Cauchy--Schwarz inequality to our previous computation we obtain
	\begin{align*}
		\left|\sum_{j,k=1}^n  \Tr(T_{jk}[\rho(r_k), \rho(\sqrt{\epsilon}\xi_j(\epsilon))])\right| &\leq \left|\sum_{j=1}^n \< \sqrt{\epsilon}\xi_j(\epsilon)1,\sum_{k=1}^n \sum_{\l} \sigma_{-i/2}^\varphi(b_\l^{jk}) s_k \sigma_{-i/2}^\varphi(a_\l^{jk})1\>_\theta\right|\\
			&\leq \left(\epsilon \sum_{j=1}^n \left\|\xi_j(\epsilon)\right\|_\theta^2\right)^{1/2} \left( \sum_{j=1}^n \left\|\sum_{k=1}^n \sum_{\l} \sigma_{-i/2}^\varphi(b_\l^{jk}) s_k \sigma_{-i/2}^\varphi(a_\l^{jk})1\right\|_{\theta}^2\right)^{1/2}
	\end{align*}
Thus we have shown so far that
	\begin{align*}
		\left| \sum_{j,k=1}^n \<P_1, \left[\frac{2}{1+A}\right]_{kj} T_{jk}\>_{\text{HS}}\right| \leq 8n^2\delta + \left(\epsilon \sum_{j=1}^n \left\|\xi_j(\epsilon)\right\|_\theta^2\right)^{1/2}  \left( \sum_{j=1}^n \left\|\sum_{k=1}^n \sum_{\l} \sigma_{-i/2}^\varphi(b_\l^{jk}) s_k \sigma_{-i/2}^\varphi(a_\l^{jk})1\right\|_{\theta}^2\right)^{1/2}.
	\end{align*}
Since $d_A^\star(x_1,\ldots, x_n)=n$, the first factor in the second term above tends to zero as $\epsilon\to 0$. Thus we have the desired equality after first letting $\epsilon$ tend to zero and then $\delta$.
\end{proof}

\begin{cor}\label{commutation_kills}
Let $M$ be a von Neumann algebra with faithful normal state $\varphi$. Suppose $M$ is generated by a finite set $G=G^*$ of eigenoperators of $\sigma^\varphi$ with finite free Fisher information. If $T_1,\ldots, T_n\in \Psi(M_\infty\otimes M_\infty^{op})$ satisfy $\sum_{j=1}^n [T_j, \rho(x_j)]=0$, then $T_1=\cdots = T_n=0$.
\end{cor}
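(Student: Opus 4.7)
The plan is to reduce the corollary directly to Proposition \ref{orthogonal_to_P_1} by constructing a carefully chosen doubly indexed family $\{T_{jk}\}$ from the given $T_1,\ldots,T_n$, and then invert the matrix $\tfrac{2}{1+A}$ to extract information about each $T_j$ individually. The hypothesis of finite free Fisher information enters only through Lemma \ref{conjugate_variables_for_reguaralized_generators}: since $\Phi_\vphi^*(G)<\infty$ forces all the conjugate variables $J_\vphi^A(x_j\colon\C[x_k\colon k\neq j])$ to exist (via \eqref{free_to_quasi-free_conjugate_variables}), we have $d_A^\star(x_1,\ldots,x_n)=n$, so Proposition \ref{orthogonal_to_P_1} applies.

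Fix an index $i\in\{1,\ldots,n\}$ and arbitrary $a,b\in M_\infty$, and set $T_{jk}:=\delta_{k=i}\, a\, T_j\, b$. Because $M_\infty$ is a $*$-algebra and $T_j\in\Psi(M_\infty\otimes M_\infty^{op})$, the products $aT_jb$ also lie in $\Psi(M_\infty\otimes M_\infty^{op})$, so the $T_{jk}$ are legitimate inputs to Proposition \ref{orthogonal_to_P_1}. The commutation hypothesis for this family is the only subtle point: viewing $M$ via left multiplication on $L^2(\mathcal{M},\theta)$, the operators $a,b$ lie in $M\subset\mathcal{M}$ and commute with $\rho(x_j)=J_\theta x_j^*J_\theta\in\mathcal{M}'$, so
\[
\sum_{j=1}^n[T_{jk},\rho(x_j)]=\delta_{k=i}\,a\!\left(\sum_{j=1}^n[T_j,\rho(x_j)]\right)\!b=0
\]
by assumption.

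Proposition \ref{orthogonal_to_P_1} then yields, after cyclicity of the trace,
\[
0=\sum_{j,k=1}^n\left\langle P_1, T_{jk}\left[\tfrac{2}{1+A}\right]_{kj}\right\rangle_{\text{HS}}=\sum_{j=1}^n\left[\tfrac{2}{1+A}\right]_{ij}\langle a^*P_1 b^*,T_j\rangle_{\text{HS}}.
\]
Running $i$ from $1$ to $n$ gives a linear system with coefficient matrix $\tfrac{2}{1+A}$; since $A>0$ this matrix is invertible, so $\langle a^*P_1 b^*,T_j\rangle_{\text{HS}}=0$ for every $j$ and every $a,b\in M_\infty$.

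Finally, translating back through the isometry $\Psi$, we have $\langle \Psi(a^*\otimes b^*),T_j\rangle_{\text{HS}}=0$ for all $a,b\in M_\infty$. Because $M_\infty$ is $L^2$-dense in $M$ (it contains the weakly dense $*$-subalgebra $M_0$), elementary tensors of the form $a^*\otimes b^*$ with $a,b\in M_\infty$ span a dense subspace of $L^2(M\bar\otimes M^{op})$, and $\Psi$ is an isometry onto its range, which contains each $T_j$. Therefore $T_j=0$ for every $j$. The only step that requires any real care is verifying the commutation hypothesis with $\rho(x_j)$, which is why it was important to multiply by elements of $M$ (acting on the left) rather than anything from $M'$; the rest is bookkeeping.
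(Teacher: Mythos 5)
Your proof is correct and, like the paper's, reduces to Proposition \ref{orthogonal_to_P_1}; the difference lies in the choice of auxiliary family $\{T_{jk}\}$ and in how the conclusion is extracted. The paper builds $T_{jk}=\sum_{m}\sum_\ell(a_\ell^{km})^*T_j(b_\ell^{km})^*$ from explicit decompositions $\left[\tfrac{1+A}{2}\right]_{km}T_m=\sum_\ell a_\ell^{km}P_1b_\ell^{km}$ and applies the proposition \emph{once} to compute directly that $\sum_j\|T_j\|_{\text{HS}}^2=0$. You instead fix test elements $a,b\in M_\infty$, take the much simpler $T_{jk}=\delta_{k=i}\,aT_jb$, and run the proposition over $i=1,\ldots,n$; the resulting linear system with invertible coefficient matrix $\tfrac{2}{1+A}$ shows each $T_j$ is Hilbert--Schmidt orthogonal to every $\Psi(a^*\otimes b^*)$, and density finishes. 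Both proofs use exactly the same two mechanisms --- commutation of $\rho(x_j)$ with left multiplication by $M$ to verify the hypothesis, and invertibility of $\tfrac{2}{1+A}$ --- so the underlying ideas coincide, but your route is somewhat more transparent: it shows not merely that the HS norms of the $T_j$ vanish but that each $T_j$ is annihilated by the full dense family of elementary HS functionals, and it avoids the slightly opaque step of decomposing $\left[\tfrac{1+A}{2}\right]_{km}T_m$. One minor remark: you justify $L^2$-density of $M_\infty\otimes M_\infty^{op}$ by citing weak density of $M_0\subset M_\infty$; while correct (Kaplansky upgrades this to $\|\cdot\|_\varphi$-density), the quickest route is simply that $M_\infty\supset\P=\C\langle G\rangle$, which is dense in $L^2(M,\varphi)$ by the GNS construction.
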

\begin{proof}
Let $\M$, $\theta$, and $M_\epsilon$ be as above. Lemma \ref{conjugate_variables_for_reguaralized_generators} implies $d_A^\star(x_1,\ldots, x_n)=n$. We compute
	\begin{align*}
		\sum_{j=1}^n \|T_j\|_{\text{HS}}^2 &= \sum_{j,m=1}^n [1]_{mj} \< T_m, T_j\>_{\text{HS}}\\
			&= \sum_{j,k,m=1}^n \left[\frac{1+A}{2}\right]_{mk} \left[\frac{2}{1+A}\right]_{kj} \<T_m, T_j\>_{\text{HS}}\\
			&= \sum_{j,k,m=1}^n \< \left[\frac{1+A}{2}\right]_{km} T_m , \left[\frac{2}{1+A}\right]_{kj} T_j\>_{\text{HS}}
	\end{align*}
We can write for each $k$ and $m$
	\begin{align*}
		\left[\frac{1+A}{2}\right]_{km}T_m = \sum_{\l} a^{km}_\l P_1 b^{km}_\l,
	\end{align*}
for some $a^{km}_\l, b^{km}_\l\in M$, so that
	\begin{align*}
		\sum_{j=1}^n \|T_j\|_{\text{HS}}^2 = \sum_{j,k}^n \<P_1, \left[\frac{2}{1+A}\right]_{kj}\sum_{m=1}^n\sum_{\l} (a^{km}_\l)^* T_j (b^{km}_\l)^* \>_{\text{HS}}.
	\end{align*}
We note that since $\rho(x_j)\in M'\cap\mc{B}(L^2(M,\vphi))$, we have for each $k=1,\ldots, n$
	\begin{align*}
		0 = \left(\sum_{m=1}^n\sum_{\l} (a_\l^{km})^* \otimes (b_\l^{km})^*\right)\# \sum_{j=1}^n [T_j, \rho(x_j)] = \sum_{j=1}^n \left[ \sum_{m=1}^n\sum_\l (a_\l^{km})^* T_j (b_\l^{km})^*, \rho(x_j)\right].
	\end{align*}
Thus if we set $T_{jk}:= \sum_{m=1}^n\sum_\l (a_\l^{km})^* T_j (b_\l^{km})^*$ for each $k=1,\ldots, n$ then $\{T_{jk}\}_{j,k=1}^n$ satisfies the hypothesis of Proposition \ref{orthogonal_to_P_1} and hence
	\begin{align*}
		\sum_{j=1}^n \|T_j\|_{\text{HS}}^2 = \sum_{j,k}^n \<P_1, \left[\frac{2}{1+A}\right]_{kj}\sum_{m=1}^n\sum_{\l} (a^{km}_\l)^* T_j (b^{km}_\l)^* \>_{\text{HS}}=0,
	\end{align*}
implying $T_j=0$ for each $j=1,\ldots,n$.
\end{proof}

%%%%%%%%%%%%%%%%%%%%%%%%%%%%%%%%%%%%%%%%%%%%%%%%%%%%%%%%%%%%%%%%%%%%%%%%%%%%%%%%%%%%%%%%%%%%%%%%%%%

\subsection{The existence of diffuse elements in the centralizer}\label{Diffuse}

Whenever we refer to a monomial in this section, we mean a monomial in $\C\<G\>$. To avoid confusion, we will temporarily return to using the notation $\C\<G\>$ in place of $\P$. We also remind the reader that the notations $E_\lambda$,  $E_I$, and $\mc{E}_\lambda$ were defined in Subsection \ref{Implications}.

We define
	\begin{align*}
		E_{\leq 1}^\infty =\{x\in \C\<G\>\colon [(\varphi\otimes 1)\circ\delta_{y_1}]\cdots [(\varphi\otimes 1)\circ\delta_{y_m}](x)\in E_{(0,1]},\ \forall n\geq0\text{ and }y_1,\ldots, y_m\in G\}.
	\end{align*}
Observe that for $x=y_1\cdots y_m\in\C\<G\>$ a monomial, $x\in E_{\leq 1}^\infty$ is equivalent to $y_j\cdots y_m\in \mc{E}_{\lambda_j}(M)$ with $\lambda_j\leq 1$ for each $j=1,\ldots, m$. %... being able to read $x$ from right to left and never having more $c_j^*$ than $c_j$ for any $j=1,\ldots, n$. For, example if $x\in \C\<c_1,c_1^*\>\cap E_{\leq 1}^\infty$ is a monomial we can write $x=(c_1^*)^{j_1}c_1^{k_1}\cdots (c_1^*)^{j_m}c_1^{k_m}$ where the exponents $j_1,k_1,\ldots, j_m,k_m\geq 0$ satisfy
%	\begin{align*}
%		j_\l \leq k_\l + (k_{\l-1} - j_{\l -1}) +\cdots +( k_m - j_m)\qquad \forall \l\in\{1,\ldots, m\}.
%	\end{align*}
If $w_j\in \C\<c_j,c_j^*\>\cap E_{\leq 1}^\infty$ are monomials for each $j=1,\ldots, n$ then a monomial formed by any interleaving of the factors in $w_1,\ldots, w_n$ is also in $E_{\leq 1}^\infty$.

\begin{lem}\label{non-tracial_zero_divisor_left_right_transfer}
For $x\in \mc{E}_\lambda(M)$, $\lambda\leq 1$, if $xp=0$ for some non-zero projection $p\in M$, then there exists a non-zero projection $q\in M^\varphi$ such that $qx=0$.
\end{lem}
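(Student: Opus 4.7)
The plan is to use the polar decomposition together with the structure guaranteed by Proposition \ref{prop:eigenop_polar_decomp}: write $x = v|x|$ with $|x|\in M^\vphi$ and $v$ an eigenoperator of $\sigma^\vphi$ with eigenvalue $\lambda$. I would then take $q := 1 - vv^*$ and verify the three required properties.

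First I would check that $q \in M^\vphi$ and that $qx = 0$. Since $v$ has eigenvalue $\lambda$, the product $xx^* = v|x|^2 v^*$ is an eigenoperator with eigenvalue $\lambda\cdot \lambda^{-1} = 1$, so $xx^* \in M^\vphi$; its support projection $vv^*$ (i.e.\ the range projection of $x$) therefore also lies in $M^\vphi$, giving $q \in M^\vphi$. The polar identity $vv^*v = v$ immediately yields $qx = v|x| - vv^*v|x| = 0$.

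The substantive step is showing $q \neq 0$. From $xp = 0$ I would deduce $|x|^2p = 0$, hence $|x|p = 0$, so the right-support $v^*v$ of $|x|$ satisfies $v^*v \leq 1-p$; since $p \neq 0$, faithfulness of $\vphi$ yields $\vphi(v^*v) < 1$. Now I would apply the KMS condition using that $v^*$ is itself an eigenoperator with eigenvalue $\lambda^{-1}$ (see Remark \ref{no_worries_about_adjoints}), so $\sigma_{-i}^\vphi(v^*) = \lambda^{-1} v^*$ and
\[
\vphi(v^*v) \;=\; \vphi\bigl(v\,\sigma_{-i}^\vphi(v^*)\bigr) \;=\; \lambda^{-1}\vphi(vv^*).
\]
Hence $\vphi(vv^*) = \lambda\,\vphi(v^*v) \leq \vphi(v^*v) < 1$, using the hypothesis $\lambda \leq 1$. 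Faithfulness of $\vphi$ then forces $q = 1 - vv^* \neq 0$.

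I do not anticipate any real obstacle: the argument is structural, and the hypothesis $\lambda \leq 1$ enters in exactly one place, to convert the strict bound $\vphi(v^*v)<1$ into $\vphi(vv^*)<1$. The apparent edge case $\lambda = 1$ (where $v \in M^\vphi$ already) needs no separate treatment, since the KMS identity automatically reduces to the trace property $\vphi(vv^*) = \vphi(v^*v)$ on $M^\vphi$.
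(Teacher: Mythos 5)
Your proof is correct and follows essentially the same route as the paper's: both take the polar decomposition $x=v|x|$, set $q=1-vv^*\in M^\vphi$, and use the KMS relation $\vphi(vv^*)=\lambda\,\vphi(v^*v)$ together with $\lambda\leq 1$ and $\vphi(v^*v)<1$ to conclude $\vphi(q)>0$. The only differences are presentational (you verify $vv^*\in M^\vphi$ via the support of $xx^*$ rather than directly from $v$ being an eigenoperator, and you rearrange the final inequality slightly).
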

\begin{proof}
If $x=v|x|$ is the polar decomposition, then $v\in \mc{E}_\lambda(M)$ by Proposition \ref{prop:eigenop_polar_decomp}. Now, $xp=0$ implies $p_{\ker(x)}$, the projection onto the kernel of $x$, is non-zero. Moreover, if $p_{\overline{\text{ran}(x)}}$ and $p_{\overline{\text{ran}(x^*)}}$ are the projections onto the closures of the ranges of $x$ and $x^*$, respectively, then we have
	\begin{align*}
		p_{\ker(x)}&=1-p_{\overline{\text{ran}(x^*)}} = 1- v^*v \\
		p_{\ker(x^*)}&= 1- p_{\overline{\text{ran}(x)}} = 1 -vv^*.
	\end{align*}
Hence
	\begin{align*}
		\varphi(p_{\ker(x^*)}) = 1 - \varphi(vv^*) = 1- \varphi( v^*\sigma_{-i}^\varphi(v))= 1- \lambda \varphi(v^*v) = (1-\lambda) + \lambda \varphi(p_{\ker(x)}) >0.
	\end{align*}
So letting $q=p_{\ker(x^*)}\neq 0$, we have $x^*q=0$ or $qx=0$, and $q=1-vv^*\in M^\varphi$.
\end{proof}

\begin{thm}\label{no_zero_divisors_from_the_centralizer}
Assume $\Phi_\vphi^*(G)<\infty$. Suppose for $x\in \C\<G\>$ there is a monomial $x_0$ of highest degree with non-zero coefficient such that $x_0\in E_{\leq 1}^\infty$. Then there is no non-zero projection $p\in M^\varphi$ such that $xp=0$.
\end{thm}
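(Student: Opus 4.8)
The plan is to follow the strategy of Mai--Speicher--Weber (\cite{MSW15}) and Charlesworth--Shlyakhtenko (\cite{CS15}) for the tracial no-zero-divisor result, but using the commutator-kills lemma (Corollary \ref{commutation_kills}) as the substitute for the $L^2$-Betti number / free entropy dimension input, and exploiting the $E_{\leq 1}^\infty$ hypothesis to keep everything well-behaved under the right action we have available on $L^2(M,\vphi)$. Suppose toward a contradiction that $xp=0$ for a non-zero projection $p\in M^\vphi$. First I would reduce to a cleaner form of $x$: among all monomials of highest degree $d$ appearing in $x$ with nonzero coefficient, at least one lies in $E_{\leq 1}^\infty$; the strategy is to differentiate $x$ repeatedly by the free difference quotients $\delta_{y}$, $y\in G$, to isolate the leading behavior. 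Concretely, I would look at the finite-rank operators $\Psi(\delta_{y_j}(x))$, or rather an iterated version, and show that the relation $xp=0$ forces a commutation relation of the type $\sum_{j}[T_j,\rho(x_j)]=0$ for suitable $T_j\in\Psi(M_\infty\otimes M_\infty^{op})$ built from the leading coefficients of $x$ and from $p$ (or from the projection onto the closure of the range of $x^*$). The point of the $E_{\leq 1}^\infty$ hypothesis and of Lemma \ref{non-tracial_zero_divisor_left_right_transfer} is that we may assume the "annihilating" projection is in the centralizer, and that all the modular-automorphism twists $\sigma_{\pm i/2}^\vphi$ appearing when we pass between the genuine right action on $L^2$ and the $\rho$-action of Subsection \ref{technical_estimate_section} act by bounded operators (scalars on eigenspaces with eigenvalue $\le 1$), so the finite-rank operators genuinely land in $\Psi(M_\infty\otimes M_\infty^{op})$.

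In more detail, I expect the argument to go as follows. Write $x$ with its leading homogeneous part; by relabeling and using that $x_1,\dots,x_n$ are algebraically free (a consequence of $\Phi_\vphi^*(G)<\infty$ via Remark \ref{alg_free}), the relation $xp=0$, together with $p\ne 0$ and $p=p^*=p^2\in M^\vphi$, should be fed into the computation of $\operatorname{Tr}(P_1[\,\cdot\,,\rho(x_j)])$ from Lemma \ref{P1_on_commutator}. The key algebraic identity is the Leibniz rule $\delta_{x_j}(ab)=\delta_{x_j}(a)\cdot b+a\cdot\delta_{x_j}(b)$ combined with $\sum_j x_j\otimes(\text{something})$ telescoping: differentiating $0=\delta_{x_j}(xp)=\delta_{x_j}(x)\cdot p + x\cdot\delta_{x_j}(p)$ and noting $x\cdot$ anything is killed after multiplying by a cyclic-vector projection on the appropriate side because $xp=0\Rightarrow x p \xi = 0$, one isolates $\delta_{x_j}(x)\cdot p$. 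Summing a clever combination over $j$ (using the matrix $\tfrac{2}{1+A}$ to convert between $\delta_j$ and $\partial_j$, exactly as in the statement of Corollary \ref{commutation_kills}) produces operators $T_j$ with $\sum_j[T_j,\rho(x_j)]=0$ and at least one $T_j\ne 0$ — the latter because the leading monomial $x_0$ survives differentiation and its image under $\Psi$ is a nonzero finite-rank operator, and the projection $p$ (or $p_{\overline{\operatorname{ran}(x^*)}}$) is nonzero. Then Corollary \ref{commutation_kills} forces all $T_j=0$, a contradiction.

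The main obstacle, and where I would spend the most care, is the bookkeeping of the two different right actions: the "derivation-friendly" right action $\xi\cdot x = S_\vphi x^* S_\vphi\xi$ from Remark \ref{right_action} versus the usual $\rho(x)=J_\vphi x^* J_\vphi$ used in Subsection \ref{technical_estimate_section}, together with the $\sigma_{\pm i/2}^\vphi$ factors that appear in \eqref{rho_action} and in Lemma \ref{theta_on_conjugations_of_r}. The $E_{\leq 1}^\infty$ hypothesis is exactly what is needed to ensure that when we apply $(\vphi\otimes 1)\circ\delta_{y}$ repeatedly to peel off factors of $x$ from the left, every intermediate expression sits in an eigenspace $E_{(0,1]}$, so $\sigma_{-i/2}^\vphi$ (which blows up eigenvectors with large eigenvalue) stays bounded and the resulting operators remain in $\Psi(M_\infty\otimes M_\infty^{op})$ with controlled norm; without this one could not even assert $T_j\in\Psi(M_\infty\otimes M_\infty^{op})$. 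A secondary subtlety is making sure the leading term does not cancel: here one uses algebraic freeness plus the observation (stated just before the theorem) that an interleaving of monomials each in $E_{\leq 1}^\infty$ is again in $E_{\leq 1}^\infty$, so the class $E_{\leq 1}^\infty$ is stable under the operations performed, and the hypothesis that $x_0$ is of \emph{highest} degree guarantees that after differentiating $d-1$ times it contributes a nonzero scalar multiple of something like $1\otimes(y_{i_d})$ or a single generator, which is not annihilated.
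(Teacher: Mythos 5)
Your high-level ingredients are the right ones (Corollary \ref{commutation_kills}, Lemma \ref{non-tracial_zero_divisor_left_right_transfer}, iterated differentiation, the $E_{\leq 1}^\infty$ hypothesis), but the two steps where the actual work happens are missing or wrong. First, the mechanism you propose for producing the relation $\sum_j[T_j,\rho(x_j)]=0$ --- applying the Leibniz rule to $xp$, as in $0=\delta_{x_j}(xp)=\delta_{x_j}(x)\cdot p+x\cdot\delta_{x_j}(p)$ --- is not available: $p$ is an arbitrary projection of $M^\vphi$, not an element of $\dom(\delta_{x_j})=\C\<G\>$, and the suggestion that the unwanted term is ``killed after multiplying by a cyclic-vector projection'' is not an argument. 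The paper never differentiates $p$. It first projects the relation onto the single eigenspace containing $x_0$, obtaining $\mc{E}_{\lambda_1}(x)\,p=0$ with $\lambda_1\leq 1$, then uses Lemma \ref{non-tracial_zero_divisor_left_right_transfer} to manufacture a nonzero projection $q_1\in M^\vphi$ with $q_1\mc{E}_{\lambda_1}(x)=0$ (this --- not ``putting the annihilating projection in the centralizer,'' which $p$ already is by hypothesis --- is the lemma's role, and it is where the eigenvalue-at-most-one condition enters), and finally multiplies the polynomial identity $\mc{E}_{\lambda_1}(x)\otimes 1-1\otimes\mc{E}_{\lambda_1}(x)=\sum_{y\in G}\delta_y(\mc{E}_{\lambda_1}(x))\#(y\otimes 1-1\otimes y)$ by $q_1\otimes p$: the left-hand side dies because $q_1$ annihilates on the left and $p$ on the right, and twisting by $\sigma_{-i/2}^\vphi\otimes\sigma_{i/2}^\vphi$ turns the right-hand side into $\sum_{y\in G}[\rho(y),T_y]$ via (\ref{rho_action}). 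Without this (or an equivalent) identity your $T_j$'s are never actually produced; also, Lemma \ref{P1_on_commutator} plays no role at this level --- it is internal to the proof of Proposition \ref{orthogonal_to_P_1}.

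Second, your endgame --- ``at least one $T_j\neq 0$ because the leading monomial survives differentiation and $p$ is nonzero, so Corollary \ref{commutation_kills} gives a contradiction'' --- does not follow. Each $T_y$ is (a modular twist of) $(q_1\otimes p)\#\delta_y(\mc{E}_{\lambda_1}(x))$, and a compression by nonzero projections can perfectly well vanish; the corollary concludes precisely that all $T_y=0$, which is not in itself absurd. In the paper this vanishing is the input to an induction on the length of $x_0=y_1\cdots y_m$: from $T_{y_1}=0$ one gets $[(\vphi q_1)\otimes 1]\circ\delta_{y_1}(\mc{E}_{\lambda_1}(x))\,p=0$, and one then repeats the entire cycle --- project onto the next eigenspace $\lambda_2\leq 1$ (this is exactly what $x_0\in E_{\leq 1}^\infty$ guarantees, namely $y_\ell\cdots y_m\in E_{(0,1]}$ at every stage so that the transfer lemma applies again; it is not needed to keep the $\sigma_{\pm i/2}^\vphi$ twists bounded, which is automatic for polynomials), obtain a new nonzero $q_2\in M^\vphi$, and apply Corollary \ref{commutation_kills} again --- and only after all $m$ stages does the highest-degree hypothesis produce $0=\alpha\,\vphi(q_m)\cdots\vphi(q_1)\,p$ with $\alpha\neq 0$, the contradiction. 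So the corollary must be invoked at every one of the $m$ steps and the contradiction appears only at the end; the one-shot version in your proposal has a genuine hole at exactly this point.
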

\begin{proof}
Suppose, towards a contradiction, $p\in M^\varphi$ is a non-zero projection such that $xp=0$. Let $\lambda_1\in (0,1]$ be such that $x_0\in \mathcal{E}_{\lambda_1}(M)$. Then
	\begin{align*}
		0= \|xp\|_\varphi^2 = \| \pi_{\lambda_1}xp\|_\varphi^2 + \| (1-\pi_{\lambda_1})xp\|_\varphi^2,
	\end{align*}
and hence $0=\pi_{\lambda_1}xp= \mc{E}_{\lambda_1}(xp) =\mc{E}_{\lambda_1}(x)p$. Lemma \ref{non-tracial_zero_divisor_left_right_transfer} implies there is a non-zero projection $q_1\in M^\varphi$ such that $q_1\mc{E}_{\lambda_1}(x)=0$. Thus
	\begin{align*}
		0 = (q_1\otimes p)\# (\mc{E}_{\lambda_1}(x)\otimes 1 - 1\otimes \mc{E}_{\lambda_1}(x)) = (q_1\otimes p)\#\sum_{y\in G} \delta_y(\mc{E}_{\lambda_1}(x))\#(y\otimes 1 - 1\otimes y).
	\end{align*}
Setting $T_y:= (\sigma_{-i/2}^\varphi\otimes \sigma_{i/2}^\varphi)\left[(q_1\otimes p)\# \delta_y(\mc{E}_{\lambda_1}(x))\right]$ for each $y\in G$ and applying $\sigma_{-i/2}^\varphi\otimes \sigma_{i/2}^\varphi$ to each side of the above equality yields
	\begin{align*}
		0 = \sum_{y\in G} T_y\#(\sigma_{-i/2}^\varphi(y)\otimes 1 - 1\otimes \sigma_{i/2}^\varphi(y)) = \sum_{y\in G} [\rho(y), T_y],
	\end{align*}
by (\ref{rho_action}). So by Corollary \ref{commutation_kills}, $T_y=0$ for each $y\in G$ and hence $(q_1\otimes p) \# \delta_y(\mc{E}_{\lambda_1}(x))=0$ for each $y\in G$. 

Let $x_0=y_1\cdots y_m$ for $y_1,\ldots, y_m\in G$. Then in particular,
	\begin{align*}
		(\varphi\otimes 1)( q_1\cdot \delta_{y_1}(\mc{E}_{\lambda_1}(x))) p=0.
	\end{align*}
The condition on $x_0$ implies that $y_\l\cdots y_m\in E_{(0,1]}$ for each $\l\in\{2,\ldots, m\}$, and so iterating the above argument we can find $\lambda_2,\ldots, \lambda_m \in (0,1]$ and non-zero projections $q_2,\ldots, q_m$ so that for each $\l\in\{2,\ldots, m\}$ if $(\vphi q)(\cdot) = \vphi(q\cdot)$ then
	\begin{align*}
		[(\varphi q_m)\otimes 1]\circ\delta_{y_m}\circ\mc{E}_{\lambda_m}\circ\cdots\circ [(\vphi q_1)\otimes 1]\circ\delta_{y_1}\circ\mc{E}_{\lambda_1}(x) p =0.
	\end{align*}
But since $x_0$ is of highest degree in $x$, say with coefficient $\alpha\neq 0$, we have
	\begin{align*}
		0= [(\varphi q_m)\otimes 1]\circ\delta_{y_m}\circ\mc{E}_{\lambda_m}\circ\cdots\circ [(\vphi q_1)\otimes 1]\circ\delta_{y_1}\circ\mc{E}_{\lambda_1}(x) p = \alpha \varphi(q_m)\cdots\varphi(q_1)p,
	\end{align*}
a contradiction.
\end{proof}

\begin{cor}\label{diffuse_centralizer_elements}
Assume $\Phi_\vphi^*(G)<\infty$. Suppose for $x\in \C\<G\>$ there is a monomial $x_0$ of highest degree with non-zero coefficient such that $x_0\in E_{\leq 1}^\infty$.
	\begin{enumerate}
	\item If $x\in \mc{E}_\lambda(M)$ for $\lambda\leq 1$, then $x$ has no atoms at zero.
	
	\item If $x\in M^\varphi$, then $x$ is diffuse.
	\end{enumerate}	
\end{cor}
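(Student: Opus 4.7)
The plan is to derive both parts directly from Theorem \ref{no_zero_divisors_from_the_centralizer}. The only additional input required is the observation that for an eigenoperator $x \in \mc{E}_\lambda(M)$, the kernel projection of $x$ automatically lies in $M^\vphi$, so any atom of $x$ at zero would yield a non-zero projection of $M^\vphi$ annihilating $x$ on the right, which Theorem \ref{no_zero_divisors_from_the_centralizer} forbids.

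For part (1), I would use the polar decomposition $x = v|x|$. Proposition \ref{prop:eigenop_polar_decomp} tells us that $|x| \in M^\vphi$, so the support projection $v^*v = s(|x|)$ belongs to $M^\vphi$. Consequently, the projection $p := 1 - v^*v$ onto $\ker(x) = \ker(|x|)$ lies in $M^\vphi$ and satisfies $xp = v|x|(1-v^*v) = 0$. An atom of $x$ at zero means precisely that $p \neq 0$, which would directly contradict Theorem \ref{no_zero_divisors_from_the_centralizer} since the hypothesis on $x_0$ is unchanged.

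For part (2), I would reduce to part (1) by translation. For every $\alpha \in \C$, the element $x - \alpha$ lies in $M^\vphi \subset \mc{E}_1(M)$. Provided $x_0$ has positive degree (the only interesting case, since otherwise $x$ is a scalar), the monomial $x_0$ remains a highest-degree monomial of $x - \alpha$ with non-zero coefficient and of course still lies in $E_{\leq 1}^\infty$. Applying part (1) to $x - \alpha$ with $\lambda = 1$ shows that $x - \alpha$ has no atom at zero, i.e., $x$ has no atom at $\alpha$. As $\alpha$ ranges over $\C$, this yields that $x$ is diffuse.

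The substantive work has already been done in Theorem \ref{no_zero_divisors_from_the_centralizer}, so there is no real obstacle here: both claims follow once one notices, via Proposition \ref{prop:eigenop_polar_decomp}, that the kernel projection of an eigenoperator automatically lies in the centralizer.
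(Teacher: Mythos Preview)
Your proposal is correct and follows essentially the same argument as the paper: for part (1) the paper likewise passes to the kernel projection $p_{\ker(x)}=1-v^*v\in M^\varphi$ (via the polar decomposition) to contradict Theorem \ref{no_zero_divisors_from_the_centralizer}, and for part (2) it applies (1) to each translate $x-\alpha\in\mc{E}_1(M)=M^\varphi$. Your write-up is slightly more explicit in citing Proposition \ref{prop:eigenop_polar_decomp} for why $v^*v\in M^\varphi$, but the route is identical.
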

\begin{proof}
If $x\in \mc{E}_\lambda(M)$, $\lambda\leq 1$, had an atom at zero, then there would be a non-zero projection $p$ such that $xp=0$. Hence we may apply Theorem \ref{no_zero_divisors_from_the_centralizer} with $p_{\ker(x)}=1-v^*v\in M^\varphi$, $v$ coming from the polar decomposition of $x$, to obtain a contradiction. For $x\in M^\varphi$, we simply apply (1) to each translation $x-\alpha\in \mc{E}_1(M)=M^\vphi$, $\alpha\in \C$.
\end{proof}

\begin{cor}\label{atom_size}
Assume $\Phi_\vphi^*(G)<\infty$. For $y_1,\ldots, y_m\in G$, let $\lambda_j$ be the eigenvalue of $y_j\cdots y_m$ for each $j=1,\ldots, m$, and suppose
	\[
		\lambda_1=\max_{1\leq j\leq m} \lambda_j.
	\]
Then the element $x=y_m^*\cdots y_1^*y_1\cdots y_m\in (M^\vphi)_+$ is diffuse when $\lambda_1\leq 1$ and otherwise has exactly one atom, which is at zero and of size $1-\frac{1}{\lambda_1}$.
\end{cor}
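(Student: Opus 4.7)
The plan is to set $w = y_1 \cdots y_m$, so $w$ is an eigenoperator of $\sigma^\vphi$ with eigenvalue $\lambda_1$ and $x = w^*w$. By Proposition \ref{prop:eigenop_polar_decomp}, the polar decomposition $w = v|w|$ yields $|w| = x^{1/2} \in M^\vphi$ and $v$ an eigenoperator with eigenvalue $\lambda_1$ whose right-support $v^*v$ coincides with the support of $x$. I would then handle the two cases $\lambda_1 \leq 1$ and $\lambda_1 > 1$ separately, using Corollary \ref{diffuse_centralizer_elements}(2) as the core diffuseness input in both, plus a spectral transfer formula to get the precise atom size in the second case.

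The first step (used only in the second case, but worth recording) is the identity
\[
    \vphi(f(ww^*)) = \lambda_1 \vphi(f(x))
\]
for every continuous $f$ with $f(0) = 0$. This follows from $ww^* = vxv^*$, so that $f(ww^*) = vf(x)v^*$ by the functional calculus (the support of $f(x)$ lies in $v^*v$), combined with the KMS computation $\vphi(vf(x)v^*) = \vphi(f(x) v^* \sigma_{-i}^\vphi(v)) = \lambda_1 \vphi(f(x) v^*v) = \lambda_1 \vphi(f(x))$.

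In the case $\lambda_1 \leq 1$, I will apply Corollary \ref{diffuse_centralizer_elements}(2) directly to $x$ viewed as the monomial $y_m^*\cdots y_1^* y_1 \cdots y_m$, which is its own highest-degree term. Using $\lambda_{y_j} = \lambda_j/\lambda_{j+1}$ (with $\lambda_{m+1}=1$), a telescoping calculation shows every right-suffix of this monomial has eigenvalue in $\{1, \lambda_1, \lambda_2, \ldots, \lambda_m\}$; since $\lambda_1 = \max_j \lambda_j \leq 1$, all these lie in $(0,1]$, so $x \in E_{\leq 1}^\infty$ and Corollary \ref{diffuse_centralizer_elements}(2) gives that $x$ is diffuse.

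In the case $\lambda_1 > 1$, the suffix eigenvalues of $x$ no longer all lie in $(0,1]$, so I will instead apply Corollary \ref{diffuse_centralizer_elements}(2) to $ww^* = y_1\cdots y_m y_m^* \cdots y_1^* \in M^\vphi$. An analogous telescoping shows its suffix eigenvalues lie in $\{1/\lambda_1\}\cup\{\lambda_j/\lambda_1 : j=1,\ldots, m\}$, which are all $\leq 1$ precisely because $\lambda_1 > 1$ and $\lambda_j \leq \lambda_1$; hence $ww^* \in E_{\leq 1}^\infty$ is diffuse. Finally, taking $f = \mathbf{1}_{(0,\infty)}$ in the transfer formula yields $1 = \lambda_1(1 - \vphi(p_{\ker(x)}))$, so $x$ has an atom at $0$ of mass $1 - 1/\lambda_1$; taking $f = \mathbf{1}_{\{\alpha\}}$ for $\alpha > 0$ shows that any nonzero atom of $x$ would force one of $ww^*$, contradicting diffuseness. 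The only delicate point is the suffix-eigenvalue bookkeeping, but this is a direct telescoping argument rather than a genuine obstacle.
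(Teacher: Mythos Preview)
Your proof is correct and follows essentially the same route as the paper: verify $x\in E_{\leq 1}^\infty$ directly when $\lambda_1\leq 1$, verify $ww^*\in E_{\leq 1}^\infty$ when $\lambda_1>1$, and then transfer spectral information via the identity $\vphi(f(ww^*))=\lambda_1\vphi(f(x))$ for $f(0)=0$. The only cosmetic difference is that the paper obtains this identity by applying the KMS condition directly to $w$ (yielding $\vphi(x^k)=\lambda_1^{-1}\vphi((ww^*)^k)$ for $k\geq 1$) rather than passing through the polar decomposition, and then approximates indicator functions by polynomials rather than continuous functions.
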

\begin{proof}
Define $\lambda_0=\lambda_{m+1}=1$. First suppose $\lambda_1\leq 1$. For each $j=1,\ldots,m$, the eigenvalues of $y_j\cdots y_m$ and $y_{j}^*\cdots y_1^*y_1\cdots y_m$ are easily seen to be $\lambda_j$ and $\lambda_{j+1}$, respectively. Each of these is less than one by assumption, so $x\in E_{\leq 1}^\infty$ and is diffuse by Corollary \ref{diffuse_centralizer_elements}.

Now suppose $\lambda_1>1$ and consider $\tilde{x}:=y_1\cdots y_my_m^*\cdots y_1^*\in (M^\vphi)_+$. For each $j=1,\ldots, m$, the eigenvalues of $y_j^*\cdots y_1^*$ and $y_j\cdots y_my_m^*\cdots y_1^*$ are $\frac{\lambda_{j+1}}{\lambda_1}$ and $\frac{\lambda_j}{\lambda_1}$, respectively. By assumption, each of these is less than $\frac{\lambda_1}{\lambda_1}=1$, so $\tilde{x}\in E_{\leq 1}^\infty$ and hence is diffuse. Now, for any $d\geq 1$ we have
	\[
		\vphi( x^k)=\vphi( \sigma_i^\vphi(y_1\cdots y_m) x^{k-1} y_m^*\cdots y_1^*) = \frac{1}{\lambda_1} \vphi( \tilde{x}^k).
	\]
Consequently, for any polynomial $p$ with $p(0)=0$, we have $\vphi(p(x))=\lambda_1^{-1}\vphi(p(\tilde{x}))$. By approximating a Dirac mass $\delta_t$ for $t\in (0,\infty)$ by such polynomials, we see that $x$ has no atoms away from zero since $\tilde{x}$ is diffuse. On the other hand, by approximating $1-\delta_0$ by such polynomials we see that $x$ must have an atom of size $1-\frac{1}{\lambda_1}$ at zero.
\end{proof}

\begin{ex}
Let $y\in G$ have eigenvalue $\lambda\leq 1$. Then by Corollary \ref{atom_size}, $y^*y$ is diffuse and $yy^*$ has an atom of size $1-\lambda$ at zero.
\end{ex}

\begin{rem}
One application of Corollary \ref{diffuse_centralizer_elements} is to operators arising from loops on a weighted graph which begin and end on a vertex of minimal weight. Following \cite[Section 4]{GJS10}, given an oriented bipartite graph $\Gamma=(V,E)$, one can associate to each edge $e\in E$ a generalized circular operator $c(e)$ acting on a Fock space $\mc{F}_\Gamma$ associated to the graph. Let $\varphi_\Gamma$ denote the vacuum state on $\mc{B}(\mc{F}_\Gamma)$, then the action of its associated modular automorphism group $\sigma^{\varphi_\Gamma}$ on the operators $c(e)$ is known:
	\begin{align*}
		\sigma^{\varphi_\Gamma}_z(c(e)) = \left(\frac{\mu(t(e))}{\mu(s(e))}\right)^{iz},\qquad z\in \C,
	\end{align*}
where $s(e),t(e)\in V$ are the source and target vertices for $e$, respectively, and $\mu$ is the weighting given by the Perron--Frobenius eigenvector of the adjacency matrix of $\Gamma$. It then follows that $c(e_1)c(e_2)\cdots c(e_m)$ is always an eigenoperator of $\sigma^{\varphi_\Gamma}$, and in particular is in the centralizer with respect to $\varphi_\Gamma$ whenever $e_1e_2\cdots e_m$ is a loop in $\Gamma$. Fix a loop $e_1\cdots e_m$ in $\Gamma$ and observe that the eigenvalue of $c(e_j)\cdots c(e_m)$, $1\leq j\leq m$, is
	\begin{align*}
		\frac{\mu(t(e_j)))}{\mu(s(e_j)))}\frac{\mu(t(e_{j+1}))}{\mu(s(e_{j+1}))}\cdots \frac{\mu(t(e_m))}{\mu(s(e_m))} = \frac{\mu(t(e_m))}{\mu(s(e_j))},
	\end{align*}
since $t(e_k)=s(e_{k+1})$. Thus if $t(e_m)=s(e_1)$ has minimal $\mu$-weight amongst all vertices traversed by the loop, then $c(e_1)\cdots c(e_m)$ is diffuse by Corollary \ref{diffuse_centralizer_elements}. A more direct proof of this result as well as a broader consideration of graphs and weightings $\mu$ can be found in \cite{Har15}.
\end{rem}

%%%%%%%%%%%%%%%%%%%%%%%%%%%%%%%%%%%%%%%%%%%%%%%%%%%%%%%%%%%%%%%%%
%		   Derivations From The Polar Decomposition		%
%%%%%%%%%%%%%%%%%%%%%%%%%%%%%%%%%%%%%%%%%%%%%%%%%%%%%%%%%%%%%%%%%

\section{Derivations From The Polar Decomposition}\label{polar_deriv}

Recall that for $y\in G$, $\delta_y$ is the free difference quotient with respect to $y$, which is $\lambda_y$-modular. When it exists, we denote by $\xi_y$ the conjugate variable to $\delta_y$. From these derivations and the polar decomposition of $y$ we will construct new derivations. In particular, we will construct a derivation associated to $|y|$ which can be restricted to the tracial context of $(M^\vphi,\vphi)$.

%%%%%%%%%%%%%%%%%%%%%%%%%%%%%%%%%%%%%%%%%%%%%%%%%%%%%%%%%%%%%%%%%%%%%%%%%%%%%

\subsection{The derivations $\delta_{|y|}$ and $\delta_v$}\label{new_derivations}

For $y\in G$ with $y\neq y^*$ and polar decomposition $y=v|y|$, we now consider two new derivations $\delta_{|y|}$ and $\delta_v$. Set $\dom{(\delta_{|y|})}=\dom(\delta_v)=\P$ and for $p\in \P$ define these derivations by
	\begin{align*}
		\delta_{|y|}(p)&:=\delta_y(p)\# (v\otimes 1) + \delta_{y^*}(p)\# (1\otimes v^*),\\
		\delta_v(p)& := \delta_y(p)\# (v\otimes |y|) - \delta_{y^*}(p)\# (|y|\otimes v^*).
	\end{align*}
Observe that $\delta_{|y|}$ and $\delta_v$ are $1$-modular, since $v\in \mc{E}_{\lambda_y}(M)$. Also $\hat\delta_{|y|}=\delta_{|y|}$ and $\hat\delta_v=-\delta_v$.

\begin{lem}\label{abs_value_derivation_closable_in_L2}
Fix $y\in G$ with polar decomposition $y=v|y|$, and suppose $y\neq y^*$ and that $\xi_y$ exists. Then $1\otimes 1\in \dom(\delta_{|y|}^*)\cap \dom(\delta_v^*)$ so that $\delta_{|y|}$ and $\delta_v$ are closable as densely defined maps
	\[
		\delta_{|y|},\delta_v\colon L^2(M,\varphi)\to L^2(M\bar{\otimes}M^{op}).
	\]
Furthermore, $|y|,|y^*|\in \dom{(\bar{\delta}_{|y|})}$ with 
	\[
		\bar{\delta}_{|y|}(|y|)=v^*v\otimes 1 + 1\otimes v^*v - v^*v\otimes v^*v \qquad{\and}\qquad \bar{\delta}_{|y|}(|y^*|) = v\otimes v^*,
	\]
and $|y|,|y^*|,v,v^*\in \dom(\bar{\delta}_v)$ with
	\[
		\bar{\delta}_v(|y|)=0,\qquad \bar{\delta}_v(|y^*|)=[v\otimes v^*,|y^*|], \qquad \bar{\delta}_v(v)=v\otimes v^*v,\qquad\text{and}\qquad\bar{\delta}_v(v^*)=-v^*v\otimes v.
	\]
\end{lem}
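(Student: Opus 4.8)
The plan is to exhibit $1\otimes 1$ in the domains of the adjoints $\delta_{|y|}^*$ and $\delta_v^*$ by writing the conjugate-variable pairing explicitly in terms of the already-existing $\xi_y$ and $\xi_{y^*}$, and then to identify the closures on the relevant polar-decomposition elements by a functional-calculus/Kaplansky approximation argument. First I would note that, since $\hat\delta_y=\delta_{y^*}$ and the conjugate variable $\xi_y$ exists, Lemma \ref{conj_var_entire} guarantees that $\xi_{y^*}$ exists as well and that both are eigenvectors of $\Delta_\vphi$ with the appropriate eigenvalues; in particular all the pairing manipulations below take place inside $\P\subset M_\infty$ where the extended modular group $\sigma_z^\vphi$ acts. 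For $p\in\P$ one computes
\[
\<1\otimes 1,\delta_{|y|}(p)\>_{\text{HS}} = \<1\otimes 1,\delta_y(p)\#(v\otimes 1)\>_{\text{HS}} + \<1\otimes 1,\delta_{y^*}(p)\#(1\otimes v^*)\>_{\text{HS}},
\]
and each term is rewritten using the identity $\<1\otimes 1, \eta\#(a\otimes b)\>_{\text{HS}} = \<(\sigma_{-i}^\vphi(b)^*\otimes \sigma_{i}^\vphi(a)^*)\#\,\cdot\,\rangle$-type moves already codified in Lemma \ref{adjoint_formula} (specifically the formula for $\delta^*$ applied to elementary tensors), so that the right-hand side becomes $\<\zeta,p\>_\vphi$ for an explicit $\zeta\in L^2(M,\vphi)$ built from $\xi_y$, $\xi_{y^*}$, $v$, $v^*$, $|y|$. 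This shows $1\otimes 1\in\dom(\delta_{|y|}^*)$, hence (by the general principle recorded just after Definition \ref{conjugate_variables_definition} and used in Lemma \ref{adjoint_formula}) that $\delta_{|y|}$ is closable; the computation for $\delta_v$ is identical, with the weights $v\otimes|y|$, $|y|\otimes v^*$ producing an extra $\sigma_{\pm i/2}^\vphi$ on $|y|$ which is harmless since $|y|\in M^\vphi$.

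Next I would identify the closures on the four distinguished elements. Since $y=v|y|\in\P$ and $v^*v=|y|^0$ (the support projection, but really I would avoid $0$-th powers and instead use that $|y|\in M^\vphi\cap\dom(\delta_{|y|})$ via $|y|^2=y^*y\in\P$ together with Lemma \ref{C^1_functional_calculus} applied to the square-root function on a compact interval containing the spectrum of $y^*y$), one gets $|y|\in\dom(\bar\delta_{|y|})$ and can compute $\bar\delta_{|y|}(|y|)$ from $\bar\delta_{|y|}(|y|^2)=\delta_{|y|}(y^*y)$ by the Leibniz rule together with $\bar\delta_{|y|}(|y|)\#(|y|\otimes 1)+(1\otimes|y|)\#\bar\delta_{|y|}(|y|)=\delta_{|y|}(y^*y)$ and multiplication by the resolvent-type operator $LR_{|y|}(\widetilde{\sqrt{\cdot}})$ from Lemma \ref{C^1_functional_calculus}; plugging in $\delta_{|y|}(y^*y)=\delta_y(y^*y)\#(v\otimes 1)+\delta_{y^*}(y^*y)\#(1\otimes v^*)$ and simplifying using $yv^*=v|y|v^*$, $v^*v|y|=|y|$, etc., should collapse to $v^*v\otimes 1+1\otimes v^*v-v^*v\otimes v^*v$. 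For $|y^*|$ I would use $|y^*|^2=yy^*$ and $|y^*|=v|y|v^*$; the cleanest route is probably $|y^*|=v|y|v^*\in\ker(\delta_{|y|})\cdot\dots$—no, rather directly: $\delta_{|y|}(yy^*)$ expands and the functional-calculus argument (Lemma \ref{C^1_functional_calculus} again, this time the Lipschitz square root, noting $|y^*|$ need not be in $\P$ but $yy^*\in\P$) gives $\bar\delta_{|y|}(|y^*|)=v\otimes v^*$. The identities for $\bar\delta_v$ on $|y|,|y^*|,v,v^*$ are obtained the same way: $\bar\delta_v(|y|)=0$ follows because $\delta_v(y^*y)=\delta_y(y^*y)\#(v\otimes|y|)-\delta_{y^*}(y^*y)\#(|y|\otimes v^*)$ turns out to vanish after expansion (the two terms cancel by $\hat\delta_v=-\delta_v$ symmetry), so the functional calculus gives zero; then $\bar\delta_v(v)$ is recovered from $v=y|y|^{-1}$—better, from $\delta_v(y)=\delta_v(v|y|)=\bar\delta_v(v)\#(|y|\otimes 1)+(1\otimes|y|)\#\bar\delta_v(v)$ set equal to $\delta_v(y)=1\otimes|y|$ (since $\delta_y(y)=1\otimes 1$, $\delta_{y^*}(y)=0$), which after multiplying by the square-root-type kernel on the spectrum of $y^*y$ yields $\bar\delta_v(v)=v\otimes v^*v$; and $\bar\delta_v(v^*)=[\bar\delta_v(v)]^\dagger$ up to the sign coming from $\hat\delta_v=-\delta_v$, giving $-v^*v\otimes v$. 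Finally $\bar\delta_v(|y^*|)=[v\otimes v^*,|y^*|]$ follows from $|y^*|=v v^* |y^*|$ or more directly from $yy^*=|y^*|^2$ and the Leibniz rule, using $\bar\delta_v(v)$, $\bar\delta_v(v^*)$ just computed.

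The main obstacle I anticipate is the bookkeeping in the functional-calculus step for $|y|$ and $|y^*|$: Lemma \ref{C^1_functional_calculus} requires $a=a^*\in\dom(\delta)$ with $a$ itself in the algebra, but $|y|$ and $|y^*|$ are only obtained as $\sqrt{y^*y}$, $\sqrt{yy^*}$ with $y^*y,yy^*\in\P=\dom(\delta_{|y|})$, so I must invoke the Lipschitz (or $C^1$) version on a compact interval $[0,\|y\|^2]$—noting $0$ is in the spectrum so the square root is only Lipschitz near $0$ when restricted away from $0$, but globally on $[0,\|y\|^2]$ it is Hölder not Lipschitz. The fix is the standard one: $\sqrt{t}$ is Lipschitz on $[\epsilon,\|y\|^2]$ and one applies the lemma to $f_\epsilon(t)=\sqrt{t+\epsilon}-\sqrt\epsilon$ (which is $C^1$ on all of $[0,\|y\|^2]$ and vanishes at $0$), then lets $\epsilon\to 0$ using closability of $\bar\delta_{|y|}$ and the explicit form of $LR_{|y|}(\widetilde{f_\epsilon})$ converging strongly against $\delta_{|y|}(y^*y)$ on the range of the spectral measure of $y^*y$; since $\delta_{|y|}(y^*y)$ already annihilates the kernel of $y^*y$ (it is left- and right-multiplied by factors involving $v^*v$), the limit is well-defined. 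Carrying this limiting argument through carefully, and similarly for $\delta_v$, is where the real work lies; everything else is algebra with the three involutions and the actions $\#$, $\cdot$, and $\rho$.
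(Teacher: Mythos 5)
Your overall architecture (pair $1\otimes 1$ against $\delta_{|y|}(p)$, $\delta_v(p)$ to land in $\dom(\delta^*)$, then regularize the square root and use closedness plus Lemma \ref{C^1_functional_calculus} to handle $|y|$ and $|y^*|$) matches the paper, and your $\epsilon$-regularization $f_\epsilon(t)=\sqrt{t+\epsilon}$ is exactly the paper's $f_\alpha$ argument for $\bar\delta_{|y|}(|y|)$ and $\bar\delta_v(|y|)$. Two points, however, are genuine gaps. First, in the closability step you invoke Lemma \ref{adjoint_formula} ``applied to elementary tensors,'' but that lemma only covers $\dom(\delta)\otimes\dom(\delta)^{op}=\P\otimes\P^{op}$, and the tensors you need ($v^*\otimes 1$, $1\otimes v$, and for $\delta_v$ tensors involving $|y|$) are not polynomial: $v$ and $|y|$ are not in $\P$. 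The statement actually needed is Corollary \ref{boundedness_of_adjoint_formula}, i.e.\ $\mc{E}_\lambda(M)\otimes\mc{E}_\gamma(M)^{op}\subset\dom(\delta_y^*)$, which is itself proved by the Kaplansky-type approximation you only gesture at; without it your ``explicit $\zeta$'' is not justified.

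Second, and more seriously, your treatment of $v,v^*\in\dom(\bar\delta_v)$ does not work as described. You propose to read $\bar\delta_v(v)$ off the Leibniz identity for $\delta_v(y)=\delta_v(v|y|)$ and then ``multiply by a square-root-type kernel.'' This is circular — the identity presupposes $v\in\dom(\bar\delta_v)$, which is precisely what must be shown — and even granting membership, the equation $\bar\delta_v(v)\cdot|y|=v\otimes|y|$ determines $\bar\delta_v(v)$ only up to terms supported on $\ker|y|$ (the lemma does not assume $\lambda_y\le 1$, so $|y|$ may have a kernel, and in any case $0$ may lie in its spectrum, so no bounded inverse or ``kernel multiplication'' is available). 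The missing idea is the paper's: approximate $v$ by $y(\alpha+|y|)^{-1}$, note $(\alpha+|y|)^{-1}\in\ker(\bar\delta_v)$ by Corollary \ref{extC1}/Lemma \ref{C^1_functional_calculus}, apply the extended Leibniz rule (Lemma \ref{extension_of_Leibniz_rule}) to get $\bar\delta_v(y(\alpha+|y|)^{-1})=(v\otimes|y|)\cdot(\alpha+|y|)^{-1}$, and let $\alpha\to 0$ using closedness; similarly $(\alpha+|y|)^{-1}y^*$ for $v^*$. Relatedly, your computation $\delta_v(y)=1\otimes|y|$ is wrong (it is $v\otimes|y|$, since the weight $v\otimes|y|$ must be applied), and your shortcut $\bar\delta_v(v^*)=-[\bar\delta_v(v)]^\dagger$ actually yields $-v^*v\otimes v^*$, not $-v^*v\otimes v$ (the latter appears to be a typo in the statement: $-v^*v\otimes v^*$ is what the paper's approximation gives and what the Leibniz computation of $\bar\delta_v(|y^*|)=[v\otimes v^*,|y^*|]$ requires). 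Your alternative route to $\bar\delta_{|y|}(|y^*|)=v\otimes v^*$ via the functional calculus applied directly to $yy^*\in\P$ is a legitimately different path from the paper's $yg_\alpha(|y|)y^*$ trick, but note that $\delta_{|y|}(yy^*)=v\otimes y^*+y\otimes v^*$ is not a bi-function of $yy^*$, so the $L^2$ limit requires factoring out the $v$'s and tracking the $\lambda_y$ factors from the non-tracial state — work you would still need to supply.
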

\begin{proof}
We compute for $p\in \P$
	\begin{align*}
		\<1\otimes 1, \delta_{|y|}(p)\>_{\text{HS}} &= \<1\otimes 1, \delta_y(p)\# v\otimes 1\>_{\text{HS}} + \<1\otimes 1, \delta_{y^*}(p)\# 1\otimes v^*\>_{\text{HS}}\\
			&=  \< \lambda_y^{-1} v^*\otimes 1, \delta_y(p)\>_{\text{HS}} + \<\lambda_y 1\otimes v, \delta_{y^*}(p)\>_{\text{HS}}.
	\end{align*}
Now, Corollary \ref{boundedness_of_adjoint_formula} implies $v^*\otimes 1\in \dom{(\delta_y^*)}$ and $1\otimes v\in \dom{(\delta_{y^*}^*)}$. Thus
	\begin{align*}
		\<1\otimes 1, \delta_{|y|}(p)\>_{\text{HS}} = \< \lambda_y^{-1}\delta_y^*(v^*\otimes 1) + \lambda_y \delta_{y^*}^*(1\otimes v), p\>_{\vphi},
	\end{align*}
which implies $1\otimes 1\in \dom{(\delta_{|y|}^*)}$. Hence $\delta_{|y|}$ is closable by Lemma \ref{adjoint_formula}. The proof for $\delta_v$ is similar.

We note that
	\begin{align*}
		\delta_{|y|}(y^*y)&= (1\otimes y)\# (1\otimes v^*) + (y^*\otimes 1)\# (v\otimes 1)\\
			&= 1\otimes (v^*y) + (y^*v)\otimes 1\\
			&= 1\otimes |y| + |y|\otimes 1,
	\end{align*}
since $v^*v=1-p_{\ker(y)}=1-p_{\ker(|y|)}$. Let $I\subset [0,\infty)$ be a compact interval containing  the spectrum of $y^*y$, and let $\mu$ be the spectral measure of $y^*y$. For each $\alpha>0$, consider the function
	\begin{align*}
		f_\alpha(t)=\sqrt{t+\alpha^2} \in C^1(I).
	\end{align*}
By Lemma \ref{C^1_functional_calculus}, $f_\alpha(y^*y)\in \dom{(\bar{\delta}_{|y|})}$ and $\bar{\delta}_{|y|}(f_\alpha(y^*y))$ is identified (via the functional calculus) with
	\begin{align*}
		\frac{f_\alpha(t) - f_\alpha(s)}{t-s}(\sqrt{t}+\sqrt{s}) = \frac{\sqrt{t} + \sqrt{s}}{\sqrt{t+\alpha^2} + \sqrt{s+\alpha^2}},
	\end{align*}
which by Lebesgue's dominated convergence theorem (DCT) converges in $L^2(I\times I,\mu\times \mu)$ to the function $1-\chi_{\{(0,0)\}}(t,s)$, where $\chi_E$ denotes the characteristic function on a set $E$. Since $v^*v=1-p_{\ker(y^*y)}=\chi_{(0,\infty)}(y^*y)$ and
	\begin{align*}
		1-\chi_{\{(0,0)\}}(t,s) = \chi_{(0,\infty)}(t) + \chi_{(0,\infty)}(s) - \chi_{(0,\infty)}(t)\chi_{(0,\infty)}(s),
	\end{align*}
this limit is identified via the Borel functional calculus for $y^*y$ with
	\[
		v^*v\otimes 1 + 1\otimes v^*v - v^*v\otimes v^*v.
	\]
Thus $\bar{\delta}_{|y|}(f_\alpha(y^*y))\to v^*v\otimes 1 + 1\otimes v^*v - v^*v\otimes v^*v$ in $L^2(M\bar{\otimes}M^{op})$. Since $f_\alpha(t)$ converges to $\sqrt{t}$ uniformly on $I$, it follows that $f_\alpha(y^*y)\to |y|$ in $L^2(M,\varphi)$. Hence, $|y|\in \dom{(\bar{\delta}_{|y|})}$ with $\bar{\delta}_{|y|}(|y|)=v^*v\otimes 1 + 1\otimes v^*v - v^*v\otimes v^*v$.

Now let $I\subset [0,\infty)$ be a compact interval containing the spectrum of $|y|$, and let $\mu$ be the spectral measure of $|y|$. For each $\alpha>0$, consider the function
	\[
		g_\alpha(t)=\frac{t}{(t+\alpha)^2}\in C^1(I).
	\]
Since $|y^*|=v|y|v^*$ and $v$ is an eigenoperator with eigenvalue $\lambda_y$, we have
	\[
		\| yg_\alpha(|y|)y^* -|y^*|\|_\vphi = \lambda_y\| |y|g_\alpha(|y|)|y|-|y|\|_\vphi.
	\]
So $yg_\alpha(|y|)y^*\to |y^*|$ in $L^2(M,\vphi)$ since $t^2g_\alpha(t)\to t$ in $L^2(I,\mu)$ by Lebesgue's DCT. By Corollary \ref{extC1} we have $g_\alpha(|y|)\in \dom(\bar{\delta}_{|y|})$. Then, since $g_\alpha(|y|)\in M^\vphi$ and $y,y^*\in M_0$ we have by Proposition \ref{closure_is_der} that $yg_\alpha(|y|)y^*\in \dom(\bar{\delta}_{|y|})$ with
	\begin{align*}
		\bar{\delta}_{|y|}(yg_\alpha(|y|)y^*)= v\otimes g_\alpha(|y|)y^* + y\cdot LR_{|y|}(\tilde{g}_\alpha)\cdot y^* + yg_\alpha(|y|)\otimes v^*.
	\end{align*}
We claim that this converges to $v\otimes v^*$ in $L^2(M\bar\otimes M^{op})$. Let $p=v^*v=1-p_{\ker(|y|)}$, then a straightforward computation shows
	\begin{align*}
		\| v\otimes g_\alpha(|y|)y^* + &y\cdot LR_{|y|}(\tilde{g}_\alpha)\cdot y^* + yg_\alpha(|y|)\otimes v^* - v\otimes v^*\|_{\text{HS}}\\
			&= \| p\otimes g_\alpha(|y|)|y| + |y|\cdot LR_{|y|}(\tilde{g}_\alpha)\cdot |y| + |y|g_\alpha(|y|)\otimes p - p\otimes p\|_{\text{HS}}.
	\end{align*}
It is readily seen (after computing $\tilde{g}_\alpha$) that $p\otimes g_\alpha(|y|)|y| + |y|\cdot LR_{|y|}(\tilde{g}_\alpha)\cdot |y| + |y|g_\alpha(|y|)\otimes p - p\otimes p$ corresponds via the Borel functional calculus for $|y|$ to
	\begin{align*}
		\chi_{(0,\infty)}(t) s g_\alpha(s)+&(\alpha^2-ts) g_\alpha(t) g_\alpha(s) +tg_\alpha(t)\chi_{(0,\infty)}(s) - \chi_{(0,\infty)}(t)\chi_{(0,\infty)}(s)\\
			&=\left[\chi_{(0,\infty)}(t) - \frac{t^2}{(t+\alpha)^2}\right]\left[\frac{s^2}{(s+\alpha)^2}-\chi_{(0,\infty)}(s)\right] + \frac{t\alpha}{(t+\alpha)^2}\frac{s\alpha}{(s+\alpha)^2}.
	\end{align*}
This converges to zero in $L^2(I\times I, \mu\times \mu)$ by Lebesgue's DCT. Hence $|y^*|\in dom(\bar{\delta}_{|y|})$ with $\bar{\delta}_{|y|}(|y^*|)=v\otimes v^*$.

Seeing that $|y|\in \dom(\bar{\delta}_v)$ is much easier. Indeed,
	\begin{align*}
		\delta_v(y^*y) &= -(|y|\otimes v^*)\cdot y + y^*\cdot (v\otimes |y|)\\
				&= -|y|\otimes (v^*v|y|) + (|y|v^*v)\otimes |y| =0,
	\end{align*}
so by Lemma \ref{C^1_functional_calculus} $f_\alpha(y^*y)\in \dom(\bar{\delta}_v)$ with $\bar{\delta}_v(f_\alpha(y^*y))=0$ for all $\alpha>0$. This, in turn, implies $|y|\in \dom(\bar{\delta}_v)$ with $\bar\delta_v(|y|)=0$.

Towards showing $v\in \dom(\bar{\delta}_v)$, we consider the elements $y(\alpha+|y|)^{-1}$, $\alpha>0$, which we claim converge to $v$ in $L^2(M,\varphi)$ as $\alpha\to 0$. Once again let $I\subset [0,\infty)$ be a compact interval containing the spectrum of $|y|$, and let $\mu$ be the spectral measure of $|y|$. Noting that
	\begin{align*}
		\|y(\alpha+|y|)^{-1} - v\|_\varphi = \||y|(\alpha+|y|)^{-1} - v^*v\|_\vphi,
	\end{align*}
we see that the convergence follows because $ |y|(\alpha+|y|)^{-1} - v^*v$ corresponds via the Borel functional calculus of $|y|$ to the function $t(\alpha+t)^{-1} - \chi_{(0,\infty)}(t)$, which converges to zero on $L^2(I,\mu)$ by Lebesgue's DCT. Furthermore, $(\alpha+|y|)^{-1}\in \dom(\bar{\delta}_v)$ with $\bar\delta_v( (\alpha+|y|)^{-1})=0$ by Lemma \ref{C^1_functional_calculus}. So by the first part of Lemma \ref{extension_of_Leibniz_rule} we have
	\begin{align*}
		\bar\delta_v( y(\alpha+|y|)^{-1}) = (v\otimes |y|)\cdot (\alpha+|y|)^{-1},
	\end{align*}
which converges to $v\otimes v^*v$ as $\alpha\to 0$. Thus $v\in \dom(\bar{\delta}_v)$ with $\bar\delta_v(v)=v\otimes v^*v$. The proof for $v^*$ is similar using $(\alpha+|y|)^{-1}y^*$ to approximate $v^*$.

Finally, observe that since $\hat{\delta}_v=-\delta_v$ we have $\overline{\dom{}}(\delta_v\oplus \hat{\delta}_v)=\dom(\bar{\delta}_v)$. Thus Proposition \ref{closure_is_der} implies $|y^*|=v|y|v^*\in \dom(\bar{\delta}_v)$ with
	\[
		\bar{\delta}_v(|y^*|)=v\otimes v^*v|y|v^* - v|y|v^*v\otimes v^*= v\otimes v^*|y^*| - |y^*|v\otimes v^* = [v\otimes v^*, |y^*|]
	\]
as claimed.
\end{proof}

\begin{rem}
Consider the derivation
	\begin{align*}
		\tilde{\delta}_v:=\delta_y\# (vv^*\otimes v|y|) - \delta_{y^*}\# (|y|v^*\otimes vv^*) = \delta_v\# (v^*\otimes v).
	\end{align*}
In light of Lemma \ref{abs_value_derivation_closable_in_L2}, we can see
	\begin{align*}
		\tilde{\delta}_v(v) &= vv^*\otimes v\\
		\tilde{\delta}_v(v^*) &= - v^*\otimes vv^*,
	\end{align*}
which is exactly the derivation defined in \cite[Section 2.3]{Shl01}---the inspiration for $\delta_v$.
\end{rem}

The primary benefit of the derivation $\bar{\delta}_{|y|}$, is that it gives us a derivation on the tracial von Neumann algebra $M^\vphi$. By composing this with $\mc{E}_\vphi\otimes \mc{E}_\vphi^{op}$, we can consider an exclusively tracial context. In the following lemma, we check a few details for these derivations that will be relevant for the proofs of Theorems A and B.

\begin{lem}\label{tracial_ultrapower_result}
For $y\in G$ satisfying $y\neq y^*$ and $\lambda_y\leq 1$, assume $\xi_y$ exists. Let $Y_1=|y|$, $Y_2=|y^*|$, and $D=\P[Y_1,Y_2]\cap M^\vphi$ (a dense subset of $L^2(M^\vphi,\vphi)$). Then
	\begin{align*}
		\delta_j:=(\mc{E}_\vphi\otimes \mc{E}_\vphi^{op})\circ\bar\delta_{Y_j}\mid_D\qquad j=1,2
	\end{align*}
are $1$-modular derivations valued in $M^\vphi\otimes (M^\vphi)^{op}$ that satisfy for $j=1,2$:
	\begin{itemize}
		\item[(i)] $1\otimes 1\in \dom(\delta_j^*)$ with $\delta_j^*(1\otimes 1)\in L^2(M^\varphi,\varphi)$;
		
		\item[(ii)] $\delta_j$ is closable;
		
		\item[(iii)] $(1\otimes \varphi)\circ \delta_j$ extends to a bounded map from $M^\varphi$ to $L^2(M^\varphi,\varphi)$.

	\end{itemize}
Moreover,
			\[
				\delta_j(Y_k)=\delta_{j=k} U_j
			\]
for $j,k=1,2$, where $U_1:=1\otimes 1$ and $U_2:=vv^*\otimes 1 + 1\otimes vv^* - vv^*\otimes vv^*$. Also, $\delta_j\# U_j=\delta_j$ for $j=1,2$.
\end{lem}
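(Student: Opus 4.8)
The plan is to verify each claim by reducing to facts already established for $\bar\delta_{|y|}$ and $\bar\delta_v$ in Lemma \ref{abs_value_derivation_closable_in_L2}, together with the general machinery for $\mu$-modular derivations. First I would check that $\delta_1,\delta_2$ are genuinely derivations valued in $M^\vphi\otimes(M^\vphi)^{op}$: since $D\subset M^\vphi\cap\overline{\dom}(\delta_{Y_j}\oplus\hat\delta_{Y_j})$ (note $\hat\delta_{Y_j}=\delta_{Y_j}$, so this is just $\dom(\bar\delta_{Y_j})$), Proposition \ref{closure_is_der} gives that $\bar\delta_{Y_j}$ satisfies the Leibniz rule on $D$, and post-composing with the conditional expectation $\mc{E}_\vphi\otimes\mc{E}_\vphi^{op}$—which is an $M^\vphi$-bimodule map by the $M^\vphi$-linearity of $\mc{E}_\vphi$—preserves the Leibniz rule and lands in $M^\vphi\otimes(M^\vphi)^{op}$. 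One-modularity is immediate because $\delta_{|y|}$ and $\delta_v$ are already $1$-modular and $\mc{E}_\vphi$ commutes with $\sigma^\vphi$ (indeed $\mc{E}_\vphi=\mc{E}_1$ in the notation of Subsection \ref{Implications}).

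For (i), (ii), (iii): the point is that these all transfer from the corresponding statements about $\bar\delta_{Y_j}$ via the adjoint relation $\delta_j^* = \bar\delta_{Y_j}^*\circ(\mc{E}_\vphi\otimes\mc{E}_\vphi^{op})$ on the relevant domain, using that $\mc{E}_\vphi\otimes\mc{E}_\vphi^{op}$ is a self-adjoint projection on $L^2(M\bar\otimes M^{op})$. For (i), Lemma \ref{abs_value_derivation_closable_in_L2} shows $1\otimes 1\in\dom(\delta_{|y|}^*)$; since $\mc{E}_\vphi\otimes\mc{E}_\vphi^{op}$ fixes $1\otimes 1$, one gets $1\otimes 1\in\dom(\delta_j^*)$ with $\delta_j^*(1\otimes 1)=\mc{E}_\vphi(\delta_{|y|}^*(1\otimes1))\in L^2(M^\vphi,\vphi)$ (using that $\delta_{|y|}^*(1\otimes1)$ is the conjugate variable, which is $\Delta_\vphi$-eigenvector with eigenvalue $1$ by Lemma \ref{conj_var_entire}, hence already in $L^2(M^\vphi,\vphi)$—so the $\mc{E}_\vphi$ is actually harmless). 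Then (ii) follows from (i) exactly as in Lemma \ref{adjoint_formula}: existence of the conjugate variable forces closability. For (iii), I would invoke Corollary \ref{boundedness_of_adjoint_formula}: since $\xi_y$ exists, $\overline{(1\otimes\vphi)\circ\hat\delta_y}$ and its companion for $\delta_{y^*}$ are bounded on each eigenspace $\mc{E}_\lambda(M)$; writing $(1\otimes\vphi)\circ\delta_j$ in terms of these (via the defining formulas for $\delta_{|y|}$, the fact that $v,v^*\in M_0$ act on fixed eigenspaces, and boundedness of $\sigma_{\pm i/2}^\vphi$ on those eigenspaces) gives a bounded map $M^\vphi\to L^2(M^\vphi,\vphi)$; post-composition with $\mc{E}_\vphi$ only improves the bound.

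Finally, the computation $\delta_j(Y_k)=\delta_{j=k}U_j$ is just applying $\mc{E}_\vphi\otimes\mc{E}_\vphi^{op}$ to the four values of $\bar\delta_{|y|}$ computed in Lemma \ref{abs_value_derivation_closable_in_L2}. Since $Y_1=|y|,Y_2=|y^*|,v^*v,vv^*\in M^\vphi$ and $v\otimes v^*$ already lies in $\mc{E}_{\lambda_y}(M)\otimes\mc{E}_{\lambda_y^{-1}}(M)^{op}$, we get $\mc{E}_\vphi\otimes\mc{E}_\vphi^{op}(v\otimes v^*)=\mc{E}_\vphi(v)\otimes\mc{E}_\vphi(v^*)=0$ as $\lambda_y<1$ forces $v\notin M^\vphi$; hence $\delta_1(Y_1)=v^*v\otimes 1+1\otimes v^*v-v^*v\otimes v^*v=U_1$ (note $v^*v=1$ here exactly when $\ker y=0$; in general $U_1$ should be read as $v^*v\otimes1+1\otimes v^*v-v^*v\otimes v^*v$, but since $|y|$ is diffuse by Corollary \ref{atom_size} when $\lambda_y\le1$, $\ker|y|=0$ and $v^*v=1$, giving $U_1=1\otimes1$), $\delta_1(Y_2)=\mc{E}_\vphi\otimes\mc{E}_\vphi^{op}(v\otimes v^*)=0$, $\delta_2(Y_1)=0$ (from $\bar\delta_v$? no—$\delta_2$ comes from $\bar\delta_{|y^*|}$), and $\delta_2(Y_2)=U_2=vv^*\otimes1+1\otimes vv^*-vv^*\otimes vv^*$. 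The identity $\delta_j\#U_j=\delta_j$ follows because $U_j$ acts as the identity on the relevant corner: $U_1=1\otimes1$ trivially, and for $U_2$ one uses that the range projection of $\bar\delta_{|y^*|}$ lives under $vv^*\otimes1$ on the left and $1\otimes vv^*$ on the right (since $|y^*|=v|y|v^*$ is supported under $vv^*$), so multiplying by $U_2$ is idempotent-from-the-left-and-right and fixes it.

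\medskip

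\noindent\textbf{Main obstacle.} The delicate point is (iii)—controlling $(1\otimes\vphi)\circ\delta_j$ uniformly. The values $\bar\delta_{|y|}(Y_1),\bar\delta_{|y|}(Y_2)$ are bounded operators so there is no issue for $Y_1,Y_2$ themselves, but for a general element of $M^\vphi$ (obtained as a strong limit, not a polynomial in $Y_1,Y_2$) one genuinely needs the eigenspace-wise boundedness from Corollary \ref{boundedness_of_adjoint_formula} together with a Kaplansky-density argument (Theorem \ref{Kaplansky}) to pass from $D$ to all of $M^\vphi$; keeping track of which eigenspace the various $v$-twisted pieces land in, and that $\sigma_{\pm i/2}^\vphi$ is bounded there, is the part that requires care rather than cleverness.
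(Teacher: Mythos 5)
Your route is essentially the paper's: define $\delta_j$ by compressing $\bar\delta_{|y|}$ and $\bar\delta_{|y^*|}$ with $\mc{E}_\vphi\otimes\mc{E}_\vphi^{op}$, get the Leibniz rule from Proposition \ref{closure_is_der} together with $M^\vphi$-bimodularity of $\mc{E}_\vphi$, get (i) from Lemma \ref{abs_value_derivation_closable_in_L2} plus the $\Delta_\vphi$-invariance of the conjugate variable (Lemma \ref{conj_var_entire}), get (ii) from Lemma \ref{adjoint_formula}, get $\delta_j(Y_k)=\delta_{j=k}U_j$ from the formulas of Lemma \ref{abs_value_derivation_closable_in_L2} together with $v^*v=1$ (diffuseness of $|y|$ when $\lambda_y\leq1$), and get $\delta_j\#U_j=\delta_j$ from the fact that the $v^*\otimes1$ and $1\otimes v$ factors in the defining formula for $\delta_{|y^*|}$ are fixed by $\#U_2$ (the paper then passes to the closure by a weak-limit argument and commutes $\mc{E}_\vphi\otimes\mc{E}_\vphi^{op}$ past $\#U_2$; you assert this for $\bar\delta_{|y^*|}$ directly, which is fine since $\#U_2$ is a bounded right multiplication by an element of $M^\vphi\otimes(M^\vphi)^{op}$). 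Two small slips: $\delta_2$ comes from $\bar\delta_{|y^*|}$, i.e.\ Lemma \ref{abs_value_derivation_closable_in_L2} applied to $y^*\in G$ (whose conjugate variable exists because $\xi_y$ does, by Lemma \ref{conj_var_entire}), not from $\bar\delta_v$ -- you catch this yourself -- and $\delta_2(Y_1)=0$ needs the symmetric computation $\bar\delta_{|y^*|}(|y|)=v^*\otimes v$, which you assert rather than derive.

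The one step that does not go through as you describe is (iii). You propose to apply Corollary \ref{boundedness_of_adjoint_formula} to $\delta_y$ and $\delta_{y^*}$ and ``write $(1\otimes\vphi)\circ\delta_j$ in terms of these.'' For the first term of $\delta_{|y|}(p)=\delta_y(p)\#(v\otimes1)+\delta_{y^*}(p)\#(1\otimes v^*)$ this works, since $(1\otimes\vphi)\bigl(\delta_y(p)\#(v\otimes1)\bigr)=\bigl[(1\otimes\vphi)\delta_y(p)\bigr]v$ and right multiplication by $v\in M_0$ is bounded. But for the second term, writing $\delta_{y^*}(p)=\sum a\otimes b$, one gets $(1\otimes\vphi)\bigl(\sum a\otimes v^*b\bigr)=\sum a\,\vphi(v^*b)$: the $v^*$ sits inside the slice and does not factor out, so this is not a bounded post-composition of the corollary's maps for $\delta_{y^*}$; repairing it would need a KMS manipulation ($\vphi(v^*b)=\lambda_y^{-1}\vphi(bv^*)$) and a separate bound for $p\mapsto(1\otimes\vphi)(\delta_{y^*}(p)\cdot v^*)$, which the corollary does not directly provide. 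The paper avoids this entirely: $\delta_{|y|}$ and $\delta_{|y^*|}$ themselves satisfy all hypotheses of Corollary \ref{boundedness_of_adjoint_formula} (they are $1$-modular, their domain $\P$ is generated by eigenoperators, their conjugate variables exist -- exactly the facts you already used for (i) and (ii), and $\hat\delta_{|y|}=\delta_{|y|}$), so the corollary applies to them directly, giving the bound on $\mc{E}_1(M)=M^\vphi$; composing with the contraction $\mc{E}_\vphi$ finishes (iii). With that substitution your argument coincides with the paper's proof.
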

\begin{proof}
Note that $\P\cap M^\vphi=\mc{E}_\vphi(\P)$ is dense in $L^2(M^\vphi,\vphi)$: $\P$ is dense in $L^2(M,\vphi)$ and
	\[
		\|\xi - p\|_\vphi^2 = \| \xi- \mc{E}_\vphi(p)\|_\vphi^2+\|(1-\mc{E}_\vphi)(p)\|_\vphi^2\qquad \forall \xi\in L^2(M^\vphi,\vphi),\ p\in \P.
	\]
Consequently, $D$ is dense.

Lemma \ref{abs_value_derivation_closable_in_L2} and Proposition \ref{closure_is_der} imply that $\bar{\delta}_{|y|}$ and $\bar{\delta}_{|y^*|}$ are $1$-modular derivations on $D$. Since $D\subset M^\vphi$ we see that $\delta_1$ and $\delta_2$ satisfy the Leibniz rule and are therefore  $1$-modular derivations valued in $M^\vphi\otimes (M^\vphi)^{op}$.

Lemma \ref{abs_value_derivation_closable_in_L2} implies that $1\otimes1\in \dom(\delta_{|y|}^*)$ with $\xi_{|y|}:=\delta_{|y|}^*(1\otimes 1)\in L^2(M,\varphi)$. Then Lemma \ref{conj_var_entire} implies $\xi_{|y|}$ is invariant under $\Delta_\vphi$ and therefore contained in $L^2(M^\vphi, \vphi)$. Since $\mc{E}_\vphi\otimes\mc{E}_{\vphi}^{op}(1\otimes 1)=1\otimes 1$, it follows that $1\otimes 1 \in \dom(\delta_1^*)$ with $\delta_1^*(1\otimes 1)=\delta_{|y|}^*(1\otimes 1)=\xi_{|y|}$. Similarly for $|y^*|$ and $\delta_2$. This establishes (i). Since the conjugate variables exist, Lemma \ref{adjoint_formula} implies $\delta_1$ and $\delta_2$ are closable. Thus (ii) is established, and (iii) follows from Corollary \ref{boundedness_of_adjoint_formula}.

Note that $\lambda_y\leq 1$ implies $y^*y$ is diffuse by Corollary \ref{diffuse_centralizer_elements}. Hence $v^*v=1$ and the formula for $\delta_j(U_k)$, $j,k=1,2$, follows from Lemma \ref{abs_value_derivation_closable_in_L2}. Now, $\delta_1\# U_1=\delta_1 \# (1\otimes 1) = \delta_1$ is clear, and towards showing $\delta_2\# U_2 =\delta_2$ observe that
	\[
		\delta_2\# U_2 = (\mc{E}_\vphi\otimes \mc{E}_\vphi^{op})\left( \bar\delta_{Y_2}\mid_D \# U_2\right).
	\]
So it suffices to show $\bar\delta_{|y^*|}\# U_2 = U_2$. Recall that
	\[
		\delta_{|y^*|} = \delta_{y^*}\# (v^*\otimes 1) + \delta_y \# (1\otimes v).
	\]
Since $(v^*\otimes 1)\# U_2= v^*\otimes 1$, and similarly for $1\otimes v$, we have $\delta_{|y^*|}\# U_2=U_2$. Let $\xi\in \dom(\bar\delta_{|y^*|})$ be approximated by $\{p_n\}_{n\in\N}\subset \P$ in the $\|\cdot\|_{\bar\delta_{|y^*|}}$-norm. Then for any $\eta\in L^2(M\bar\otimes M^{op})$ we have
	\begin{align*}
		\< \bar\delta_{|y^*|}(\xi)\# U_2, \eta\>_{\text{HS}} & = \< \bar\delta_{|y^*|}(\xi), \eta\# U_2^*\>_{\text{HS}}\\
			&= \lim_{n\to\infty} \< \delta_{|y^*|}(p_n), \eta\# U_2^*\>_{\text{HS}}\\
			&= \lim_{n\to\infty} \< \delta_{|y^*|}(p_n)\# U_2, \eta\>_{\text{HS}}\\
			&= \lim_{n\to\infty} \< \delta_{|y^*|}(p_n), \eta\>_{\text{HS}}\\
			&= \<\bar\delta_{|y^*|}(\xi),\eta\>_{\text{HS}},
	\end{align*}
where we have used $U_2\in M^\vphi\otimes (M^\vphi)^{op}$. Thus $\bar\delta_{|y^*|}\# U_2 = \bar\delta_{|y^*|}$ and the desired formula holds.
\end{proof}

We are very grateful to Yoann Dabrowski who suggested to us the following lemma, which is a modified version of \cite[Proposition 3.21]{CDS14}.

\begin{lem}\label{lem:closable_big_kernel}
For $y\in G$ satisfying $y\neq y^*$ and $\lambda_y\leq 1$, assume $\xi_y$ exists. With the same notations as in the previous lemma, consider $\delta_0\colon D\to M^\vphi\otimes (M^\vphi)^{op}$ defined for $p\in D$ by
	\[
		\delta_0(p):= \sum_{j=1}^2 \delta_j(p) \#(Y_j\otimes 1-1\otimes Y_j) - [p,1\otimes 1].
	\]
Then $1\otimes 1\in \dom(\delta_0^*)$ with
	\[
		\delta_0^*(1\otimes 1) = \sum_{j=1}^2 [Y_j,\delta_j^*(1\otimes 1)].
	\]
In particular, $\delta_0$ is a closable operator, with closure $\bar\delta_0$ satisfying $L^2(\C[Y_1,Y_2],\vphi)\subset \ker(\bar\delta_0)$. Moreover, for any $x\in \dom(\bar\delta_0)\cap M^\vphi$ we have
	\begin{align}\label{eqn:vanishing_derivation}
		\| [x,U_2]\|_{\text{HS}}^2 =&  - \<\bar\delta_0(x), [x,1\otimes 1]\>_{\text{HS}}\nonumber\\
			 &+\sum_{j=1}^2 \left(\< [x,Y_j],[x, \delta_j^*(U_j)]\>_\vphi + 2\Re\< (\vphi\otimes 1 - 1\otimes \vphi^{op})\circ \bar\delta_j(x), [x,Y_j]\>_\vphi \right) .
	\end{align}
\end{lem}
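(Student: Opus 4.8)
The plan is as follows. First I would observe that $\delta_0$ is itself a derivation on the $*$-algebra $D$: for any fixed $c\in M^\vphi\otimes(M^\vphi)^{op}$ the map $p\mapsto\delta_j(p)\#c$ satisfies the Leibniz rule, since right $\#$-multiplication by $c$ commutes with the left and right bimodule actions appearing in the Leibniz rule for $\delta_j$, and $p\mapsto[p,1\otimes1]=p\otimes1-1\otimes p$ is the inner derivation implemented by $1\otimes1$; hence $\delta_0$ is a finite sum of derivations $D\to M^\vphi\otimes(M^\vphi)^{op}$. A short computation with the involutions, using $\hat\delta_y=\delta_{y^*}$, $\hat\delta_{y^*}=\delta_y$ and the fact that $\mc{E}_\vphi\otimes\mc{E}_\vphi^{op}$ commutes with $\dagger$, gives $\hat\delta_j=\delta_j$ and then $\hat\delta_0=-\delta_0$; in particular $\|\cdot\|_{\bar\delta_0}$ and $\|\cdot\|_{\bar{\hat\delta}_0}$ agree, so $\overline{\dom{}}(\delta_0\oplus\hat\delta_0)=\dom(\bar\delta_0)$. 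With this in hand, Lemma \ref{adjoint_formula} — applied to the tracial von Neumann algebra $(M^\vphi,\vphi)$ in place of $(M,\vphi)$, so that $(M^\vphi)_\infty=M^\vphi$ — reduces both the closability of $\delta_0$ and the density of $\dom(\delta_0^*)$ (whence the final ``closable'' assertion) to the single statement that the conjugate variable $\delta_0^*(1\otimes1)$ exists.

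To compute $\delta_0^*(1\otimes1)$ I would evaluate $\langle 1\otimes1,\delta_0(p)\rangle_{\text{HS}}$ for $p\in D$ directly. The $-[p,1\otimes1]$ term contributes $\vphi(p)-\vphi(p)=0$. For the $j$-th term, traciality of $\vphi$ on $M^\vphi$ lets me rewrite $\langle 1\otimes1,\delta_j(p)\#(Y_j\otimes1)\rangle_{\text{HS}}=\langle 1\otimes1,Y_j\cdot\delta_j(p)\rangle_{\text{HS}}$ and $\langle 1\otimes1,\delta_j(p)\#(1\otimes Y_j)\rangle_{\text{HS}}=\langle 1\otimes1,\delta_j(p)\cdot Y_j\rangle_{\text{HS}}$, and then the Leibniz rule with $\delta_j(Y_j)=U_j$ (Lemma \ref{tracial_ultrapower_result}) turns these into $\langle 1\otimes1,\delta_j(Y_jp)\rangle_{\text{HS}}-\langle 1\otimes1,U_j\cdot p\rangle_{\text{HS}}$ and $\langle 1\otimes1,\delta_j(pY_j)\rangle_{\text{HS}}-\langle 1\otimes1,p\cdot U_j\rangle_{\text{HS}}$. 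The $\langle 1\otimes1,\delta_j(\cdot)\rangle_{\text{HS}}$ terms are $\langle\delta_j^*(1\otimes1),\cdot\,\rangle_\vphi$ by definition of the conjugate variable, which exists by Lemma \ref{tracial_ultrapower_result}(i), and a trace computation gives $\langle 1\otimes1,U_j\cdot p-p\cdot U_j\rangle_{\text{HS}}=0$ for $j=1,2$ (trivially for $U_1=1\otimes1$, and for $U_2=vv^*\otimes1+1\otimes vv^*-vv^*\otimes vv^*$ because its three summands pair symmetrically against $p$). Collecting terms and moving $Y_j$ across by traciality once more yields $\langle 1\otimes1,\delta_0(p)\rangle_{\text{HS}}=\langle\sum_j[Y_j,\delta_j^*(1\otimes1)],p\rangle_\vphi$, that is, $\delta_0^*(1\otimes1)=\sum_j[Y_j,\delta_j^*(1\otimes1)]$, which lies in $L^2(M^\vphi,\vphi)$ by Lemma \ref{tracial_ultrapower_result}(i). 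For the kernel statement, a direct computation using $\delta_j(Y_k)=\delta_{j=k}U_j$, $\delta_j\#U_j=\delta_j$, and $v^*v=1$ (recalling that $y^*y$ is diffuse by Corollary \ref{diffuse_centralizer_elements}, so $vv^*$ is the support projection of $Y_2$ and $U_2$ acts as a unit on $Y_2\otimes1$ and $1\otimes Y_2$) shows $\delta_0(Y_1)=\delta_0(Y_2)=0$; since $\delta_0$ is a derivation it vanishes on the algebra $\C[Y_1,Y_2]\subset D$, and since $\ker(\bar\delta_0)$ is closed, $L^2(\C[Y_1,Y_2],\vphi)\subset\ker(\bar\delta_0)$.

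For (\ref{eqn:vanishing_derivation}) I would first prove it for $p\in D$ and then pass to the limit. Starting from $\langle\bar\delta_0(p),[p,1\otimes1]\rangle_{\text{HS}}$ and substituting the definition of $\delta_0$, the $-[p,1\otimes1]$ term contributes $-\|[p,1\otimes1]\|_{\text{HS}}^2=-\|[p,U_1]\|_{\text{HS}}^2$. As every operator involved lies in $M^\vphi\otimes(M^\vphi)^{op}$, where $\vphi\otimes\vphi^{op}$ is a trace, right $\#$-multiplication by the self-adjoint $Y_j\otimes1-1\otimes Y_j$ is self-adjoint; combined with the bimodule identity $[p,1\otimes1]\#(Y_j\otimes1-1\otimes Y_j)=[p,\,Y_j\otimes1-1\otimes Y_j]$ this gives $\langle\delta_j(p)\#(Y_j\otimes1-1\otimes Y_j),[p,1\otimes1]\rangle_{\text{HS}}=\langle\delta_j(p),[p,\,Y_j\otimes1-1\otimes Y_j]\rangle_{\text{HS}}$. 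Expanding this pairing leg-by-leg, moving $p$ across with the bounded left/right bimodule actions, and applying the Leibniz rule for $\delta_j$ with $\delta_j(Y_j)=U_j$, produces three kinds of terms: (i) pairings of the $(\vphi\otimes1)$- and $(1\otimes\vphi^{op})$-compressions of $\delta_j(p)$ against $[p,Y_j]$, which reassemble into $2\operatorname{Re}\langle(\vphi\otimes1-1\otimes\vphi^{op})\circ\bar\delta_j(p),[p,Y_j]\rangle_\vphi$; (ii) a term $\langle\delta_j([p,Y_j]),U_j\rangle_{\text{HS}}=\langle[p,Y_j],\delta_j^*(U_j)\rangle_\vphi$ (after commuting $p$ past $Y_j$ and using traciality), which one further commutator manipulation converts to $\langle[p,Y_j],[p,\delta_j^*(U_j)]\rangle_\vphi$; and (iii) the leftover $U_j$-insertions, which combine to $\|[p,U_1]\|_{\text{HS}}^2-\|[p,U_2]\|_{\text{HS}}^2$ (again using $\delta_j\#U_j=\delta_j$ and $v^*v=1$ to identify $U_2$). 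Rearranging gives (\ref{eqn:vanishing_derivation}) for $p\in D$. For general $x\in\dom(\bar\delta_0)\cap M^\vphi$, Theorem \ref{Kaplansky} applied to $\delta_0$ (legitimate by the first paragraph) furnishes $p_n\in D$ with $\|p_n\|\le\|x\|$, $p_n\to x$ $*$-strongly, and $\delta_0(p_n)\to\bar\delta_0(x)$; both sides of the identity are continuous along this convergence — the $[p_n,1\otimes1]$, $[p_n,Y_j]$ and $[p_n,\delta_j^*(U_j)]$ factors because $*$-strong, norm-bounded convergence is compatible with the bounded bimodule actions on $L^2$, and the term involving $(\vphi\otimes1-1\otimes\vphi^{op})\circ\bar\delta_j(p_n)$ because that map extends to a bounded map out of $M^\vphi$ (Lemma \ref{tracial_ultrapower_result}(iii) together with Corollary \ref{boundedness_of_adjoint_formula}), so the compressions converge weakly in $L^2$ against the norm-convergent $[p_n,Y_j]$.

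The main obstacle is step (iii) above: carefully bookkeeping the $\#$-algebra expansion of $\langle\delta_j(p),[p,\,Y_j\otimes1-1\otimes Y_j]\rangle_{\text{HS}}$ so that the four ``mixed'' contributions $pY_j\otimes1$, $p\otimes Y_j$, $Y_j\otimes p$, $1\otimes Y_jp$ recombine — through repeated use of the trace property of $\vphi$ on $M^\vphi$ and the Leibniz rule — into exactly the combination on the right of (\ref{eqn:vanishing_derivation}), with the two $U_j$ contributions and their Hilbert--Schmidt norms appearing with the correct signs. Everything else is routine given Lemmas \ref{adjoint_formula}, \ref{abs_value_derivation_closable_in_L2}, \ref{tracial_ultrapower_result}, Corollary \ref{boundedness_of_adjoint_formula}, and Theorem \ref{Kaplansky}.
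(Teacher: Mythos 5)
Your handling of the first two assertions is correct and essentially the paper's argument: the computation of $\delta_0^*(1\otimes 1)$ via traciality, the Leibniz rule with $\delta_j(Y_j)=U_j$, and the vanishing of $\langle 1\otimes 1,[p,U_j]\rangle_{\text{HS}}$ is the paper's computation run in the opposite direction; closability via Lemma \ref{adjoint_formula} and the kernel statement via $\delta_0(Y_1)=\delta_0(Y_2)=0$ (using that $U_2$ acts as a unit on $Y_2\otimes 1$ and $1\otimes Y_2$) plus closedness of $\bar\delta_0$ also match the paper. Your explicit check that $\delta_0$ is a derivation with $\hat\delta_0=-\delta_0$ is a worthwhile detail the paper leaves implicit.

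The gap is the identity (\ref{eqn:vanishing_derivation}), which is the heart of the lemma and which you do not actually prove: you expand $\sum_j\langle\delta_j(p),[p,\,Y_j\otimes1-1\otimes Y_j]\rangle_{\text{HS}}$, assert that the terms recombine into the stated right-hand side, and explicitly defer the bookkeeping as ``the main obstacle.'' That bookkeeping is exactly where the work lies, and as described it does not go through. The direct pairing of $\delta_j(p)$ against $[p,Y_j]\otimes1-1\otimes[p,Y_j]$ produces a single term $-\langle(\vphi\otimes1-1\otimes\vphi^{op})\circ\delta_j(p),[p,Y_j]\rangle_\vphi$, not the $2\,\mathrm{Re}$ combination; obtaining the latter, and the term $\langle[p,Y_j],[p,\delta_j^*(U_j)]\rangle_\vphi$, requires the identity $\delta_j([p,Y_j])=[\delta_j(p),Y_j]+[p,U_j]$, a conjugation argument using $\hat\delta_j=\delta_j$ and $U_j^\dagger=U_j$, the Jacobi identity, and the relation $\delta_j\# U_j=\delta_j$ --- none of which appears in your sketch. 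Moreover, the sign accounting you assert (the leftover $U_j$-insertions giving $\|[p,U_1]\|_{\text{HS}}^2-\|[p,U_2]\|_{\text{HS}}^2$, hence the minus sign in front of $\langle\bar\delta_0(x),[x,1\otimes1]\rangle_{\text{HS}}$) is not what the algebra yields: carrying out the computation as in the paper's proof gives $\|[x,U_2]\|_{\text{HS}}^2=+\langle\bar\delta_0(x),[x,1\otimes1]\rangle_{\text{HS}}+\sum_j(\cdots)$, and one can test on a simple model ($\delta_1,\delta_2$ free difference quotients with respect to free semicirculars $Y_1,Y_2$, $U_1=U_2=1\otimes1$, and $x$ a third free semicircular, so that $\delta_0(x)=-[x,1\otimes1]$) that the minus-sign version fails while the plus-sign version holds; the printed minus sign in (\ref{eqn:vanishing_derivation}) appears to be a typo, immaterial in the paper's applications since there $\bar\delta_0(x)=0$. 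That your claimed recombination lands exactly on the (mistyped) statement rather than on what the expansion produces is itself evidence the computation was not carried out. The final approximation step via Theorem \ref{Kaplansky} and the boundedness results from Lemma \ref{tracial_ultrapower_result} and Corollary \ref{boundedness_of_adjoint_formula} is fine.
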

\begin{proof}
For $p\in D$ note that $\vphi\otimes\vphi^{op}([p,U_j])=0$ for $j=1,2$. Thus, we have
	\begin{align*}
		\< \sum_{j=1}^2 [Y_j,\delta_j^*(1\otimes 1)], p\>_\vphi &= \sum_{j=1}^2 \<1\otimes 1, \delta_j([Y_j,p])\>_{\text{HS}}\\
			&= \sum_{j=1}^2 \<1\otimes 1, [U_j, p]\>_{\text{HS}}+ \< Y_j\otimes 1 - 1\otimes Y_j, \delta_j(p)\>_{\text{HS}}\\
			&= \sum_{j=1}^2 \< 1\otimes 1, \delta_j(p)\#(Y_j\otimes 1 - 1\otimes Y_j)\>_{\text{HS}}\\
			&= \<1\otimes 1, \delta_0(p)\>_{\text{HS}}.
	\end{align*}
So $1\otimes 1\in \dom(\delta_0^*)$ with the claimed image. Hence $\delta_0$ is closable by Lemma \ref{adjoint_formula}.

For $p\in \C[Y_1,Y_2]$, $\delta_0(p)=0$ since $U_2\# (Y_2\otimes 1 - 1\otimes Y_2) = Y_2\otimes 1- 1\otimes Y_2$. It follows that $L^2(\C[Y_1,Y_2],\vphi)\subset \dom(\bar\delta_0)$ with $\delta_0$ identically zero on this subspace.

We now establish (\ref{eqn:vanishing_derivation}). First note that $U_j\in \dom(\delta_k^*)$ for $j,k=1,2$ by Corollary \ref{boundedness_of_adjoint_formula}, and so the right-hand side is well-defined. Also, by Theorem \ref{Kaplansky} it suffices to establish this formula for $p\in D$. We have $\delta_j([p,Y_j]) = [\delta_j(p), Y_j] + [p,U_j]$. So
	\begin{align*}
			\|[p,U_j]\|_{\text{HS}}^2 = \< \delta_j([p,Y_j]),[p,U_j]\>_{\text{HS}} - \<[\delta_j(p), Y_j], [p,U_j]\>_{\text{HS}}=: {\I}_j - {\II}_j.
	\end{align*}
We compute
	\begin{align*}
		{\I}_j&= \< [p^*,\delta_j([p,Y_j])], U_j\>_{\text{HS}}\\
			&= \< \delta_j([p^*,[p,Y_j]]), U_j\>_{\text{HS}} - 	\<[\delta_j(p^*), [p,Y_j]], U_j\>_{\text{HS}}\\
			&= \< [p,Y_j], [p,\delta_j^*(U_j)]\>_{\text{HS}} +  \< \delta_j(p^*), [U_j, [p^*,Y_j]]\>_{\text{HS}}
	\end{align*}
Focusing on the second term, we use $\hat{\delta}_j=\delta_j$ and $U_j^\dagger=U_j$ to observe that
	\begin{align*}
		\< \delta_j(p^*), [U_j, [p^*,Y_j]]\>_{\text{HS}} &= \overline{ \< [U_j, [p^*,Y_j]],\delta_j(p)^\dagger\>_{\text{HS}}}\\
			&= \overline{\vphi\otimes\vphi^{op}( [U_j, [p^*,Y_j]]^*\# \delta_j(p)^\dagger)}\\
			&= \vphi\otimes\vphi^{op}( ([U_j, [p^*,Y_j]]^\dagger)^* \# \delta_j(p))\\
			&= \< [U_j, [p^*,Y_j]]^\dagger, \delta_j(p)\>_{\text{HS}}\\
			&= \< [ [Y_j, p], U_j], \delta_j(p)\>_{\text{HS}}\\
			&= \< [U_j, [p,Y_j]], \delta_j(p)\>_{\text{HS}}.
	\end{align*}
Thus we have shown
	\[
		{\I}_j = \< [p,Y_j], [p,\delta_j^*(U_j)]\>_{\text{HS}} + \< [U_j, [p,Y_j]], \delta_j(p)\>_{\text{HS}}.
	\]

Next, using that $\vphi$ is a trace on $M^\vphi$, we have
	\begin{align*}
		{\II}_j = \<\delta_j(p), [ [ p,U_j],Y_j]\>_{\text{HS}} = \<\delta_j(p), [p, [U_j, Y_j]]\>_{\text{HS}} - \<\delta_j(p), [U_j, [p,Y_j]]\>_{\text{HS}}.
	\end{align*}
Note that $[U_j,Y_j]=1\otimes Y_j - Y_j\otimes 1$. So considering $[p, [U_j, Y_j]]$ appearing in the first term above, we have
	\begin{align*}
		[p, [U_j, Y_j]] &= (p\otimes 1-1\otimes p)\# [U_j,Y_j]\\
			&= [p, 1\otimes 1]\# [U_j,Y_j]\\
			&= [p, 1\otimes 1]\# (1\otimes Y_j - Y_j\otimes 1).
	\end{align*}
Since $\vphi\otimes\vphi^{op}$ is tracial on $M^\vphi\otimes(M^\vphi)^{op}$ we have
	\begin{align*}
		\sum_{j=1}^2 \<\delta_j(p), [p,[U_j,Y_j]]\>_{\text{HS}} &= \sum_{j=1}^2 \< \delta_j(p)\# (1\otimes Y_j - Y_j\otimes 1), [p,1\otimes 1]\>_{\text{HS}}\\
			&= \< -\delta_0(p) - [p,1\otimes 1], [p,1\otimes 1]\>_{\text{HS}}\\
			&= -\< \delta_0(p), [p,1\otimes 1]\>_{\text{HS}} - \|[p,1\otimes 1]\|_{\text{HS}}^2.
	\end{align*}
Thus we have shown
	\[
		\sum_{j=1}^2 -{\II}_j = \<\delta_0(p), [p,1\otimes 1]\>_{\text{HS}} + \| [p,1\otimes 1]\|_{\text{HS}}^2 + \sum_{j=1}^2 \<\delta_j(p), [U_j,[p,Y_j]]\>_{\text{HS}}
	\]
Combining this with out previous observations about ${\I}_j$, we have
	\begin{align*}
		\sum_{j=1}^2 \| [p,U_j]\|_{\text{HS}}^2 = \sum_{j=1}^2 &\left( \< [p,Y_j], [p,\delta_j^*(U_j)]\>_{\text{HS}}+2\Re \<\delta_j(p),[U_j,[p,Y_j]]\>_{\text{HS}}\right)\\
			& + \<\delta_0(p),[p,1\otimes 1]\>_{\text{HS}} + \| [p,1\otimes 1]\|_{\text{HS}}^2.
	\end{align*}
Subtracting $\|[p,U_1]\|_{\text{HS}}^2 = \|[p,1\otimes 1]\|_{\text{HS}}^2$ from each side gives the desired formula, provided we show
	\[
		\<\delta_j(p),[U_j,[p,Y_j]]\>_{\text{HS}}=\< (\vphi\otimes 1 - 1\otimes \vphi^{op})\circ \delta_j(p), [p,Y_j]\>_\vphi .
	\]
We compute
	\begin{align*}
		\<\delta_j(p),[U_j,[p,Y_j]]\>_{\text{HS}} &= (\vphi\otimes\vphi^{op})( \delta_j(p)^*\# (1\otimes [p,Y_j] - [p,Y_j]\otimes 1) \# U_j) \\
			&= (\vphi\otimes\vphi^{op})(U_j\# \delta_j(p)^*\# (1\otimes [p,Y_j] - [p,Y_j]\otimes 1))\\
			&= (\vphi\otimes\vphi^{op})(\delta_j(p)^*\# (1\otimes [p,Y_j] - [p,Y_j]\otimes 1)),
	\end{align*}
where we have used Lemma \ref{tracial_ultrapower_result}  to assert $\delta_j\# U_j=\delta_j$ in the last equality. Continuing, we obtain
	\begin{align*}
		\<\delta_j(p),[U_j,[p,Y_j]]\>_{\text{HS}} &=(\vphi\otimes\vphi^{op})(\delta_j(p)^*\# (1\otimes [p,Y_j] - [p,Y_j]\otimes 1))\\
			&= \vphi\left(\vphantom{\sum} [p,Y_j] (\vphi\otimes 1)(\delta_j(p)^*) - (1\otimes\vphi^{op})(\delta_j(p)^*) [p,Y_j]\right)\\
			&= \vphi\left(\vphantom{\sum} (\vphi\otimes 1)(\delta_j(p))^* [p,Y_j] - (1\otimes\vphi^{op})(\delta_j(p))^* [p,Y_j]\right)\\
			&=\< (\vphi\otimes 1 - 1\otimes \vphi^{op})\circ \delta_j(p), [p,Y_j]\>_\vphi,
	\end{align*}
as claimed.
\end{proof}

%%%%%%%%%%%%%%%%%%%%%%%%%%%%%%%%%%%%%%%%%%%%%%%%%%%%%%%%%%%%%%%%%%%%%%%%%%%%%%%%%%%%

\subsection{Extending $\bar\delta_{|y|}$ via the predual $(M\bar\otimes M^{op})_*$}

Fix $y\in G$ with $\lambda_y\leq 1$, so that $|y|$ is diffuse by Corollary \ref{atom_size}. Though we won't use it to prove either of our main theorems, there does exist a closed extension of $\bar{\delta}_{|y|}$ that is defined on $v$ and $v^*$ and sends both to zero. To see this extension, though, one must first expand the codomain of the derivations to the predual $(M\bar\otimes M^{op})_*$.

Let $\M$ be a von Neumann algebra with faithful normal state $\psi$. Recall that $L^2(\M,\psi)$ can be embedded into $\M_*$ the predual of $\M$ via
	\begin{align*}
		L^2(\M,\psi)\ni \xi\mapsto \omega_\xi\in \M_*,
	\end{align*}
where $\omega_\xi(x)=\<1,x\xi\>_\psi$ for $x\in \M$.

\begin{lem}\label{predual_bounded_by_L1_norm}
Suppose $a\in \M$ satisfies $|a|\in \M^\psi$. Then $\|\omega_a\|\leq \psi(|a|)$.
\end{lem}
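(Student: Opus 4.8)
The statement to be proved is: if $a \in \M$ satisfies $|a| \in \M^\psi$, then $\|\omega_a\| \leq \psi(|a|)$, where $\omega_a(x) = \langle 1, xa\rangle_\psi$. The plan is to exploit the polar decomposition $a = u|a|$ together with the fact that $|a| \in \M^\psi$ to rewrite $\omega_a$ in a form where the Cauchy--Schwarz-type estimate gives exactly $\psi(|a|)$.

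First I would write $a = u|a|$ with $u$ a partial isometry in $\M$ and note that, since $|a| \in \M^\psi$, it lies in $\dom(S_\psi) = \dom(\Delta_\psi^{1/2})$ and in fact $\Delta_\psi^{1/2}|a| = |a|$ (as $|a|$ is fixed by $\sigma^\psi$, hence by $\Delta_\psi$). Then for $x \in \M$,
\[
	\omega_a(x) = \langle 1, x u|a|\rangle_\psi = \langle 1, xu|a|^{1/2}\cdot |a|^{1/2}\rangle_\psi.
\]
The key move is to split $|a| = |a|^{1/2}\cdot |a|^{1/2}$ and shuffle one factor to the other side of the inner product using the modular structure: since $|a|^{1/2} \in \M^\psi$ we have $\langle 1, y\,|a|^{1/2}\rangle_\psi = \langle |a|^{1/2}, y\rangle_\psi$ for $y \in \M$ (because $|a|^{1/2}$ is $J_\psi$-fixed and central-under-$\sigma^\psi$, so the right action by $|a|^{1/2}$ is the bounded operator $S_\psi |a|^{1/2} S_\psi = |a|^{1/2}$ acting on the left). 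This yields
\[
	\omega_a(x) = \langle |a|^{1/2}, xu|a|^{1/2}\rangle_\psi.
\]
Now apply Cauchy--Schwarz and bound the right factor:
\[
	|\omega_a(x)| \leq \| |a|^{1/2} \|_\psi \cdot \| xu|a|^{1/2}\|_\psi \leq \| |a|^{1/2}\|_\psi \cdot \|x\|\cdot\|u\|\cdot\| |a|^{1/2}\|_\psi = \|x\| \cdot \psi(|a|),
\]
using $\| |a|^{1/2}\|_\psi^2 = \psi(|a|)$ and $\|u\| \leq 1$. Taking the supremum over $\|x\| \leq 1$ gives $\|\omega_a\| \leq \psi(|a|)$.

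The main obstacle is justifying the identity $\langle 1, xu|a|^{1/2}\rangle_\psi = \langle |a|^{1/2}, xu\rangle_\psi$, i.e. that the right action of $|a|^{1/2}$ on $L^2(\M,\psi)$ is implemented simply by $|a|^{1/2}$ itself. This follows from the discussion of the $M$--$M_\infty$ bimodule structure in the preliminaries: $|a|^{1/2} \in \M^\psi \subset \M_\infty$ and its right action is $J_\psi \sigma_{i/2}^\psi(|a|^{1/2})^* J_\psi = J_\psi |a|^{1/2} J_\psi$, which acts on the cyclic vector image $xu \in L^2(\M,\psi)$ as right multiplication, so $\langle 1, (xu)\cdot |a|^{1/2}\rangle_\psi = \langle 1, xu|a|^{1/2}\rangle_\psi$ on one hand and $= \langle J_\psi |a|^{1/2} J_\psi 1, xu\rangle_\psi^- $... more cleanly: since $|a|^{1/2}$ is $J_\psi$-real and in $\M^\psi$, $\langle 1, \xi \cdot |a|^{1/2}\rangle_\psi = \langle (\xi^*)\cdot|a|^{1/2}, 1\rangle_\psi = \overline{\langle 1, |a|^{1/2}\xi^*\rangle_\psi}$, and unwinding with $\xi = xu \in \M$ reduces everything to the bounded-operator identity $J_\psi|a|^{1/2}J_\psi\Omega = |a|^{1/2}\Omega$ for the cyclic vector $\Omega = 1$. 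Once this bookkeeping is in place the estimate is immediate; I expect this to be a short argument rather than a genuine difficulty, but it is the only point requiring care about the non-tracial inner product.
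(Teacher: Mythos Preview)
Your proof is correct and follows essentially the same strategy as the paper: split $|a|=|a|^{1/2}|a|^{1/2}$, use $|a|^{1/2}\in\M^\psi$ to cycle one factor across $\psi$, then apply Cauchy--Schwarz. The paper additionally polar-decomposes $x=w|x|$ and applies the Cauchy--Schwarz inequality for the state to $\psi(|a|^{1/2}w|x|^{1/2}\cdot|x|^{1/2}v|a|^{1/2})$, but your version is slightly cleaner since this extra decomposition is unnecessary.

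One remark: the ``main obstacle'' you flag is not an obstacle. The identity $\langle 1, y|a|^{1/2}\rangle_\psi = \langle |a|^{1/2}, y\rangle_\psi$ is immediate from the definition of the centralizer: both sides equal $\psi(y|a|^{1/2})=\psi(|a|^{1/2}y)$, which holds precisely because $|a|^{1/2}\in\M^\psi$. There is no need to invoke $J_\psi$, the right action of $M_\infty$, or $\sigma_{i/2}^\psi$---the overcomplicated justification you sketch obscures what is a one-line fact.
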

\begin{proof}
If $\psi$ is a trace, then this follows by \cite[Equation V.2.(2)]{Tak02}. The proof presented here is identical modulo the additional non-tracial hypothesis.

Suppose $a$ has polar decomposition $a=v|a|$, and let $x\in M$ have polar decomposition $x=w|x|$. Then 
	\begin{align*}
		|\omega_a(x)|^2 = |\psi(xa)|^2 &= |\psi( |a|^\frac{1}{2} w|x|^{\frac12} |x|^{\frac12} v |a|^\frac{1}{2})|^2\\
			&\leq \psi( |a|^\frac{1}{2} w|x|w^* |a|^\frac{1}{2}) \psi( |a|^\frac{1}{2} v^*|x| v|a|^\frac{1}{2}).
	\end{align*}
But
	\begin{align*}
		w|x|w^* &= |x^*| \leq \|x\| 1\qquad \text{, and}\\
		v^*|x|v &\leq \|x\| v^*v \leq \|x\| 1,
	\end{align*}
so that continuing our previous estimate we have $|\omega_a(x)|^2 \leq \|x\|^2 \psi(|a|)^2$.
\end{proof}

Using the above embedding, we think of $\delta_{|y|}$ as a densely defined map
	\begin{align*}
		\delta_{|y|}\colon L^2(M,\varphi)\to (M\bar{\otimes}M^{op})_*
	\end{align*}
with adjoint
	\begin{align*}
		\delta_{|y|}^\star\colon M\bar{\otimes}M^{op}\to L^2(M,\varphi).
	\end{align*}

\begin{prop}
Fix $y\in G$ with $\lambda_y\leq 1$ and polar decomposition $y=v|y|$, and suppose $y\neq y^*$ and that $\xi_y$ exists. Then $\delta_{|y|}$ is closable as a densely defined map
	\begin{align*}
		\delta_{|y|}\colon L^2(M,\varphi)\to (M\bar{\otimes}M^{op})_*,
	\end{align*}
say with closure $\overline{\delta}^1_{|y|}$, and $v,v^*\in \dom(\overline{\delta}^1_{|y|})$ with $\overline{\delta}^1_{|y|}(v)=\overline{\delta}^1_{|y|}(v^*)=0$.
\end{prop}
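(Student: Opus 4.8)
\emph{Closability.} The claim is that the densely defined map $\delta_{|y|}\colon L^2(M,\varphi)\to(M\bar\otimes M^{op})_*$ is closable, equivalently that its adjoint $\delta_{|y|}^\star$ is weak$^*$ densely defined in $M\bar\otimes M^{op}$. Since $\xi_y$ exists, Lemma \ref{abs_value_derivation_closable_in_L2} gives that the conjugate variable to $\delta_{|y|}$ exists, so Corollary \ref{boundedness_of_adjoint_formula} applies and $\mc{E}_\lambda(M)\otimes\mc{E}_\gamma(M)^{op}\subset\dom(\delta_{|y|}^*)$ for every $\lambda,\gamma\in\R_+^\times$ (here $\delta_{|y|}^*$ denotes the $L^2\to L^2$ adjoint). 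For $z=a\otimes b$ with $a\in\P$, $b\in\P$ one has, for $p\in\P$, the identity $\langle z,\delta_{|y|}(p)\rangle=(\varphi\otimes\varphi^{op})(z\#\delta_{|y|}(p))=\langle z^*,\delta_{|y|}(p)\rangle_{\mathrm{HS}}$, and since $z^*=a^*\otimes b^*$ lies in the span of the $\mc{E}_\mu(M)\otimes\mc{E}_\nu(M)^{op}$, hence in $\dom(\delta_{|y|}^*)$, this equals $\langle\delta_{|y|}^*(z^*),p\rangle_\varphi$, which is $\|\cdot\|_\varphi$-bounded in $p$. Thus $z\in\dom(\delta_{|y|}^\star)$. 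As $\P\otimes_{\mathrm{alg}}\P^{op}$ is weak$^*$ dense in $M\bar\otimes M^{op}$, we conclude that $\delta_{|y|}$ is closable; write $\bar\delta^1_{|y|}$ for its closure, which extends $\bar\delta_{|y|}$ since the inclusion $L^2(M\bar\otimes M^{op})\hookrightarrow(M\bar\otimes M^{op})_*$ is norm-continuous.

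\emph{The approximants.} Since $\lambda_y\leq1$, Corollary \ref{atom_size} gives that $|y|$ is diffuse, so $v^*v=1$ and (by Lemma \ref{abs_value_derivation_closable_in_L2}) $\bar\delta_{|y|}(|y|)=1\otimes1$. For $\alpha>0$ set $\xi_\alpha:=y(\alpha+|y|)^{-1}=v\,g_\alpha(|y|)$ with $g_\alpha(t)=t(\alpha+t)^{-1}$. Since $|y|$ has bounded spectrum, $(\alpha+|y|)^{-1}=g(|y|)$ with $g\in C^1$ lies in $M^\varphi\cap\dom(\bar\delta_{|y|})$ with $\bar\delta_{|y|}((\alpha+|y|)^{-1})=-(\alpha+|y|)^{-1}\otimes(\alpha+|y|)^{-1}$ by Corollary \ref{extC1}; combining this with $\delta_{|y|}(y)=v\otimes1$ and the Leibniz rule of Lemma \ref{extension_of_Leibniz_rule} yields $\xi_\alpha\in\dom(\bar\delta_{|y|})$ and
\[
\bar\delta_{|y|}(\xi_\alpha)=v\otimes(\alpha+|y|)^{-1}-y(\alpha+|y|)^{-1}\otimes(\alpha+|y|)^{-1}=\alpha\big(v(\alpha+|y|)^{-1}\big)\otimes(\alpha+|y|)^{-1},
\]
using $v-y(\alpha+|y|)^{-1}=\alpha\,v(\alpha+|y|)^{-1}$. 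Moreover $\|\xi_\alpha-v\|_\varphi^2=\alpha^2\varphi((\alpha+|y|)^{-2})=\int\frac{\alpha^2}{(\alpha+t)^2}\,d\mu(t)\to0$ by dominated convergence, where $\mu$ is the distribution of $|y|$. The element $\eta_\alpha:=(\alpha+|y|)^{-1}y^*$ is treated symmetrically (using $\delta_{|y|}(y^*)=1\otimes v^*$), giving $\bar\delta_{|y|}(\eta_\alpha)=\alpha(\alpha+|y|)^{-1}\otimes\big((\alpha+|y|)^{-1}v^*\big)$ and $\eta_\alpha\to v^*$ in $L^2(M,\varphi)$.

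\emph{The predual estimate and conclusion.} Now $\bar\delta_{|y|}(\xi_\alpha)$ is an eigenoperator of $\Delta_{\varphi\otimes\varphi^{op}}$ (with eigenvalue $\lambda_y$), so $|\bar\delta_{|y|}(\xi_\alpha)|\in(M\bar\otimes M^{op})^{\varphi\otimes\varphi^{op}}$ by Proposition \ref{prop:eigenop_polar_decomp} applied in $M\bar\otimes M^{op}$; a direct computation gives $|\bar\delta_{|y|}(\xi_\alpha)|=\alpha\,(\alpha+|y|)^{-1}\otimes(\alpha+|y|)^{-1}$, using $v^*v=1$. Hence, by Lemma \ref{predual_bounded_by_L1_norm},
\[
\big\|\delta_{|y|}(\xi_\alpha)\big\|_{(M\bar\otimes M^{op})_*}\leq(\varphi\otimes\varphi^{op})\big(|\bar\delta_{|y|}(\xi_\alpha)|\big)=\alpha\,\varphi\big((\alpha+|y|)^{-1}\big)^2,
\]
and similarly for $\eta_\alpha$. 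It remains to see that $\alpha\,\varphi((\alpha+|y|)^{-1})^2\to0$ as $\alpha\to0$. This is \emph{the main point}: it fails for a general diffuse $\mu$, and requires regularity of $\mu$ near $0$. Such regularity is available here: the conjugate variable $\xi_{|y|}=\delta_{|y|}^*(1\otimes1)$ lies in $L^2(M^\varphi,\varphi)$ by Lemma \ref{conj_var_entire}, and pairing it against polynomials in $|y|$ together with $\bar\delta_{|y|}(|y|)=1\otimes1$ shows that $\mc{E}_{W^*(|y|)}(\xi_{|y|})$ is the conjugate variable to the free difference quotient of $|y|$ in the tracial algebra $W^*(|y|)$; thus $\Phi^*(|y|)<\infty$, and by Voiculescu's regularity theorem (\emph{cf.} \cite{VoiV}) the distribution $\mu$ has a density in $L^3$, whence $\varphi((\alpha+|y|)^{-1})=\int(\alpha+t)^{-1}\,d\mu(t)=O(\alpha^{-1/3})$ by H\"older's inequality and therefore $\alpha\,\varphi((\alpha+|y|)^{-1})^2=O(\alpha^{1/3})\to0$. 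Consequently $(\xi_\alpha,\delta_{|y|}(\xi_\alpha))\to(v,0)$ and $(\eta_\alpha,\delta_{|y|}(\eta_\alpha))\to(v^*,0)$ in $L^2(M,\varphi)\times(M\bar\otimes M^{op})_*$, and since these pairs lie in $\Gamma(\bar\delta_{|y|})\subset\Gamma(\bar\delta^1_{|y|})$, which is closed, we get $v,v^*\in\dom(\bar\delta^1_{|y|})$ with $\bar\delta^1_{|y|}(v)=\bar\delta^1_{|y|}(v^*)=0$.
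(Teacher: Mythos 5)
Your proposal is correct and follows essentially the same route as the paper: closability via $\P\otimes\P^{op}\subset\dom(\delta_{|y|}^\star)$ together with weak density, the approximants $y(\alpha+|y|)^{-1}$ (resp.\ $(\alpha+|y|)^{-1}y^*$) handled by the Leibniz rule, the predual bound of Lemma \ref{predual_bounded_by_L1_norm}, and the key regularity input that $\Phi^*(|y|)<\infty$ forces an $L^3$ density so the resulting $L^1(\mu\times\mu)$ quantity vanishes by H\"older. The only deviations are cosmetic: you run the H\"older estimate one variable at a time ($\alpha\,\varphi((\alpha+|y|)^{-1})^2=O(\alpha^{1/3})$) rather than via the two-variable $L^{3/2}$ norm, and you write out the $v^*$ approximant explicitly where the paper says ``similar.''
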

\begin{proof}
The same proof as in Lemma \ref{abs_value_derivation_closable_in_L2} shows that $\delta_{|y|}^\star(1\otimes 1)\in L^2(M,\varphi)$ exists and that $\P\otimes \P^{op}\subset \dom(\delta_{|y|}^\star)$. We claim that the weak density of $\P\otimes \P^{op}$ in $M\bar{\otimes}M^{op}$ suffices to establish closability of $\delta_{|y|}$. Indeed, suppose $\dom{(\delta_{|y|})}\ni x_n\to 0$ in $L^2(M,\varphi)$ and $\delta_{|y|}(x_n)\to \omega\in (M\bar{\otimes}M^{op})_*$. Given $\epsilon>0$, let $\zeta\in \P\otimes\P^{op}$ be such that
	\begin{align*}
		\|\omega\| &\leq |\omega(\zeta)|+\epsilon.
	\end{align*}
Then, we have
	\begin{align*}
		\|\omega\| & \leq \lim_{n\to\infty} |\<1\otimes 1, \zeta \#\delta_{|y|}(x_n)\>_{\text{HS}}|+\epsilon\\
				& = \lim_{n\to\infty} |\<\delta_{|y|}^\star(\zeta^*),x_n\>_\varphi| + \epsilon = \epsilon.
	\end{align*}
Thus $\omega=0$ and $\delta_{|y|}$ is closable as a map into $(M\bar\otimes M^{op})_*$. Denote this closure by $\overline{\delta}^1_{|y|}$ and note that it is an extension of $\bar\delta_{|y|}$ by the aforementioned embedding.

Recall from the proof of Lemma \ref{abs_value_derivation_closable_in_L2}, that $y(\alpha+|y|)^{-1}$ converges in $L^2(M,\vphi)$ to $v$ as $\alpha\to 0$. Furthermore, $(\alpha+|y|)^{-1}\in \dom(\bar\delta_{|y|})\subset \dom{(\overline{\delta}^1_{|y|})}$ by Corollary \ref{extC1}. The computation
	\begin{align*}
		\frac{\frac{1}{\alpha+t} - \frac{1}{\alpha+s}}{t-s} = \frac{-1}{(\alpha+t)(\alpha+s)}
	\end{align*}
along with the formulas in Lemma \ref{abs_value_derivation_closable_in_L2} imply
	\begin{align*}
		\overline{\delta}^1_{|y|}( (\alpha+|y|)^{-1}) = -\left[(\alpha+|y|)^{-1}\otimes (\alpha+|y|)^{-1}\right]\# \bar\delta_{|y|}(|y|)=-(\alpha+|y|)^{-1}\otimes (\alpha+|y|)^{-1}
	\end{align*}
(note $v^*v=1$ since $\lambda_y\leq 1$). By Lemma \ref{extension_of_Leibniz_rule}, $y(\alpha+|y|)^{-1}\in \dom(\bar\delta_{|y|})\subset \dom(\overline{\delta}^1_{|y|})$ with
	\begin{align*}
		\overline{\delta}^1_{|y|}( y(\alpha+|y|)^{-1}) &= v\otimes (\alpha+|y|)^{-1} - y(\alpha+|y|)^{-1}\otimes (\alpha+|y|)^{-1}\\ 
			& = v\left[ 1 - |y|(\alpha+|y|)^{-1}\right]\otimes (\alpha+|y|)^{-1}\\
			& = v \alpha(\alpha+|y|)^{-1}\otimes (\alpha+|y|)^{-1}\\
			& = v\sqrt{\alpha}(\alpha+|y|)^{-1}\otimes \sqrt{\alpha}(\alpha+|y|)^{-1},
	\end{align*}
It is easy to see that
	\begin{align*}
		|v\sqrt{\alpha}(\alpha+|y|)^{-1}\otimes \sqrt{\alpha}(\alpha+|y|)^{-1}| = \sqrt{\alpha}(\alpha+|y|)^{-1}\otimes \sqrt{\alpha}(\alpha+|y|)^{-1} \in (M\bar{\otimes}M^{op})^{\varphi\otimes\varphi^{op}},
	\end{align*}
which can be identified with the function
	\begin{align*}
		g_\alpha(t,s)= \frac{\sqrt{\alpha}}{\alpha+t}\frac{\sqrt{\alpha}}{\alpha+s}.
	\end{align*}
Now, let $I\subset [0,\infty)$ be a compact interval containing the spectrum of $|y|$, let $\mu$ be the spectral measure of $|y|$, and let $m$ be the Lebesgue measure. Since $\bar{\delta}_{|y|}(|y|)=1\otimes 1$, $\bar{\delta}_{|y|}$ is the free difference quotient for $|y|$, and hence $1\otimes 1 \in \dom(\delta_{|y|}^*)$ implies that $|y|$ has finite free Fisher information. Consequently, \cite{VoiI} and \cite{VoiV} imply that $p:=d\mu/dm \in L^3(\R,m)$. Thus, by H\"older's inequality we have
	\[
		\|g_\alpha\|_{L^1(I\times I, \mu\times \mu)} \leq \| g_\alpha\|_{L^\frac{3}{2}(I\times I,m\times m)} \|p\|^2_{L^3(I,m)}
	\]
An easy computation shows $\|g_\alpha\|_{L^{3/2}(I\times I,m\times m)}\to 0$ as $\alpha\to 0$. Lemma \ref{predual_bounded_by_L1_norm} then implies $\overline{\delta}^1_{|y|}(y(\alpha+|y|)^{-1})\to 0$ in $(M\bar{\otimes}M^{op})_*$ as $\alpha\to 0$, and hence $v\in \dom(\overline{\delta}^1_{|y|})$ with $\overline{\delta}^1_{|y|}(v)=0$. The proof for $v^*$ is similar, but does use that $y$ and $v$ belong to the same eigenspace $E_\lambda$.
\end{proof}

\begin{rem}
Note that unless the spectrum of $|y|$ is bounded away from zero, $g_\alpha(s,t)^2$ does not converge to zero in $L^\frac{3}{2}(I\times I,m\times m)$. This means the above argument cannot show that $g_\alpha(s,t)\to 0$ in $L^2(I\times I,\mu\times \mu)$.
\end{rem}

Let $\bar{\delta}_{|y|}$ denote the closure of $\delta_{|y|}$ as a map into $L^2(M\bar{\otimes}M^{op})$. By the above proposition, we can consider an extension $d_{|y|}$ of $\bar{\delta}_{|y|}$ defined on $\text{span}\{\C\< v, v^*\>\cup \dom(\bar{\delta}_{|y|})\}$ so that $d_{|y|}(v)=d_{|y|}(v^*)=0$, the rest of its definition being determined by the Leibniz rule and $\bar{\delta}_{|y|}$. As a map into $(M\bar{\otimes}M^{op})_*$, it is clear that $d_{|y|}$ is closable with closure $\overline{\delta}^1_{|y|}$; however, this also implies that $d_{|y|}$ is closable as a map into $L^2(M\bar{\otimes}M^{op})$. Indeed, if $(x_n)_{n\in\N}\subset \dom(d_{|y|})$ converges to zero in $L^2(M,\varphi)$ and $(d_{|y|}(x_n))_{n\in\N}$ converges to some $\eta\in L^2(M\bar{\otimes}M^{op})$, then (since $\|\omega_\zeta\|\leq \|\zeta\|_{\text{HS}}$ for $\zeta\in L^2(M\bar{\otimes}M^{op})$) it follows that $(d_{|y|}(x_n))_{n\in\N}$ converges to $\omega_\eta$ as elements of $(M\bar{\otimes}M^{op})_*$. Hence $\omega_\eta=0$, since $\overline{\delta}^1_{|y|}$ is closed, which then implies $\eta=0$ because
	\begin{align*}
		\<a,\eta\>_{\text{HS}} = \omega_\eta(a^*)=0\qquad \forall a\in M\bar{\otimes}M^{op}.
	\end{align*}
Thus, as maps into $L^2(M\bar{\otimes}M^{op})$, the closure of $d_{|y|}$ is a closed extension of $\bar{\delta}_{|y|}$ with the claimed properties regarding $v,v^*$.

%%%%%%%%%%%%%%%%%%%%%%%%%%%%%%%%%%%%%%%%%%%%%%%%%%%%%%%%%%%%%%%%%
%					Main Results					%
%%%%%%%%%%%%%%%%%%%%%%%%%%%%%%%%%%%%%%%%%%%%%%%%%%%%%%%%%%%%%%%%%

\section{Main Results}\label{main_results}

We conclude with the proofs of our main results. We also include a few easy corollaries that minimize the hypotheses.

\begin{cthm}{A}
Let $M$ be a von Neumann algebra with a faithful normal state $\varphi$. Suppose $M$ is generated by a finite set $G=G^*$, $|G|\geq 2$, of eigenoperators of $\sigma^\varphi$ with finite free Fisher information. Then $(M^\vphi)'\cap M=\C$. In particular, $M^\varphi$ is a $\II_1$ factor and if $H< \R_+^\times$ is the closed subgroup generated by the eigenvalues of $G$ then
	\begin{align*}
		\text{$M$ is a factor of type} \left\{\begin{array}{cl} \rm{III}_1 & \text{if  }\ H=\R_+^\times\\
													\rm{III}_\lambda	& \text{if }\ H=\lambda^\Z,\ 0<\lambda<1\\
													\rm{II}_1 & \text{if }\ H=\{1\}. \end{array}\right.
	\end{align*}
\end{cthm}
\begin{proof}
%First note that $M^\varphi \cap \C\<G\>$ is $\|\cdot\|_\varphi$-dense in $M^\varphi$. Indeed, if $\C\<G\>$ is operator norm dense in $M$, so given $z\in M^\varphi$ we can find $\{p_n\}\in \C\<G\>$ which converge to $z$ in the $\|\cdot\|_\varphi$-norm. Noting that
%	\begin{align*}
%		\| z-p_n\|_\varphi^2 = \| \mc{E}_\varphi( z- p_n)\|_\varphi^2 + \| (1- \mc{E}_\varphi)(p_n)\|_\varphi^2,
%	\end{align*}
%we see that $\{\mc{E}_\varphi(p_n)\}$ also converges to $z$ in the $\|\cdot\|_\varphi$-norm. But every monomial $q\in \C\<G\>$ is an eigenvector of $\Delta_\varphi$ and so either $\mc{E}_\varphi(q)=q$ or $\mc{E}_\varphi(q)=0$. Consequently $\mc{E}_\varphi(p_n)\in M^\varphi\cap \C\<G\>$.
  
By Corollary \ref{cor:bicentralizer}, $(M^\vphi)'\cap M = (M^\vphi)'\cap M^\vphi$. Also $M'\cap M \subset M'\cap M^\vphi\subset (M^\vphi)'\cap M^\vphi $, so it suffices to show $M^\vphi$ is a factor. Fix $z\in (M^\varphi)'\cap M^\vphi$. Pick $y\in G$ such that $y\neq y^*$ (if no such $y$ exists then $M^\vphi=M$ and we simply appeal to the tracial result \cite[Theorem 1]{Dab10}). By replacing $y$ with $y^*$ if necessary, we may assume $\lambda_y\leq 1$ so that $y^*y$ is diffuse by Corollary \ref{diffuse_centralizer_elements}. Let $\delta:=\delta_2$ be as in Lemma \ref{tracial_ultrapower_result} with $Y_2=|y^*|$. Then 		
	\[
		\delta(y^*y)=\mc{E}_\vphi\otimes\mc{E}_\vphi^{op}(v^*\otimes y + y^*\otimes v)=0
	\]
where $y^*=v^*|y^*|$ is the polar decomposition. Let $\{\zeta_\alpha\}_{\alpha>0}$ be the maps derived from the contraction resolvent associated to $L=\delta^*\bar{\delta}$. Then by Lemma \ref{comm_for_cont_res} we have
	\[
		0=\zeta_\alpha([z,y^*y])=[\zeta_\alpha(z),y^*y],
	\]
Then Proposition \ref{closure_is_der} implies
	\[
		0=\bar{\delta}([\zeta_\alpha(z),y^*y])=[\bar{\delta}\circ\zeta_\alpha(z), y^*y].
	\]
Hence $\bar{\delta}\circ\zeta_\alpha(z)$ is a Hilbert--Schmidt operator commuting with a diffuse operator, implying $\bar{\delta}\circ\zeta_\alpha(z)=0$ for all $\alpha>0$. Since $\lim_{\alpha\to\infty} \zeta_\alpha(z)= z$ by (\ref{zeta_to_identity}) we see that $z\in \dom(\bar{\delta})$ with $\bar{\delta}(z)=0$. But then, invoking Proposition \ref{closure_is_der} again as well as the computations in Lemma \ref{abs_value_derivation_closable_in_L2}, we have
	\[
		0=\bar{\delta}([z,|y^*|]) = [z,p\otimes 1+1\otimes p - p\otimes p],
	\]
where $p=vv^*$. Applying $1\otimes \vphi$ yields
	\[
		zp + z(1-p)\vphi(p) = p\vphi(z) + (1-p)\vphi(pz).
	\]
Multiplying both sides by $p$ and applying $\vphi$ reveals that $\vphi(zp)=\vphi(z)\vphi(p)$. So we can rewrite the above equation as:
	\[
		zp+z(1-p)\vphi(p)= p \vphi(z) + (1-p)\vphi(p)\vphi(z).
	\]
Then multiplying by $p+\frac{1}{\vphi(p)}(1-p)$ yields
	\[
		zp+z(1-p)=\vphi(z) p + \vphi(z) (1-p),
	\]
or $z=\vphi(z)\in \C$ as claimed.

Now, $M^\varphi$ is a factor containing the diffuse element $y^*y$; that is, $M^\varphi$ is a $\II_1$ factor. As for the type classification of $M$, first note that if $H=\{1\}$ then $G\subset M^\varphi$ and hence $M=M^\varphi$ is a $\II_1$ factor. Otherwise, we appeal to the modular spectrum $S(M)$.

Recall the notation established in Subsection \ref{arveson_connes}. Since $M$ and $M^\varphi$ are factors, $S(M)\cap \R_+^\times = \Gamma(\sigma^\varphi)$ by Lemma \ref{Connes_first_lemma} and $\Gamma(\sigma^\varphi)=\text{Sp}(\sigma^\varphi)$ by Lemma \ref{Connes_second_lemma}. Thus, by Connes's classification it suffices to show $H=\text{Sp}(\sigma^\varphi)$. Let $\lambda$ be an eigenvalue for some non-zero monomial $p\in \C\<G\>$ (hence $\lambda\in H$). Then for any $f\in \bigcap_{x\in M} I(x)$ we have
	\begin{align*}
		0 = \sigma_f^\varphi(p) = \int_\R \check{f}(-t)\sigma_t^\varphi(p)\ dt = \int_\R \check{f}(-t)  \lambda^{it}\ dt\cdot p = f(\lambda) p.
	\end{align*}
Hence $f(\lambda)=0$ and therefore $\lambda\in \text{Sp}(\sigma^\varphi)$. Since the Arveson spectrum is closed, this implies $H\subset \text{Sp}(\sigma^\varphi)$. If $H=\R_+^\times$, then this forces equality. Otherwise, given $\mu\in \R_+^\times\setminus H$ there is an open set $U\subset \R_+^\times\setminus H$ containing $\mu$. Suppose $f\in A(\R_+^\times)$ satisfies $\text{supp}(f)\subset U$. Then any monomial $p\in \C\<G\>$ has an eigenvalue $\lambda\not\in U$ and hence
	\begin{align*}
		\sigma_f^\varphi(p) = f(\lambda) p = 0.
	\end{align*}
Consequently $M^{\sigma^\varphi}_0(U)=\{0\}$, and so $\mu\not\in \text{Sp}(\sigma^\varphi)$ by Lemma \ref{M_0_Takesaki_Lemma}.
\end{proof}

Other than in the type classification of $M$, the above proof only required that the other generators $G\setminus\{y,y^*\}$ were annihilated by $\delta_y$ and $\delta_{y^*}$. Consequently, we have the following corollary.

\begin{cor}\label{minimalist_corollary}
Suppose $M$ is a von Neumann algebra with a faithful normal state $\varphi$. Let $y\in M$ be an eigenoperator of $\sigma^\varphi$ and $B\subset M$ a unital $*$-subalgebra which is globally invariant under $\sigma^\varphi$. Letting $N$ denote the von Neumann algebra generated by $B$ and $y$, if $\Phi^*_\vphi(y,y^*\colon B)<\infty$ then $N^{\varphi\mid_N}$ is a $\rm{II}_1$ factor and $N$ is a factor.
\end{cor}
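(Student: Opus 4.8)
The plan is to run the proof of Theorem A almost verbatim, letting the $*$-algebra $B$ play the role that $\C\<G\setminus\{y,y^*\}\>$ played there; the notation $\Phi_\vphi^*(y,y^*\colon B)$ presupposes $y\neq y^*$, which we therefore assume (if $y=y^*$ then $\lambda_y=1$ and $B$ itself must carry the nontriviality, a case outside the stated hypothesis).

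First I would reduce to $M=N$. Since $y$ is an eigenoperator and $B$ is $\sigma^\vphi$-invariant, $N=W^*(B,y)$ is globally invariant under $\sigma^\vphi$, so by \cite[Theorem IX.4.2]{Tak03} there is a $\vphi$-preserving conditional expectation $M\to N$; hence $\vphi\mid_N$ is a faithful normal state with $\sigma^{\vphi\mid_N}=\sigma^\vphi\mid_N$ and $N^{\vphi\mid_N}=N\cap M^\vphi$. Replacing $M$ by $N$, it suffices to show $M^\vphi$ is a $\II_1$ factor: since the centre of $M$ is fixed by the modular group, $Z(M)=M'\cap M\subseteq (M^\vphi)'\cap M^\vphi=Z(M^\vphi)$, and factoriality of $M^\vphi$ then forces $Z(M)=\C$.

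Next, replacing $y$ by $y^*$ if needed (which alters neither $N$ nor the finiteness of the free Fisher information), take $\lambda_y\leq 1$. The hypothesis says precisely that the free difference quotients $\delta_y$, $\delta_{y^*}=\hat\delta_y$ on $\P:=B[y,y^*]$ — with $\delta_y(y)=1\otimes1$, $\delta_y(y^*)=0$, $\delta_y\mid_B=0$ — admit conjugate variables, and $\delta_y$ is $\lambda_y$-modular because $y$ is an eigenoperator and $\delta_y$ annihilates $B$. These are exactly the derivations exploited in Theorem A (the only property of ``the other generators'' used there being that they lie in $\ker\delta_y\cap\ker\delta_{y^*}$), so Sections \ref{diff_calc}--\ref{polar_deriv} apply verbatim: one obtains the closable $1$-modular tracial derivation $\delta:=\delta_2$ of Lemma \ref{tracial_ultrapower_result} on $D=B[\,|y|,|y^*|\,]\cap M^\vphi$, valued in $M^\vphi\otimes(M^\vphi)^{op}$, with $\delta(y^*y)=0$ and $\delta(|y^*|)=U_2:=vv^*\otimes1+1\otimes vv^*-vv^*\otimes vv^*$, where $y=v|y|$ is the polar decomposition. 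Fix $z\in(M^\vphi)'\cap M^\vphi$. Given that $y^*y$ is diffuse (see below) and lies in $\ker\delta\cap M^\vphi$, Lemma \ref{comm_for_cont_res} gives $[\zeta_\alpha(z),y^*y]=0$ for the contraction-resolvent maps $\zeta_\alpha$ of $L=\delta^*\bar\delta$; Proposition \ref{closure_is_der} yields $[\bar\delta(\zeta_\alpha(z)),y^*y]=0$, so the Hilbert--Schmidt operator $\bar\delta(\zeta_\alpha(z))$ commutes with a diffuse element and hence vanishes; letting $\alpha\to\infty$ and using (\ref{zeta_to_identity}) gives $z\in\ker\bar\delta$. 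Then $[z,|y^*|]\in\ker\bar\delta$, so $0=\bar\delta([z,|y^*|])=[z,U_2]$, and the elementary manipulation from the proof of Theorem A (apply $1\otimes\vphi$, then multiply by $vv^*$ and by $vv^*+\vphi(vv^*)^{-1}(1-vv^*)$) forces $z=\vphi(z)\in\C$. Thus $M^\vphi$ is a factor containing the diffuse element $y^*y$, i.e., a $\II_1$ factor, and then $M$ is a factor by the reduction above.

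The genuinely new point, and the main obstacle, is the diffuseness of $y^*y$ (and, through Corollary \ref{diffuse_centralizer_elements}, of $y^*y-t$ for $t>0$): Corollary \ref{atom_size} is not directly available, as it assumed $M=W^*(G)$ with $\Phi_\vphi^*(G)<\infty$. One must instead establish the relative version of Corollary \ref{commutation_kills} --- if $T_1,T_2\in\Psi(M_\infty\otimes M_\infty^{op})$ satisfy $[T_1,\rho(y)]+[T_2,\rho(y^*)]=0$ then $T_1=T_2=0$ --- whose proof is that of Corollary \ref{commutation_kills} read relatively: normalize $y$ so that $\{y,y^*\}$ has the covariance of a generalized circular element, form the free product $(\M,\theta)=(M,\vphi)*(\Gamma(\R^2,A^{it})'',\vphi_A)$ for the $2\times2$ matrix $A$ governing the eigenoperator pair, attach the quasi-dual system $\{r_1(\epsilon),r_2(\epsilon)\}$, and apply the analogue of Proposition \ref{orthogonal_to_P_1}, whose hypothesis $d_A^\star=2$ follows from Lemma \ref{conjugate_variables_for_reguaralized_generators} once the relative conjugate variables $J_\vphi^A(y\colon B[y^*])$ and $J_\vphi^A(y^*\colon B[y])$ exist --- which is guaranteed by $\Phi_\vphi^*(y,y^*\colon B)<\infty$ via the linear relation between free and quasi-free difference quotients. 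Nothing in that chain uses that $M$ is finitely generated or that the base algebra is of polynomial type. Granting this, the argument of Theorem \ref{no_zero_divisors_from_the_centralizer} applied to the monomials $y$ and $y^*y-t$ (whose top-degree parts lie in $E_{\leq1}^\infty$ since $\lambda_y\leq1$) supplies the required diffuseness. A minor technical caveat: the $\mu$-modular formalism of Sections \ref{diff_calc}--\ref{polar_deriv} is phrased for domains spanned by eigenoperators, so one should assume $B$ is compatible with this --- e.g.\ $B$ itself generated by eigenoperators, as in the intended application --- or note that only the $\sigma^\vphi$-eigenvector span of $B$ actually intervenes in the constructions used above.
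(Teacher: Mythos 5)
Your proposal is correct and takes essentially the same route as the paper, whose entire justification of this corollary is the remark preceding it: the proof of Theorem \ref{A} only used that the remaining generators are annihilated by $\delta_y$ and $\delta_{y^*}$, so the $\sigma^\vphi$-invariant algebra $B$ can play their role after reducing to $N$ via the $\vphi$-preserving conditional expectation. The extra care you take --- rerunning the Section \ref{diffuse} machinery (a relative form of Corollary \ref{commutation_kills} built from a two-generator free Araki--Woods free product) to get diffuseness of $y^*y$ from $\Phi^*_\vphi(y,y^*\colon B)<\infty$ alone, and flagging that the $\mu$-modular/Dirichlet formalism is stated for domains generated by eigenoperators --- addresses exactly the points the paper leaves implicit, and your sketches for filling them are consistent with how the paper's arguments adapt.
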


We note that $\Phi^*_\vphi(y,y^*\colon B)<\infty$ in particular implies that $y$ and $y^*$ are distinct elements which are algebraically free over $B$.

\begin{cthm}{B}
Let $M$ be a von Neumann algebra with a faithful normal state $\varphi$. Suppose $M$ generated by a finite set $G=G^*$, $|G|\geq 2$, of eigenoperators of $\sigma^\varphi$ with finite free Fisher information. Then $M^\varphi$ does not have property $\Gamma$. Furthermore, if $M$ is a type $\III_\lambda$ factor, $0<\lambda<1$, then $M$ is full.
\end{cthm}
\begin{proof}
Pick $y\in G$ such that $y\neq y^*$ (if no such $y$ exists, then $M^\vphi=M$ and we simply appeal to the tracial result \cite[Theorem 13]{Dab10}). By replacing $y$ with $y^*$ if necessary, we may assume $\lambda_y\leq 1$ so that $y^*y$ is diffuse by Corollary \ref{diffuse_centralizer_elements}. Letting $y=v|y|$ be the polar decomposition, it follows that $v^*v=1$.

Let $Y_1=|y|$, $Y_2=|y^*|$, $\delta_1$ and $\delta_2$ as in Lemma \ref{tracial_ultrapower_result}, and $\delta_0$ as in Lemma \ref{lem:closable_big_kernel}. We claim that $W^*(Y_1,Y_2)$ does not have property $\Gamma$ and consequently is non-amenable. Suppose $(z_n)_{n\in \N}\subset W^*(Y_1,Y_2)$ is a uniformly bounded, asymptotically central sequence. Without loss of generality, we may assume $\vphi(z_n)=0$ for all $n\in\N$. By Lemma \ref{lem:closable_big_kernel}, $\bar\delta_0(z_n)=0$ for all $n\in \N$ and so (\ref{eqn:vanishing_derivation}) implies
	\[
		\| [z_n, U_2]\|_{\text{HS}}^2 = \sum_{j=1}^2 \left( \<[z_n,Y_j], [z_n,\delta_j^*(U_j)]\>_{\vphi} + 2\Re\< (\vphi\otimes 1 - 1\otimes \vphi)\circ \bar\delta_j(z_n), [z_n, Y_j]\>_{\vphi}\right).
	\]
Thus by the uniform boundedness of $(z_n)$ and Corollary \ref{boundedness_of_adjoint_formula}, we have $\|[z_n,U_2]\|_{\text{HS}}^2\to 0$. Let $p=vv^*$ then one easily checks that
	\begin{align}\label{eqn:expanded_comm_U2}
		\|[z_n,U_2]\|_{\text{HS}}^2 = \| z_np + \sqrt{\vphi(p)} z_n(1-p) \|^2_\vphi + \| pz_n + \sqrt{\vphi(p)}(1-p) z_n\|_\vphi^2 + 2|\<p,z_n\>_\vphi|^2.
	\end{align}
Therefore
	\begin{align*}
		\|z_n\|_\vphi^2 &= \left\| \left( p + \frac{1}{\sqrt{\vphi(p)}}(1-p)\right)\left(pz_n + \sqrt{\vphi(p)}(1-p) z_n\right)\right\|_\vphi^2\\
			&\leq \left\|p + \frac{1}{\sqrt{\vphi(p)}}(1-p)\right\|^2 \| [z_n,U_2]\|_{\text{HS}}^2,
	\end{align*}
which tends to zero. Thus $W^*(Y_1,Y_2)$ does not have property $\Gamma$, and consequently is non-amenable.

Now, as $W^*(Y_1,Y_2)$ is non-amenable, it admits a non-amenability set (see \cite[Definition 2.4]{DI16}). That is, there exists $S\subset W^*(Y_1,Y_2)$ and constant $K>0$ such that
	\begin{align}\label{eqn:non-amen_set}
		\|\eta\|_{\text{HS}} \leq K \sum_{x\in S} \| [x, \eta] \|_{\text{HS}} \qquad \forall \eta\in L^2(W^*(Y_1,Y_2)\bar\otimes W^*(Y_1,Y_2)).
	\end{align}
Let $\{\zeta_\alpha\}_{\alpha>0}$ be the maps derived from the contraction resolvent associated to $L_0:=\delta_0^*\bar\delta_0$. For $\omega$ a free ultra-filter, let $(z_n)_{n\in \N}\in W^*(Y_1,Y_2)'\cap (M^\vphi)^\omega$, which we can assume satisfies $\vphi(z_n)=0$ for all $n\in\N$. Fix $\alpha>0$, then for each $n\in \N$ (\ref{eqn:vanishing_derivation}), Corollary \ref{boundedness_of_adjoint_formula}, and (\ref{eqn:non-amen_set})  imply
	\begin{align*}
		\|[\zeta_\alpha(z_n),U_2]\|_{\text{HS}}^2 \leq& \sum_{j=1}^2 \| [\zeta_\alpha(z_n), Y_j]\|_{\vphi} \|z_n\| \left( 2\|\delta_j^*(U_j)\|_\vphi + 12 \|\delta_j^*(1\otimes 1)\|_\vphi\right)\\
			& + K\|[\zeta_\alpha(z_n),1\otimes 1]\|_{\text{HS}} \sum_{x\in S} \| [x, \bar\delta_0\circ\zeta_\alpha(z_n)]\|_{\text{HS}}.
	\end{align*}
Lemma \ref{lem:closable_big_kernel} implies that $W^*(Y_1,Y_2)\subset \ker(\bar\delta_0)$. So by Lemma \ref{comm_for_cont_res} we have $[\zeta_\alpha(z_n), Y_j] = \zeta_\alpha( [z_n, Y_j])$ for $j=1,2$, and $[x,\bar\delta_0\circ \zeta_\alpha(z_n)] = \bar\delta_0\circ\zeta_\alpha( [x,z_n])$ for all $x\in S$. Thus the above estimate implies $\lim_n \|[\zeta_\alpha(z_n),U_2]\|_{\text{HS}}^2=0$. By \cite[Theorem 4.3]{Pet09}, $\{\eta_\alpha\}_{\alpha>0}$ converges uniformly on $( W^*(Y_1,Y_2)'\cap (M^\vphi)^\omega)_1$. By (\ref{zeta_formula}), $\{\zeta_\alpha\}_{\alpha>0}$ also converges uniformly. Using (\ref{eqn:expanded_comm_U2}) we therefore have
	\[
		\lim_{n\to\infty} \|[z_n, U_2]\|_{\text{HS}}^2 = \lim_{n\to\infty} \lim_{\alpha\to\infty} \| [\zeta_\alpha(z_n),U_2]\|_{\text{HS}}^2 = \lim_{\alpha\to\infty} \lim_{n\to\infty} \| [\zeta_\alpha(z_n),U_2]\|_{\text{HS}}^2=0.
	\]
From this and the same computation which showed $W^*(Y_1,Y_2)$ does not have property $\Gamma$, we have $\| z_n\|_\vphi^2\to 0$. Thus $W^*(Y_1,Y_2)'\cap (M^\vphi)^\omega=\C$, and in particular $M^\vphi$ does not have property $\Gamma$.

As discussed in Subsection \ref{full}, $M$ is full if and only if any uniformly bounded sequence $(x_n)_{n\in\N}$ of $\varphi$-centered elements of $M$ in the asymptotic centralizer with respect to $\phi$ for all $\phi\in M_*$ converges $*$-strongly to zero. Recall that on uniformly bounded subsets of $M$ the $*$-strong topology coincides with the topology defined by the norm $\|x\|_\varphi^\#:=\varphi(x^*x+xx^*)^{1/2}$, $x\in M$. 

Let $(x_n)_{n\in\N}\subset M$ be a sequence satisfying the above hypothesis. From the proof of Theorem \ref{A}, we know that if $M$ is of type $\III_\lambda$, $0<\lambda<1$, then $S(M)=\{0\}\cup\lambda^\Z$. So by Lemma \ref{Connes_first_lemma}, $\text{spectrum}(\Delta_\varphi)=\{0\}\cup\lambda^\Z$ and hence 1 is isolated in the spectrum of $\Delta_\varphi$. Thus we may apply \cite[Proposition 2.3.(2)]{Con74} to assert that $\|x_n - \mc{E}_\varphi(x_n)\|^\#_\varphi\to 0$. So, it suffices to show $\|\mc{E}_\varphi(x_n)\|_\varphi^\# = \sqrt{2}\|\mc{E}_\varphi(x_n)\|_\varphi\to 0$. We will show that $(\mc{E}_\varphi(x_n))_{n\in\N}$ is asymptotically central and therefore converges to zero in $L^2(M^\varphi,\varphi)$ since $M^\varphi$ does not have property $\Gamma$.

Fix $z\in M^\varphi$ and set $a_n:=[\mc{E}_\varphi(x_n),z]$. Then
	\begin{align*}
		\| [\mc{E}_\varphi(x_n), z] \|_\varphi^2 &= \varphi( a_n^* \mc{E}_\varphi(x_n)z - a_n^* z  \mc{E}_\varphi(x_n))\\
			& = \varphi([a_n^*, \mc{E}_\varphi(x_n)] z)\\
			& = \varphi( [a_n^*, x_n] z) + \varphi( [a_n^*, \mc{E}_\varphi(x_n) - x_n]z).
	\end{align*}
Letting $\phi(\cdot):= \varphi(\cdot z) \in M_*$, we see that the first term in the last expression above is bounded by $\|a_n^*\| \| [x_n, \phi]\|$, which tends to zero since $(x_n)_{n\in\N}$ is uniformly bounded and in the asymptotic centralizer with respect to $\phi$. Thus it suffices to bound the second term:
	\begin{align*}
		\varphi( [a_n^*, \mc{E}_\varphi(x_n) - x_n]z) &= \varphi( (\mc{E}_\varphi(x_n) - x_n) [z,a_n^*])\\
										&\leq \| \mc{E}_\varphi(x_n) - x_n \|_\varphi^\# \| [z, a_n^*]\|_\varphi\\
										&\leq \| \mc{E}_\varphi(x_n) - x_n \|_\varphi^\# 4 \|\mc{E}_\varphi(x_n)\| \|z\|^2.
	\end{align*}
This tends to zero and completes the proof.
\end{proof}

We are again grateful to Yoann Dabrowski who suggested the above proof showing that $M^\vphi$ does not have property $\Gamma$. In particular, this allowed us to replace the hypothesis $|G|\geq 3$ with $|G|\geq 2$.

As with Corollary \ref{minimalist_corollary}, we used only that $G\setminus \{y_,y^*\}$ is annihilated by $\delta_y$ and $\delta_{y^*}$. Hence we have the following corollary.

\begin{cor}
Suppose $M$ is a von Neumann algebra with a faithful normal state $\varphi$. Let $y\in M$ be an eigenoperator of $\sigma^\varphi$ and $B\subset M$ a unital $*$-subalgebra which is globally invariant under $\sigma^\varphi$. Letting $N$ denote the von Neumann algebra generated by $B$ and $y$, then $N^{\varphi\mid_N}$ does not have property $\Gamma$, and if $1$ is isolated in the spectrum of $\Delta_{\vphi\mid_N}$ then $N$ is full.
\end{cor}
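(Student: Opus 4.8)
The plan is to run the proof of Theorem \ref{B} essentially verbatim, with the generating set $G$ replaced by $\{y,y^*\}\cup B$ and the polynomial algebra $\P$ replaced by the unital $*$-algebra $B[y,y^*]$. As noted in the remark preceding the corollary, in that proof every appeal to the generators in $G\setminus\{y,y^*\}$ only used that they are annihilated by $\delta_y$ and $\delta_{y^*}$, which for $B$ holds by definition of the free difference quotient. First I would replace $M$ by $N$ and $\vphi$ by $\psi:=\vphi\mid_N$: since $B$ is globally $\sigma^\vphi$-invariant and $y$ is an eigenoperator, $N$ is globally $\sigma^\vphi$-invariant, so by \cite[Theorem IX.4.2]{Tak03} there is a $\vphi$-preserving conditional expectation $M\to N$, $\sigma^\psi=\sigma^\vphi\mid_N$, $\Delta_\psi=\Delta_\vphi\mid_{L^2(N,\psi)}$, and $y$ is an eigenoperator of $\sigma^\psi$; we may also assume $N$ has separable predual. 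The hypothesis $\Phi_\vphi^*(y,y^*\colon B)<\infty$ (read into the statement, as in Corollary \ref{minimalist_corollary}) forces $y\ne y^*$ and $y,y^*$ algebraically free over $B$, and guarantees the conjugate variables $\xi_y=J_\psi(y\colon B[y^*])$ and $\xi_{y^*}=J_\psi(y^*\colon B[y])$ exist. Swapping $y$ with $y^*$ if necessary, take $\lambda_y\le 1$, so $|y|$ is diffuse and $v^*v=1$ by Corollary \ref{atom_size}.

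With $\delta_y$ the free difference quotient with respect to $y$ on $B[y,y^*]$ --- which is $\lambda_y$-modular, kills $B$, and has $\hat\delta_y=\delta_{y^*}$ --- one builds $\delta_{|y|}$ and $\delta_v$ exactly as in Subsection \ref{new_derivations}, and then Lemma \ref{abs_value_derivation_closable_in_L2}, Proposition \ref{closure_is_der}, Lemma \ref{tracial_ultrapower_result} and Lemma \ref{lem:closable_big_kernel} apply with $D:=B[Y_1,Y_2]\cap N^\psi$ ($Y_1=|y|$, $Y_2=|y^*|$) in place of the set $D$ there. This yields the $1$-modular derivations $\delta_1,\delta_2\colon D\to N^\psi\otimes(N^\psi)^{op}$, the closable operator $\delta_0$ with $L^2(\C[Y_1,Y_2],\psi)\subset\ker(\bar\delta_0)$ and the identity (\ref{eqn:vanishing_derivation}), and the boundedness of $(1\otimes\psi)\circ\bar\delta_j$ from Corollary \ref{boundedness_of_adjoint_formula}. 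The one point needing care is that $\mu$-modularity of $\delta_y$ requires a domain generated by eigenoperators; this is arranged, as in the (omitted) proof of Corollary \ref{minimalist_corollary}, by first shrinking $B$ to the $\sigma^\psi$-invariant $*$-subalgebra of its entire elements spanned by eigenoperators of $\sigma^\psi$, which is still $\sigma$-weakly dense in $B$ and generates $N$ together with $y$.

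The rest is a transcription of the proof of Theorem \ref{B}. First, $W^*(Y_1,Y_2)$ lacks property $\Gamma$: for a uniformly bounded $\psi$-centered asymptotically central sequence $(z_n)\subset W^*(Y_1,Y_2)$, Lemma \ref{lem:closable_big_kernel} gives $\bar\delta_0(z_n)=0$, so (\ref{eqn:vanishing_derivation}) with uniform boundedness and Corollary \ref{boundedness_of_adjoint_formula} gives $\|[z_n,U_2]\|_{\text{HS}}\to 0$, and (\ref{eqn:expanded_comm_U2}) together with multiplication by $p+\vphi(p)^{-1/2}(1-p)$, $p=vv^*$, forces $\|z_n\|_\psi\to 0$; hence $W^*(Y_1,Y_2)$ is non-amenable and has a non-amenability set as in (\ref{eqn:non-amen_set}). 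Then, for centered $(z_n)\in W^*(Y_1,Y_2)'\cap(N^\psi)^\omega$, the resolvent maps $\zeta_\alpha$ attached to $L_0=\delta_0^*\bar\delta_0$ commute with $[z_n,\,\cdot\,]$ against $Y_1,Y_2$ and against the non-amenability set by Lemma \ref{comm_for_cont_res}, so (\ref{eqn:vanishing_derivation}), Corollary \ref{boundedness_of_adjoint_formula} and (\ref{eqn:non-amen_set}) give $\lim_n\|[\zeta_\alpha(z_n),U_2]\|_{\text{HS}}^2=0$ for each $\alpha$; uniform convergence of $\{\zeta_\alpha\}$ on the unit ball of $W^*(Y_1,Y_2)'\cap(N^\psi)^\omega$ (via \cite[Theorem 4.3]{Pet09} and (\ref{zeta_formula})) permits interchanging the limits in $n$ and $\alpha$, whence $\lim_n\|[z_n,U_2]\|_{\text{HS}}^2=0$ and $\|z_n\|_\psi\to 0$; thus $W^*(Y_1,Y_2)'\cap(N^\psi)^\omega=\C$ and $N^\psi$ lacks property $\Gamma$. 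Finally, if $1$ is isolated in $\text{spectrum}(\Delta_\psi)=\text{spectrum}(\Delta_{\vphi\mid_N})$, then for a uniformly bounded $\psi$-centered sequence $(x_n)\subset N$ in the asymptotic centralizer with respect to every $\phi\in N_*$, \cite[Proposition 2.3.(2)]{Con74} gives $\|x_n-\mc{E}_\psi(x_n)\|_\psi^\sharp\to 0$, and the estimate in the proof of Theorem \ref{B} (using the functionals $\phi=\psi(\,\cdot\,z)$, $z\in N^\psi$) shows $(\mc{E}_\psi(x_n))$ is asymptotically central in $N^\psi$, so $\|\mc{E}_\psi(x_n)\|_\psi^\sharp\to 0$; combining, $x_n-\psi(x_n)\to 0$ $*$-strongly and $N$ is full. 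The main obstacle is the bookkeeping flagged in the second paragraph: making sure the reduction to $B[y,y^*]$ preserves the $\mu$-modularity and eigenoperator-domain hypotheses that Lemmas \ref{tracial_ultrapower_result} and \ref{lem:closable_big_kernel} rest on; beyond that there is nothing new.
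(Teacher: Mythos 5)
Your proposal follows exactly the paper's intended route: the author treats both corollaries as immediate consequences of Theorems A and B, remarking only that the earlier proofs used nothing about the generators outside $\{y,y^*\}$ beyond their being annihilated by $\delta_y$ and $\delta_{y^*}$, which for $B$ holds by definition of the free difference quotient. Your bookkeeping — reduce to $N$ via the $\varphi$-preserving conditional expectation (which exists since $N$ is $\sigma^\varphi$-invariant), read in the missing hypothesis $\Phi^*_\varphi(y,y^*\colon B)<\infty$ by analogy with Corollary~\ref{minimalist_corollary}, normalize so $\lambda_y\le 1$, run Lemmas~\ref{abs_value_derivation_closable_in_L2}, \ref{tracial_ultrapower_result}, \ref{lem:closable_big_kernel} and the resolvent/non-amenability-set argument — is precisely what the paper's remark envisions, and you correctly identify that the only serious point is whether the technical machinery (Lemmas~\ref{P_is_core}, \ref{conj_var_entire}, Corollary~\ref{boundedness_of_adjoint_formula}, Theorem~\ref{Kaplansky}, etc.) still applies when the domain $\P$ is replaced by $B[y,y^*]$, since those lemmas all carry the standing hypothesis that the domain is a $*$-algebra generated by eigenoperators of $\sigma^\varphi$.

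The one place your write-up asserts more than it can justify is the claimed fix: that one may "shrink $B$ to the $\sigma^\psi$-invariant $*$-subalgebra of its entire elements spanned by eigenoperators of $\sigma^\psi$, which is still $\sigma$-weakly dense in $B$." That density is false in general. A $\sigma^\varphi$-invariant $*$-subalgebra certainly contains a dense set of entire elements (by Gaussian smoothing), but the linear span of genuine \emph{eigenoperators} in $B$ is dense only when $\varphi$ restricted to $W^*(B)$ is almost periodic — for instance, if $\sigma^\varphi$ has absolutely continuous Arveson spectrum on $W^*(B)$, the only eigenoperators in $B$ lie in $B\cap M^\varphi$. To be fair, the paper's own "proof" of this corollary (a single sentence preceding the statement) glosses over exactly the same point, and the most reasonable reading is that the corollary carries the unstated hypothesis that $B$ is, like $\P$, already generated by eigenoperators of $\sigma^\varphi$; under that reading your argument is a faithful transcription of the paper's. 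But as a stand-alone claim, the density assertion is a gap: it either needs an additional hypothesis on $B$ or a reworking of Lemmas~\ref{P_is_core}--\ref{Kaplansky} that replaces the eigenoperator/eigenspace decomposition with some softer argument for merely $\sigma^\varphi$-invariant domains.
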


%%%%%%%%%%%%%%%%%%%%%%%%
%                        	Bibiliography                                %
%%%%%%%%%%%%%%%%%%%%%%%%

\bibliography{references}

\end{document}